\documentclass[twoside, 12pt, letterpaper]{article}
\usepackage[margin=1in,centering]{geometry}
\usepackage{latexsym,graphicx, epsfig}
\usepackage{amssymb,amsmath,amsthm,amsopn,pifont} 
\usepackage{subfigure}
\usepackage[abs, percent]{overpic}
\usepackage{mathrsfs}
\usepackage{pict2e}
\usepackage[usenames, dvipsnames]{color}
\usepackage[implicit=true, colorlinks=true,linkcolor=Black,citecolor=Black,urlcolor=Black]{hyperref}
\pagestyle{myheadings}
\mathsurround 2pt
  \raggedbottom

\input xy
\xyoption{all}

\newcommand\B{{\mathfrak B}}
\newcommand\D{{\mathbb D}}

\newcommand\cS{{\mathcal S}}

\newcommand\cC{{\mathcal C}}
\newcommand\cE{{\mathcal E}}
\newcommand\cO{{\mathcal O}}
\newcommand\cR{{\mathcal R}}
\newcommand\fR{{\mathfrak R}}
\newcommand\fB{{\mathfrak B}}

\newcommand\cB{{\mathcal B}}

\newcommand\cF{{\mathcal F}}
\newcommand\Tes{{\bf Tes}}
\newcommand\F{{\mathcal F}}

\newcommand\C{{\mathbb C}}
\newcommand\N{{\rm N}}
\newcommand\R{{\mathbb R}}

\newcommand\Z{{\mathbb Z}}
\newcommand\Q{{\mathbb Q}}

\newcommand\ssm{{\smallsetminus}}

\newcommand\e{{\mathbf e}}
\newcommand\g{{\mathbf g}}

\newcommand\tb{{\mathbf t}}

\newcommand\rb{{\mathbf r}}

\newcommand{\mapstoself}%
  {\,\protect\rotatebox[origin=c]{90}{\Huge$\circlearrowleft$}\,}
\def\K{{\mathcal K}}

\def\p{{\mathfrak p}}
\def\e{{\bf e}}
\def\re{{\sf Re}}
\def\v{{\bf v}}
\def\f{{\bf f}}
\def\ocS{{\overline\cS}}

\def\s{{\mathfrak s}}
\def\cB{{\mathcal B}}
\def\cO{{\mathcal O}}
\def\rank{{\rm rank}}

\def\cS{{\mathcal S}}

\def\<{{\langle}}
\def\>{{\rangle}}
\def\E{{\mathcal E}}

\def\I{{\mathcal I}}
\def\bsk{{\bigskip}}
\def\ssk{{\smallskip}}
\def\si{{~\simeq~}}
\def\msk{{\medskip}}
\def\G{{\mathcal G}}

\def\d{{\bf d}}
\def\lg{{(\!(}}
\def\rg{{)\!)}}
\def\({{(\!(}}
\def\){{)\!)}}

\def\<<{{<\!\!<}}
\def\M{{\mathbb M}}

\refstepcounter{equation}

\newtheorem{lem}{Lemma}
\newtheorem{theo}[lem]{Theorem}

\newtheorem{coro}[lem]{Corollary}
\newtheorem{prop}[lem]{Proposition}

\newtheorem{conj}[lem]{\indent Conjecture}

\newtheorem*{lemma*}{Lemma}
\newtheorem*{theorem*}{Theorem}
\newtheorem*{assA*}{Assertion A}
\newtheorem*{assB*}{Assertion B}

\theoremstyle{definition}
\newtheorem{definition}[lem]{\indent Definition}
\newtheorem{rem}[lem]{\indent Remark}
\newtheorem{ex}[lem]{\indent Example}
\refstepcounter{lem}

\renewcommand{\descriptionlabel}[1]%
     {\hspace{\labelsep}\textsf{#1}}

\markboth{\centerline{\sc Cubic Polynomial Maps, Part III}}{\centerline{\sc Araceli Bonifant and  John Milnor}}

\begin{document}

\title{\bf Cubic Polynomial Maps\\ with Periodic Critical Orbit, Part III:
Tessellations and Orbit Portraits \\ }
\author{\bf Araceli Bonifant and John Milnor}

\maketitle
  
\begin{abstract}
We study the parameter space $\cS_p$ for cubic polynomial maps with a marked
critical point of period $p$. We will outline a fairly complete theory
as to how the dynamics of the map $F$ changes as we move around the parameter
space $\cS_p$. For every escape region $\cE\subset\cS_p$,
every parameter ray in $\cE$ with rational parameter angle 
lands at some uniquely defined point 
in the boundary $\partial{\mathcal E}$. This landing point is necessarily 
either a parabolic map or a Misiurewicz map. The relationship between 
parameter rays and dynamic rays is formalized by the
\textbf{\textit{period $q$  tessellation}} of $\cS_p$, where maps in the
same face of this tessellation 
always have the same period $q$ orbit portrait.
\end{abstract}

\tableofcontents

\vspace{.5cm}
\noindent
{\bf Keywords:} cubic polynomials, parameter rays, tessellations, 
dynamic rays, Misiurewicz maps, parabolic maps, escape region, 
wake, primary edge, secondary edge, orbit portraits.

\vspace{.2cm}
\noindent
{\bf Mathematics Subject Classification (2020):} 37F10,  30C10, 30D05.

\setcounter{equation}{0}
\setcounter{lem}{0}
\section{Introduction}\label{s-in} 

Let $\cS_p$ be the space of all monic centered cubic polynomial
maps $F$ with a \textbf{\textit{ marked critical point}}
$a=a_F$ of period $p\ge 1$. If $v=F(a)$
is the corresponding critical value, then we obtain the normal form
\begin{equation}\label{e-1}
F(z)~=~F_{a,v}(z)~=~z^3-3a^2z+(2a^3+v)\,.
\end{equation}
It will be convenient to identify $\cS_p$ with the smooth affine curve
consisting of all pairs $(a,\,v)\in\C^2$ such that $a$
has period exactly $p$ under iteration of $F$.
Compare  Parts I and II of this paper (\cite{M4} and \cite{BKM}).
In other words, we identify the map $F=F_{a,v}\in \cS_p$ with the pair
$(a,v)\in\C^2$. 
Each $\cS_p$ is connected (see \cite{AK}), and consists of a compact
connected \textbf{\textit{connectedness locus}}, together with a finite
number of \textbf{\textit{escape regions}},
each conformally isomorphic to a punctured disk. The genus of $\cS_p$
(or of the smoothly compactified curve $\ocS_p$) grows quite rapidly
with $p$. (Compare \S\ref{s-count}.) 
\smallskip

This paper will outline a fairly complete theory as to
how the dynamics of the
map $F$ changes as we move around the parameter space $\cS_p$. However
there are many questions that we have been unable to answer: We will be
grateful for any comments or suggestions.\medskip

Here is a brief description of what follows.
Section~\ref{s-rays} contains many definitions and basic properties. It
first describes 
 \textbf{\textit{dynamic rays}}, which lie in the complement of 
the filled Julia set of a map in $\cS_p$, and \textbf{\textit{parameter
rays}}, which lie in the various escape regions 
of $\cS_p$. In particular, it shows that every parameter ray with a
rational parameter angle lands at a map which is either parabolic or
Misiurewicz  depending on whether its angle is
\textbf{\textit{co-periodic}}
or not (See Definition~\ref{D-cp} and Theorem~\ref{T-main}). The
\textbf{\textit{ kneading invariant}} of an escape region is described in
Definition~\ref{D-kn} through Remark~\ref{R-J2K}.
Hyperbolic components are discussed in Remark~\ref{R-HC}. Copies of the
quadratic  Mandelbrot set in parameter space in Remark~\ref{R-Mand},  some
basic conjectures and questions in
Remarks~\ref{R-MandelC} and \ref{R-MCquestion}. The section concludes with a
discussion of \textbf{\textit{dual}} critically periodic points in
Remark~\ref{R-dual}.
\smallskip

Section~\ref{s-tess} describes a dynamically useful tessellation
$\Tes_q(\ocS_p)$ of the smoothly compactified parameter curve $\ocS_p$.
Here $p$ and  $q$ can be any positive integers. The \textbf{\textit{edges}}
of this tessellation are the parameter rays of co-period $q$, leading
from an \textbf{\textit{ideal vertex}} to a \textbf{\textit{parabolic
vertex}}. Each face $\F$ of the tessellation determines a period
$q$ \textbf{\textit{orbit portrait}} which describes which dynamic rays
of period $q$ land at the same point of the Julia set for maps in $\F$.
There is a basic distinction between three different kinds of edge:
As we cross a \textbf{\textit{primary edge}} the change in orbit portrait
is strictly monotonic, so that the orbit portrait on one side is a proper
subset of the orbit portrait on the other side. In the case of a
\textbf{\textit{secondary edge}} the change is strictly non-monotonic,
so that neither orbit portrait contains the other. Finally, for an
\textbf{\textit{inactive edge}}, there is no change in orbit portrait.
(Conjecturally the inactive edges are precisely the \textbf{\textit{dead
end edges}}, with no other edge sharing the same parabolic landing point.)
See Definition~\ref{D-3E}, Theorem~\ref{T-edge} and Conjecture~\ref{CJ-main}.
We give a detailed study of just
how the orbit portrait changes as we cross an edge of the tessellation.
The concept of wake is closely 
related. A hyperbolic component $H$ of Type D has a \textbf{\textit{wake}}
$W$ if $H$ is contained in a
simply connected open set $W\subset \cS_p$, which is bounded by two
parameter rays which land at a parabolic boundary
point of $H$, and which lie in the same escape region. (See
Definition~\ref{D-wake}.)

\smallskip

Section \ref{s-near-para} provides a description of how the period $q$
orbit portrait changes as we circle around a parabolic vertex $\p$ of
$\Tes_q(\ocS_p)$. There are three cases to consider, according as the
number of parameter rays landing at $\p$ is two, three, or four.
(There is no change in the orbit portrait
if there is only one ray landing; and conjecturally
there are no cases with more than four rays landing.) We distinguish
between \textbf{\textit{background relations}} which hold for every map
close to $\p$ and \textbf{\textit{distinguishing relations}} which
hold only for some maps near $\p$. In all of the cases we have seen,
the distinguishing relations are completely determined by the angles
of the parameter rays landing at $\p$.  In the four ray case, we  discuss rabbit
and non-rabbit examples. (See Remark~\ref{R-RNR}.)
\smallskip

 Section~\ref{s-count} counts the number 
 of vertices, edges, and faces in each tessellation $\Tes_q(\ocS_p)$. We give
 explicit formulas for the number of edges,  and (conjecturally at
 least) for the number of parabolic vertices. We also give a formula for the
 number of type B components of the curve $\cS_p$. (See Theorem~\ref{T-Bcount}.)
 \smallskip

 Section~\ref{s-Mis} provides a more detailed description of the Misiurewicz
case with special emphasis in Tan Lei similarity in Conjecture~\ref{C-TLS}.
\smallskip

Appendix~\ref{a-para-stab} proves a stability theorem for the landing 
of parameter rays as we approach a suitably restricted parabolic limit. 
\smallskip

Appendix~\ref{a-embdtree}  describes weak Hubbard trees.
\smallskip

The exposition will be continued in Part IV of this paper (see \cite{BM}).
\bigskip

\noindent\textbf{Acknowledgments:} We are grateful to  Jan Kiwi  for very
useful comments, and to Scott Sutherland 
for help with many problems.
Some of the exploration in this paper was done with the program
Dynamics Explorer  from Source Forge
\url{https://sourceforge.net/u/bwboyd/profile/}. We are thankful to
Suzanne and Brian Boyd for helping us to adapt some of our programs
to their Dynamics Explorer platform. \medskip


\setcounter{lem}{0}
\section{Dynamic Rays and Parameter Rays}\label{s-rays}

The theory of \textbf{\textit{dynamic rays}} $\cR$
in the basin of infinity $\C\ssm K(F)$
for a monic polynomial map  $F$ is classical and well understood.
 (Here $K(F)$ denotes the filled Julia set of $F$.)
In particular, each dynamic ray is an orthogonal trajectory to the family of
equipotentials $\g_F(z)={\rm constant}$. Here  \hbox{$\g_F:\C\to\R$} is the
associated dynamic Green's function, defined by
$$ \g_F(z)~=~\lim_{n\to\infty}\frac{1}{d^n}\log^+|F^{\circ n}(z)|~,$$
where $d\ge 2$ is the degree of $F$. 
Each such ray $\cR$ has a well defined angle $\theta\in\R/\Z$. If $\theta$ is 
rational, then the ray will land at a well defined point of the Julia set 
$\partial K(F)$; unless it crashes into a critical or precritical point
of $F$. (See Remark~\ref{R-DJS}.)

 More explicitly, we can label
these rays, using the \textbf{\textit{B\"ottcher coordinate}}
 $\fB_F(z)\in\C\ssm\overline\D$, which is well defined, for $z$ large, with
$$ \fB_F\big(F(z)\big)~=~ \fB_F(z)^d,\quad{\rm and~with}\quad
 \fB_F(z)=z+O(1/z)\quad{\rm as}\quad |z|\to\infty~.$$
Then\vspace{-.4cm}
  $$\g_F(z)~=~\log|\fB_F(z)|$$ for any $z$
  in the region where $\fB_F$ is defined (See \cite[Definition 9.6]{M3}.). The
  unique dynamic ray through $z$
  can then be labeled as $\cR_F(\theta)\subset\C\ssm K(F)$, where
  $\theta\in\R/\Z$ is the argument of the complex number $\fB_F(z)$ and $K(F)$
  is the Julia set.  Note that
$$ F\big(\cR_F(\theta)\big)~=\cR_F(d\,\theta).$$
Each dynamic ray $\cR_F(\theta)$ can be parametrized by the Green's
function $\g(z)$. 

If the Julia set is connected, 
the B\"ottcher coordinate maps the complement
$\C\ssm K(F)$ holomorphically onto the region $|\fB_F(z)|>1$; and every ray
can be described as a function
\begin{equation}\label{E-ray}
 \g~\mapsto ~\rb(\g)~=~\rb_{F,\theta}(\g)\end{equation}
which is defined and real analytic as a function of the three variables:
$F$ in parameter space,
$\theta\in\R/\Z$, and $\g\in(0,\,+\infty)$. 
Furthermore, each ray tends
to infinity in $\C$ as $\g\to\infty$, and  tends to the Julia set as
$\g\to 0$. 

\medskip

On the other hand, the theory of parameter rays in the parameter space
$\cS_p$  has some special features that require explanation.
Each $\cS_p$ has some finite number of \textbf{\textit{escape regions}}
$\cE_h$ consisting of maps $F$ such that the free critical point $-a$
has unbounded orbit. The complement $\cS_p\ssm\bigcup_h\cE_h$ is the
 \textbf{\textit{connectedness locus}}, which is compact and
connected  and consists of those maps which have connected Julia set. 
Each $\cE_h$ is conformally isomorphic to the
punctured open disk $\D\ssm\{0\}$. Concentric circles in $\D$
correspond to \textbf{\textit{equipotentials}}, where the parameter
Green's function
$${\bf G}(F)~=~\g_F(2a_F)~=~\log|\B(2a_F)|~=~\lim_{n\to\infty}\frac{1}{3^n}\log^+|F^{\circ n}(-a)|$$
takes some constant value (see \cite[Definition 9.6]{M3}). 
The orthogonal trajectories of these equipotentials,
corresponding to radial line segments in the disk, are called
\textbf{\textit{parameter rays}} $\fR$ in $\cE_h$.  The angle $\theta$ of
the dynamic ray passing through $2a$ is called the
\textbf{\textit{co-critical angle}} of $F$.

\begin{rem}[\bf Multiplicity of an Escape Region]\label{R-mult} 
In general, a parameter ray is not uniquely determined
by its parameter angle. Each escape region $\cE_h$ has a well defined
multiplicity $\mu_h\ge 1$; and each parameter angle corresponds to
$\mu_h$ different parameter rays, which are evenly spaced around the
region. (Compare \cite{BKM} where it is shown that $\mu_h$ is always 
a power of two.) Most of the escape regions we discuss explicitly 
have multiplicity $\mu_h=1$. In particular, 
if $p\le 3$ then every escape region in $\cS_p$ has multiplicity one.

However,  when $p\geq 4$, there are certainly escape regions
with multiplicity $\mu>1$.  Here is an example:  
According to  \cite[Table 6.4]{BKM} there are two
escape regions in
$\cS_4$ with kneading invariant $(0,0,1,0)$ (see Definition \ref{D-kn}
below); and each of these has multiplicity two. (The two are $180$
degree rotations of each other.) 
A careful study of Figure 11 of \cite{BKM} shows that each of these two
regions is surrounded by a circle of $26$ neighboring escape regions,
most of which have multiplicity one. For each parameter ray in one of
these two regions there is another ray with the same angle. 
Two such rays usually land near quite different
escape regions, so there is presumably no close  
relationship between the two landing points.

More explicitly (see \cite{BKM}),  in the case $\mu>1$ we will need
a conformal isomorphism $$~\cE_h~\stackrel{\cong}{\longrightarrow} 
~\C\ssm\overline\D~$$ 
given by choosing a smooth branch
$~~ F~\mapsto~\root{\scriptstyle\mu}\of{{\mathfrak B}_F(2a_F)}~~$
 of the $\mu$th-root  of the B\"ottcher coordinate $~\B_F$~
evaluated at $2a=2a_F$. The choice of the $\mu$-root  
is called an \textbf{\textit{~anchoring~}} of the escape region $\cE_h$.
We then define  \textbf{\textit{~parameter angle~}} 
$\phi(F)~$  as
$$  \phi(F)~=~\arg\Big(\sqrt[{\scriptstyle\mu}]{{\mathfrak B}_F(2a_F)}\Big)
~\in~\R/\Z\,.$$
 This angle~ $\phi(F)$,~ measured in the escape region $~\cE_h$,~ is simply
 a label with no invariant dynamical meaning, since it 
 depends on the choice of anchoring. 
However  the \textbf{\textit{co-critical angle~}}
 $$\theta(F)~=~\arg\big({\mathfrak B}_F(2a_F)\big)~\in\R/\Z\,, $$
measured in the dynamical plane, is invariantly defined, 
and is related to the parameter angle  by the equation
\begin{equation}\label{e-theta-phi}
\theta(F)~=~\mu\;\phi(F)\,.
\end{equation}
Of course in the special case $\mu=1$, the parameter angle can be identified
with the co-critical angle. 
\end{rem}
\medskip

\begin{definition}
 For any specified angle $\phi_0$,
 the set consisting of all $~F\in \cE_h~$ with
parameter angle $\phi(F)=\phi_0\,$,~
is called a \textbf{\textit{\,parameter ray\,}} 
${\mathfrak R}_{\cE_h}(\phi_0)\,.$ 
Note that
\begin{equation}\label{e-ray-def}
 F\,\in\, {\mathfrak R}_{\cE_h}(\phi)\qquad\Longrightarrow\qquad
 2a_F\,\in\,\cR_F(\mu\phi)\, = \,\cR_F(\theta)\,.
\end{equation}
\end{definition}

\begin{figure}[htb!]
\centerline{\includegraphics[height=1.3in]{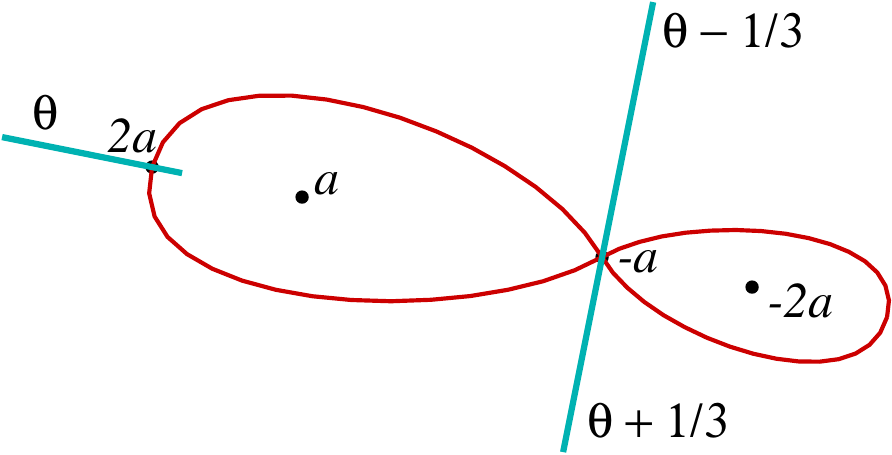}}

\caption[Dynamic plane for a map $F\in\cE_h$]{\textsf{Sketch of the dynamic
    plane for a map $F$ which lies on a parameter ray of angle $\theta$ in some 
    escape region $\cE_h$.
    Here $a=a_F$ is the marked (periodic) critical point; $-a$ is
    the free critical point; and 
    the figure eight curve represents the equipotential~~
    \hbox{$\g_F(z)={\rm constant}=\g_F(-a)=\g_F(2a)$}.~
    Note that the two \textbf{\textit{co-critical points}} $2a$ and $-2a$
    satisfy $F(2a)=F(-a)$ and $F(-2a)=F(a)$.  The  filled Julia set
    $K(F)$ is the disjoint union of parts in the left lobe and parts in the
    right lobe; and hence is not connected.     Since some 
    neighborhood of $2a$ maps bijectively outside the figure eight curve,
    the dynamic $\theta$ ray for $F$ extends smoothly at least a little
    past $2a$. \label{Faa-1}}}
\end{figure}

(Compare Figure~\ref{Faa-1}.) Note that the parameter angle remains constant
as $F$ varies along any
parameter ray. In fact our parameter rays are special cases of stretching
rays, as defined by Branner and Hubbard \cite{BH1}. The different maps
along a stretching ray are quasi-conformally conjugate, under a conjugacy
which multiplies the Green's function by a constant, but preserves the
dynamic angle.

\begin{figure}[htb!]
\centerline{\includegraphics[height=2.3in]{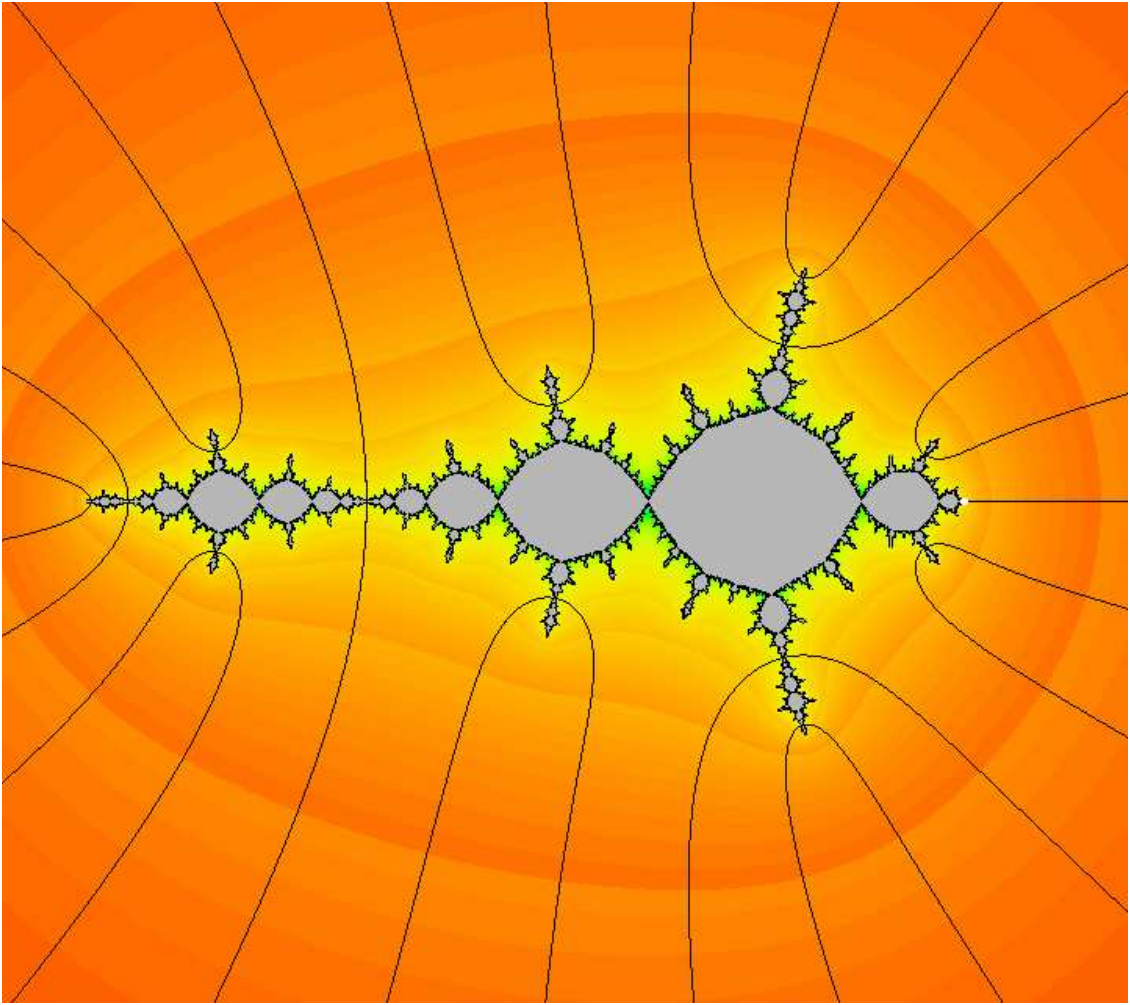}\qquad\includegraphics[height=2.3in]{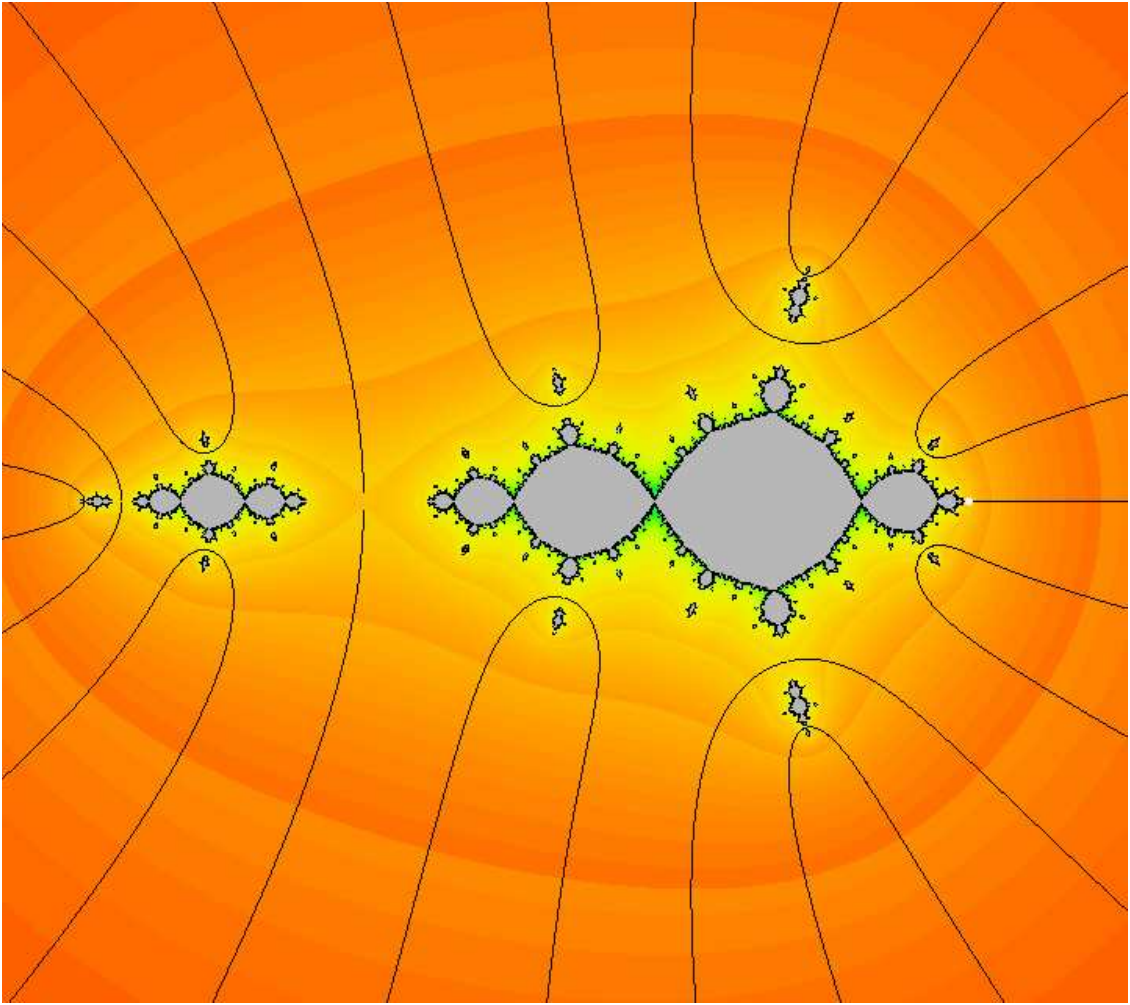}}
  \caption[Julia set for the landing map of the zero parameter ray in the
  ``basilica'' escape region of $\cS_2$]{\textsf{On the left, the Julia set for
the landing map $F_0$ of the zero parameter ray in the ``basilica'' escape
region of $\cS_2$. The dynamic rays of angle $n/27$ are shown. On the right,
the Julia set for a map $F$ on the zero parameter ray for this basilica
region. \label{f-s2cfjul}}}
\end{figure}
\msk

\begin{rem}[\bf Disconnected Julia Sets]\label{R-DJS} In any escape region, 
we are dealing with Julia sets which are not connected, so
we have to be careful in the discussion of dynamic rays. As we follow a dynamic
ray from infinity, it will remain smooth and well defined as long
as we are outside the figure eight equipotential curve. But two rays crash together at
$-a$ (as illustrated in Figure \ref{Faa-1}). Furthermore, $-a$ has countably
many iterated pre-images inside the figure eight, and in many cases two
rays will crash together at each of these. (Compare Figure~\ref{f-s2cfjul}
for many examples of this.) However, there are also cases where only
one ray crashes at a pre-critical point. (Compare Figure~\ref{F-para-pic}-right
for two examples of such ``dead end'' rays.) 
Note that the set of all pre-critical points converges to the filled Julia 
set in the following sense: Given any
neighborhood $N$  of $K(F)$, there are only finitely many
pre-critical points outside of $N$.

These critical and precritical points are precisely the critical  points of
the Green's function in $\C\ssm K(F)$. It is often convenient to parametrize
each dynamic ray as a smooth function $\g\mapsto \rb(\g)$ of the Green's
function. For a generic choice of ray, this function $\rb(\g)$ is defined
for all $\g>0$. However in the countably many exceptional cases where
the ray crashes into a critical or precritical point 
$z_0$, the function $\rb(\g)$ is defined only for $\g>\g(z_0)$.
Note that $\rb(\g)$ can be considered as a real analytic function, not
only of $\g$, but also of the map $F\in\cS_p$ and of the dynamic
angle $\phi$, throughout the interior 
of the region where it is defined.
\end{rem}
\bigskip

Another key idea is the following.

\begin{definition}[\bf Co-periodic Angles]\label{D-cp} An angle
  $\theta\in\R/\Z$ will be called
  \textbf{\textit{co-periodic}} of \textbf{\textit{co-period}}
  $q\ge 1$ if either $\theta+1/3$ or $\theta-1/3$
  is periodic of period $q$ under angle tripling. The elements of the associated
  periodic orbit can be listed as
\begin{equation} \label{E-theta-j}  \{\theta_1,~\theta_2,~\cdots,~ \theta_q\}
\qquad{\rm where} \qquad~\theta_j =3^j\theta\quad{\rm  for} \quad 1\le j\le q~.
\end{equation}
Thus $~~\theta_1\mapsto \theta_2\mapsto\cdots\mapsto \theta_q\mapsto\theta_1~~$
under angle tripling. In other words, if we think of $j$ as a residue class
modulo $q$, then $\theta_{j+1}=3\theta_j$ in all cases.
Here $\theta_q$ is always equal to $\theta\pm 1/3$ for some choice of sign.
  
 It will be convenient to use
the term~ \textbf{\textit{triad}}~ for any triple of angles 
$~ (\theta,~\theta_q,~ \widehat\theta), ~$ such that 
both $\theta$ and $\widehat\theta=\theta\mp 1/3$ are co-periodic, while
$ \theta_q~=~3^q\theta ~=~3^q\widehat\theta$ is periodic.
Thus the three angles $\theta,~\theta_q$
and $\widehat\theta$ are evenly spaced around the circle, and all three map
to $\theta_1$ under tripling. However $\theta_q$ is the only one of the
three which is periodic.
 We will describe the two co-periodic 
angles in a triad (or the associated dynamics rays) 
as \textbf{\textit{twin co-periodic angles}} (or \textbf{\textit{twin rays}}).
\ssk

If $F$ is a map 
belonging to a parameter ray $\fR$ of angle $\theta$ in some escape region,
then in the dynamic plane for $F$ the rays of angle $\theta_q$ and 
$\widehat\theta$ will crash together at the escaping critical point $-a$,
while the dynamic ray of angle $\theta$ will pass through the co-critical
point $2a$. (Compare Figure~\ref{Faa-1}.) But note that $\theta$ is not
uniquely determined by the  $\theta_j$. In fact the correspondence
$\theta\mapsto \theta_1$ is two-to-one. (Compare Lemma~\ref{L-2to1}.)
\end{definition}\msk

\begin{rem}[\bf Notational convention]\label{R-NC} 
 The smallest common denominator
for angles of period $q$ under tripling will be denoted by
$$d~=~d(q)~=~3^q-1~.$$
In fact the angles of the form $\theta=m/d$ where $m$ is an integer modulo
$d=d(q)$ are precisely those which satisfy $3^q\theta\equiv\theta$ modulo $\Z$.
Similarly every angle $\theta$ of co-period $q$ can be written as 
a fraction of the form 
$\theta=m/(3d)$. Here two conditions must be satisfied:

\begin{itemize}
\item[(1)] $m$ must be congruent to either one or two modulo three, and

\item[(2)] it must not be possible to write $\theta$ in this form
for any smaller value of $q$.
\end{itemize}
\noindent(As an example, it might seem that $\frac{13}{3\;d(3)}$
has co-period three. However the computation
$$ \frac{13}{3\;d(3)}~ =~\frac{13}{78}~=~\frac{1}{6}~=~\frac{1}{3\;d(1)}$$ 
shows that it actually has co-period one.)\ssk

It is not hard to check that a fraction $0<m/n<1$ in lowest terms is
co-periodic of some co-period if and only if $n$ is divisible by three
but not by nine.
\end{rem}\medskip

\begin{definition}[\bf Parabolic Maps and Misiurewicz Maps]\label{D-PM}
  A map $F \in\cS_p$ will be called \textbf{\textit{parabolic}} if it has
  a (necessarily unique) parabolic orbit. The \textbf{\textit{ray period}}
  $q$ of a parabolic map is the period of any dynamic ray which lands
  at a parabolic point. This is always equal to the period of the
  cycle of parabolic basins; but it may be a multiple of the period of the
  parabolic point itself. (Compare \cite[Thm.~18.13]{M3}.)
  \medskip

  Imitating the terminology of Douady and Hubbard for quadratic maps,
  we will say that $F$ is a \textbf{\textit{Misiurewicz map}} if the orbit
  of the free critical point $-a_F$ is eventually periodic repelling.
\end{definition} 
\medskip

{\bf Note.} For simplicity in the exposition from now on we will
assume that $\mu=1$, and denote by $\theta$ the angle corresponding to both,
the  parameter ray ${\mathfrak R}(\phi)$ and dynamical ray
${\mathcal R}(\theta)$. The reader should be aware that when
dealing with an escape region of multiplicity $\mu >1$ adjustments should
be made according to equation~(\ref{e-theta-phi}).
\medskip

\begin{theo}[\bf Landing Theorem]\label{T-main}
Every parameter ray $\fR$ with rational parameter angle $\theta$
lands at a uniquely defined map $F=F_{\fR}$ which belongs to the boundary
of its escape region in $\cS_p$. There two possible cases:

\begin{description}
\item[{\bf Case 1: Parabolic Maps.}] If the angle $\theta$ is co-periodic of
  co-period $q$,  then the free critical orbit for $F_\fR$ converges
  to a parabolic orbit of ray period $q$.
  {\rm(For a more detailed description, see Lemma \ref{L-para},
    and for examples, see Figures~$\ref{Fac-julpar}$ 
    and $\ref{F-s3per2jul}$ as well as Figures~\ref{F-s5rab-par} and
    \ref{F-4wall}.)}

\item[\bf Case 2: Misiurewicz Maps.] If $\theta$ is rational but not
co-periodic, then the free critical orbit for $F_\fR$ 
is eventually periodic repelling.
In this case the dynamic $\theta$ ray lands at the
co-critical point $2a$; and the non-periodic rays
of angle $\theta\pm 1/3$ both land at $-a$. 
{\rm (Compare Figures~$\ref{F-S2juls}$ and $\ref{F-cap-ara}$.)} 
As a special case, if the angle $\theta$ is actually periodic, then it is not
co-periodic, and $2a$ itself is a 
repelling periodic point. {\rm (Compare Figure~\ref{F-percase}.)}
\end{description}
\end{theo}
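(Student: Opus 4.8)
The plan is to prove the Landing Theorem by transporting the well-developed quadratic-polynomial theory (Douady–Hubbard, Goldberg–Milnor) to the cubic setting via the structure of stretching rays. First I would recall that every $F$ in an escape region $\cE_h$ has exactly one escaping critical point $-a$, and the marked critical point $a$ remains periodic of period $p$, so the relevant "dynamical" combinatorics are governed by a single external angle $\theta$ (the co-critical angle of $F$, equal to the parameter angle since we assume $\mu=1$). Since our parameter rays are stretching rays, all maps along a fixed ray $\fR$ are quasiconformally conjugate with the same dynamic ray portrait; the landing map $F_\fR$ is then obtained as the limit $\g\to 0$ of the real-analytic parametrization $\rb_{F,\theta}(\g)$ from \eqref{E-ray}, and existence of the limit in the compactified curve $\ocS_p$ follows from a normal-families/compactness argument together with the fact that $\mathbf G(F)\to 0$ along $\fR$. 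Uniqueness of the landing point is the crux and I expect it to be the main obstacle: one must rule out that the ray oscillates among several accumulation points. The standard approach is to show any accumulation point $F_0$ has $\mathbf G(F_0)=0$, hence lies in $\partial\cE_h$ or the connectedness locus boundary, and then use a hyperbolicity/expansion argument: the external angle $\theta$ being rational forces the orbit of the co-critical point $2a$ under $F_0$ to be either pre-periodic or to converge to a parabolic cycle, and rigidity of such maps (no quasiconformal deformations fixing the combinatorics) pins down $F_0$ uniquely.

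Next I would separate the two cases according to the arithmetic of $\theta$ described in Remark~\ref{R-NC}. In Case 1, $\theta$ is co-periodic of co-period $q$: by Definition~\ref{D-cp} there is a triad $(\theta,\theta_q,\widehat\theta)$ with $\theta_q=3^q\theta$ periodic of period $q$. In the dynamic plane of any $F$ on $\fR$, the rays of angle $\theta_q$ and $\widehat\theta$ crash together at the escaping critical point $-a$, while $\cR_F(\theta)$ passes through $2a$. As $\g\to 0$, the critical point $-a$ is being "pushed onto" the Julia set along these two rays; in the limit $F_\fR$, the configuration of rays $\{\cR(\theta_j)\}$ of period $q$ must land at a single point whose forward orbit the limit critical value must hit — the only way for two distinct periodic rays $\theta_q,\widehat\theta$ to "collide" in the limit at an image of the critical point is for that landing point to be parabolic (a repelling cycle cannot absorb a critical point while keeping the rays crashing together). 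This identifies $F_\fR$ as parabolic with ray period $q$, matching \cite[Thm.~18.13]{M3}; a reference forward to Lemma~\ref{L-para} can supply the finer statement. The delicate point is showing the limiting critical value lands exactly in the parabolic cycle's immediate basin boundary rather than merely accumulating on it, which is where Appendix~\ref{a-para-stab}'s stability theorem for parameter-ray landing at restricted parabolic limits does the real work.

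In Case 2, $\theta$ is rational but not co-periodic, so by the last remark in Remark~\ref{R-NC} the denominator of $\theta$ in lowest terms is either not divisible by three (so $\theta$ is periodic under tripling) or divisible by nine (so $\theta$ is strictly pre-periodic). Either way neither $\theta_q$-type collision into a parabolic cycle occurs; instead the orbit of $2a$ under $F_\fR$ must be eventually periodic, and I would argue it is repelling by the usual dichotomy (a non-repelling cycle would force $F_\fR$ into the interior of a hyperbolic component or carry a parabolic point, contradicting rationality being non-co-periodic). Thus $F_\fR$ is a Misiurewicz map. To get the precise ray-landing statement — that $\cR_{F_\fR}(\theta)$ lands at the co-critical point $2a$ and that $\cR_{F_\fR}(\theta\pm 1/3)$ both land at $-a$ — I would pass to the limit the dynamic-plane picture of Figure~\ref{Faa-1}: for $F$ on $\fR$ the ray $\cR_F(\theta)$ extends a little past $2a$ and the twin rays $\theta\pm1/3$ crash at $-a$; as $\g\to 0$ the equipotential figure-eight shrinks to the Julia set, forcing these landing relations in the limit, using that rational rays land and that pre-critical points converge to $K(F)$ (Remark~\ref{R-DJS}). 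The special sub-case where $\theta$ itself is periodic is immediate: then $2a$ lies on a periodic ray, so $2a=F_\fR$'s image is on a cycle of rays landing at a repelling periodic point, and $2a$ itself must be that repelling periodic point.

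The main obstacle throughout is the uniqueness of the landing point and the fine control of the limiting dynamics — existence is soft, but pinning $F_\fR$ down requires rigidity of parabolic and Misiurewicz maps together with the stretching-ray structure, and in the parabolic case the technical heart is precisely the content of Appendix~\ref{a-para-stab}, which I would invoke rather than reprove. I would therefore organize the write-up as: (i) existence of accumulation points in $\ocS_p$ and $\mathbf G=0$ there; (ii) the arithmetic dichotomy on $\theta$; (iii) Case 1 via parabolic stability (Appendix~\ref{a-para-stab}) and Lemma~\ref{L-para}; (iv) Case 2 via the pre-periodic/repelling dichotomy and a limit of the Figure~\ref{Faa-1} ray configuration; (v) uniqueness in each case from rigidity, completing the argument that the single accumulation point is the landing point.
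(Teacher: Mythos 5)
Your overall architecture (accumulation points lie on the boundary, a parabolic/Misiurewicz dichotomy keyed to whether $\theta$ is co-periodic, and Douady--Hubbard stability of repelling landing points to force the $\theta$-ray to land at $2a$ in the Misiurewicz case) matches the paper's, and your Case 2 is essentially the argument of Lemma~\ref{L-ratland}. But there are two genuine gaps. First, uniqueness of the landing point: you correctly flag this as the crux, but the mechanism you propose --- rigidity of parabolic and Misiurewicz maps ``fixing the combinatorics'' --- does not by itself rule out the ray accumulating on two \emph{different} rigid maps; you would still need to show that all accumulation points share identical combinatorics, which you do not do. The paper's argument is much softer and you miss it entirely: every accumulation point is parabolic or Misiurewicz, each of these classes is a countable union of zero-dimensional algebraic subvarieties of $\cS_p$ and hence countable, and the accumulation set of a ray is connected; a connected subset of a countable set is a single point. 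Without this (or a worked-out combinatorial rigidity statement), your proof of the first sentence of the theorem is incomplete.

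Second, Case 1 is not actually proved. Your heuristic that ``a repelling cycle cannot absorb a critical point while keeping the rays crashing together'' is not an argument, and it also misstates the combinatorics: in a triad $(\theta,\theta_q,\widehat\theta)$ only $\theta_q$ is periodic, while $\widehat\theta$ is co-periodic, so you are not colliding ``two distinct periodic rays.'' The paper's actual argument for (co-periodic $\Rightarrow$ parabolic) is: if $F_\fR$ were Misiurewicz then by Lemma~\ref{L-ratland} the $\theta$-ray lands at $2a$, so the periodic ray of angle $3\theta$ lands at the free critical value $F(2a)=F(-a)$; combined with the fact that a co-periodic ray cannot land at a periodic point, the preimage structure of the critical value forces the free critical orbit to be periodic, placing $F_\fR$ in the interior of the connectedness locus --- a contradiction. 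The converse (Lemma~\ref{L-paraland}, which also pins down the ray period as $q$) is a purely combinatorial escape argument: if neither of the two rays crashing at $-a$ were $F^{\circ q}$-invariant, then the ray of angle $3^k\theta$ would land at $F^{\circ k}(-a)$ for every $k$, and eventual periodicity of rational angles under tripling would contradict the escape of $-a$. Neither step appears in your proposal, and your appeal to Appendix~\ref{a-para-stab} is misplaced: that stability theorem concerns perturbations \emph{of} a parabolic map and is used later for upper semi-continuity of orbit portraits, not for the Landing Theorem itself.
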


\smallskip

\begin{figure}[htb!]
   \centerline{\includegraphics[width=3.5in]{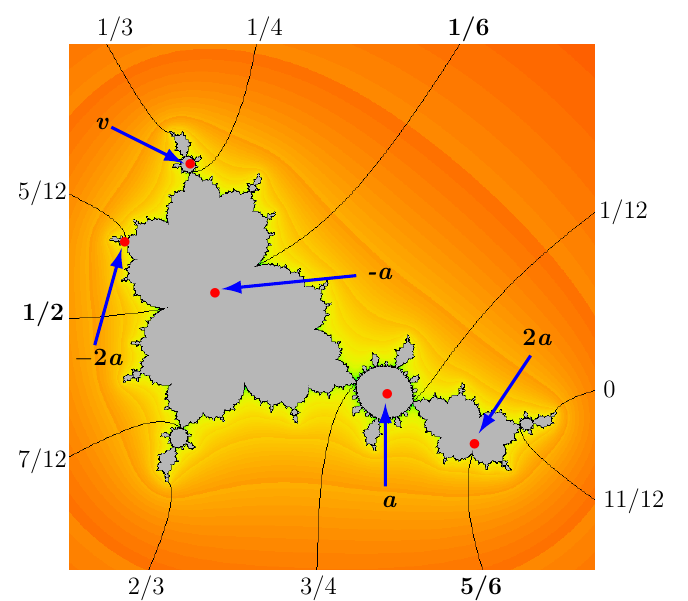}}
  \vspace{-.4cm}
  \caption[Parabolic example of period one.]{\textsf{A parabolic example with
      ray period one:
    Julia set for the landing map of the
    $5/6$ parameter ray in the inner escape region $\cE_{\sf in}$
    of $\cS_2$. Here the dynamic rays of angle \hbox{$5/6+1/3=1/6$} 
    and $5/6-1/3=1/2$ land on the
    boundary of the fixed parabolic component, with the $1/2$ ray
 landing at the parabolic fixed point.  (See 
    Figure~\ref{f2} lower-left for the $5/6$ ray in parameter space.)
In this example we have period $q=1$,
so that the critical basin $U(-a)$ is equal to the critical value basin
$U(v')$~.    \label{Fac-julpar}}}
\end{figure}     
\smallskip

\begin{figure}[htb!]
\centerline{\includegraphics[width=4in]{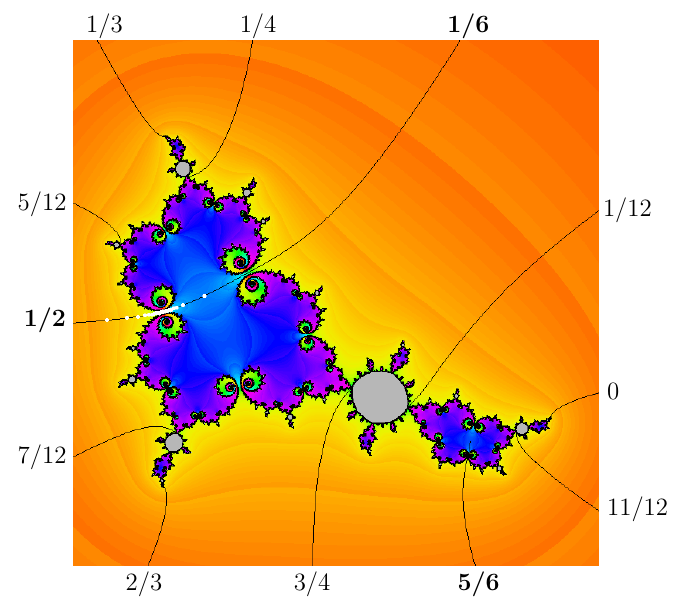}}
\vspace{-.4cm}
\caption[Perturbed figure~\ref{Fac-julpar}.]{\label{F-near-par} \textsf{Modified
    version of Figure~\ref{Fac-julpar} showing
    the Julia set for a map belonging to the parameter ray of angle $5/6$
    in the inner escape region of Figure~\ref{f2}. 
    The dynamic rays of angle $\theta-1/3=1/2$ and $\theta+1/3\equiv 1/6$
    now crash together at the free critical point $-a$; while the $5/6$ ray
    passes through $2a$. 
    The first $22$ forward images of $-a$ are marked by white dots,
    lying on the ($1/2$)-ray. 
(Most of these dots are jammed close together since the orbit escapes very 
slowly through the narrow channel between two repelling fixed points.)
}}
\end{figure}
\smallskip

\begin{figure}[htb!]
\begin{center}
\begin{overpic}[width=3in, tics=10]{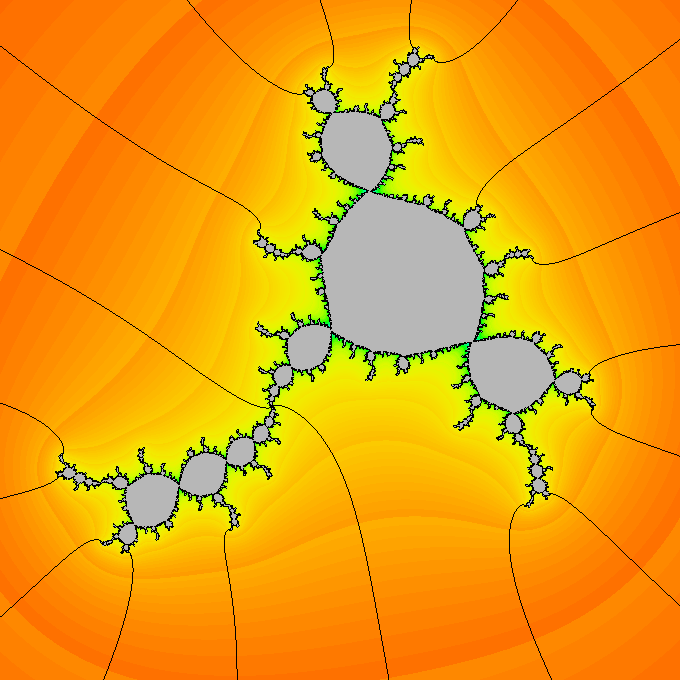}
\put(220,104){0}
\put(220,144){1/18}
\put(220,200){\bf 1/9}
\put(160,222){1/6}
\put(127,222){2/9}
\put(93,222){5/18}
\put(50,222){\bf 1/3}
\put(-27,200){7/18}
\put(-25,136){\bf 4/9}
\put(-22,84){1/2}
\put(-22,53){5/9}
\put(-32,15){11/18}
\put(24,-14){2/3}
\put(60,-14){13/18}
\put(113,-14){\bf 7/9}
\put(170,-14){5/6}
\put(220,20){8/9}
\put(220,66){17/18}
\put(61,62){\color{red} $\bullet$}
\put(147,147){\color{red} $\bullet$}
\put(125,125){\color{red} $\bullet$}
\put(111,167){\color{red} $\bullet$}
\put(82,83){\color{red} $\bullet$}
\put(41,13){${\bf -2}\textbf{\textit{a}}$}
\put(77,21){$\textbf{\textit{-a}}$}
\put(196,162){${\bf 2}\textbf{\textit{a}}$}
\put(118,52){$\textbf{\textit{a}}$}
\put(175,192){$\textbf{\textit{v}}$}
\linethickness{1.5pt}
\put(125,62){\color{blue}\vector(0.3,4){5}}
\put(195,165){\color{blue}\vector(-3,-1){40}}
\put(88,30){\color{blue}\vector(0,1){55}}
\put(57,24){\color{blue}\vector(0.3,1.5){8}}
\put(172,193){\color{blue}\vector(-2,-0.8){50}}
\end{overpic}\qquad\qquad
\begin{overpic}[height=3in]{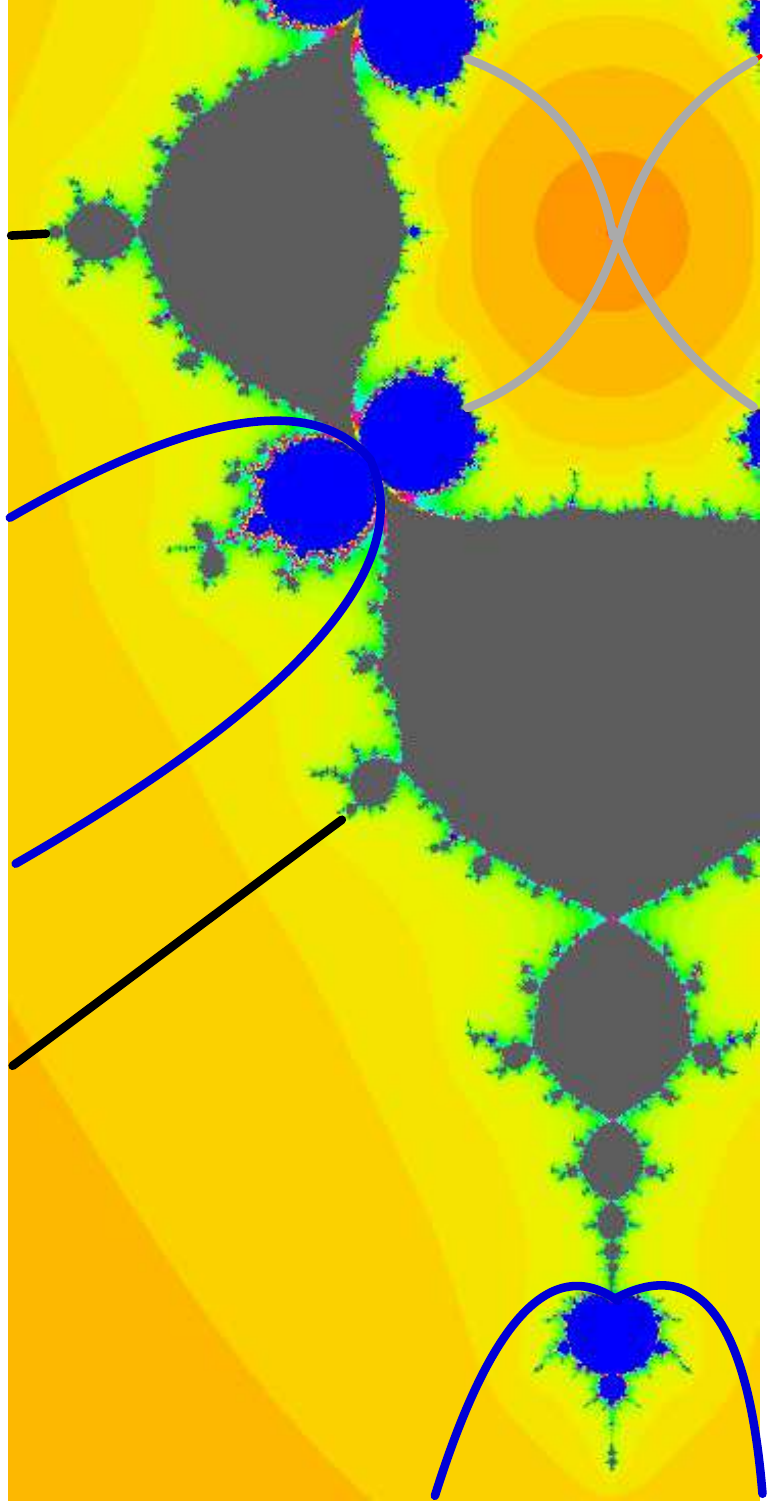}
  \put(20,63){1/9}
  \put(50,-14){1/6}
  \put(100,-14){1/3}
  \put(0, 106){2/24}
  \put(0, 156){1/24}
  \put(0,188){0}
\end{overpic}
\medskip

\caption[A Misiurewicz example.]{\label{F-S2juls}\textsf{A Misiurewicz 
    example.
On the left,  Julia set for the landing map of the $1/9$ \break parameter
ray in the outer (basilica) escape region $\cE_{\sf out}$ of
$\cS_2$. Note that the dynamical $1/9$ ray lands at the co-critical point,
which is eventually periodic repelling
since $1/9\mapsto 1/3\mapsto 0 ~({\rm mod}~\Z)$. 
The two dynamic rays of angles $1/9\pm 1/3$ land on the
free critical point $-a$.  The $1/9$ parameter ray is shown in the
figure on the right. Note that $1/9$
 is not co-periodic. (Some co-periodic rays are also shown. Compare
 Figures \ref{f2} and \ref{F-S2rays} in Section \ref{s-tess}.) Here the landing
 point of the $1/9$ dynamic ray looks very much like the landing point of
 the $1/9$ parameter ray. An analogous statement seems to be true in all
 Misiurewicz cases. Compare the discussion of
 \textbf{\textit{Tan Lei similarity}} 
 in Conjecture \ref{C-TLS}.
  For the coloring of parameter space pictures, 
 see Remark~\ref{R-HC}; and for the choice of local coordinates in
 $\cS_p$ see \cite[Sec.2]{BKM}~.}}
\end{center}
\end{figure}
\smallskip

\begin{figure}[htb!]
  \centerline{\includegraphics[width=4.2in]{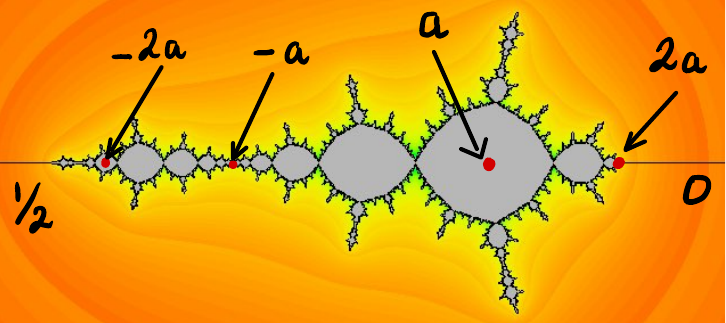}}
  \caption[Simpler Misiurewicz example.]{\label{F-percase}\sf A simpler
    Misiurewicz example:
    Julia set for the landing point of the zero parameter ray in the outer
    escape region of  $\cS_2$. Here the free  critical point $-a$ maps
    directly to the repelling fixed point $2a$, the landing point of the
    $\theta=0$ ray. As in the previous figure, the landing
    point of the $\theta$ ray in the dynamic plane looks much like the 
  corresponding landing point in parameter space. (Again see Figure~\ref{f2}).}
  \end{figure}

(Compare \cite[\S5]{AK}.) The proof of Theorem \ref{T-main} begins as follows.

\begin{lem}[{\bf Preliminary Landing Statement}]\label{L-ratland} 
Every parameter ray $\fR\subset\cE_h$ with rational parameter angle $\theta$
lands at a well defined map $F_\fR$ in the boundary $\partial\cE_h$
which is either parabolic or Misiurewicz.
Furthermore, in the Misiurewicz case, the $\theta$ dynamic ray lands at
$2a$, and the two rays of angle $\theta\pm 1/3$ land at $-a$.
\end{lem}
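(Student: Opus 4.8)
The plan is to follow the classical Douady–Hubbard strategy for landing of rational rays, adapted to the parameter curve $\cS_p$ via the B\"ottcher-type parametrization already set up above. First I would fix an escape region $\cE_h$ (assuming $\mu=1$ as per the Note, so $\phi=\theta$) and use the conformal isomorphism $\cE_h\cong\D\ssm\{0\}$ to realize the parameter ray $\fR=\fR_{\cE_h}(\theta)$ as a radial segment. Since $\theta$ is rational, $\theta$ is either periodic or preperiodic under tripling. The key dynamical device is the relation \eqref{e-ray-def}: for every $F\in\fR$ the co-critical point $2a_F$ lies on the dynamic ray $\cR_F(\theta)$, and (because $2a$ and $-2a$ are genuine co-critical points with $F(2a)=F(-a)$, $F(-2a)=F(a)$) the forward orbit of $2a_F$ under $F$ is governed entirely by angle tripling of $\theta$. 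I would first establish compactness of the accumulation set $\widehat{\fR}\subset\partial\cE_h$ of $\fR$ (nonempty since $\cS_p$ is compact up to the punctures, and $\fR$ leaves every compact subset of $\cE_h$ only toward the puncture, not toward $\partial\cE_h$), and then show this accumulation set is a single point by a two-pronged argument.

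The two prongs correspond to the dichotomy in the statement. Prong one: suppose some limit map $F_0\in\widehat{\fR}$ has the property that the dynamic ray $\cR_{F_0}(\theta_q)$ (where $\theta_q=3^q\theta$ is the periodic angle in the orbit, in the co-periodic case, or the terminal periodic angle of the eventually-periodic orbit of $\theta$ in general) lands at a \emph{repelling} periodic point. Then by the stability of repelling cycles and of landing of rays at repelling periodic points (Douady–Hubbard / Goldberg–Milnor, as in \cite[Ch.~18]{M3}), this landing persists for all $F$ near $F_0$, and one can pull back along the orbit to control $\cR_F(\theta)$ and hence the position of $2a_F$ near $F_0$; combined with the transversality/nondegeneracy of the parametrization $F\mapsto \root\mu\of{\B_F(2a_F)}$ this forces $\fR$ to actually land at $F_0$, and $F_0$ is then Misiurewicz (with $\cR_{F_0}(\theta)$ landing at $2a$ and $\cR_{F_0}(\theta\pm1/3)$ landing at $-a$, since those two angles triple to $\theta_1$ and the co-critical relation propagates). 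Prong two: if instead no limit map has the terminal periodic ray landing at a repelling point, then by the standard trichotomy for landing of periodic rays the ray $\cR_{F_0}(\theta_q)$ must land at a \emph{parabolic} point (it cannot land at an irrationally indifferent point because rays of rational angle land at points whose multiplier is a root of unity, cf. \cite[Thm.~18.10–18.11]{M3}); then $F_0$ is parabolic and I would invoke the parabolic stability result of Appendix~\ref{a-para-stab} to again conclude that $\widehat{\fR}$ is the single point $F_0$.

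Filling in the details: the proof reduces to (a) the general fact that an accumulation point of a rational ray in this kind of moduli-theoretic setting must have $2a$ preperiodic or periodic (driven purely by the combinatorics of tripling $\theta$), hence the terminal dynamic ray lands at a point with root-of-unity multiplier, i.e.\ repelling or parabolic; and (b) a stability/uniqueness argument in each of the two cases. For (a) one uses that along $\fR$ the Green's function $\g_{F}(2a_F)=\mathbf{G}(F)\to 0$ as $F$ approaches $\partial\cE_h$, so in the limit $2a_{F_0}\in\partial K(F_0)$, and the angle of the ray through it is the \emph{fixed} rational number $\theta$; continuity of the dynamic ray in $F$ (real-analyticity of $\rb_{F,\theta}(\g)$ in $F$, as recorded after \eqref{E-ray} and in Remark~\ref{R-DJS}) then transports the landing behavior to the limit. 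The main obstacle, as usual in these arguments, is the \textbf{uniqueness} of the landing point, i.e.\ ruling out that $\fR$ spirals and accumulates on a whole continuum of $\partial\cE_h$: this is exactly where one needs the stability theorems — Douady–Hubbard-type stability of repelling landings in the Misiurewicz case, and the dedicated parabolic landing stability theorem of Appendix~\ref{a-para-stab} in the parabolic case — together with a local-degree/transversality statement for the parametrization $F\mapsto\root\mu\of{\B_F(2a_F)}$ near the landing point. I would expect the parabolic case to be the genuinely hard part, since near a parabolic limit the dynamic rays do not vary continuously in the naive sense and one must work in the restricted setting of Appendix~\ref{a-para-stab}; the Misiurewicz case should be a fairly routine transcription of the classical argument.
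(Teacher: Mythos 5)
Your dichotomy and your identification of the Misiurewicz case are essentially the paper's argument: take an accumulation point $\widehat F$ of $\fR$ in $\partial\cE_h$, push $\theta$ forward to a periodic angle, and split according to whether the resulting periodic landing point is repelling or parabolic; in the repelling case use Douady--Hubbard stability of the landing point together with $\g_{F_j}(2a_{F_j})\to 0$ along the ray to force $2a_{\widehat F}$ to coincide with the (eventually periodic repelling) landing point of $\cR_{\widehat F}(\theta)$, so that $\widehat F$ is Misiurewicz. That part of your proposal matches the paper.

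The genuine gap is in your treatment of uniqueness of the landing point. You propose to rule out a nontrivial accumulation continuum by combining stability theorems with an unspecified ``local-degree/transversality statement for the parametrization $F\mapsto\root\mu\of{\B_F(2a_F)}$,'' and in the parabolic case you invoke the Appendix~\ref{a-para-stab} result to conclude that the accumulation set is a single point. Neither step is established: the Appendix theorem is about stability of the landing of \emph{dynamic} rays at a parabolic point under perturbation of the map (in a one-free-critical-point family), and it says nothing directly about a \emph{parameter} ray being trapped near a parabolic map; likewise, the transversality you would need near the landing point is exactly the kind of statement that is delicate at parabolic parameters. The paper sidesteps all of this with a short soft argument you are missing: the accumulation set of a parameter ray in $\partial\cE_h$ is a nonempty compact \emph{connected} set; every point of it is (by the dichotomy you already proved) either Misiurewicz or parabolic; the Misiurewicz maps form a countable set (a countable union of zero-dimensional algebraic subvarieties of $\cS_p$) and the parabolic maps form a countable set; and a connected subset of a countable set is a single point. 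With that observation your prong-one and prong-two analyses already finish the lemma, and no stability-of-landing-in-parameter-space input is needed at this stage.
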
\smallskip

Note that these statements depend only on the angle of the parameter 
ray, and not on which particular escape region it is contained in. \msk

The proof of this lemma will make use of the following. 
\smallskip 

\begin{lem}[{\bf Landing stability for repelling  points (Douady-Hubbard)}]
  \label{L-land-stab} 
For $f_0$ in the space of monic polynomial maps of some fixed degree $n\ge 2$,
 suppose that the periodic
 dynamic ray $\cR_{f_0}(\theta)$ lands on a repelling periodic point $z_0$
of the Julia set. Then for all
$f$ in a neighborhood of $f_0$ the corresponding ray $\cR_{f}(\theta)$
lands on a repelling point  which depends holomorphically on $f$. Furthermore,
the entire ray $\cR_{f}$ converges uniformly to $\cR_{f_0}$
$($using the standard  parametrization by the Green's function$)$ as $f$ tends
to~$f_0$.
\end{lem}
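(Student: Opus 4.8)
The plan is to pin the ray down at its two ends and then bridge the bounded range of potentials in between. Near infinity the ray is governed by the B\"ottcher coordinate, and near its landing point it will be governed by the Koenigs linearization of a repelling cycle which persists under perturbation; a compactness argument joins the two regimes. Everything is done in the ambient space of monic degree-$n$ polynomials, so the statement for $\cS_p$ follows by restriction. \emph{Step 1 (persistence of the landing point and its linearization).} I would first let $q$ be the period of $\theta$ under multiplication by $n$, so that $f_0^{\circ q}$ carries $\cR_{f_0}(\theta)$ onto itself (multiplying the Green's function by $n^q$) and hence fixes its landing point $z_0$; since $z_0$ is repelling, $\lambda_0:=(f_0^{\circ q})'(z_0)$ has $|\lambda_0|>1$. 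As $\lambda_0\ne 1$, the implicit function theorem gives a holomorphic map $f\mapsto z(f)$ near $f_0$ with $z(f_0)=z_0$, $f^{\circ q}(z(f))=z(f)$, and $\lambda(f):=(f^{\circ q})'(z(f))\to\lambda_0$; after shrinking, $|\lambda(f)|\ge\Lambda>1$ for a fixed $\Lambda$. The Koenigs linearization \cite{M3}, $\phi_f(z)=\lim_{m}\lambda(f)^{-m}\big(f^{\circ qm}(z)-z(f)\big)$, converges locally uniformly, is holomorphic jointly in $(z,f)$, conjugates $f^{\circ q}$ to $w\mapsto\lambda(f)\,w$, and is a biholomorphism of a neighborhood $W_f:=\phi_f^{-1}\{|w|<r\}$ of $z(f)$ onto a disk, where $r>0$ is uniform for $f$ near $f_0$. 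In particular $f^{\circ q}$ is \emph{injective} on $W_f$, and the branch $\Psi_f$ of its inverse fixing $z(f)$ is a holomorphic contraction of $W_f$ into itself with no critical points and $\g_f\circ\Psi_f=n^{-q}\g_f$.

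\emph{Step 2 (the ray persists on $[\g_*,\infty)$).} Since $\cR_{f_0}(\theta)$ lands, $\rb_{f_0,\theta}(\g)$ is defined for all $\g>0$; in particular the $f_0$-ray contains no critical point of the Green's function. Fix any $\g_*>0$. Above a large potential threshold the ray is given directly by the B\"ottcher coordinate, so $\rb_{f,\theta}\to\rb_{f_0,\theta}$ uniformly there as $f\to f_0$. On the remaining compact range of potentials the $f_0$-ray meets only finitely many precritical points of $f_0$ --- none, in fact --- and these move continuously with $f$; hence, using that $\rb_{f,\theta}(\g)$ is real analytic on the open region where it is defined (Remark~\ref{R-DJS}), for all $f$ in a neighborhood of $f_0$ the ray $\rb_{f,\theta}$ is defined on all of $[\g_*,\infty)$ and converges uniformly there to $\rb_{f_0,\theta}$.

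\emph{Step 3 (extension into $W_f$, landing, and uniform convergence).} Next I would choose $\g_*$ small enough that $\rb_{f_0,\theta}\big((0,\g_*]\big)\subset W_{f_0}$; then by Step 2, for $f$ near $f_0$ the fundamental arc $A_f:=\rb_{f,\theta}\big([\g_*/n^{q},\g_*]\big)$ is defined, lies in $W_f$, and converges uniformly to $A_{f_0}$. Being a portion of the ray where it is defined, $A_f$ contains no precritical point, and therefore neither does any iterate $\Psi_f^{\circ j}(A_f)$ (a $\Psi_f$-preimage of a precritical point is precritical). Since $f^{\circ q}$ is injective on $W_f$, the contraction $\Psi_f$ is the only way to pull the ray back inside $W_f$, so iterating $\Psi_f$ continues $\rb_{f,\theta}$ over all of $(0,\g_*]$ with image in $W_f$, and in the Koenigs coordinate
\[
 \phi_f\big(\rb_{f,\theta}(\g/n^{qj})\big)\;=\;\lambda(f)^{-j}\,\phi_f\big(\rb_{f,\theta}(\g)\big),\qquad \g\in[\g_*/n^{q},\g_*].
\]
Because $|\lambda(f)|>1$, the right side tends to $0$, so $\rb_{f,\theta}(\g)\to z(f)$ as $\g\to 0$: the ray lands at the holomorphic, repelling point $z(f)$. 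For the uniform convergence claim, combine Step 2 on $[\g_*,\infty)$ with the displayed identity on $(0,\g_*]$: writing $\g$ in the $j$-th fundamental interval, $\phi_f(\rb_{f,\theta}(\g))-\phi_{f_0}(\rb_{f_0,\theta}(\g))$ is controlled by $|\lambda(f)^{-j}-\lambda_0^{-j}|\le j\,\Lambda^{-(j-1)}|\lambda(f)^{-1}-\lambda_0^{-1}|$ together with the uniform closeness of $A_f$ to $A_{f_0}$ and of $\phi_f^{\pm 1}$ to $\phi_{f_0}^{\pm 1}$; since $\sup_{j}j\,\Lambda^{-(j-1)}<\infty$, this tends to $0$ uniformly in $\g$, and $z(f)\to z_0$.

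\emph{Main obstacle.} The hard part will be Step 2 on the intermediate band of potentials: one must rule out that, for $f$ near $f_0$, the ray $\cR_f(\theta)$ crashes into a precritical point or drifts away before entering $W_f$. This is precisely where the hypothesis enters --- the $f_0$-ray, being defined on all of $(0,\infty)$, is free of critical points of $\g_{f_0}$ --- together with the continuous dependence of precritical points on $f$. Once the ray is safely inside $W_f$, injectivity of $f^{\circ q}$ there makes the continuation, and the landing at $z(f)$, essentially formal.
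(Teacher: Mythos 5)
The paper does not prove this lemma at all --- it simply cites Douady--Hubbard \cite[Prop.~8.5]{DH1} --- and your argument (persistence of the repelling cycle and its Koenigs linearization, stability of the ray on a compact band of potentials away from the finitely many precritical points there, then continuation of the tail by iterating the inverse branch $\Psi_f$ on the linearizing neighborhood) is exactly the standard proof from that source, and it is correct. One tiny point: your parenthetical ``a $\Psi_f$-preimage of a precritical point is precritical'' argues the wrong direction --- what you need is that $\Psi_f^{\circ j}(A_f)$ is free of critical points of $\g_f$, which follows because these arcs lie in $W_f$ where $f^{\circ q}$ is injective, so no $f^{\circ i}(z)$, $0\le i<q$, can be a critical point of $f$, while the later iterates land on the precritical-free arc $A_f$.
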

\smallskip

See for example \cite[Prop. 8.5]{DH1}. This contrasts with the
behavior at a parabolic landing point. (Compare the Appendix.) 
\medskip

\begin{proof}[\bf Proof of Lemma \ref{L-ratland}]
The parameter ray ${\mathfrak R}$  
must have at least one accumulation point $\widehat F$ in  the
boundary $\partial\cE_h$. Since the parameter angle $\theta$ is rational,
and since $\widehat{F}$ belongs to the connectedness locus, the associated 
dynamical ray $\cR_{\widehat F}(\theta)$  lands at some  periodic or
pre-periodic point $z_0(\widehat{F})$ in the Julia set $J(\widehat F)$.
In fact, if we choose a multiple $3^k\theta$ which is periodic under angle
tripling, then the dynamic ray $\widehat{F}^{\circ k}(\cR)$ will land at a
periodic point
$z_k(\widehat F)=\widehat{F}^{\circ k}\left(z_0(\widehat{F})\right)$ which is
either parabolic or repelling. (See for example \cite[\S18.10]{M3}.)
If $z_k(\widehat F)$ is a parabolic point, then the map $\widehat{F}$ is
parabolic by definition.

On the other hand, suppose that
this periodic point $z_k(\widehat F)$  is repelling,
then we will prove that the landing point $z_0(\widehat F)$ is 
equal to the co-critical point $2a=2a_{\widehat F}$, so that
$\widehat F$ is Misiurewicz.
Let $F$ vary over a small neighborhood of $\widehat F$ in $\cS_p$. By
Lemma~\ref{L-land-stab}, the entire dynamical ray
 $\cR_{F}(\theta)$ will vary continuously with $F$. In particular,
its landing point $z_0(F)$ will vary continuously.
If we choose $F$ to lie on the parameter ray
 ${\mathfrak R}_{\cE_h}(\theta)$,
then the co-critical point $2a_{F}$ will lie on this dynamical
 ray $\cR_{F}(\theta)$. (Compare Figure~\ref{Faa-1}.)
 Choose a sequence of maps $F_j$ tending to $\widehat{F}$ and all
 belonging to the parameter ray  ${\mathfrak R}_{\cE_h}(\theta)$,
recall from Lemma \ref{L-land-stab} that the dynamic ray  
$\cR_{F_j}(\theta)$ converges uniformly to $\cR_{\widehat F}(\theta)$, when
these rays are parametrized by the respective Green's functions, the
 Green's function $\g_{F_j}(2a_{F_j})$ 
will tend to $\g_{\widehat{F}}(2a_{\widehat{F}})=0$, hence the distance between
$2a_{F_j}$ and the landing point $z_0({F_j})$ must tend to zero.
Taking the limit as $F_j$ tends to $\widehat F$, it follows that
$2a_{\widehat F}$ must be precisely equal to the
 landing point $z_0(\widehat F)$, which is eventually periodic
 repelling. Since $\widehat F(2a)$ is always equal to the critical value 
$\widehat F(-a)$, this proves that $\widehat F$ is Misiurewicz.
(Here we are only concerned with the orbit of $-a$,
since the critical point $a$ has finite periodic orbit by definition.) 
\medskip
 
Thus every accumulation point for the ray
  ${\mathfrak R}$ in the boundary $\partial\cE_h$ must be
either Misiurewicz or parabolic. Since the set of critically
finite maps in $\cS_p$  
is a countable union of zero-dimensional algebraic
subvarieties, it is a countable set; and similarly the set of parabolic maps
is countable. But the set of all accumulation points of a parameter ray
in $\partial \cE_h$ is necessarily a connected set, hence it must consist
of a single map $F_\fR$. This proves Lemma~\ref{L-ratland}.
\end{proof}

\bigskip

\begin{proof}[{\bf Proof of the Landing Theorem~\ref{T-main}}]\hfill{}

{\bf Step 1.} Suppose that the
parameter angle $\theta$ is co-periodic. If the landing map $F_\fR$  were
Misiurewicz, then by Lemma~\ref{L-ratland} the dynamic ray of angle
$\theta$ for
$F_\fR$ would land on the co-critical point $2a$. Therefore the ray of angle
$3\theta$ which is periodic would land on the critical value $F_\fR(2a)=
F_\fR(-a)$. Thus both critical orbits would be periodic; hence $F_\fR$
would be in the interior of the connectedness locus. This is impossible
since the landing point must be a boundary point of the connectedness locus.
Thus $F_\fR$ must be a parabolic map. It then follows from the work of
Fatou and Julia that the orbit of at least one critical point (which can
only be $-a$) must converge to the parabolic orbit.\medskip

{\bf Step 2.}
  We must prove the following statement:

  \begin{lem}\label{L-paraland} If the landing map $F_\fR$ is parabolic,
    with a parabolic basin of period $q$, then 
    the parameter angle $\theta$ for $\fR$ must be co-periodic of
    co-period $q$. 
    Furthermore, for any map $F$ on the ray $\fR$, the entire orbit of
    the free critical point under $F^{\circ q}$
    must be contained in a dynamic ray of period $q$, with  angle either
    $\theta+ 1/3$ or $\theta-1/3$. 
\end{lem}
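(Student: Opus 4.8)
The plan is to reduce the lemma to a combinatorial identity about the angle $\theta$, and then to prove that identity by inspecting the parabolic limit $F_\fR$.

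\emph{Reduction.} Fix any $F$ on $\fR$. By the definition of the parameter ray, $2a_F$ lies on $\cR_F(\theta)$, and this ray runs smoothly a little past $2a_F$ (see~\eqref{e-ray-def} and Figure~\ref{Faa-1}). Since $F$ is locally two-to-one at $-a$ with $F(-a)=F(2a_F)$, and since $-a$ is the unique critical point of the Green's function $\g_F$ at potential $\g_F(-a)={\bf G}(F)$ (all precritical points sit at strictly lower potential, so a ray descending from infinity cannot crash earlier), the two dynamic rays that crash together at $-a$ must be the two tripling-preimages of $\cR_F(3\theta)$ distinct from the ray through $2a_F$, namely $\cR_F(\theta+1/3)$ and $\cR_F(\theta-1/3)$. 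Because $F$ carries $\cR_F(\alpha)$ into $\cR_F(3\alpha)$ and $3^q(\theta\pm1/3)\equiv 3^q\theta\pmod\Z$, the forward $F^{\circ q}$-orbit of $-a$ lies in a single dynamic ray if and only if $3^q\theta\equiv\theta+1/3$ or $3^q\theta\equiv\theta-1/3\pmod\Z$; and in that case the common ray necessarily has angle $\theta\pm1/3$ and period a divisor of $q$. By Definition~\ref{D-cp} the relation $3^q\theta\equiv\theta\pm1/3$ says exactly that $\theta$ is co-periodic of co-period dividing $q$. So it is enough to prove that this relation holds with $q$ equal to the ray period of $F_\fR$, and that no proper divisor of $q$ works in its place.

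\emph{The limiting argument.} Let $q$ be the ray period of $F_\fR$. By Step~1 of the proof of Theorem~\ref{T-main}, the orbit of $-a_{F_\fR}$ converges to the parabolic cycle of $F_\fR$; moreover $-a_{F_\fR}$ lies in an \emph{immediate} basin component $U$ of that cycle, since the cycle of immediate basin components must contain one of the critical points of $F_\fR$ in its union and the only candidates are $a$ (which is periodic and lies in a different cyclic component) and $-a$. Then $F_\fR^{\circ q}:U\to U$ is a proper degree-two map, the orbit $F_\fR^{\circ kq}(-a)$ converges to a parabolic point $\zeta\in\partial U$, and a dynamic ray of period exactly $q$ lands at $\zeta$. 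Now let $F\to F_\fR$ along $\fR$, so that ${\bf G}(F)=\g_F(-a)\to 0$. The plan is to show, using the parametrization of rays by the Green's function, that the crashing rays $\cR_F(\theta+1/3)$ and $\cR_F(\theta-1/3)$ converge to dynamic rays of $F_\fR$ which bound the parabolic escaping channel at $\zeta$ and hence land at $\zeta$ (or, if the orbit of $-a$ first approaches another point of the cycle, at that point). This forces one of $\theta\pm1/3$ to have period exactly $q$, which is the required identity with $q$ minimal; and, fed back into the reduction, it shows that for \emph{every} $F$ on $\fR$ the whole $F^{\circ q}$-orbit of $-a$ lies in the period-$q$ dynamic ray of angle $\theta+1/3$ or $\theta-1/3$.

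\emph{Main obstacle.} The crux is this last convergence statement, and especially the claim that the limiting rays land at a genuine parabolic point rather than at a strictly pre-parabolic one — it is precisely this, together with the fact that $-a$ lies in the \emph{immediate} basin, that makes $\theta\pm1/3$ periodic of period exactly $q$ rather than merely pre-periodic. Dynamic rays are \emph{not} continuous at a parabolic limit in the naive sense of Lemma~\ref{L-land-stab}: as ${\bf G}(F)\to0$ the rays $\cR_F(\theta\pm1/3)$ develop ever longer tendrils that snake through the narrowing channel at $\zeta$ and into $U$ in order to reach $-a$, and controlling this limit requires the parabolic-implosion estimates carried out in Appendix~\ref{a-para-stab}, on which this step of the argument depends.
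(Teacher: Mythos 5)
Your reduction is sound and matches the paper's setup: for $F$ on $\fR$ the two rays crashing at $-a$ are $\cR_F(\theta\pm 1/3)$, and the lemma comes down to showing that one of the angles $\theta\pm 1/3$ is periodic of period $q$. The way you then try to establish this, however, is not the paper's route, and it has a genuine gap at exactly the point you flag as the ``main obstacle.'' You propose to let $F\to F_\fR$ along $\fR$ and to show that the crashing rays $\cR_F(\theta\pm1/3)$ converge to rays of $F_\fR$ which land on the parabolic cycle, deferring the needed control to ``the parabolic-implosion estimates carried out in Appendix~\ref{a-para-stab}.'' That appendix does not supply this. Theorem~\ref{pls-T1} runs in the opposite direction: it \emph{assumes} the angle is periodic and that its ray for the limit map lands at a parabolic point, and then concludes continuity of the landing point under perturbation (under a one-free-critical-point hypothesis). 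To invoke it here you would already need to know that $\theta\pm1/3$ is periodic and that its $F_\fR$-ray lands on the parabolic cycle --- which is essentially the statement being proved. Nothing in your sketch rules out, for instance, that the limit rays accumulate on a strict preimage of the parabolic point, or crash into a precritical point of $F_\fR$; settling this is the whole difficulty, and it is left unproved.

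The paper avoids the limiting process entirely. It fixes a single map $F$ on $\fR$ and argues by contradiction: if neither crashing ray is carried into itself by $F^{\circ q}$, then every forward image $F^{\circ k}(-a)$ lies on the dynamic ray of angle $3^k\theta$, and the eventual periodicity of the rational angle $\theta$ under tripling is played off against the fact that $-a$ escapes to infinity; the case $q>1$ is handled by running the same argument for the iterate $F^{\circ q}$. The parabolic structure of the landing map is used only to set the scene, not in the deduction --- which is why Remark~\ref{R-qa} can claim this method is much easier than the classical arguments for the quadratic analogue. If you want to keep a perturbative flavor, the more promising move is not implosion but its negation: if the $F_\fR$-rays of angle $\theta\pm 1/3$ landed at repelling (pre)periodic points, the stability of Lemma~\ref{L-land-stab} would prevent the nearby rays $\cR_F(\theta\pm1/3)$ from crashing at $-a$ at all. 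Even so, the pre-parabolic alternatives would remain to be excluded, so as it stands the proposal is incomplete.
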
\smallskip

We will need the following.\ssk

\begin{definition}[\bf The Root Point of a Parabolic Basin] \label{D-root}
Given a cycle of $q$ parabolic basins, the parabolic map $F^{\circ q}$ sends
each basin to itself with degree two. Hence the boundary of each
basin contains a unique fixed point of $F^{\circ q}$, called its
\textbf{\textit{root point}}. These root points form a parabolic orbit
of some period $k$ which divides $q$. 
In the case where  $k$  is less than $q$, note that each such root point
will be shared between $q/k$ different basin boundaries.
\end{definition}

{\bf Proof of Lemma \ref{L-paraland}.}
  Let $U$ be the parabolic basin for $F_\fR$ which contains the free
  critical point $-a$, and let $q\ge 1$ be the period of this
  basin, so that $F_\fR^{\circ q}$ maps $U$ onto itself with degree
  two.\ssk

  To simplify the discussion, we will first consider the special
  case $q=1$, so that\break $U=F_\fR(U)$.
 Thus the orbit of $-a$ under $F_\fR$ will converge
  to the parabolic root point. (Compare Figure \ref{F-para-pic}-left.)
 If we approximate this landing map
  by a nearby map $F$ on the ray $\fR$, then the orbit of $-a$ under
  $F$ will pass through a narrow ``escape channel'' near the former
  parabolic point, and then diverge towards infinity. 
  If the parameter angle is $\theta$, recall that the two dynamic
  rays of angle   $\theta\pm 1/3$ must crash together at the
  free critical point   (Figure \ref{F-para-pic}-right).\ssk

  We want to prove that $F$ must map one of these two crashing rays
  into itself. This will prove that $\theta$ is 
  co-periodic.   Furthermore, since the $F$-invariant ray contains the
  free critical point, it will prove that this ray contains the 
  entire free critical orbit.\msk

  Suppose to the contrary that neither crashing ray maps to itself under
  $F$. Since both of the angles $\theta\pm 1/3$ map to $3\theta$ under
  tripling, it follows that the ray of angle $3\theta$ must land at $F(-a)$.
  Similarly for each $k>0$ the ray of angle $3^k$ will land at
  $F^{\circ k}(-a)$.   But every
  rational angle is eventually periodic under tripling, so this contradicts
  the hypothesis that $-a$ eventually escapes to infinity. This
  proves the Lemma in the case $q=1$.\medskip

  If the parabolic basin has period $q>1$, then we can apply an analogous
  argument to the iterate $F^{\circ q}$ of degree $3^q$. Details are
  straightforward, and will be 
  left to the reader. This completes the proof of Lemma \ref{L-paraland}
  and Theorem \ref{T-main}.
\end{proof}\msk

\begin{rem}[\bf The quadratic analog]\label{R-qa} A completely analogous
  argument proves the corresponding statement for quadratic maps: 
  \begin{quote}\sf If a
parameter ray of angle $\theta$ lands at a parabolic point of the Mandelbrot
set, then $\theta$ is periodic under doubling. Furthermore,  the dynamic ray
of angle $\theta$ for any map $f$ belonging to this parameter ray contains
the entire orbit of the critical point under $f^{\circ q}$,  
where $q$ is the period of $\theta$.\end{quote}
Of course these results are well known. (Compare \cite[Chapter 8, Section 2.2
and Chapter 13]{DH1} and  Schleicher \cite[Theorem 1.1]{Sch}). 
However this method of proof seems much easier.
\end{rem}\msk

\begin{figure}[htb!]
  \begin{overpic}[width=1.7in]{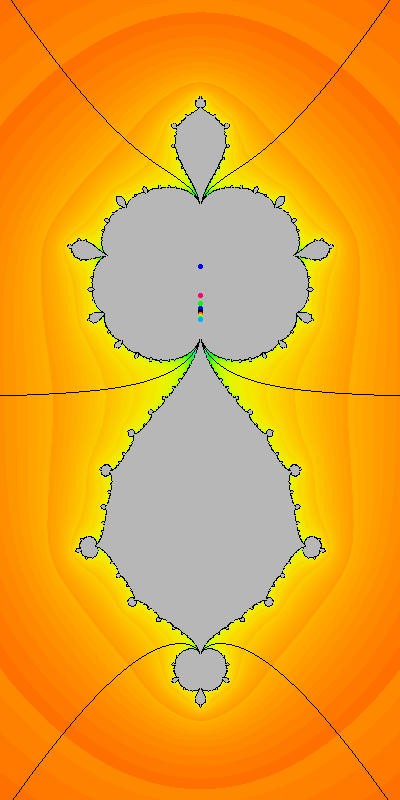}
    \put(105,20){5/6}
  \put(105,212){1/6}
  \put(0,212){1/3}
  \put(65, 160){$-a$}
  \put(0, 110){1/2}
  \put(0, 20){2/3}
  \put(117,110){0}
\end{overpic}\qquad\qquad\begin{overpic}[width=3.5in]{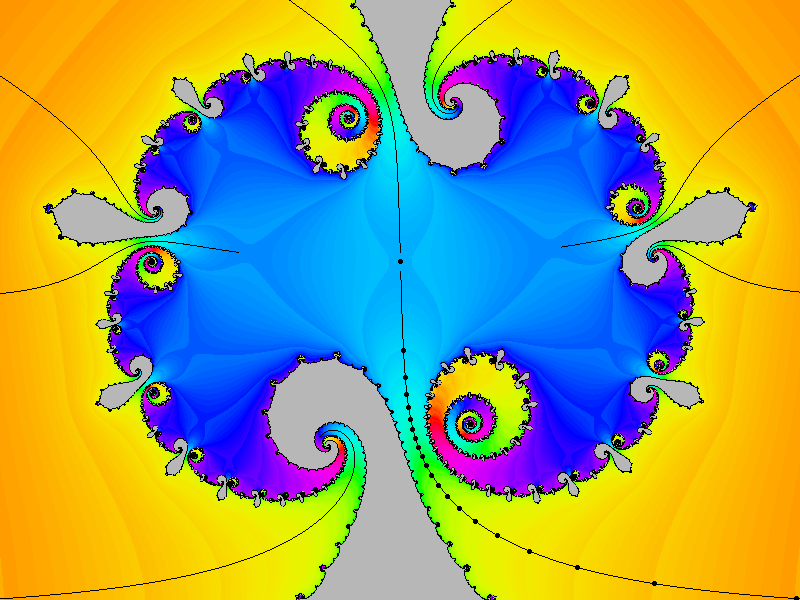}
  \put(245,3){0}
  \put(230,87){1/18}
  \put(230,165){1/9}
  \put(158,178){1/6}
  \put(70,178){1/3}
  \put(0,167){7/18}
  \put(0, 87){4/9}
  \put(0, 3){1/2}
  \put(130,104){$-a$}
    \end{overpic}
\caption[Julia sets showing rays crashing together]
{\label{F-para-pic}\sf On the left: Julia set for the map
  $z\mapsto z^3+ i\,z^2+z$ in $\cS_1$, showing the free critical orbit
  converging from above to the parabolic fixed point at $z=0$ within the basin
  $U$. On the right: detail of the Julia set
for a map on the $\theta=2/3$ parameter ray, showing that the dynamic
rays of angle $\theta+1/3=0$ and $\theta-1/3=1/3$ 
crash together at the free critical point (picture by Scott Sutherland).}
  
  \centerline{\includegraphics[width=2.9in]{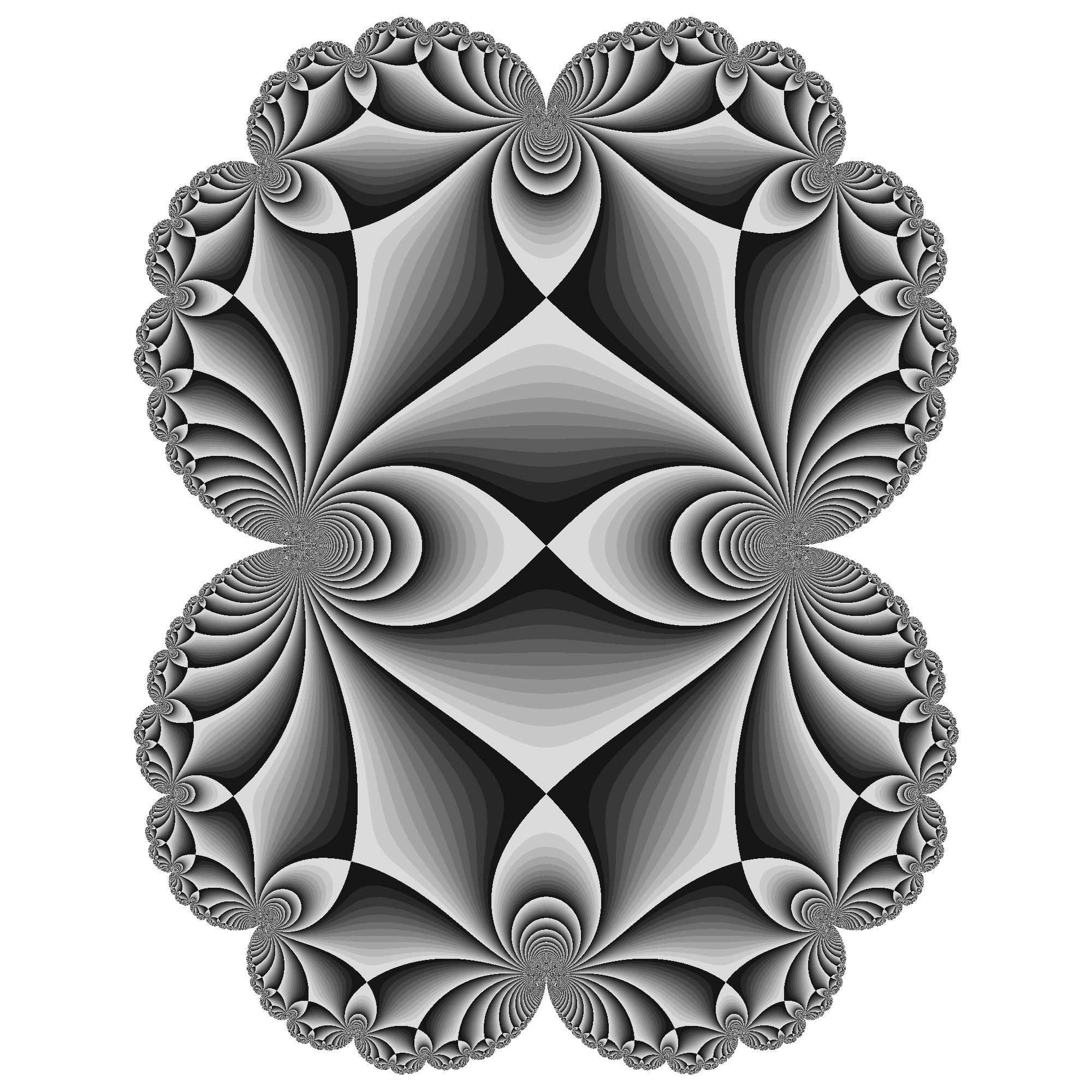}}
  \caption[The standard cauliflower]
  {\label{F-caulpic}\sf A picture of the standard cauliflower
    $z\mapsto z^2+z$ showing the real part $\re\big(\Phi(z)\big)$
    of the Fatou coordinate,
    modulo $\Z$. Here the shading varies from white for~~ zero$\!+$ modulo $\Z$,
    to black for~~ one$\!-$ modulo $\Z$.
    Thus the boundary curves between black and white indicate
    places where the real part of $\Phi$ is an integer. The points
    where two such curves cross represent the critical point and its
    iterated pre-images.}
\end{figure}

\begin{rem}[\bf Cyclic order]\label{R-cyc} If three or more parameter rays
land at a common point (either parabolic or Misiurewicz), then we believe
that the cyclic order of the rays about their landing point is always
the same as the cyclic order of their parameter angles.
(This is purely a local statement, and does not depend on whether the
rays belong to the same escape region or to different escape regions.)
For the Misiurewicz case compare Conjecture~\ref{C-TLS}; while for the
parabolic case compare the more detailed discussion in Section~\ref{s-near-para}.
\end{rem}
\msk

We next provide a more detailed description of the parabolic case,
which will play an essential role in later sections.
For the following, the symbol $F$ will always
denote a map which is parabolic, and hence belongs to the connectedness locus.
\medskip

\begin{rem}[\bf Dynamics in the parabolic basin] \label{F-par-basin}
Dynamics within a parabolic basin can be rather complicated, and is best
understood in terms of Fatou coordinates. In the case of an $F$-invariant
basin with a single simple critical point, the map is always topologically
conjugate to the standard quadratic example which is illustrated in Figure
\ref{F-caulpic}. Here the interior of $U$ is divided into simply
connected open ``cells'', bounded by curves which are black on one side
and white on the other. Each such cell maps biholomorphically onto a different
cell, and two cells have the same image if and only if they are diametrically
opposite with respect to the central critical point. Each cell which has the
parabolic fixed point as a boundary point maps to an immediate neighbor,
either the right hand neighbor if it is along the $x$-axis, or the left hand
neighbor otherwise.
\end{rem}

\begin{rem}[\bf Parabolic Multipliers]\label{R-pbm} 
One basic invariant of a parabolic map $F$ in $S_p$
is its \textbf{\textit{multiplier}} $\lambda$.

\begin{quote}{\bf Definition.} If the parabolic orbit for $F$ has
  period $k$, then $\lambda$ is the first derivative of $F^{\circ k}$ at
  any orbit point.\end{quote}

\noindent This is always a root of unity. More precisely,
  if the ray period is $q$ then $\lambda$ is a $(q/k)$-th root of unity.
  For further discussion see Conjecture MC2 in Remark \ref{R-MandelC}. \end{rem}
\msk

\begin{theo}[\bf Parabolic Maps]\label{L-para}
Suppose that $F$  has a parabolic orbit of ray period $q$.
Let $U(-a)$  be the Fatou component of $F$ which contain the free critical
  point $-a$. Then, with notation as in Definition~$\ref{D-cp}$, the
 dynamic ray of angle $\theta_q$ for $F$ lands at the \textbf{\textit{root
      point}} of $U(-a)$; that is at the unique boundary point which is fixed
  by $F^{\circ q}$. {\rm $($See Definition~\ref{D-root}.$)$}
  Similarly the non-periodic ray of angle  $\widehat{\theta}$ lands at
  the unique boundary point of $U(-a)$,  which is not equal to the root point,
  and yet maps to the root point under the $2$-fold branched covering map
  $F^{\circ q}$ from $U(-a)$ to itself.
  Similarly, if we define the root point of the Fatou component $U(2a)$
  to be the unique point which maps to the root point of $U(v')$ under $F$,
  then the non-periodic dynamic ray of angle
  $\theta$ for $F$ lands at the root point of  $U(2a)$.
\end{theo}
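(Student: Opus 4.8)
The plan is to work entirely in the dynamic plane of the parabolic map $F$, using the combinatorics of dynamic rays landing at the boundary of $U(-a)$ together with the covering structure of $F^{\circ q}$ on this component. First I would set up the $q=1$ case, where $U=U(-a)=F(U)$ and $F$ restricts to a degree-two branched cover of $U$ fixing the unique root point $z_0\in\partial U$. By Theorem~\ref{T-main} and Lemma~\ref{L-paraland}, for any map on the ray $\fR$ one of the twin rays $\theta\pm1/3$ is $F$-invariant and contains the free critical orbit; in the parabolic limit this invariant ray persists as a ray landing on $\partial U$ (its angle is exactly $\theta_q$ in the notation of Definition~\ref{D-cp}, since $\theta_q=\theta\pm 1/3$ is the periodic one of the triad). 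The key point is that a periodic dynamic ray landing on the boundary of an $F$-invariant parabolic basin $U$ must land at the root point: the root point is the unique boundary fixed point of the degree-two self-map $F|_U$, and an $F$-fixed ray landing on $\partial U$ must land at a point fixed by $F$ on $\partial U$ which is on the boundary of $U$ — and the only candidate is $z_0$. (One must rule out the possibility that the ray lands at some other parabolic or repelling fixed point not on $\partial U$; but the ray crashes into $-a\in U$ for nearby maps and hence limits onto a ray hitting $\partial U$, so its landing point lies in $\partial U$.) This gives the first assertion.

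For the statement about $\widehat\theta$: the ray of angle $\widehat\theta$ is the other twin, and $\widehat\theta\mapsto \theta_1 = 3\theta_q$ under tripling, so $\cR_F(\widehat\theta)$ and $\cR_F(\theta_q)$ have the same image $\cR_F(\theta_1)$ under $F$. Since the periodic ray $\cR_F(\theta_q)$ lands at $z_0$, the ray $\cR_F(\theta_1)$ lands at $F(z_0)=z_0$, so $\cR_F(\widehat\theta)$ lands at some point $w\in\partial U$ with $F^{\circ q}(w) = z_0$ (here $q=1$). The two twin rays are distinct rays landing at distinct points of $\partial U$ (they are separated by the intervening ray $\cR_F(\theta)$ in cyclic order, and distinct periodic/preperiodic landing data), so $w\ne z_0$. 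Since $F|_U$ is the degree-two branched cover, $F^{-1}(z_0)\cap\partial U$ consists of exactly two points — the root point $z_0$ itself and one other — and $w$ must be that other point. This is precisely the claimed characterization. The statement for $U(2a)$ is then immediate by pulling back one more step: $\cR_F(\theta)$ maps under $F$ to $\cR_F(\theta_1)$, which lands at $z_0 = $ root point of $U(v')=U(-a)$ (in the $q=1$ case $v'$-basin and $-a$-basin coincide), and $F$ maps $U(2a)$ isomorphically onto a neighborhood of $v'$ inside $U(v')$, so the landing point of $\cR_F(\theta)$ is the unique preimage of $z_0$ in $\partial U(2a)$, which is by definition the root point of $U(2a)$.

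For general $q$, I would replace $F$ by $G=F^{\circ q}$, which maps $U(-a)$ to itself with degree two, and apply the $q=1$ argument verbatim with $G$ in place of $F$: the ray $\cR_F(\theta_q)$ is $G$-invariant (since $3^q\theta_q\equiv\theta_q$) and lands at the $G$-fixed boundary point of $U(-a)$, i.e.\ its root point in the sense of Definition~\ref{D-root}; the ray $\cR_F(\widehat\theta)$ satisfies $F^{\circ q}(\cR_F(\widehat\theta)) = \cR_F(3^q\widehat\theta) = \cR_F(\theta_q)$ (because $\widehat\theta = \theta_q \mp 1/3$ and... more carefully, $3^q\widehat\theta = 3^q\theta_q = \theta_q$ since $\widehat\theta$ and $\theta_q$ differ by $1/3$ and... ) hmm — I need to check: we have $\widehat\theta=\theta\mp1/3$ and $\theta_q=\theta\pm1/3$, and $3^q\theta = \theta_1 = 3\theta_q$, while $3^q\widehat\theta = 3^q\theta \mp 3^q/3$; since $3^q/3 = 3^{q-1}$ is an integer modulo... this needs the triad identity $3^q\theta = 3^q\widehat\theta = \theta_q$ stated in Definition~\ref{D-cp}, which I would simply quote. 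So $\cR_F(\widehat\theta)$ lands at a point of $\partial U(-a)$ mapping to the root point under $G=F^{\circ q}$ but not equal to it, as claimed; and $\cR_F(\theta)$ lands at the root point of $U(2a)$ by the same one-step pullback, now using that $F$ maps $U(2a)$ isomorphically onto a neighborhood of $v'$ in $U(v')$ where $v'=F(-a)$. The main obstacle I anticipate is the careful justification that the landing point of each of these rays genuinely lies on $\partial U(-a)$ (respectively $\partial U(2a)$) and not at some other point of the Julia set with the same eventual ray-combinatorics — this requires combining Lemma~\ref{L-paraland} (the free critical orbit lies on the ray, forcing accumulation on $U(-a)$) with the stability of ray-landing and the structure of parabolic basin boundaries; the rest is bookkeeping with angle tripling and the degree-two covering.
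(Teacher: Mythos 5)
Your proposal follows essentially the same strategy as the paper's proof: set up the cycle of Fatou components $U(2a)\to U(v')\to\cdots\to U(-a)\to U(v')$, show that the periodic angle of the triad lands at the root point of $U(-a)$, and then pull back to locate the two co-periodic twins among the preimages of that root point. The one organizational difference is that the paper first asserts that the ray of angle $\theta_1=3\theta$ lands at the root point of $U(v')$ and pushes forward around the cycle ($\theta_j$ lands at the root of the component containing $F^{\circ j}(-a)$) to reach $\theta_q$, whereas you establish the landing of $\cR_F(\theta_q)$ directly from the invariance statement of Lemma~\ref{L-paraland}; both rest on the same (not fully spelled-out) limiting argument, so neither is more rigorous than the other on that point. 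Where your write-up genuinely tangles --- and where you flag your own uncertainty --- is the reduction of the general case to $q=1$ by replacing $F$ with $G=F^{\circ q}$. The triad $(\theta,\theta_q,\widehat\theta)$ is defined by a \emph{single} tripling, $3\theta=3\theta_q=3\widehat\theta=\theta_1$, so the clean move, and the one the paper makes, is a one-step pullback: the root point of $U(v')$ has exactly three $F$-preimages (the root of $U(2a)$, the root of $U(-a)$, and the diametrically opposite boundary point of $U(-a)$, since $F:U(-a)\to U(v')$ is the degree-two branched cover in the cycle while $F:U(2a)\to U(v')$ is an isomorphism), and these match the three rays of the triad one for one. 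Pulling back $q$ steps via $F^{\circ q}$ forces you to sort through $3^q$ preimages and to re-justify separately that the relevant landing points lie on $\partial U(-a)$, which is avoidable. Finally, one small slip: $F$ maps $U(2a)$ isomorphically onto all of $U(v')$, not onto a neighborhood of $v'$ inside it.
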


\begin{proof}[{\bf Proof}] The following diagram represents the
  various Fatou components
which must be considered in the case $q>1$.
\begin{equation}\label{E-Umaps}
\xymatrix@C=4em{U(2a)\ar[r]^{F} & U(v')\ar[r]<8pt>^{F^{\circ q-1}} & 
  U(-a)\ar@{->>}[l]^{F}}    \end{equation}
(Here we  are only  concerned  with the 
orbit of the free critical point: the marked critical point is not
involved in this discussion.)
Thus the Fatou component $U(2a)$ containing $2a$ maps isomorphically onto
the component $U(v')$ under $F$, and then isomorphically
onto the component
$U(-a)$ under $F^{\circ q-1}$. Then $F$ sends it back to the component
$U(v')$ by a 2-fold branched covering map. For the special case $q=1$,
since $U(v')=U(-a)$ there is a simpler diagram.
     $$ \xymatrix@C=4em{U(2a)\ar[r]^{F} & U(-a)}\mapstoself_{F}$$
Since $\theta$ has co-period $q$, we know that the angle
$\theta_1=3\theta$ is periodic of period $q$. 
 It follows from Definition~\ref{D-root} that the dynamic ray of angle
$\theta_1$ lands at the root point of the Fatou component $U(v')$ containing
$v'=F(-a)$. It follows inductively that
the dynamic ray of angle $\theta_j$ lands at the root point of the component
containing $F^{\circ j}(-a)$. Since the cycle of components has period $q$
it follows that the  ray of angle $\theta_q$ lands at the root point of the
component $U(-a)$ itself.

The root point of $U(v')$ has three pre-images under $F$.
One is the root point of  $U(2a)$, one is the root point of $U(-a)$,
and the third is a diametrically opposite point of the boundary of $U(-a)$.
Correspondingly the dynamic ray of angle $\theta_1$ has three pre-images
under $F$. The ray of angle $\theta$ lands at the root point of $U(2a)$;
the ray of angle $\theta_q$ lands at the root point of $U(-a)$; while
the ray of angle  $\widehat{\theta}$ lands at the diametrically
opposite point of the boundary of $U(-a)$. \end{proof} \medskip

\begin{rem} It might seem natural to expect that as a sequence of
  maps $F_h$ on the parameter
ray $\fR$ converge to the landing map $F_\fR$, the corresponding dynamic rays
of angle $\theta\pm 1/3$ would converge to uniquely defined paths
from infinity to $-a$; but this may not be true.
The limit outside of the filled Julia set $K(F_\fR)$ is uniquely defined.
However, depending on the Lavaurs phase, there may be infinitely many possible
limit maps inside of this filled Julia set, depending on which sequence of
points of $\fR$ we choose. (See \cite{BMS}.)
\end{rem}

\begin{figure}[htb!]
\centerline{\includegraphics[width=3in]{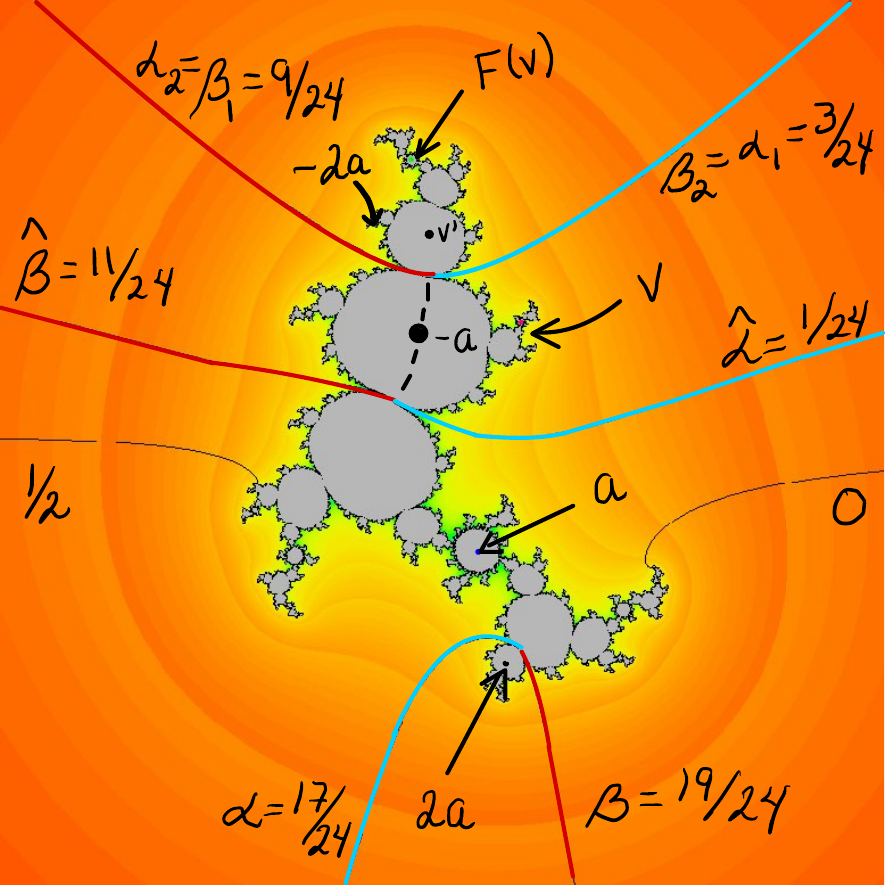}}
\caption[Julia parabolic map in $\cS_3$]{\label{F-s3per2jul}\sf Julia set for
  a parabolic map in $\cS_3$
  which is the landing point for two different parameter rays of co-period two
  with angles $\alpha=17/24$ and $\beta=19/24$.
  In this example, there is a single parabolic fixed point, which is the root
  point of both the $-a$ and the $F(-a)=v'$ components. Furthermore, in this
  example the parameter                               
  angles  $\alpha$ and $\beta$ belong to the same  grand orbit,
  so that $\alpha_1  =\beta_2$ and $\beta_1=\alpha_2$. The         
  corresponding point $F$ in the parameter space
  $\cS_3$ is visible towards the upper left in Figure \ref{F-t2s3}.
  There is an  analogous picture for an arbitrary parabolic map in any $\cS_p$,
  although the corresponding number of parameter rays will vary (conjecturally
  always between one and four). 
Compare Remark \ref{R-J2K}. 
}
\end{figure}

\medskip

These results have an interesting consequence as follows.
 \smallskip
 
\begin{coro}\label{C-cop} Suppose that several co-periodic parameter rays
with parameter angles $\theta(j)$ 
land at a common parabolic map $F\in \cS_p$.
Then the  dynamic rays of angle $\theta(j)$ for $F$  all land at the
root point $z_F$ of the co-critical component $U(2a)$ in the filled 
Julia set. $($Here 
these parameter rays need not all lie in the same escape region.$)$
\end{coro}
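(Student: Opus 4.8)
The plan is to derive Corollary~\ref{C-cop} directly from Theorem~\ref{L-para} together with Lemma~\ref{L-paraland}. Fix the common parabolic landing map $F$, and let its parabolic orbit have ray period $q$. First I would observe that by Lemma~\ref{L-paraland}, every parameter ray landing at $F$ has co-period exactly $q$ (the ray period of the parabolic basin is an invariant of $F$, so it cannot depend on which ray we chose). Hence each angle $\theta(j)$ is co-periodic of co-period $q$, and Theorem~\ref{L-para} applies to each of them individually.

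Next, the key point: Theorem~\ref{L-para} tells us precisely where the dynamic ray of angle $\theta(j)$ lands for the map $F$, namely at \emph{the root point of the Fatou component $U(2a)$}. The subtlety to address is that, a priori, the labels ``$U(2a)$'', ``$U(v')$'', ``$U(-a)$'' refer to the Fatou components containing the free critical point and its images, and these are intrinsic to $F$ --- they do not depend on $\theta(j)$. So the component $U(2a)$ is the same for every $j$, and its root point $z_F$ (the unique point of $\partial U(2a)$ mapping under $F$ to the root point of $U(v')$, per Theorem~\ref{L-para} and Definition~\ref{D-root}) is a single well-defined point of $J(F)$. Therefore the conclusion of Theorem~\ref{L-para}, applied with $\theta$ replaced by each $\theta(j)$ in turn, says exactly that $\cR_F(\theta(j))$ lands at $z_F$ for all $j$ simultaneously. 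The parenthetical remark that the rays need not lie in the same escape region is automatic: the whole argument takes place in the single dynamic plane of $F$, and the escape regions only entered through Lemma~\ref{L-paraland} to guarantee co-periodicity of each $\theta(j)$, which holds regardless.

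I expect the only genuine obstacle to be bookkeeping about the component $U(2a)$ in the period-one case $q=1$: there $U(v') = U(-a)$, and one must check that ``the root point of $U(2a)$'' still makes sense as the unique $F$-preimage on $\partial U(2a)$ of the root point of $U(-a)$, which Theorem~\ref{L-para} already handles explicitly via its simplified diagram $\xymatrix{U(2a)\ar[r]^{F}& U(-a)}$. So in fact there is nothing left to prove beyond assembling these citations. A clean write-up would be: by Lemma~\ref{L-paraland} each $\theta(j)$ is co-periodic of co-period $q$ equal to the ray period of $F$; by Theorem~\ref{L-para} the dynamic ray $\cR_F(\theta(j))$ lands at the root point of the Fatou component $U(2a)$ of $F$; since $U(2a)$ and hence its root point $z_F$ depend only on $F$ and not on $j$, all these rays land at the common point $z_F$. $\quad\square$
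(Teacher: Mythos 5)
Your proof is correct and is essentially the paper's own argument: the paper disposes of this corollary with the single line ``This follows immediately from Theorem~\ref{L-para}.'' Your write-up simply makes explicit the two points the paper leaves implicit --- that each $\theta(j)$ has co-period equal to the ray period of $F$ (via Lemma~\ref{L-paraland}), and that the component $U(2a)$ and its root point depend only on $F$ and not on $j$.
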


\begin{proof} This follows immediately from Theorem~\ref{L-para}. 
\end{proof} 
\smallskip

\begin{coro}\label{C-distinct} No two co-periodic parameter rays
landing at a common point can have the same parameter angle. 
{\rm (This is true whether the rays lie in the same escape region or different
escape regions.)}
\end{coro}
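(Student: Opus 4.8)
The plan is to derive this immediately from Corollary~\ref{C-cop}, which we have just proved. Suppose, for contradiction, that two co-periodic parameter rays $\fR$ and $\fR'$ land at a common point $F \in \cS_p$ and have the same parameter angle $\theta = \theta(1) = \theta(2)$. Since $F$ is a common landing point of co-periodic rays, Theorem~\ref{T-main} (Case 1) tells us that $F$ is a parabolic map, say of ray period $q$, and $\theta$ is co-periodic of co-period $q$. We may then invoke Corollary~\ref{C-cop}: the dynamic ray $\cR_F(\theta)$ lands at the root point $z_F$ of the co-critical Fatou component $U(2a)$.

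The point is now that this single dynamic ray $\cR_F(\theta)$ cannot ``belong'' to two distinct parameter rays in the way required. More precisely, recall from Lemma~\ref{L-para} that $\cR_F(\theta)$ is the \emph{unique} dynamic ray of angle $\theta$, and it lands at the root point of $U(2a)$. Along either parameter ray $\fR$ (resp.\ $\fR'$), for a map $F_j \to F$ on the ray, the co-critical point $2a_{F_j}$ lies on the dynamic ray $\cR_{F_j}(\theta)$ (this is equation~(\ref{e-ray-def}), with $\mu = 1$). As $F_j \to F$ along the ray, the Green's function $\g_{F_j}(2a_{F_j}) \to 0$, so $2a_{F_j}$ converges to the landing point of $\cR_F(\theta)$, namely $z_F$; in particular $2a_F = z_F$. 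But this holds for \emph{any} sequence on \emph{either} ray, and the parameter space $\cS_p$ is a curve in which $a = a_F$ is a coordinate (together with $v$), so the two rays $\fR$ and $\fR'$ would have to approach $F$ ``from the same direction'' in the sense of determining the same germ of co-critical behavior. This is not quite a contradiction on its own, so I need a sharper argument.

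The cleaner route is the following. The statement to contradict is really about the \emph{correspondence between parameter angles and parameter rays landing at a fixed parabolic map}. Fix the parabolic map $F$ with ray period $q$. By Theorem~\ref{L-para}, each co-periodic parameter ray landing at $F$ determines (via its parameter angle $\theta$) the specific triad $(\theta,\theta_q,\widehat\theta)$, where $\theta_q = 3^q\theta$ is the periodic angle whose dynamic ray lands at the root point of $U(-a)$. Two parameter rays with the \emph{same} parameter angle $\theta$ would give the same triad, and hence the same co-critical point constraint $2a_F = z_F$ where $z_F$ is the root point of $U(2a)$. The genuine obstacle is to see that the parameter ray landing at $F$ with a given co-periodic angle $\theta$ is unique. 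For this one should appeal to the local injectivity of the relevant anchoring/B\"ottcher-type coordinate near the ideal boundary of the escape region: within a single escape region $\cE_h$ (with $\mu_h = 1$), the map $F \mapsto \phi(F) = \arg \B_F(2a_F)$ is part of a conformal isomorphism $\cE_h \cong \C \ssm \overline{\D}$, so distinct parameter rays in $\cE_h$ carry distinct parameter angles by definition; and two rays in \emph{different} escape regions that both land at $F$ with the same angle would contradict the stability theorem of Appendix~\ref{a-para-stab}, which pins down the landing behavior of co-periodic parameter rays near a suitably restricted parabolic limit.

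I expect the main obstacle to be precisely this last point: ruling out the possibility that two co-periodic parameter rays, lying in \emph{different} escape regions, land at the same parabolic point with the same parameter angle. Within one escape region the distinctness of angles on distinct rays is essentially a definition, so there is nothing to prove. For rays in different escape regions, the argument must go through the structure established in Section~\ref{s-near-para} and Appendix~\ref{a-para-stab}: a co-periodic parameter ray of angle $\theta$ landing at the parabolic map $F$ corresponds to an ``escape channel'' for the perturbed free critical orbit, and the channel (hence the germ of the ray) is determined by the dynamic ray data $(\theta_q, \widehat\theta)$ together with the escape region it lies in — and within a fixed escape region, by the parameter angle alone. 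So the clean write-up will be: (i) note $F$ parabolic and $\theta$ co-periodic by Theorem~\ref{T-main}; (ii) same escape region case is immediate from the definition of parameter ray; (iii) different escape region case is excluded by the parabolic stability result of the Appendix applied to each ray separately. I would keep the proof short, citing Corollary~\ref{C-cop} and Appendix~\ref{a-para-stab}, and leave the routine channel-uniqueness bookkeeping to the reader.
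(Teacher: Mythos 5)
Your starting point --- deducing the statement from Corollary~\ref{C-cop} --- is exactly the paper's (its entire proof is that one sentence), but your write-up contains one outright error and one genuine gap. The error: you claim that for $F_j\to F$ along the parameter ray, the convergence $\g_{F_j}(2a_{F_j})\to 0$ forces $2a_{F_j}$ to converge to the landing point $z_F$ of $\cR_F(\theta)$, ``in particular $2a_F=z_F$.'' That inference is lifted from the Misiurewicz half of the proof of Lemma~\ref{L-ratland}, where it is justified by Douady--Hubbard stability of repelling landing points (Lemma~\ref{L-land-stab}). At a parabolic landing point that stability fails --- this is precisely the subject of Appendix~\ref{a-para-stab} --- and the conclusion is false: for the parabolic landing map, $2a_F$ is an \emph{interior} point of the Fatou component $U(2a)$, whereas $z_F$ is its root point on the boundary (Theorem~\ref{L-para}). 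You do flag that this line does not close, but you then reuse ``the co-critical point constraint $2a_F=z_F$'' in the second half of the argument.

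The gap: the only nontrivial content of the corollary is the case of two rays in \emph{different} escape regions (within one region with $\mu=1$ the parameter angle determines the ray by definition, as you note), and for that case you offer only an appeal to Appendix~\ref{a-para-stab} plus ``routine channel-uniqueness bookkeeping'' left to the reader. The Parabolic Landing Stability Theorem is a statement about dynamic rays in a one-free-critical-point family; it does not by itself exclude two parameter rays in two different escape regions carrying the same angle, and the assertion that the ``escape channel'' is determined by the triad together with the escape region is exactly what would need proof. The deduction the paper has in mind is more direct: two distinct parameter rays landing at a common point $F$ are disjoint arcs defining two distinct accesses to $F$ (some part of the connectedness locus must separate them; compare Lemma~\ref{L-acc}), and by Corollary~\ref{C-cop} each is matched with the dynamic ray of its own angle landing at the co-critical root point $z_F$, distinct accesses being matched with distinct dynamic rays. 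Equal angles would assign a single dynamic ray to two accesses. If you keep your structure, you must actually establish the injectivity of the correspondence from parameter rays at $F$ to dynamic rays at $z_F$ rather than defer it.
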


\begin{proof}  This follows immediately from Corollary \ref{C-cop}.
 \end{proof}

{\bf Examples:}
For each fixed co-periodic angle $\theta$, we will see in Section~\ref{s-count}
that there are ${\bf d}_p$ distinct parameter rays in $\cS_p$
with parameter angle $\theta$, where $\d_p$ is the degree of $\cS_p$
(asymptotic to $3^{p-1}$). Thus, for each such $\theta$ there correspond
$\d_p$ distinct parabolic landing points. For examples in the case $p=2$
with  ${\bf d}_2=2$, see Figures~\ref{f2}, and \ref{F-S2rays} or
\ref{f-po}. For many examples with $p=3$ and  ${\bf d}_3=8$, see
Figures~\ref{F-t1s3} 
through ~\ref{F-T3S3}. In all of these cases,
there is just one such ray in each escape region. However, for $p\ge 4$
there are regions of multiplicity $\mu>1$, each containing $\mu$ such
rays.\bsk

{\bf A convenient notation.}  We will often use the notation 
$\p$ for a parabolic map, when we are thinking of it simply as a point
in the parameter space $\cS_p$. In such cases, we will use the notation
$F_\p:\C\to\C$ if we are thinking of it explicitly as a cubic map from
the complex plane to itself.\smallskip

As an example, in the lower right of  Figure \ref{F-S2rays}, the 
four parameter rays with angles  $~\theta~=~10/24$, $11/24$,
$ 14/24$ and $17/24$ belong to two different escape
 regions, but still lie in positive cyclic order around their common
 landing point $\p$. The dynamic rays with the same angles land at a
 common point $z_0$ in the Julia set for $F_\p$. Here $z_0$ is the root
 point of the hyperbolic component containing $2a$,  as shown in 
 Figure \ref{f-s2par}. The associated periodic angles
 $~3\,\theta~=~1/4,~ 3/8,~ 3/4,~ 1/8$  are also in positive cyclic order,
 and land at the image $F_\p(z_0)$, which is the root point 
 of the component containing $-a$.  (Notice that in this particular case
 the Fatou components of $-a$ and of $v'$ share
 the same root point.) 
 \medskip

 In many cases the converse of Corollary \ref{C-cop} is true, so that
 a dynamic ray lands at the co-critical root point if and only if
 there is a parameter ray with the same angle landing at the given parabolic
 map. But there are also many counterexamples.  For example
 Figure~\ref{F-s5rab-par} shows four rays
 (in three different escape regions) landing at a parabolic map
 in $\cS_5$; and Figure~\ref{F-4wall} shows the four
corresponding rays in the dynamic plane. But there are six additional rays
in the dynamic plane, corresponding to six additional accesses to the root
point. (Compare Remark~\ref{R-access}.) Note that this parabolic map is on
the common boundary between a rabbit region, and two different escape
regions. 

Conjecturally, such rabbit examples are the only counterexamples to
the converse statement. 
And even in these counterexamples, for every dynamic ray landing at
the co-critical root, there seems to be a parameter ray with the same angle
which lands fairly close to the parabolic point. Furthermore, these
other rays seem to 
play an important role in understanding the local dynamics
(see \cite{BM}).
 \medskip


Note also that the local angles between parameter rays at their landing
point are quite different from the corresponding angles between dynamic
rays (which are not even well defined unless the multiplier is real). 
This is completely analogous to what happens for
the quadratic family. For example, since 
 the  parameter rays of angle $1/7$
 and $2/7$ for the Mandelbrot set land at the ``fat rabbit'' map $F_0$,
 it follows that the $1/7$ and $2/7$ dynamic rays for $F_0$
 land at a common point in $J(F_0)$.
 But the $4/7$ dynamic ray also lands at this same point. Similarly the
 correspondence does not preserve local angles in the quadratic case: 
The  180$^\circ$ angle between parameter rays at their landing point
is not equal to  the apparent 120$^\circ$ angles between dynamic rays at
their landing point.
 (The latter angles are not actually well defined since the dynamic rays
spiral in to their landing point.) 
\medskip

 In \S\ref{s-tess} we will prove that every parabolic map in ${\mathcal S}_p$
 is the landing point of at least one co-periodic ray. (See 
 Theorem~\ref{T-decomp} and Corollary~\ref{C-allpara}.) 
 \medskip

 To conclude this section, we will describe several important concepts.

\begin{definition}[\bf Kneading]\label{D-kn} For every
  escape region $\cE_h\subset \cS_p$ there is an associated infinite~
  \textbf{\textit{kneading sequence}}\vspace{-.2cm}
  $$(\kappa_1,~\kappa_2,~\kappa_3,~\cdots)~,$$
where $\kappa_j=0$  if 
$F^{\circ j}(a)$ belongs to the lobe of the figure eight containing $a$,
and where $\kappa_j=1$ if it belongs to the other lobe which contains $-2a$.
(Compare Figure \ref{Faa-1}.) Here $F$ can be any map
in $\cE_h$. Since the marked critical point $a$ has period
$p$, it follows that
$$ \kappa_p=0,~~~{\rm and~that}\qquad \kappa_j=\kappa_{p+j}\quad
{\rm for~all} \quad j~.$$  (Note that the minimal period of this
sequence may be some divisor of $p$.)

In practice, it is usually
more convenient to list only the first $p$ entries
$(\kappa_1,~\cdots,~\kappa_p)$ which we will call the 
\textbf{\textit{kneading invariant}} of the escape region. (Here $\kappa_p$
is always zero.) We will often abbreviate this
notation, writing for example 110 in place of $(1,1,0)$.
(Note that the various kneading invariants
$(1,0)$ in $\cS_2$ or $(1,0,1,0)$ in $\cS_4$ or $(1,0,1,0,1,0)$ in $\cS_6$
all give rise to the same kneading sequence of period two; which can be denoted
briefly by $\overline{10}$.)
\end{definition} \medskip

\begin{rem}[\bf The Mandelbrot Set and $\cS_p$]\label{R-Mand}
  The classical quadratic  Mandelbrot set, which we denote by  $\M$, 
  plays two quite different roles in   the study of $\cS_p$.\smallskip

\begin{itemize}  
\item[{\bf(1)}] Every $\cS_p$ contains infinitely many copies of $\M$,
which play a very important role. These will be studied in Remark~\ref{R-MandelC}
  below.\smallskip

\item[{\bf (2)}]
Branner and Hubbard \cite{BH1} showed that every escape region
$\cE$ in $\cS_p$ has a unique \textbf{\textit{ associated
critically periodic quadratic map}} with the following property:

\begin{quote}\sf For every map $F$ in $\cE$, every connected
  component of the Julia set $J(F)$ is either a point, or a copy 
  of the Julia set for the associated quadratic map.\end{quote}
\end{itemize}
\end{rem}
\medskip

If the kneading sequence for an escape region in $\cS_p$
has minimal period $k$, then the period for the
critical point of the associated quadratic map is $p/k$. (Compare
Definition~\ref{D-PM}  
as well as \cite[Theorem 5.15]{M4}.) In
particular, if the kneading invariant is $(0,0,\cdots,0)$ so that 
$k=1$, then the quadratic map has critical period $p$. 
\medskip

\begin{figure}[htb!]
\centerline{\includegraphics[width=3.8in]{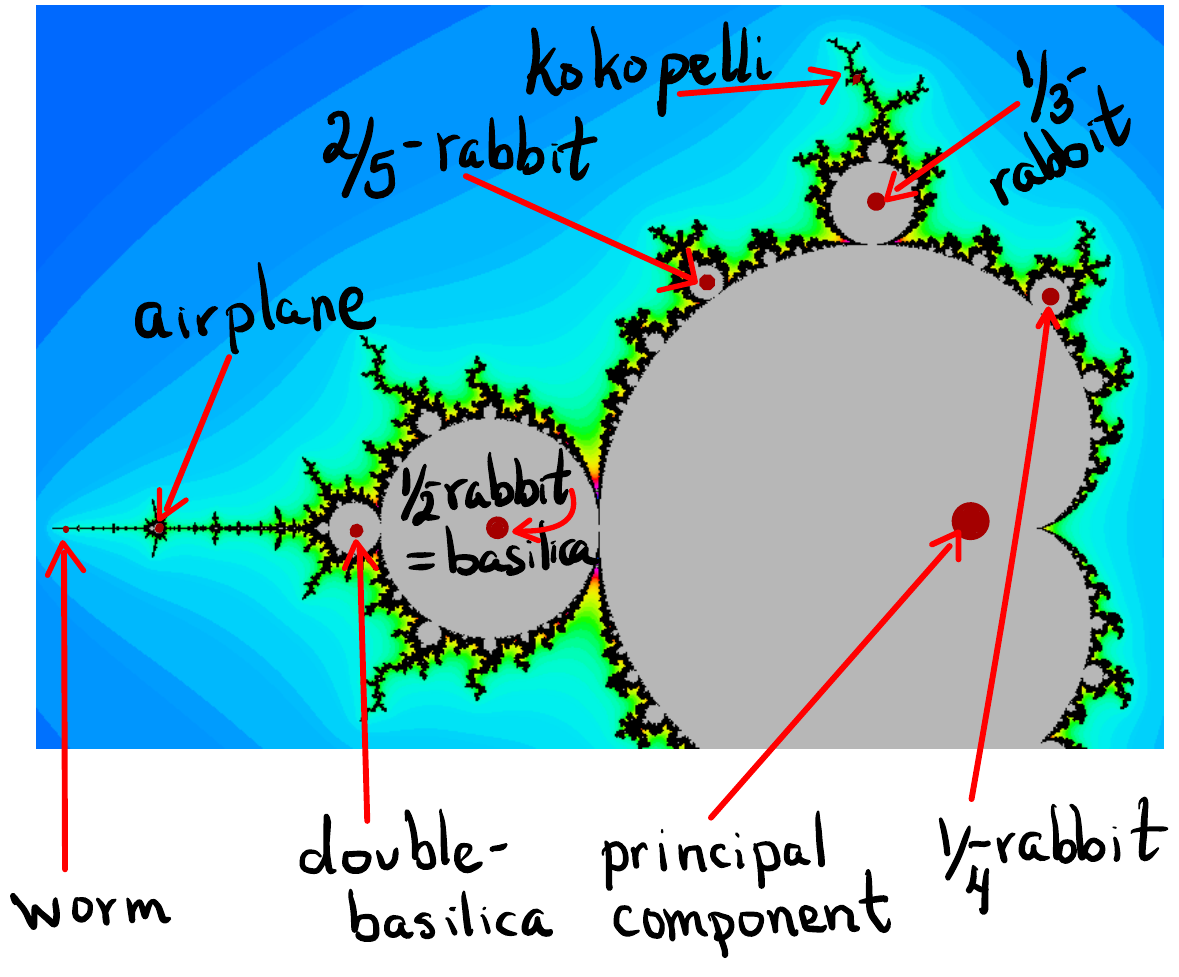}}
\vspace{-.3cm}
\caption[Mandelbrot set showing some critically periodic points]{\sf The
  classical Mandelbrot set $\M$, showing some of the critically
  periodic points that we will refer to. (Compare \cite[Table 5.15]{BKM}.
  Here the ``worm'' is the left-most period four component.) The immediate
  satellites of the main hyperbolic component are referred to as 
  \textbf{\textit{rabbit}} components. There is one rabbit 
  component for each rational number $0<n/p<1$.
\label{F-manpic}}
\end{figure}

\begin{definition}[\bf Zero-Kneading Regions] 
We will say that an escape region in $\cS_p$ belongs
to the \textbf{\textit{zero-kneading family}} if its kneading invariant 
$(0,\,0,\,\cdots 0)$ consists of $p$ zeros, or in other words if its
kneading sequence is $\overline 0$, of period one. 
In this case, Branner and Hubbard showed that 
the associated quadratic map determines the
escape region uniquely. Zero-kneading
regions are always invariant under 180 degree rotation
(\cite[Lemma 5.16]{BKM}). They are invariant under complex conjugation if and
only if the associated quadratic map has real coefficients.

In particular, for each of the components emphasized in Figure~\ref{F-manpic}, 
there is a corresponding escape region in some $\cS_p$. The main hyperbolic
component corresponds to the unique escape region in $\cS_1$. 
There is the ``basilica'' region in $\cS_2$. There are
the ``airplane'' and two ``rabbit'' regions in $\cS_3$. In 
$\cS_4$ there are the ``Kokopelli'' region  (see Figure~\ref{F-44})
and its complex conjugate, as well as the
``$(\pm1/4)$-rabbit'' regions, 
the ``double-basilica'' region, and  
the ``worm''. 
Compare  \cite[Ex.~5.14]{BKM}.

Every $\cS_p$ with $p>1$
contains two or more rabbit regions, corresponding to the period $p$ 
immediate satellites  of the Mandelbrot set.   For more about
zero-kneading regions see \cite{BM} (the sequel to this paper).\bsk

For $p\le 4$ the following statement is true: 
 \smallskip

\begin{quote}
{\sf Two escape regions with a common boundary point
always have different kneading sequences. 
In particular, no two zero kneading regions can share a boundary point.}
\end{quote}\medskip

\noindent 
It seems likely that this statement 
is true in every $\cS_p$. As a step in this direction:
\cite[Theorem 6.8]{AK} shows
that if the common boundary point is also a boundary  point of a hyperbolic
component of Type B$(m,n)$ then the $m\!$th component of the kneading
sequence always flips as we cross from one escape region to the other,
but no other component changes. (For the definition of Type  B$(m,n)$ see
Remark~\ref{R-HC}.) In \cite{AK}, Arfeux-Kiwi provide a similar but much more
complicated rule in the case of a Type A component. \medskip

{\bf Non-Zero Kneading.} In all cases with non-zero
kneading, since the kneading period $k$ is greater than one,
the period for the associated map is some proper divisor
$p/k<p$. In particular, in many cases we will have
$p/k=1$, which means that all Julia sets in the escape region will contain
countably many connected components which are 
topological circles. In particular, if $p$ is a
prime number then this will certainly happen in all cases with non-zero
kneading.
\end{definition}\smallskip

\begin{rem}[\bf From Julia Set to Kneading Invariant]\label{R-J2K}
  Any parameter ray $\fR$ of co-period $q$ lands at some parabolic map $\p$.
  Given the parameter angle $\theta$, and the 
  Julia set $J(F_\p)$, we will show that it is not difficult
to compute the kneading invariant for the escape region which contains $\fR$.
  (Compare \cite[Definition 5.3]{AK}; which also includes 
 an analogous discussion for
  parameter rays which are rational but not co-periodic.) \medskip

Recall from Theorem~\ref{L-para} 
that the dynamic ray of angle $\theta$ must land at
  the root point of the Fatou component which contains $2a$. Hence for
  $1\le j\le q$ the ray of angle $\theta_j=3^j\theta$ lands at the root point
  of the component which contains $F_\p^{\circ j}(2a) =F_\p^{\circ j}(-a)$.
  In particular, the ray of angle $\theta_q$ lands at the root point of the
  component $U=U(-a)$ which contains the free critical point $-a$.
  Note that the map $F_\p$
  from $U$ to $F_\p(U)$ is a 2-fold branched covering map
  (with reference to Equation$($\!\ref{E-Umaps}$)$).

  The parameter angle $\theta$
  of co-period $q$ gives rise to a
\textbf{\textit{triad}} $$(\theta,~~\theta_q,~~\widehat{\theta})$$
consisting of three equally spaced
dynamic angles, where $\theta$ and  $\widehat{\theta}$ are co-periodic, but
$\theta_q$ is periodic of period $q$. In the dynamic plane
  for the parabolic landing point,
  the $\theta$ ray lands at the root point of $U(2a)$, the $\theta_q$
  ray lands at the root point of $U(-a)$, and the $\widehat{\theta}$
  ray lands
  at the ``opposite'' boundary point of $U(-a)$, which maps to the root
  point under the 2-fold covering map $F_\p^{\circ q}$. (Compare
  Figure~\ref{F-s3per2jul}.)  
\end{rem}

\begin{figure} [htb!]
  \centerline{\includegraphics[width=2.4in]{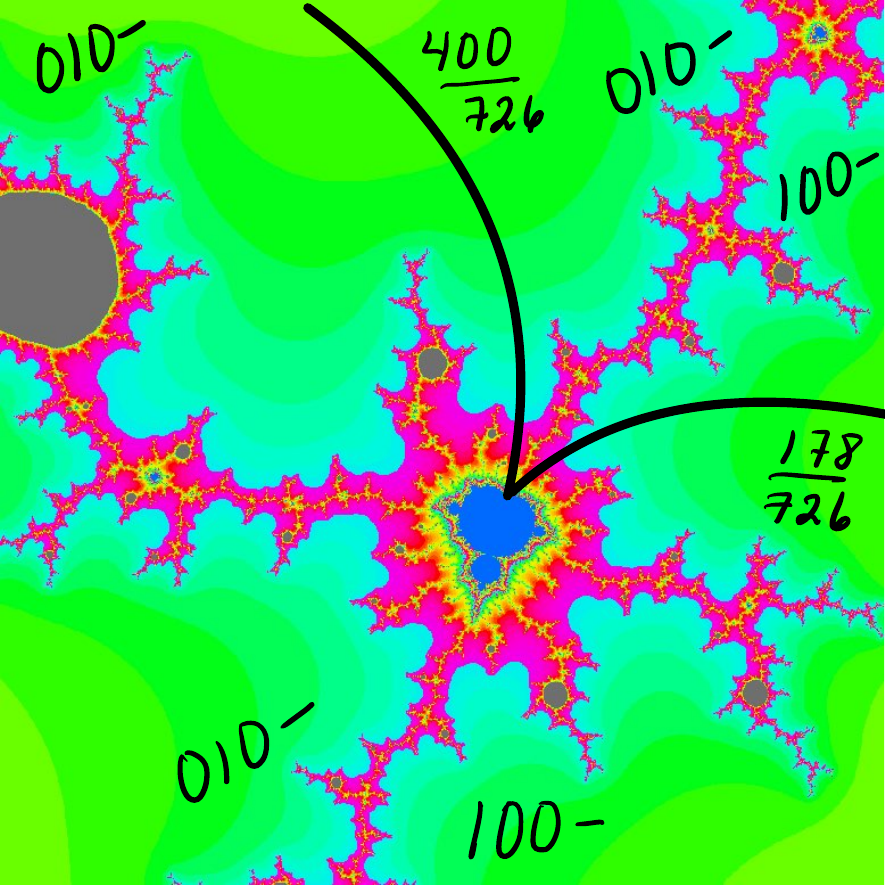}}\medskip
  \centerline{\includegraphics[width=2.6in]{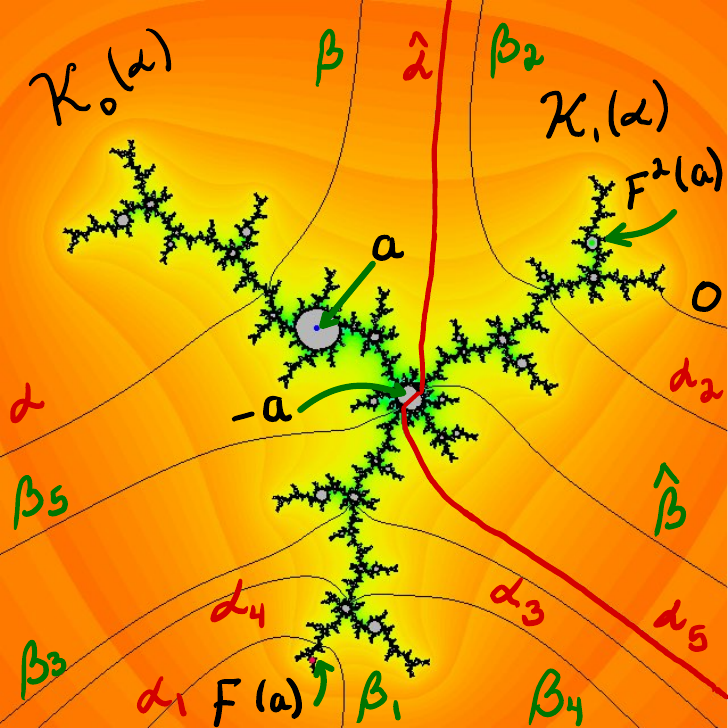}\quad
      \includegraphics[width=2.6in]{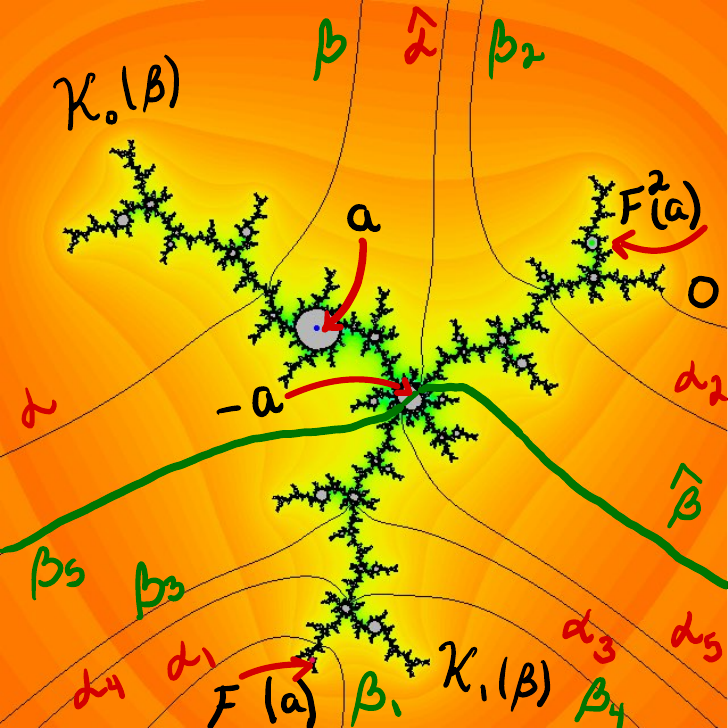}}
    \caption[Small Mandelbrot copy in $\cS_3$ with walls]{\label{F-J2K}\sf Above:
      A small Mandelbrot copy in $\cS_3$
      lying along the boundary between the $010-$ and $100-$ regions.
      Below: The dynamic plane for the parabolic landing point, with
      the $\alpha$-wall on the left and the $\beta$-wall on the right, where
      $\alpha$ has numerator $400$ and $\beta$ has numerator $178$. 
      The corresponding kneading invariants, $010$ on the left and $100$
      on the right, can be computed using Lemma \ref{L-J2K}.}      
  \end{figure}\medskip

  \begin{definition}[\bf Walls and Kneading Walls] \label{D-wall} 
    By a \textbf{\textit{wall}} across $\C$ we
    will mean a closed subset homeomorphic to $\R$. Any wall separates 
    $\C$ into two complimentary simply-connected open sets.\medskip

    Let $\p$ be a parabolic point of ray period $q$ in $\cS_p$, and let
    $\theta$ be the angle of some parameter ray which lands on $\p$. Then
    in the dynamic plane for the map $F=F_\p$, the dynamic ray of angle
    $\theta$ will land at the root point of the $U(2a_\p)$ component. The
     union of the $\theta_q$ and  $\widehat{\theta}$ dynamic rays,
    together with an arc within $U(-a_\p)$ joining their landing points,
    will be called the $\theta$-\textbf{\textit{kneading wall}} across the
    dynamic plane. This wall bisects the plane, dividing it into an open set
    $\K_0(\theta)$ containing the marked critical point $a_\p$, and an open
    set ${\mathcal K}_1(\theta)$ containing the marked co-critical point
    $-2a_\p$. 
  Every point of the marked periodic orbit must belong to one of these
  two open sets.\end{definition}

See Figures~\ref{F-s3per2jul}, \ref{f-s2par},  \ref{F-4wall}, 
for examples where the root point of $U(-a_\p)$ is a fixed point of $F$; and see
Figures~\ref{F-J2K} and \ref{F-43}-right for examples where it is not a fixed
point.

\begin{lem}\label{L-J2K}
The kneading invariant $(\kappa_1,\ldots\kappa_q)$ for the escape region which
contains this parameter ray of angle $\theta$ can be computed as follows. We
have $\kappa_j=0$ if and only if $F^{\circ j}_\p(a)\in \K_0$, 
and $\kappa_j=1$ if and only if $F^{\circ j}_\p(a)\in \K_1$.
\end{lem}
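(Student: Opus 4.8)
The plan is to unwind the definition of the kneading sequence and show that the combinatorial partition of $\C$ it records — "which lobe of the figure eight does $F^{\circ j}(a)$ land in?" — is the limiting form, as one travels along the parameter ray $\fR$ down to the parabolic landing map $\p$, of the partition of the dynamic plane by the $\theta$-kneading wall. Recall from Definition~\ref{D-kn} that for a map $F'$ in the escape region $\cE_h$ the figure eight equipotential $\g_{F'}(z)=\g_{F'}(-a_{F'})$ divides $\C$ into two lobes, one containing $a_{F'}$ and one containing $-2a_{F'}$, and $\kappa_j=0$ or $1$ according to which lobe contains $F'^{\circ j}(a_{F'})$. The figure eight self-intersects exactly at the escaping free critical point $-a_{F'}$, through which the two crashing dynamic rays of angles $\theta\pm1/3=\theta_q,\widehat\theta$ pass (Definition~\ref{D-cp}); and the co-critical point $2a_{F'}$ lies on the dynamic $\theta$ ray, just outside the figure eight. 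So already in the escape region the boundary between the two lobes is, combinatorially, the union of (parts of) the $\theta_q$ and $\widehat\theta$ rays joined at $-a_{F'}$ — precisely the escape-region analogue of the kneading wall.

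First I would set up this identification carefully in the escape region: choose $F'\in\fR$ with large potential, and observe that the outer boundary of the $a$-lobe of the figure eight, together with the small arc of the figure eight near $-a_{F'}$, is isotopic (rel the marked periodic orbit, which lies at potential $0$ and is therefore disjoint from a high equipotential) to the union of the $\theta_q$ ray, the $\widehat\theta$ ray, and a short arc joining their common landing/crash point $-a_{F'}$ — i.e. to a "wall" in the sense of Definition~\ref{D-wall}, with the $a$-lobe mapping to the side $\K_0$ containing $a_{F'}$ and the $-2a$-lobe to the side $\K_1$ containing $-2a_{F'}$. Since the marked orbit $\{a_{F'}, F'(a_{F'}),\ldots\}$ never touches this wall, each $\kappa_j$ is determined by which side of the wall contains $F'^{\circ j}(a_{F'})$, and the value does not change under isotopies of the wall that avoid the marked orbit.

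Next I would pass to the limit $F'\to F_\p=F_\p$ along $\fR$. By Lemma~\ref{L-land-stab} the periodic dynamic rays $\cR_{F'}(\theta_1),\ldots,\cR_{F'}(\theta_q)$ land at repelling-or-parabolic points varying continuously, and by Theorem~\ref{L-para} in the limit the $\theta_q$ ray lands at the root point of $U(-a_\p)$ and the $\widehat\theta$ ray lands at the "opposite" boundary point of $U(-a_\p)$; joining these landing points by an arc inside $U(-a_\p)$ produces exactly the $\theta$-kneading wall of Definition~\ref{D-wall}, which by hypothesis bisects $\C$ into $\K_0(\theta)\ni a_\p$ and $\K_1(\theta)\ni -2a_\p$. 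The marked periodic orbit of $F_\p$ is a repelling cycle (the marked critical point has period $p$, so its cycle is superattracting — wait: more precisely it lies in the interior of the marked-critical Fatou component, hence off the wall), so it stays a definite distance from the wall. The escape-region wall for $F'$ near $F_\p$ converges, outside $K(F_\p)$, to the two limiting rays, and the marked orbit points converge to their $F_\p$-counterparts; since both the orbit points and the wall vary continuously and the orbit stays off the limiting wall, the side of the wall occupied by $F'^{\circ j}(a_{F'})$ is eventually constant and equal to the side of the $\theta$-kneading wall occupied by $F_\p^{\circ j}(a_\p)$. Hence $\kappa_j=0\iff F_\p^{\circ j}(a_\p)\in\K_0$ and $\kappa_j=1\iff F_\p^{\circ j}(a_\p)\in\K_1$, which is the assertion.

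The main obstacle is the limiting step inside the filled Julia set: a priori the crashing rays $\cR_{F'}(\theta\pm1/3)$ need not converge to well-defined arcs reaching $-a_{F'}$ inside $K(F_\p)$ (this is exactly the Lavaurs-phase subtlety flagged in the Remark following Theorem~\ref{L-para}). The resolution is that we never need the arcs themselves — only which side of the wall a given marked orbit point lies on — and the marked orbit, being a repelling cycle of $F_\p$ at positive distance from $K(F_\p)$'s relevant boundary pieces, is detected already by the portion of the wall outside $K(F_\p)$, where convergence is uniform. So I would phrase the continuity argument entirely in terms of the complement of a neighborhood of $K(F_\p)$ (or more carefully, in terms of the external access structure), avoiding any claim about limits of the rays inside the filled Julia set. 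A secondary, more bookkeeping-level point is checking that the isotopy in the escape region genuinely sends the $a$-lobe to $\K_0$ and not to $\K_1$; this follows from tracking the co-critical point $2a$, which lies outside the figure eight on the $\theta$ ray and limits to the root point of $U(2a_\p)$, pinning down the orientation of the wall consistently on both sides of the limit.
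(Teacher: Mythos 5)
Your proposal is correct and follows essentially the same route as the paper's (much terser) proof: approximate $F_\p$ by a map $F$ on the ray $\fR$, observe that the $\theta_q$ and $\widehat\theta$ dynamic rays deform continuously outside $U(-a_\p)$ while crashing together at $-a$ inside it, and note that the marked orbit (which stays off the wall, being in the superattracting cycle of Fatou components disjoint from $U(-a_\p)$) also deforms continuously. Your extra care about the Lavaurs-phase issue — that only the portion of the wall outside $K(F_\p)$ is needed to determine which side the marked orbit points lie on — is a worthwhile clarification of the step the paper leaves implicit.
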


\begin{proof} (Compare \cite[Definition 5.3]{AK}.) 
The proof is easily supplied: As we approximate $F_\p$ closely by a map $F$
which lies on the parameter ray $\fR$, the  dynamic rays of
angle $\theta_q$ and $\widehat{\theta}$ will deform continuously
outside of the open
set $U$, but will crash together inside $U$. Since the marked periodic orbit
will also deform continuously, the conclusion follows.
\end{proof}\smallskip

As an example, applying this lemma to Figure~\ref{F-s3per2jul}, it follows
that the $\alpha=17/24$ 
parameter ray in an escape region of $\cS_3$ with kneading invariant $110$,
shares a landing point with the $\beta=19/24$ parameter ray in a region
with invariant $010$. See Figure \ref{F-t2s3}, upper left, for
the corresponding point in parameter space. 
\medskip

\begin{rem}[\bf Hyperbolic Components]\label{R-HC}
  The most basic distinction in our pictures of $\cS_p$ is between
 the \textbf{\textit{connectedness locus}}, consisting of
 maps with connected Julia set, and the various escape regions.
 The connectedness locus 
  contains four essentially different types of hyperbolic components. 
 (Compare \cite[\S1A]{M4}.) If $F$ is any map belonging to the  
hyperbolic component $H$, then $H$ belongs to one of four types, as follows.

\begin{quote}
\textbf{\textit{~Type A~}}
if both critical points of $F$ belong to the same Fatou component.
\smallskip

\textbf{\textit{~Type B~}} if the critical points belong to different
components of a common  cycle of Fatou components. More
explicitly, it has \textbf{\textit{Type B$(m,n)$}} if
$$\xymatrix@C=4em{a~~ \ar[r]<8pt>^{F_0^{\circ m}}& -a~~ \ar[r]<8pt>^{F_0^{\circ n}}& a}$$

  where $F_0$ is the center point of the hyperbolic component, and where
  $m+n=p$ with $m,\,n>0$.
\smallskip

\textbf{\textit{~Type C~}} if some forward image $F^{\circ n}(-a)$
belongs to the cycle of attracting Fatou components containing $a$, but the
free critical point $-a$ itself does not. \smallskip

\textbf{\textit{~Type D~}} if the critical points belong to disjoint
cycles of attracting Fatou components. More explicitly we will call this
\textbf{\textit{Type D$(q)$}}  if the free critical point $-a$ belongs
to a component of period $q$.
\smallskip
\end{quote}

\noindent  In our figures, the components of Type A, B and C are
brown, while those of Type D are blue. 
 The escape regions have lighter colors, tending towards green (or red) 
 near the border.  In all cases, the background is
 computer drawn, but any parameter rays are hand drawn approximations.

 There are only finitely many 
components of Type A and B in each $\cS_p$, while there are infinitely
many of Type C and D. The components of Type A, B, and especially D,
play an important role in the organization of each $\cS_p$. (See for
example  Section~\ref{s-near-para}.) 
Components of Type C usually appear as small attachments to the much
larger A or B components, or as isolated disks. For examples see 
Figure~\ref{F-J2K} (top-left),  or Figure~\ref{F-S2cheb} (left).
\end{rem} \medskip

\begin{rem}[\bf Root Points in Parameter Space]\label{R-roots2}
  By definition a \textbf{\textit{root point}} of a hyperbolic component in
  $\cS_p$ is a parabolic boundary point with minimal ray period. Here
  components of Type D have one root point; those  of Type A have 
  two root points; and  those of Type B have three root points.
As one example, the archetypal Type A component is the central
lemon shaped component in $\cS_1$, with root points at the upper and
lower tips. (Compare Figure~\ref{F-t2s1}.)
As far as we know, those of Type C can never have a parabolic boundary point.

Assuming Conjecture MC1 below,  
it follows that components of Type A and B always have attached components
of Type D. We believe that components of Type C never have such attachments.
(For details in the Type A and B cases
see  \cite{M1} and \cite[Theorem 6.6]{AK}. Similar arguments prove the
corresponding  statement for components of Type D.)
\end{rem}\medskip

\begin{rem}[\bf Copies of $\M$ in $\cS_p$: Three Basic Conjectures]\label{R-MandelC}
  McMullen \cite{Mc}\footnote{Special cases of \cite[Theorem 1.1]{Mc} were
    studied by Douady and Hubbard in \cite[pgs. 332-336]{DH3} and by Eckmann and
 Epstein in \cite{EE}.} has shown that families of rational maps often contain
many \hbox{hybrid-conjugate} copies of the quadratic Mandelbrot set $\M$. (Here
``\hbox{hybrid-conjugate}'' means that the copies are conformally isomorphic to
$\M$ throughout the interior;
  but only quasi-conformally isomorphic on the boundary.) In our case,
  we conjecture a somewhat more explicit statement.\msk

  \begin{description}
  \item[{\bf Conjecture MC1.}] {\sf Every parabolic point in $\cS_p$
      is contained in a unique maximal  \hbox{hybrid-conjugate} copy of
    $\M$. Similarly every hyperbolic component of Type D in $\cS_p$
is contained in a unique maximal \hbox{hybrid-conjugate} copy of $\M$.} \msk

\noindent It follows for example that every parabolic point is the root
point of a unique component of type D, and that parabolic points 
are everywhere dense on the boundary of each such component.

Now consider the \textbf{\textit{multiplier}} of a parabolic point, as defined
in Remark~\ref{R-pbm}.\msk

\item[{\bf Conjecture MC2.}] {\sf Let $F_\p$ be a parabolic map in a copy
 $M$ of $\M$. Then the multiplier of $F_\p$ is equal to the multiplier  of the
 corresponding parabolic map in $\M$.}

\noindent  As an example, this implies that the multiplier of $F_\p$ is
equal to $+1$ if and only if $\p$ is the cusp point of its copy of $\M$.
\end{description}\msk

By the \textbf{\textit{period}} of a critically periodic map in $\cS_p$
we will mean the  period of its free critical point. By the
\textbf{\textit{period}} of a Mandelbrot copy $M$ we will mean the period
of the center of its main hyperbolic component.
\msk

\begin{description}
 \item[{\bf Conjecture MC3.}] {\sf If $F$ is the critically periodic center
  point of a component in $M$, then the period of $F$ is the product of
  the period of $M$, and the period of the corresponding critically periodic
  point in $\M$. (In particular if $M$ has period $q$, then the period  of the
  center point of any component of $M$ will be some multiple of $q$.) }
\end{description}

\end{rem}



\begin{rem}[\bf A Question]\label{R-MCquestion} If $F$ is the center point of
  a hyperbolic component contained in such a copy $M\subset\cS_p$, 
    how is the Hubbard tree of $F$ related to the Hubbard tree of the
    associated critically periodic quadratic map?\msk
    
     Giving a sufficiently precise answer to this question, it might be
    possible to give a constructive proof of Conjecture MC1. 
    But before discussing the question, we need a precise definition which is
    weak enough so that it makes sense to compare quadratic and cubic Hubbard
    trees. (See Appendix~\ref{a-embdtree}.)
\end{rem}
    \msk

\begin{figure}[htb!]
  \centerline{\includegraphics[width=3in]{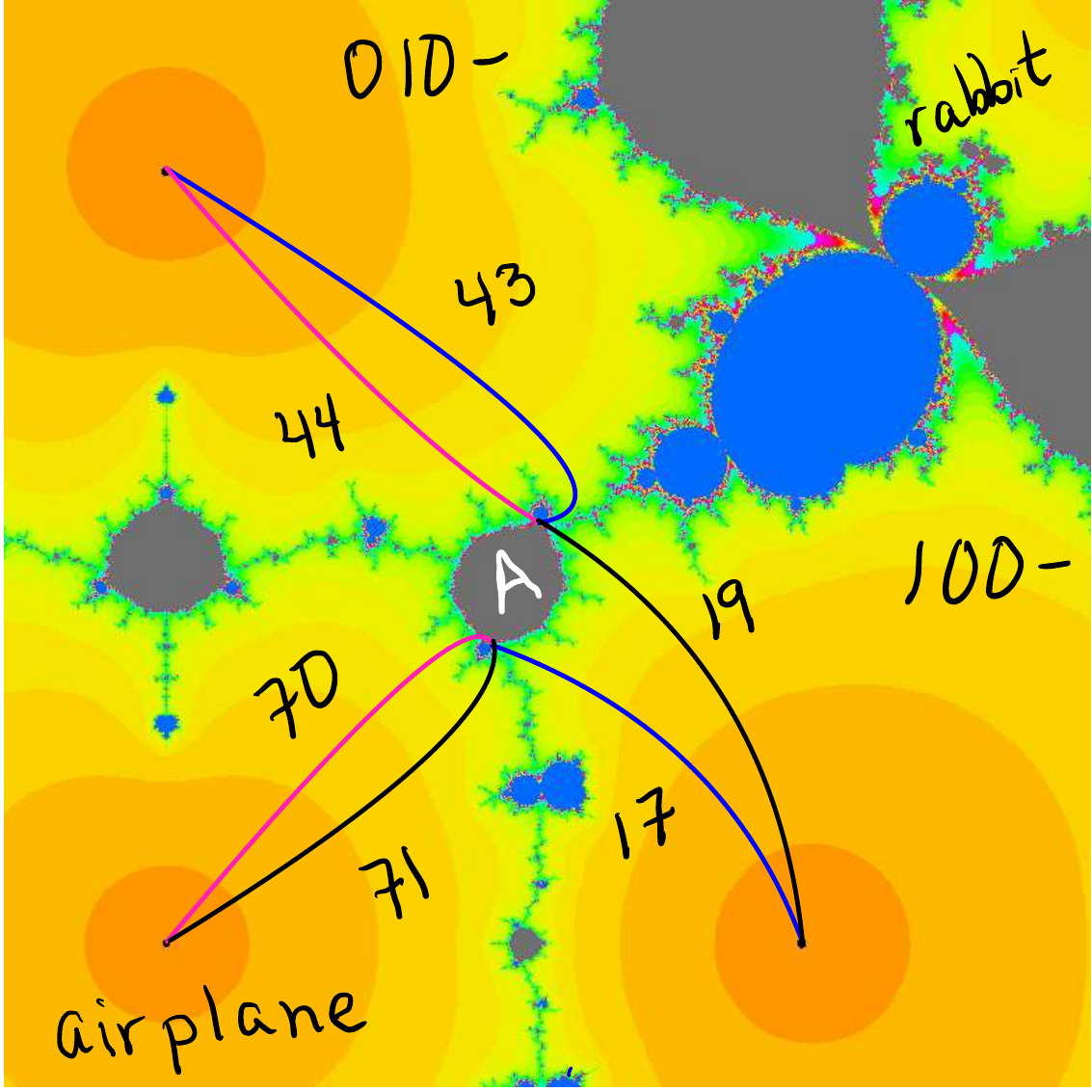}}
  \caption [Detail of $\cS_3$]  {\label{F-MC-crash}
    \sf Detail of $\cS_3$. (Compare Figures~\ref{F-t3s3air-010} through
    \ref{F-air}.) Four different escape regions are visible.
    Here the large Mandelbrot copy crashes into a Type A component
    at the root point of its airplane
    component, so that the tip of the copy gets bent
    upward. It is interesting to note that the small D component
    at the bottom of the A component is the dual of the ``airplane'' component
    at the top, so that the Julia sets of the center points are the same,
    but the roles of $+a$ and $-a$ are interchanged. Here twin 
    rays have been given the same color. (Compare Remark~\ref{R-dual}.) All
    angles have denominator 78 =3\,d(3). (See Remark \ref{R-NC}.)}
  \medskip

 \centerline{\includegraphics[width=4.2in]{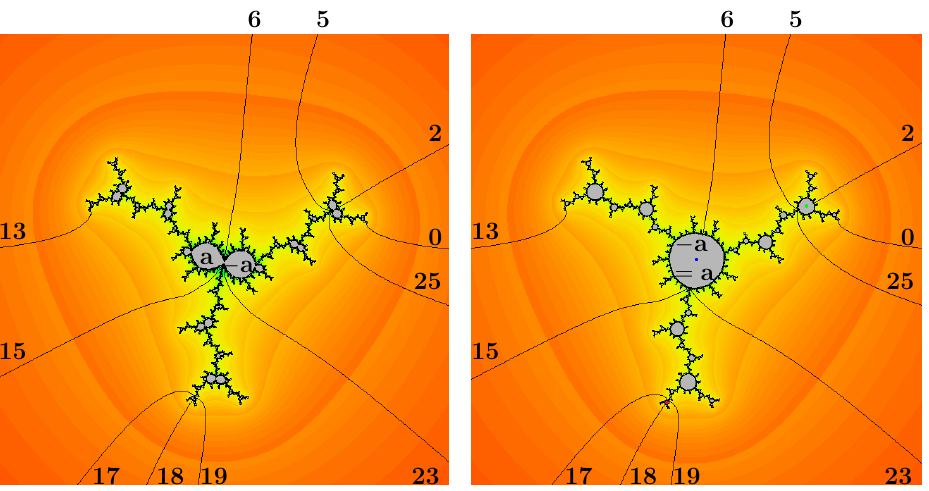}} 
\caption[Julia sets for a Type A component and the attached Type D
components.]
{\sf \label{F-centerA-airplane}  On the left, Julia set for the center of
  the Type D component which is  attached to the top of the 
  Type A component in the lower left of Figure~\ref{F-MC-crash}. The lower 
Type D component has the same Julia set but with $+a$ and $-a$ interchanged.
  On the right, the Julia set for the center of this type A component, with
  $+a=-a$.  All angles have denominator $d(3)=26$. Note that 
  the upper root point is the landing point of the $(43,44, 19)/78$ 
  parameter rays, while the lower root points is the landing point
  for the $(17, 70, 71)/78$ rays. Both of these triples correspond to the
  angles $(17, 18, 19)/26$ for the critical value root point of the Julia
  set on the left, where 
  $(17,\,18,\, 19) \mapsto (25,\,2,\,5) \mapsto(23,\,6,\,15)$.
}

 \end{figure}\msk

\begin{rem}[\bf Dual Critically Periodic Points in Parameter Space] 
\label{R-dual}
Suppose that both critical points of a cubic polynomial are periodic. 
We have a choice as to which one to mark if the map 
represents the center point of a 
hyperbolic component of Type B or D. (For center points of A components,
the two critical points coincide, so there is no choice.) If one choice
of marked point give rise to a point $F\in\cS_p$, then the other choice
gives rise to a \textbf{\textit{dual}} point $F^*\in \cS_q$, where 
$q$ is the period of the other critical point. Note that $F$ and $F^* $
are identical as maps from $\C$ to $\C$. In particular, the Julia set
$J(F)$ is identical to the Julia set $J(F^*)$. The only difference
is the choice of which critical point to mark. In the case of an A
component, we say that the center point is \textbf{\textit{self-dual}}.

If $(a,v)$ are the coordinates of a critically periodic point
in $\cS_p$, then the dual center point in $\cS_q$ has coordinates
$$          (a', v')~ =~ (-a,~ F(-a))~ = ~(-a,~ 4 a^3+ v).$$
Here $p=q$ in the case of a map of Type A, B or D$(p)$. However for a
component of type D$(q)$ in $\cS_p$ with $p\ne q$, the dual point will be
of Type D$(p)$ in the space $\cS_q$. \bigskip

\begin{figure}[htb!]
  \begin{center}
  \begin{overpic}[width=2.7in]{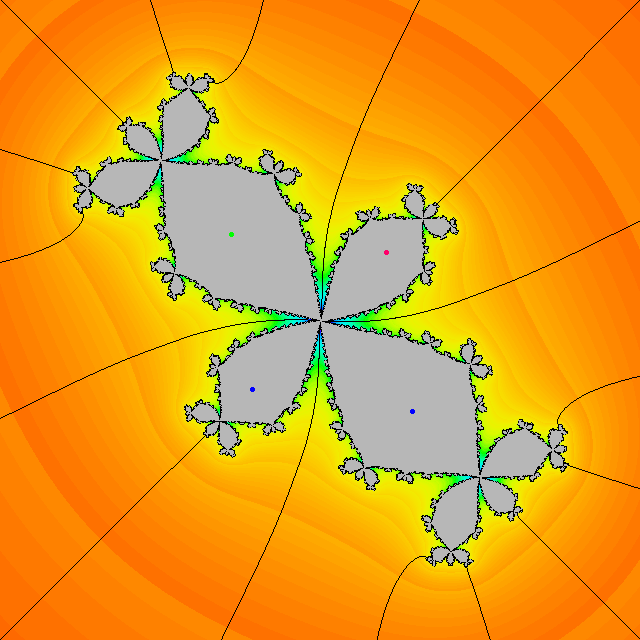}
      \put(200,78){\bf 0}
      \put(200,130){\bf 1/16}
      \put(200,190){\bf 2/16}
      \put(130,200){\bf 3/16}
      \put(78,200){\bf 4/16}
      \put(30,200){\bf 5/16}
      \put(-20,200){\bf 6/16}
      \put(-30,146){\bf 7/16}
      \put(-30,108){\bf 1/2}
      \put(-30,58){\bf 9/16}
      \put(-30,-13){\bf 10/16}
      \put(51,-13){\bf 11/16}
      \put(100,-13){\bf 12/16}
      \put(147,-13){\bf 13/16}
      \put(198,-13){\bf 14/16}
      \put(200,39){\bf 15/16}
      \put(128,71){${\mathbf a}$}
      \put(60,127){${\mathbf{-a}}$}
    \end{overpic}
    \medskip
  \caption[A weakly self-dual Julia set of Type B$(2,2)$]
{\label{F-selfdual}\sf A weakly self-dual Julia set of Type B$(2,2)$
  in $\cS_4$. 
  Here the two largest components are centered at $+a$ and $-a$. 
  The 180 degree rotation
  of this set interchanges the orbits of $+a$ and $-a$. Note that 
  $~~1/16\mapsto 3/16\mapsto 9/16\mapsto 11/16\mapsto 1/16$.}
\end{center}
\end{figure}

For centers of Type~B$(r,s)$ (where $r,~s>0$ and $r+s=p$), the dual is of
Type~B$(s,r)$. In the special case $r=s=p/2$, the component is 
\textbf{\textit{weakly self-dual}} in the sense that the 180 degree rotation
of the Julia set interchanges the two critical points $a$ and $-a$.
(Compare Figure \ref{F-selfdual}.
The involution $ ~ (a,v) \leftrightarrow (a', v')~$ has an actual fixed point
only for the center of an A component.)

The center of a D component can never be self-dual. For this 
would imply that it has two different root points, which is impossible.
(Compare Remark \ref{R-roots2}.)

The self-duality of an A component may seem uninteresting, but in fact
it has significant consequences. According to Conjecture MC1, each of the
two root points is also the root point of a component of type D.
As we pass from one root point 
to the opposite one along a path through the center, the critical
points $+a$ and $-a$ cross through each other and are interchanged.
As a consequence the D components at one root point are dual to
the D components at the other one. (See for example Figures~\ref{F-MC-crash}
and \ref{F-centerA-airplane}.) 

For other examples of this duality see Remark~\ref{R-dual-ex}, as well as
the many examples in \cite{BM}. 

Recall from  Definition~\ref{D-cp} that every co-periodic angle $\theta$ 
has a unique \textbf{\textit{twin}} co-periodic angle of the form
$\theta\pm 1/3$. If two components
 $H$ and $H^*$ are dual to each, then the angles of parameter
 rays which land on a root point of $H$ 
 are twins of the angles of rays which land on a root point of 
$H^*$. In particular, if $H$ is self-dual,
then there are parameter rays with twin angles landing on the two root
points of $H$.

This provides an easy way to distinguish between components of Type A and
Type B in any parameter picture in which ray angles are provided: Choose
any ray landing at a root point. There is a twin ray landing on the 
opposite root point of the same component if and only if it has Type~A.


\end{rem}

\setcounter{lem}{0}
\section{Tessellations and Orbit Portraits.}\label{s-tess}
For each integer $q\ge 1$ we will describe a partition of the smoothly
compactified 
parameter curves $\overline\cS_p$ which controls the behavior of
dynamic rays of period $q$ for the associated Julia sets.
(Compare Definition \ref{D-op} and Theorem \ref{T-decomp}.)
\smallskip

   \begin{definition} \label{D-copdec} Let $q$ be any positive integer.
     The \textbf{\textit{period $q$ tessellation}} of the Riemann surface
     $\overline\cS_p$ is constructed as follows. 
 The collection of all parameter rays of co-period $q$ in all escape regions,
together with their landing points, decomposes $\cS_p$ into a finite number
of connected open sets $~\F_k$, which we will call the 
\textbf{\textit{faces}} of the tessellation. (Note that there are only
finitely many escape regions in $\cS_p$,
and only finitely many parameter rays of co-period $q$ in each escape region.
(Compare \cite[Corollary 5.12]{M4}; and see Section~\ref{s-count} for
some specific numbers.) By definition, the \textbf{\textit{edges}}
of the tessellation consist of all parameter rays of 
co-period $q$; while the \textbf{\textit{vertices}} consist of:

\begin{enumerate}
\item[(1)] \textbf{\textit{ parabolic vertices:\/}}
  the landing points of these rays;  and 

\item[(2)] \textbf{\textit{ ideal vertices}} (or puncture points):
the points of $\overline\cS_p\ssm\cS_p$.
\end{enumerate}

\noindent Note that every edge joins a parabolic vertex to an ideal vertex.
We will use the notation  
$$\Tes_q(\overline\cS_p)~=~\big(\Tes^{(0)},~\Tes^{(1)},~\Tes^{(2)}\big)$$ for
this tessellation,
where $\Tes^{(0)}$ is the set of vertices, $\Tes^{(1)}$ is the set of edges,
and $\Tes^{(2)}$ is the set of faces.
\end{definition}
\medskip

\begin{rem}\label{R-simplycon} 
  Here we are stretching the usual concept of tessellation,
which assumes that all faces are simply-connected. Our tessellations
may well have faces which are not simply-connected. (Compare Figure~\ref{f2}.)
In fact we see  in  \cite{BM} 
that the tessellation $\Tes_q(\ocS_p)$
has non-simply-connected faces, and also has non-connected one-skeleton,
whenever $1<p\ne q$. On the other hand, it seems quite possible that,
in all cases with $p=q$, the one-skeleton
is connected, and every face is simply-connected.
However we have no idea how to prove such a statement.
\end{rem}

 These tessellations are extremely complicated for 
 all but the smallest $p$ and $q$. (Compare Table~\ref{t3} in
  Section~\ref{s-count}.)
\medskip

\begin{rem}\label{r-mpt} Each parameter ray in an escape region 
$\cE_j$ is a leaf in the smooth foliation of  $\cE_j$ by its parameter rays. 
Since parameter rays can never intersect each other, we could equally well
construct such a tessellation using any finite collection of co-periods 
$q$. Such \textbf{\textit{mixed period tessellations}} provide more
information by correlating behavior for several different periods. However
we discuss only the simpler single period tessellations, which together
provide all of the essential information, since a
 mixed period tessellation is completely determined by the corresponding
 single period tessellations. 
\end{rem}
\medskip

Note that rays from two different escape regions $\cE_i$ and $\cE_j$
may well land at a common boundary\footnote{Such a common boundary
$\partial\cE_i\cap\partial\cE_j$ (if non-empty) is always a compact subset of
the connectedness locus. We suspect that this set is always totally
disconnected. (See Figure~\ref{F-t2s3} or \ref{F-air} for examples.)} 
point in $\partial\cE_i\cap\partial\cE_j$. Similarly, a face  
$\F_k\in\Tes^{(2)}_q(\overline\cS_p)$ may well 
  intersect several different escape regions, and will always intersect 
the connectedness locus. (Compare Figure~\ref{F-S2rays}. )
\bigskip

\begin{rem}[\bf Parametrization of $\cS_p$]\label{R-coords}
To illustrate these tessellations, we must choose some parametrization 
of $\cS_p$,  of large open subsets of $\cS_p$. For $p\ne 3$ will  use
the local  ``\textbf{\textit{canonical coordinate}}'' $\tb$ around any point
of the curve $\cS_p$. (See 
\cite[Section 2]{BKM} or \cite[Appendix~D]{BM}.)
 On the other hand, in the special case of $\cS_3$,
it is more useful to use a \textbf{\textit{torus coordinate}}, that is a
suitably chosen affine coordinate on the universal covering surface of
$\ocS_3$, which is conformally isomorphic to $\C$.  (See \cite[Appendix~C]{BM}.)
In both cases, this parametrization is conformal, so that it preserves
the shapes of small regions in $\cS_p$.\end{rem}
\medskip
 
\begin{figure}[htb!]
\begin{center}
\begin{overpic}[width=2.25in, tics=10]{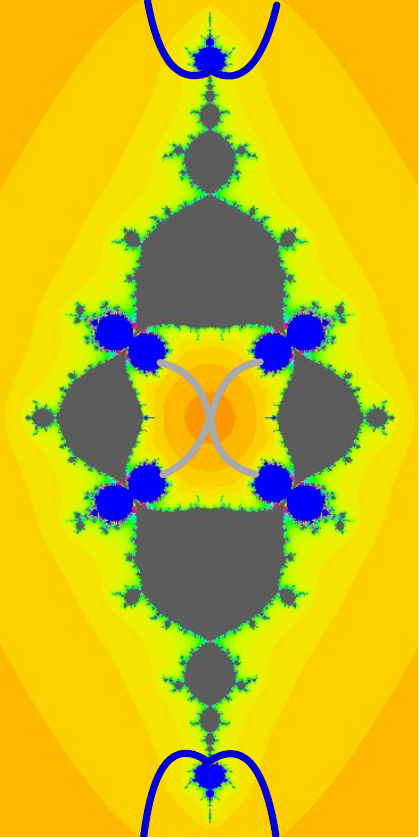}

\put(36,315){\bf 5/6}
\put(110,315){\bf 2/3}
\put(1,158){\bf 0}
\put(157,158){\bf 1/2}
\put(35,10){\bf 1/6}
\put(110,10){\bf 1/3}
\put(55,168){\bf 1/6}
\put(87,168){\bf 1/3}
\put(55,148){\bf 5/6}
\put(87,148){\bf 2/3}
\end{overpic}\qquad\qquad\begin{overpic}[width=1.9in, tics=10]{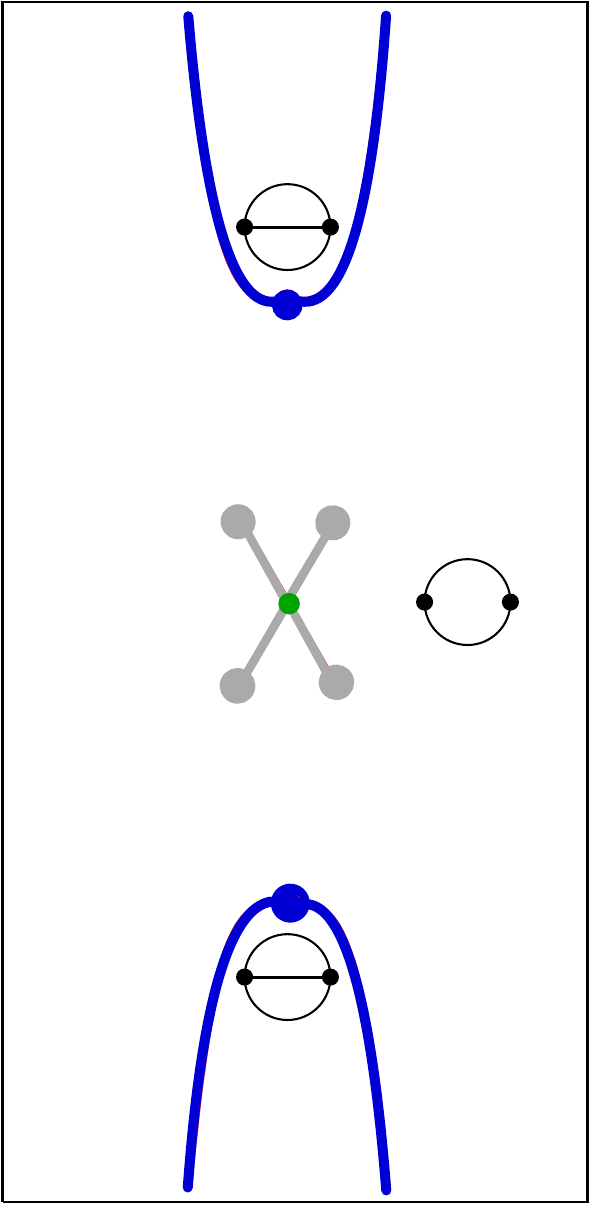}
\put(58, 265){$\F_1$}  
  \put(21,265){\bf 5/6}
  \put(94,265){\bf 2/3}
 \put(58,10){$\F_3$}
\put(21,10){\bf 1/6}
\put(94,10){\bf 1/3}
\put(15,180){$\F_2$}
\put(32,150){\bf 1/6}
\put(82,150){\bf 1/3}
\put(32,120){\bf 5/6}
\put(82,120){\bf 2/3}
\end{overpic}
\caption[$\Tes_1(\overline\cS_2)$]{\textsf{On the left, the curve $\cS_2$,
    using the canonical  
 coordinate  $\tb$. The inner escape region with kneading
invariant $(1,0)$ forms  
 a neighborhood of the ideal vertex at the center of the figure, while
 the outer ``basilica'' escape  region with kneading invariant $(0,0)$
is a neighborhood of the ideal vertex at infinity.
 Both regions have multiplicity $\mu=1$. The four outer rays of 
 co-period one divide the plane into three regions; but the four inner rays
 do not further separate the plane. (These rays are colored blue and gray
 respectively. Compare Definition \ref{D-3E}.) 
 On the right,  a cartoon showing the resulting tessellation
 $\Tes_1(\overline\cS_2)$.  
 The orbit portrait associated with each face is indicated by a circle,
 with the points of angle zero and $1/2$ joined  whenever the corresponding
 rays have a common landing point. The six large 
 dots represent parabolic vertices, while the green dot in the center
 represents one of the two ideal points. (The other ideal point is at
 infinity.) Note that the middle  
 face intersects both of the escape  regions. 
 \label{f2}
}}
\end{center}
\end{figure}

\begin{ex} The space $\cS_1$ is isomorphic to $\C$, parametrized by
the canonical coordinate $\tb=a$, where $a$ is the marked critical point. The
space $\cS_2$ is isomorphic to $\C\ssm\{0\}$, parametrized by
$$\tb~=~ \frac{1}{3(v-a)}~,$$
where $v$ is the marked critical value, with $a\leftrightarrow v$.
See Figures~\ref{f2} and~\ref{F-S2rays}, where the 
inner escape region $\cE_{\sf in}$ parametrizes maps with kneading
invariant $(1,0)$, so that $a$ and $F(a)=v$ lie in different
lobes  of the figure eight curve of Figure~\ref{Faa-1}. Every nontrivial
component of the filled 
Julia set is a topological copy of the closed unit disk
(Compare Definition \ref{D-kn}.) 
 The outer escape region
 $\cE_{\sf out}$ parametrizes maps with kneading invariant
$(0,0)$, so that the orbit of $a$ lies entirely in  
one lobe of the figure eight. In this case every nontrivial
component of the filled  Julia set is a copy of the
basilica.\footnote{Compare Remark \ref{R-Mand}, as well as 
  \cite[Table~6.8 and Figure~9]{BKM}.}\medskip

 For $p>2$ the canonical coordinate
can only be defined as a local coordinate, since it ramifies at most ideal
points. (See  \cite[Appendix~C]{BM} for further discussion.)
\end{ex} \medskip

\begin{rem}[{\bf Symmetries}]\label{R-sym}
  Let $\G$ be the commutative group consisting of the four transformations
  $$\xymatrix{z~ \ar[r] & ~\pm z} \qquad {\rm and} \qquad
  \xymatrix{z~ \ar[r] & ~\pm \overline z}$$
  of the plane of complex numbers. Thus $\G$ consists of three involutions,
  which we can describe as:

  \begin{itemize}

\item[$\bullet$] \textbf{\textit{up-down reflection}}
  $\xymatrix{z~ \ar@{<->}[r] & ~\overline z}$, 

\item[$\bullet$] \textbf{\textit{left-right reflection}}
  $\xymatrix{z~ \ar@{<->}[r] &  ~-\overline z}$, and

\item[$\bullet$] \textbf{\textit{180 degree
      rotation}} $\xymatrix{z~ \ar@{<->}[r] & ~-z}$;
  \end{itemize}

\noindent    together with their product which is
  the identity transformation. Note that two of these four transformations
  are holomorphic and two are antiholomorphic.

  This group $\G$ acts naturally on each $\cS_p$. If we think of $\cS_p$
  as an affine variety with coordinates $(a,v)$, then the action of a group
  element $g$ is given by $g(a,v)=\big(g(a),\,g(v)\big)$. On the other hand
  if we think of the elements of $\cS_p$ as polynomial maps $F:\C\to\C$, then
  it is given\footnote{If $F(z)=z^3+\alpha z+\beta$, then conjugating by
    $z\mapsto\overline z$ sends $F$ to 
    $z\mapsto z^3+\overline\alpha z+\overline\beta$; while 
 conjugation by $z\mapsto -z$ sends it to $z\mapsto z^3+\alpha z-\beta$.}
  by $F\mapsto g\circ F\circ g$.

  For the associated Julia sets the appropriate formula is
  $$J(g\circ F\circ g)~=~g\big(J(F)\big)~.$$
  (Compare  Figure \ref{F-4para}.)   For either the angles of dynamic
  rays in a Julia set or the co-critical angles of a parameter ray in
  $\cS_p$, the corresponding action is given by

  \begin{quote}
    $\theta \mapsto -\theta$ \qquad \quad  for up-down reflection,

   $\theta\mapsto 1/2-\theta$ \quad  for left-right reflection, and

    $\theta\mapsto 1/2+\theta$ \quad for 180 degree rotation;
  \end{quote}
  \noindent where $\theta\in\R/\Z$.\medskip

  {\bf Note.} All of the escape regions we are familiar  with have at
  least one non-trivial symmetry, so that there is at most one other
  region in $\ocS_p$ which is a holomorphic or antiholomorphic copy.
  However it seems quite possible that for higher values of $p$ there exist
  escape regions with no symmetries other than the identity map,
  so that there could be four distinct regions which are copies of each other.
\end{rem}\medskip

The parameter  $\tb$ for $\cS_1$ or $\cS_2$, or for the universal 
covering space of $\ocS_3$ will always be chosen so that the action of $\G$ on
this space  corresponds to its action on the complex number  $\tb$.
For $p>3$, a similar  statement is true for the local parametrizations.
These automorphisms always preserve the tessellations.

\begin{figure}[htb!]
\begin{center}
  \begin{minipage}{2in}
 \begin{overpic}[width=2in]{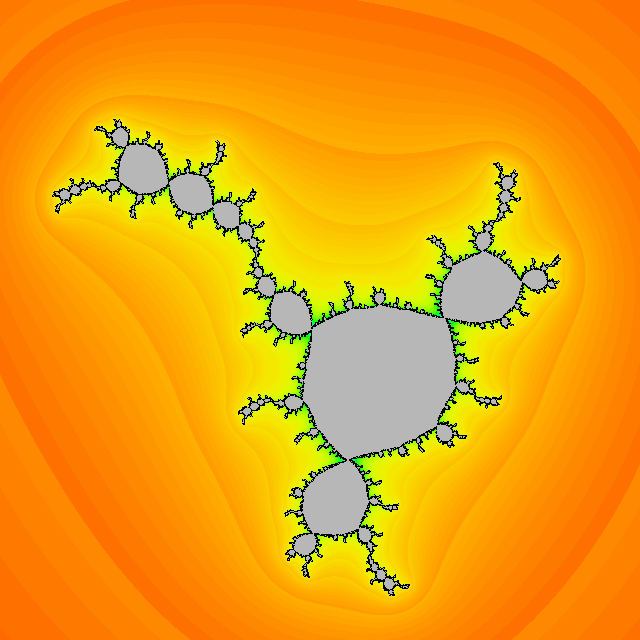}
\put(80,55){$a$}
\put(60,84){$-a$}
\end{overpic}
\end{minipage}\quad
\begin{minipage}{2in}
\begin{overpic}[width=2in]{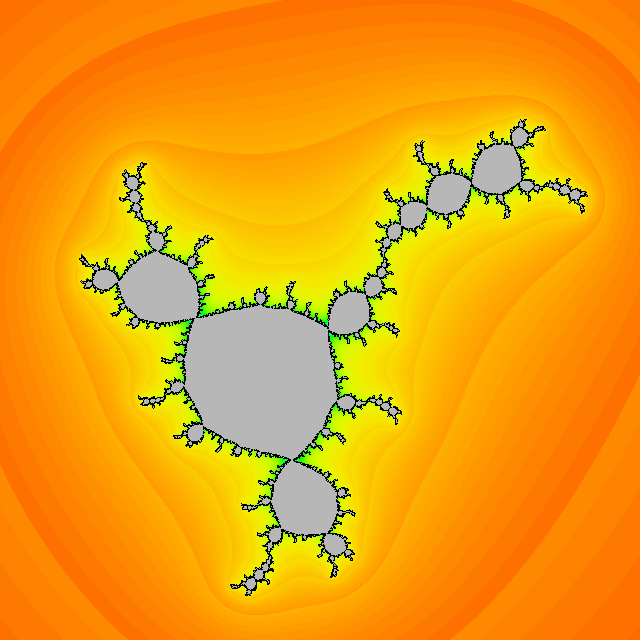}
  \put(55,55){$a$}
  \put(65,84){$-a$}
\end{overpic}
\end{minipage}
\smallskip

\begin{minipage}{2in}
\begin{overpic}[width=2in]{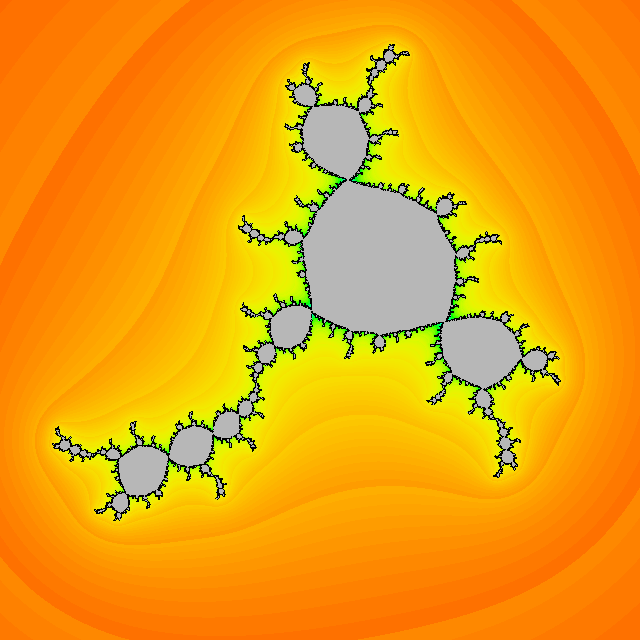}
\put(80,82){$a$}
\put(37,55){$-a$}
\end{overpic}
\end{minipage}\quad
\begin{minipage}{2in}
\begin{overpic}[width=2in]{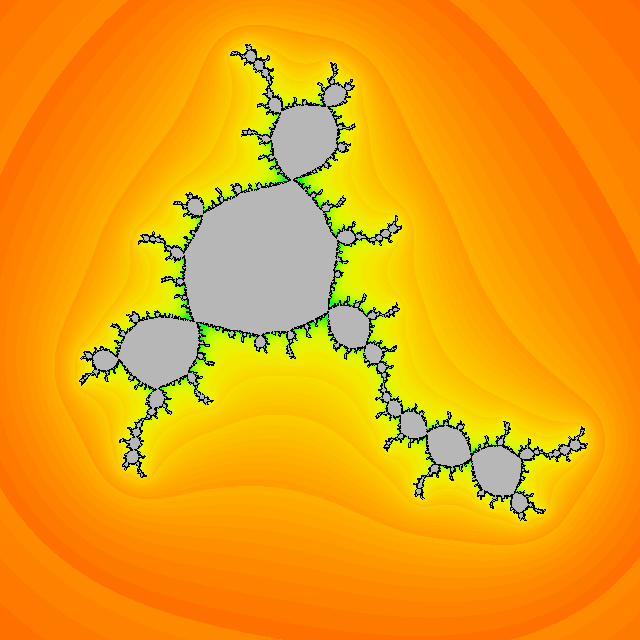}
\put(55,83){$a$}
\put(65,52){$-a$}
\end{overpic}
\end{minipage}
\end{center}
\caption[Four parabolic maps in $\cS_2$]{\textsf{For most maps $F\in\cS_p$,
    the four images $g\circ F\circ g$
  under conjugation by elements of $\G$ are distinct, 
  and the four corresponding Julia sets
$g\big(J(F)\big)$ are also distinct. This figure shows a typical example,
consisting
of Julia sets for  four Misiurewicz maps in $\cS_2$ which are on the
boundary of the basilica escape region. (See Figure~\ref{F-S2juls} in
Section \ref{s-rays} for more detail of the lower left figure, and see Figure
\ref{f2} or \ref{F-S2rays} for the full parameter space.)
 The group $\G$ permutes these four maps, and correspondingly
 permutes their Julia sets. Thus this entire figure is invariant under the
 action of $\G$. 
 \label{F-4para}}}
\end{figure}

\begin{figure}[htb!]
  \begin{center}
   \begin{overpic}[width=2in, tics=10]{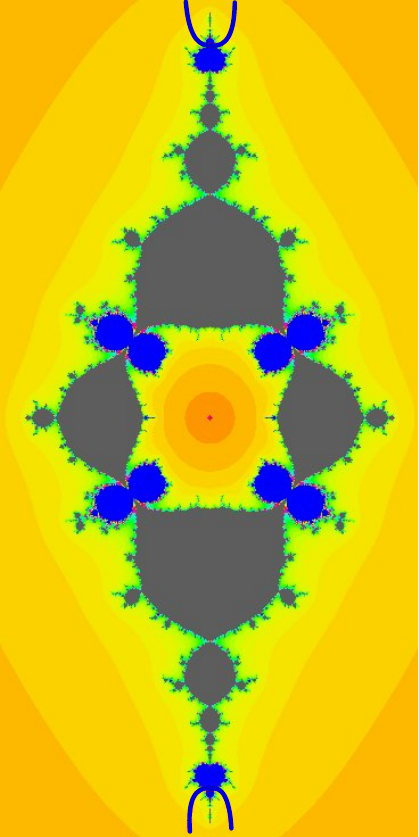}
\put(52,5){\bf 5}
\put(86,5){\bf 7}
\put(86,278){\bf 17}
\put(45,278){\bf 19}
\end{overpic}\quad\begin{overpic}[width=3.6in, tics=10]{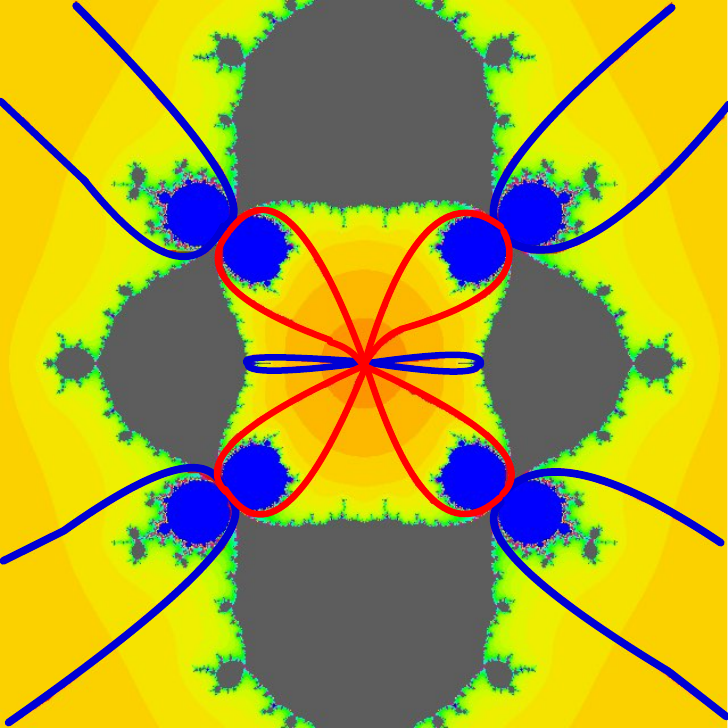}
\put(4,70){\bf 1}
\put(25,5){\bf 2}
\put(220,10){\bf 10}
\put(235,82){\bf 11}
\put(230,178){\bf 13}
\put(200,237){\bf 14}
\put(42,247){\bf 22}
\put(3,192){\bf 23}
\put(92,134){\bf 1}
\put(99,148){\bf 2}
\put(110,172){\bf 5}
\put(138,168){\bf 7}
\put(142,147){\bf 10}
\put(155,135){\bf 11}
\put(157,117){\bf 13}
\put(144,106){\bf 14}
\put(130,87){\bf 17}
\put(108,78){\bf 19}
\put(105,108){\bf 22}
\put(90,117){\bf 23}
\end{overpic}
\end{center}
\caption[${\rm Tes}_2(\ocS_2)$]{\textsf{Showing the $24$ parameter rays of
    co-period $2$ for $\cS_2$
    (twelve inside and twelve outside).
     Four of these rays are shown on the left, and the remaining twenty
     on the right. These rays divide the plane into sixteen period $2$
  faces.  Compare Figure \ref{f-po}. For the distinction between red
  and blue rays, see Definition \ref{D-3E}.  All angles with denominator
  $3d(2)=24$.
     \label{F-S2rays}\vspace{-.3cm}}}
\end{figure}

\begin{figure}[htb!]
  \begin{center}
    \begin{overpic}[width=3.3in, tics=10]{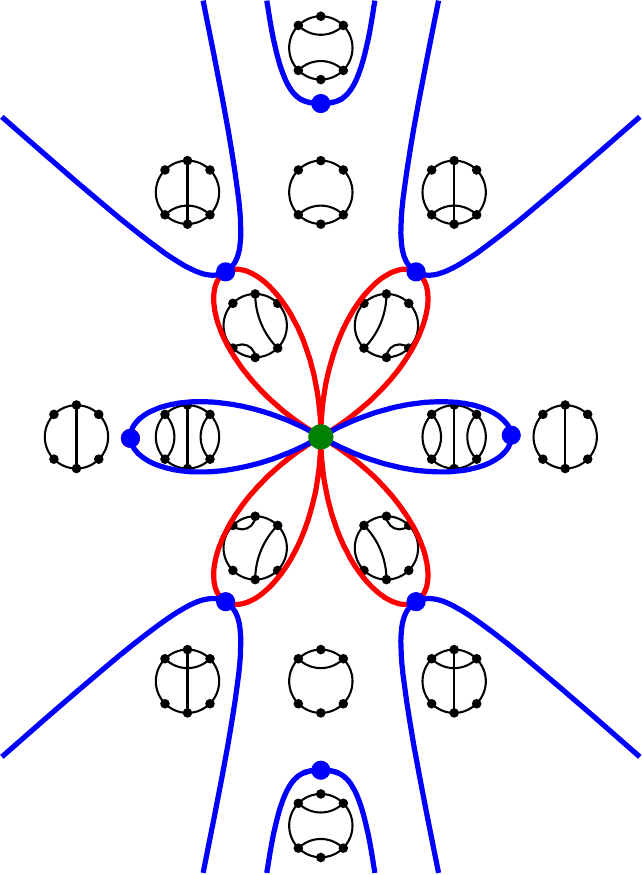}
\put(94,-10){\bf 5}
\put(135,-10){\bf 7}
\put(132,330){\bf 17}
\put(87,330){\bf 19}
\put(7,60){\bf 1}
\put(70,30){\bf 2}
\put(158,30){\bf 10}
\put(220,60){\bf 11}
\put(205,243){\bf 13}
\put(156,280){\bf 14}
\put(65,280){\bf 22}
\put(21,241){\bf 23}
\put(52,176){\bf 1}
\put(70,200){\bf 2}
\put(100,220){\bf 5}
\put(128,215){\bf 7}
\put(159,200){\bf 10}
\put(160,178){\bf 11}
\put(160,138){\bf 13}
\put(158,118){\bf 14}
\put(125,95){\bf 17}
\put(106,105){\bf 19}
\put(65,118){\bf 22}
\put(55,138){\bf 23}
\end{overpic}
\end{center}
\caption[Cartoon of  $\Tes_2(\ocS_2)$]{\label{f-po}\textsf{Cartoon showing the
    tessellation
    $\Tes_2(\overline\cS_2)$, together with the period $2$ orbit portrait
    associated with each of the sixteen faces.  Each of the co-periodic
    angles shown in this cartoon has denominator $24= 3\,d(2)$. 
In this particular case, 
    each face has a well defined critically periodic center point, and two faces
    have the same orbit portrait only if their center points have the same
  Julia set. (Compare Remark \ref{R-dual-ex}.)}}
\end{figure}

\begin{figure}[htb!]
  \centerline{\includegraphics[width =2.7in]{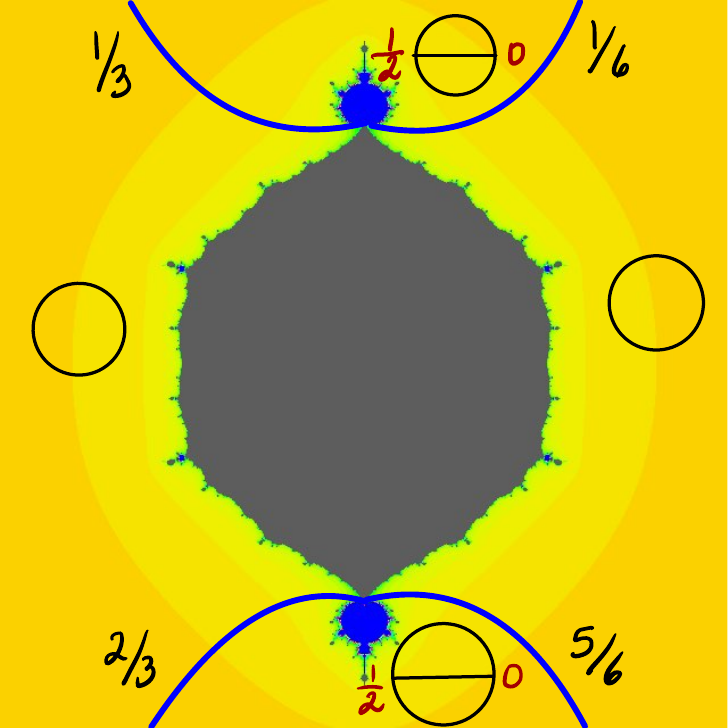}\quad\includegraphics[width =2.7in]{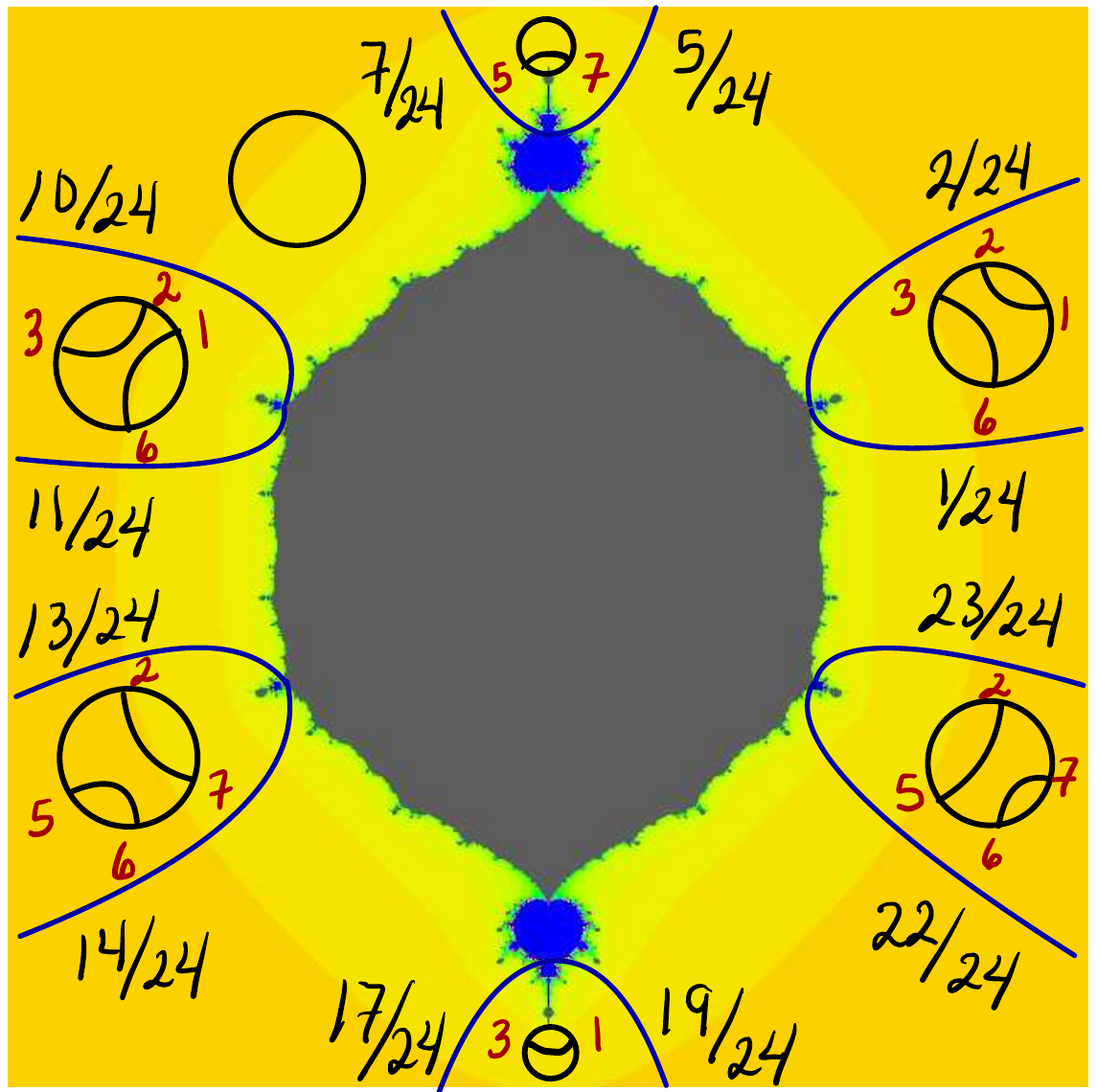}}
  \caption[Pictures of $\Tes_1(\ocS_1)$ and $\Tes_2(\ocS_1)$ ]{\sf On the left,
    a  picture of $\Tes_1(\ocS_1)$. Notice that
   the dynamic rays $0$ and $1/2$ land together for maps in 
   the two primary wakes.
   On the right, a picture of $\Tes_2(\ocS_1)$
   (with common denominator $d(2)=8$ for orbit portraits). In both cases, 
   the largest face containing the central region
   has trivial orbit portrait. 
   The orbit portraits on the right are identical 
 with those in the inner region of Figure \ref{f-po}, illustrating the
 duality between vertices of $\Tes_2(\ocS_1)$ and  
  $\Tes_1(\ocS_2)$.
\label{F-t2s1}}
\medskip

\centerline{\includegraphics[height=2.2in]{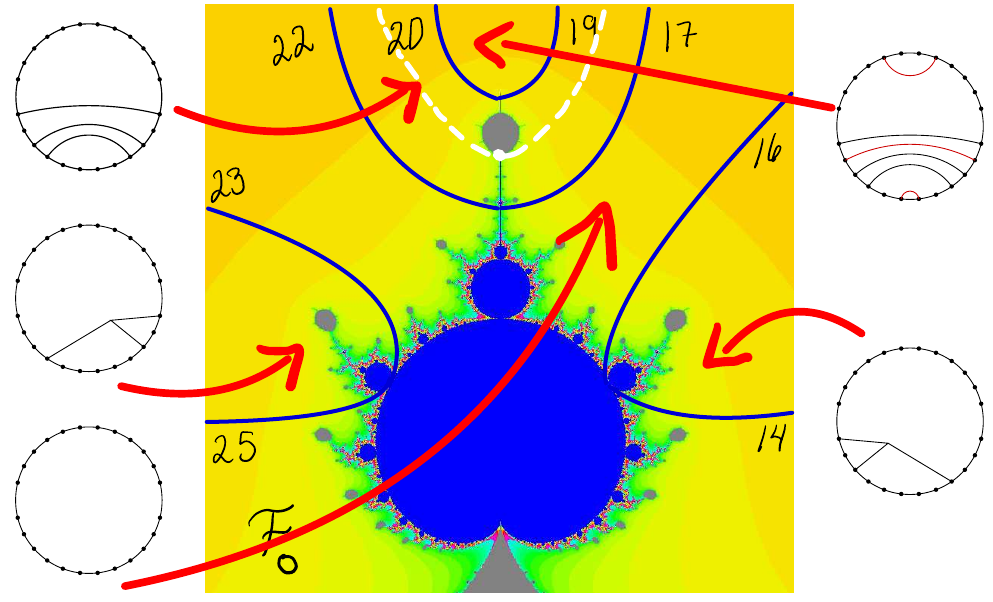}\qquad\includegraphics[width=1.8in]{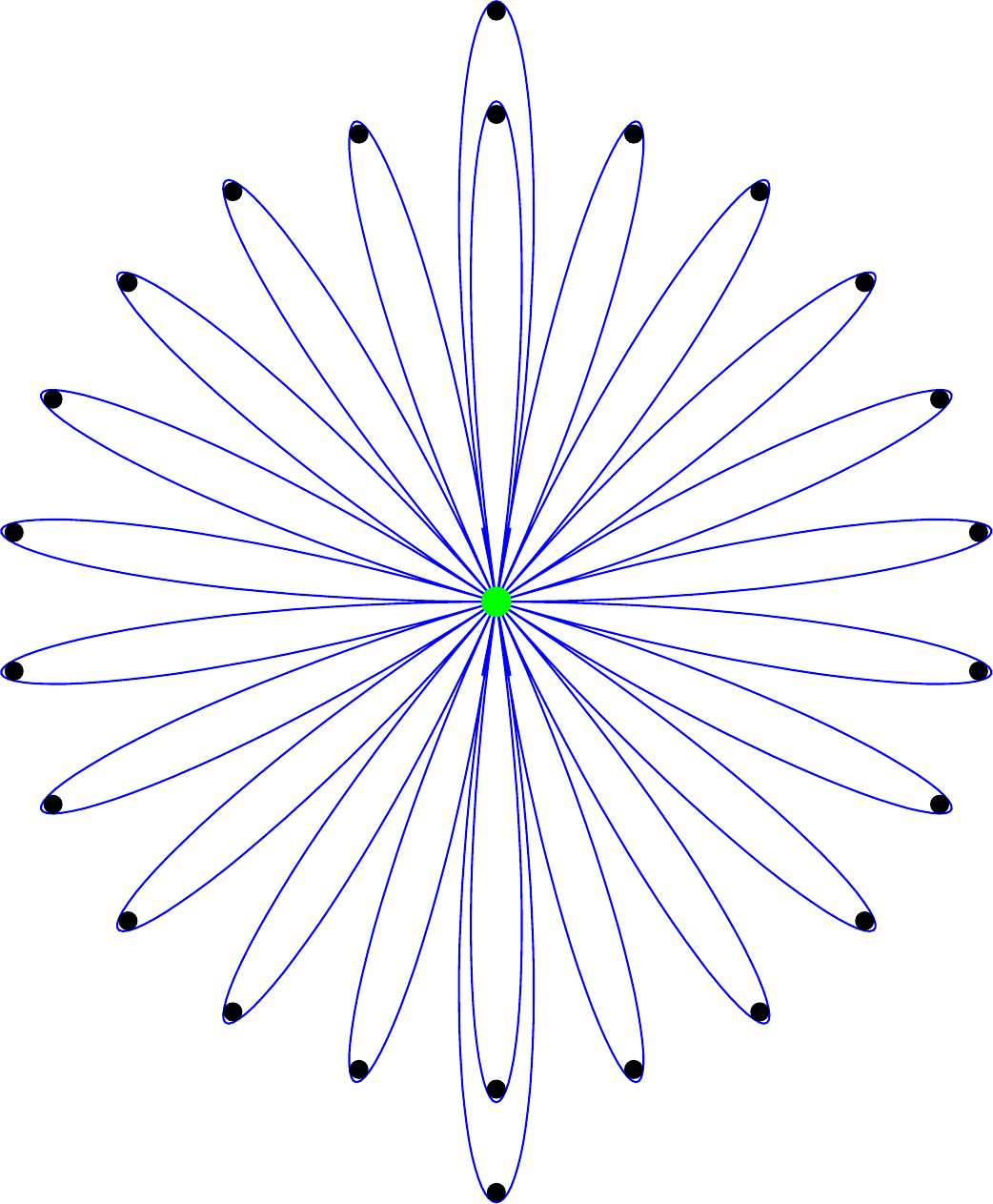}}
\caption[Pictures of $\Tes_3(\ocS_1)$]{\sf Left: The top part of the 
  tessellation $\Tes_3(\ocS_1)$
  showing five faces, together with their period $3$ orbit portraits. (The
common denominator is $3\,d(3)=78$ for parameter rays, and $d(3)=26$ 
 for dynamic rays.)
The rest of the connectedness locus in $\cS_1$ can be seen above in
Figure~\ref{F-t2s1}.
The Type D components at the base of the two top faces
are very small, and would be visible only under
substantial magnification. For the dotted white rays, see
Figure~\ref{F-cap-ara} and Remark~\ref{R-not-wake}.) \hfill\break
Right: An inverted picture of $\Tes_3(\ocS_1)$ 
with the ideal point at the center, showing only the $48$ parameter rays.
Conjecturally the same pattern of co-period $3$ rays landing together would
occur  in {\bf any} zero-kneading region. Compare Figures
\ref{F-air} and \ref{F-3rab} in Sections \ref{s-tess} and \ref{s-near-para}. 
 There is an analogous conjecture for 
any co-period $q$. Compare the outer part of Figure \ref{F-S2rays}
with Figure \ref{F-t2s1}-right for the case $q=2$. 
(See \cite{BM} for a more detailed discussion.)\label{F-t3s1}}
\end{figure}

\begin{definition}[{\bf Orbit Portraits}]\label{D-op} 
  Let $F\in \cS_p$ be a map such that every dynamic ray of period $q$
  lands at a (necessarily periodic) point of the Julia set. By definition 
the \textbf{\textit{period $q$ orbit portrait}} $\cO_q(F)$ is
the following equivalence relation between angles of period $q$ under tripling:

\begin{quote}\sf Two such angles $\phi$ and $\psi$ are equivalent, written 
$\phi\simeq\psi$,  if and only if the corresponding dynamic rays have a common
landing point in $J(F)$.\end{quote}\end{definition}

\noindent(Compare~\cite{M2}.) This definition makes sense for every
  $F$ in the  connectedness locus $\cC(\cS_p)$.
  It also makes sense and is independent of $F$ throughout 
  each  face $\F_k$ of the tessellation $\Tes_q(\overline\cS_p)$.  
  However, in the special case where $F$ belongs to a parameter ray of
  co-period $q$, the orbit portrait is not defined, since at least
  one period  $q$ dynamic ray must crash into a pre-periodic point and
  hence not land anywhere. In most cases, the orbit portrait
  will jump discontinuously as we cross such a ray. (Compare
  Corollaries~\ref{C-jump} and Theorem~\ref{T-OPjump}.) 

More generally we can consider \textbf{\textit{mixed period orbit
    portraits}}. Let $0<q_1<\cdots<q_k$ be a finite list of periods. 
For a generic $F\in\cS_p$ we can consider the associated orbit portrait
$\cO_{q_1,\cdots,q_k}(F)$ in which we allow dynamic rays for all periods
which are in the list. (Of course, dynamic rays of different period
can never have a common landing point; hence their angles
can never be equivalent.)
\medskip

Such an equivalence relation  between angles can be conveniently described
by providing an unordered list of all of the equivalence classes 
which contain more than one element. As in Remark \ref{R-NC} we will 
 write angles which have period
$q$ under multiplication by $3$ as fractions of
the form $n/d$ with common denominator $d=3^{q}-1$. 
As an example, if $q=2$ so that \hbox{$d(2)=8$} then  the angles with 
period precisely $q$ can be listed as
$1/8,\;2/8,\;3/8,\;5/8,\;6/8,\;7/8$. If the $1/8$ and $2/8$ rays land at
one point and the $3/8$ and $6/8$ rays land at a different point, then
we will write
$$\cO_2(F)~=~\{1/8\simeq 2/8,~3/8\simeq 6/8\}~. $$
In practice we will often abbreviate the right hand side 
by writing $\{1\simeq 2,~3\simeq 6\}/8$. 
If all four of these rays land at a single point,
we will write $\{1\simeq 2\simeq 3\simeq 6 \}/8$.

Such an orbit portrait can be described graphically by identifying
each fraction $n/d$ which has period $q$ under tripling
with the point $e^{2\pi i\, n/d}$ on
  the unit circle, and then joining two such points by a path within
  the unit disk whenever the corresponding dynamic rays land at a common
  point in the Julia set.  This can always be done so that two such
  paths intersect only when all of the associated dynamic rays land at
a common point. As examples, the two orbit portraits described in
the preceding paragraph corresponds to the two circle diagrams to 
the ``south-east'' in \autoref{f-po}. \smallskip

  Another way of expressing this condition is the following. By definition,
two or more disjoint compact subsets $A_j$ of the unit circle  are
\textbf{\textit{unlinked}} if we can choose disjoint, compact connected
sets $A'_j$ in the unit disk so that $A_j\subset A'_j$.
 In fact, if the $A_j$ are unlinked, then we can always choose
 $A'_j$ to be the smallest convex set which contains $A_j$.

 \begin{lem}\label{L-unlink}
A necessary property of any orbit portrait (whether single period
or mixed period) is that its various equivalence classes must be unlinked.
\end{lem}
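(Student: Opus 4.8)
The plan is to argue by contradiction with the Jordan curve theorem, after reducing to a pair of classes. First I would record the elementary reduction: a collection of pairwise disjoint finite subsets of the circle is \emph{unlinked}, in the sense defined just above, as soon as no two of its members are \emph{linked}, where two disjoint sets $A,B$ are linked if there are $\alpha,\alpha'\in A$ and $\beta,\beta'\in B$ occurring in the cyclic order $\alpha,\beta,\alpha',\beta'$; indeed, if no two members are linked, the convex hulls of the classes with at least two elements (single points are never interior to the convex hull of other points of the circle) are pairwise disjoint and serve as the sets $A_j'$. I would also recall two facts that make the argument uniform: distinct equivalence classes of an orbit portrait have distinct landing points by definition; and, under the hypothesis making $\cO_{q_1,\dots,q_k}(F)$ defined, each ray appearing in the portrait is a simple arc in $\C\ssm K(F)$ landing at a point of $J(F)$, distinct such rays are disjoint (distinct gradient trajectories of $\g_F$), and each is disjoint from $K(F)$.

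So suppose two distinct classes $A_1,A_2$ are linked, with witnesses $\theta_1,\theta_2\in A_1$ and $\psi_1,\psi_2\in A_2$ in cyclic order $\theta_1,\psi_1,\theta_2,\psi_2$; let $z_1\ne z_2$ be the common landing point of the $A_1$-rays, resp.\ of the $A_2$-rays. Choose a level $c>0$ large enough that $K(F)\subset\Omega_c:=\{\g_F<c\}$, that $E_c:=\partial\Omega_c$ is a Jordan curve, and that the four rays in question each meet $E_c$ exactly once, inside the region where $\fB_F$ is a univalent coordinate; since $\fB_F(z)=z+O(1/z)$, the four intersection points occur on $E_c$ in the same cyclic order as $\theta_1,\psi_1,\theta_2,\psi_2$. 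For each of the four angles, the truncated ray $\cR_F(\cdot)\cap\overline{\Omega_c}$ together with its landing point is a simple arc $\gamma$ in the closed topological disk $\overline{\Omega_c}$, running from a point of $E_c$ to a point of $\Omega_c$ and meeting $E_c$ only at that outer endpoint. Then $\Gamma:=\gamma_{\theta_1}\cup\gamma_{\theta_2}$ is a cross-cut of $\overline{\Omega_c}$ (a simple arc through $z_1$, interior in $\Omega_c$, endpoints on $E_c$), so by the Jordan curve theorem it splits $\Omega_c$ into two components $W_+,W_-$ and splits $E_c$ into two open arcs $I_+\subset\partial W_+$, $I_-\subset\partial W_-$.

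The crux — and the step I expect to need the most care — is to see that the two $A_2$-rays are forced to enter opposite components while ending at the same point. By the cyclic order, the outer endpoints of $\gamma_{\psi_1}$ and $\gamma_{\psi_2}$ lie in $I_+$ and in $I_-$ respectively; each of these arcs is connected, is disjoint from $\Gamma$ (the rays are pairwise disjoint, $z_2\notin\cR_F(\theta_i)$ since $z_2\in K(F)$, and $z_1\ne z_2$), and meets $E_c$ only at its outer endpoint, so its complement of that endpoint lies entirely in $W_+$, resp.\ in $W_-$. But $\gamma_{\psi_1}$ and $\gamma_{\psi_2}$ share the point $z_2\in\Omega_c$, hence $(\gamma_{\psi_1}\cup\gamma_{\psi_2})\ssm E_c$ is a connected subset of $\Omega_c\ssm\Gamma=W_+\sqcup W_-$ meeting both $W_+$ and $W_-$, a contradiction. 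Therefore no two classes are linked, and the collection of equivalence classes is unlinked. The argument is identical in the mixed-period case, since rays of different periods are still pairwise disjoint simple arcs landing in $J(F)$; the only genuine subtleties are the behaviour at $\infty$, dealt with by truncating at $E_c$, and tracking which component each arc enters, dealt with by the order-preservation of $\fB_F$ near $\infty$.
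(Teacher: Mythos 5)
Your argument is correct and complete: the reduction to pairwise non-linking via convex hulls, the truncation at a high equipotential $E_c$ where $\fB_F$ orders the rays, and the crosscut/Jordan-curve contradiction forcing the two $\psi$-arcs into opposite components while sharing the landing point $z_2$ are all sound, including the disconnected-Julia-set and mixed-period cases. The paper offers no argument at all (it simply declares the proof straightforward), and what you have written is precisely the standard proof the authors intend.
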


The proof is straightforward.\qed\msk

Such a diagram in the unit disk
can be thought of as the period $q$ part of the
  Thurston lamination for $F$. See Figures~\ref{f2} and \ref{f-po} 
which show the orbit portraits associated with the various faces
of the period one and two tessellations for the curve  $\cS_2$;  and
Figure~\ref{F-t2s1}  for the various faces of the corresponding tessellations for
the curve  $\cS_1$.
\msk

{\bf Note.} Although we have described an orbit portrait as a geometric object,
it can also be thought of as a finite dynamical system, consisting of the
tripling map from a suitable set of angles of fixed co-period $q$ to itself.
Note that two orbit portraits which look very similar topologically,
may have quite different dynamics. As an example, the orbit portrait
$\{1\simeq 2\simeq 3\simeq 6\}/8$ looks topologically
very much like $\{1\simeq 3\simeq 9\simeq 27\}/80$. But the first consists
of two orbits of period two while the second represents a single
orbit of period four. \msk

\begin{rem} \label{R-edge-conj}
 In  Figures~\ref{f2}, \ref{f-po} and \ref{F-t2s1} note that:~
  {\sf Any two faces with an
edge in common have different orbit portrait.} (A single boundary point
in common is not enough.) See the Edge Monotonicity Conjecture \ref{CJ-main} 
which implies that the same statement is true in every tessellation
$\Tes_q(\ocS_p)$.
\end{rem}
\msk

\begin{figure}[htb!]
  \centerline{\includegraphics[width=3.5in]{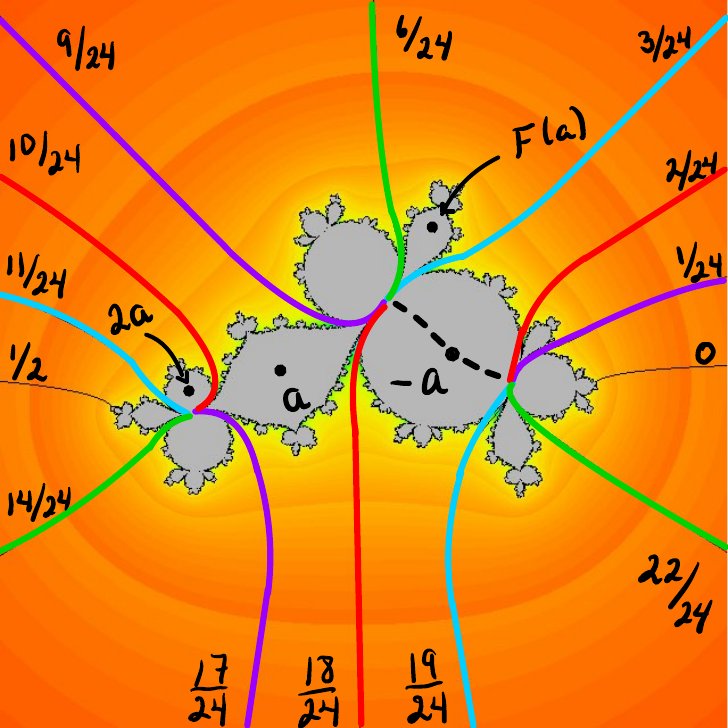}}
  \caption[Julia set of parabolic map in $\cS_2$, with kneading walls.]{\label{f-s2par}\textsf{Julia set for a parabolic map
    $F\in\cS_2$, with rays of angle $10/24$, $11/24$, $14/24$ and
    $17/24$ landing at the root point of the  co-critical component $U(2a)$.
    These are the same as the angles of the parameter rays landing 
    at the corresponding parabolic map. (See the the bottom right quadrant
    of Figure~\ref{F-S2rays}.) Here the four ray colors correspond to the
    associated \textbf{\textit{triads}}\break
    $(\theta,~\theta+1/3,~\theta+2/3)$ of equally spaced angles.
    The associated periodic rays of angle $3/4$, $1/8$, $1/4$, and $3/8$ 
    land at the root point of the $U(-a)$ component. (In this particular case,
    this is also the root point of the $U(a)$ and $U\big(F(a)\big)$
    components.) The remaining ray in each triad lands at the diagonally
    opposite boundary point of $U(-a)$.  Here the walls corresponding to
    the pairs $(1/24, 9/24)$ and  $(6/24, 22/24)$ separate $a$ from $F(a)$,
    hence the corresponding parameter rays of angle $17/24$ and $14/24$
lie in the escape region with
 kneading invariant $(1,0)$. On the other hand, the $(2/24, 18/24)$
    and $(3/24, 19/24)$ do not separate, hence the $10/24$ and $11/24$
parameter rays lie in the $(0,0)$ region, as shown in Figure~\ref{F-S2rays}.
There is an analogous figure for any parabolic point $\p\in\cS_p$, with
a triad for each parameter ray landing at $\p$. (Compare Remark~\ref{R-J2K}.)
}}\vspace{-.3cm}
\end{figure}

\begin{figure}[htb!]
\begin{center}
\begin{overpic}[width=3.5in, tics=10]{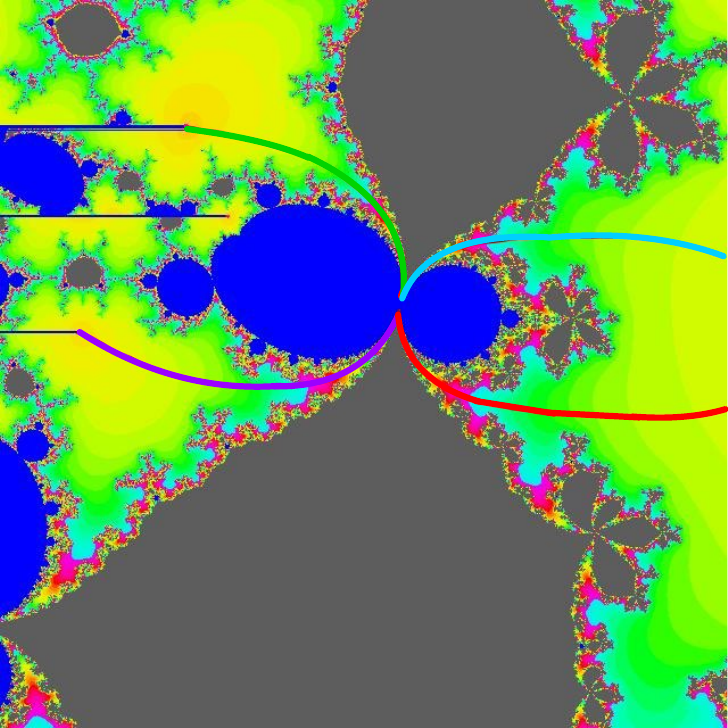}
\put(205,173){\bf 335/726}
\put(225,150){\bf 3/5-}
\put(218,135){\textbf{\textit{rabbit}}}
\put(218,120){\textbf{\textit{region}}}
\put(205,97){\bf 334/726}
\put(32,110){\bf 575/726}
\put(25,94){\bf 11110}
\put(67,208){\bf 344/726}
\put(71,237){\bf 01000}
\put(13,168){\bf 11010}
\linethickness{1.5pt}
\put(-20,146){\color{black}\vector(7,2){95}}
\put(-38,150){\bf 01010}
\end{overpic}
\caption[Part of the parameter curve $\cS_5$]{\textsf{Part of the parameter
    curve $\cS_5$, showing a parabolic point $\p$
    which is on the boundary of three different escape regions,
    and showing the four rays of co-period $5$ which land on it.
   These rays have been given four different colors, matching the corresponding
    colors in the Julia set picture (Figure \ref{F-4wall}).
    The two right hand rays are primary and lie in the $3/5$ rabbit escape
    region; while the two on the left are secondary, and lie in two
    different escape regions. 
    In fact there are four different  escape regions on the left,
 labeled by their kneading invariants. 
    (The three horizontal slits which terminate at ideal vertices
 are needed since the canonical coordinate is ramified at these 
    points.)
 The four rays are labeled by their associated parameter angles. 
Note that each of the three escape regions which have $\p$
as boundary point contains at least one of these rays. There are 
actually hundreds of co-period $5$ rays in each escape region
(see Table \ref{t1a} in Section \ref{s-count}), so the full
picture of the tessellation within this square would be very complicated,
even though each hyperbolic component is contained in a single face.
For a more detailed picture of this region, see \cite{BM}. 
\label{F-s5rab-par}}}
\end{center}
\end{figure}

\medskip 
\begin{figure} [htb!]
  \centerline{\includegraphics[width=5in]{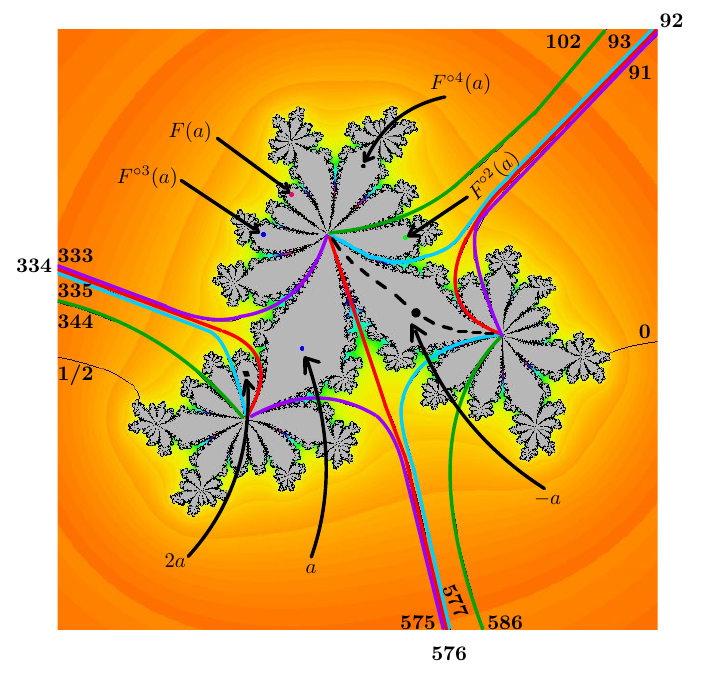}}
\vspace{-.7cm}
\caption[Julia set for Figure \ref{F-s5rab-par}]{\label{F-4wall} \small \sf
  Julia set for the center of the right
  hand Type D component in Figure \ref{F-s5rab-par}. (The corresponding picture
for its parabolic root point 
$\p$ would be homeomorphic; but its components would be fatter and
harder to separate.) Let $\theta(1),\,\ldots,\,\theta(4)$ be the angles for
the four parameter rays landing at $\p$. Then the corresponding dynamic
rays of angle $\theta(j)$
land, in the same cyclic order, at the root point of the $U(2a)$ component
to the lower left of the present figure. Similarly the corresponding periodic
rays of angle $\theta_5(j)$ land at the root point of the $U(-a)$
component towards the top of the figure; while the associated co-periodic
rays of angle $\widehat{\theta}_5(j)$ land at the opposite
boundary point of $U(-a)$, to the lower right. The last eight rays, landing
on the boundary of $U(-a)$, give rise to four different walls across this
dynamic plane. The kneading invariants of the corresponding parameter rays
can easily be computed, using Lemma \ref{L-J2K} together with these walls.
As one example,  for the red wall, consisting of the
$92$ and $576$ rays together with the dotted line,
 the entire orbit of $a$ is contained in one complementary
 component; so the kneading invariant of the red $334/726$ parameter ray
 is $00000$, as shown in Figure~\ref{F-s5rab-par}.
 On the other hand the purple wall, consisting of the 
 $91$ and $333$ rays connected by the dotted line, 
separates the marked point $a$ from the rest of its orbit.
Therefore the purple $575/726$ parameter ray has kneading invariant $11110$.
There are actually ten different rays landing at the co-critical
root point.  For a discussion of how to tell
which dynamic rays actually correspond to parameter rays, see
\cite{BM}. The primary parameter rays  correspond to the  co-periodic rays 
at both sides of the hyperbolic component that contains $2a$ and that
land at the root point of this component. The secondary parameter rays
are the next closer ones to this component landing at 
the root point of  the hyperbolic component of $2a$. Note that the dual
map has an identical Julia set, but with the roles of $+a$ and $-a$
interchanged. (See Remark \ref{R-dual}.)}
\end{figure}

\begin{definition}[{\bf Portrait Size}]\label{D-size} 
  By the  \textbf{\textit{size}} $\s\ge 0$ of an orbit portrait
  we will mean the number of unordered pairs of distinct angles which are
  equivalent to each other. This is a convenient measure of the
  complexity of the portrait.
  Note that a collection of $n$ mutually equivalent angles makes a
  contribution of $~n(n-1)/2~$ to the size. Thus a portrait consisting 
  of four rays meeting at a point has size six.  As examples,
  the various portraits in \autoref{f-po} have sizes $1$, $2$, $3$, and $6$.
\end{definition}
\medskip

\begin{definition}[{\bf Portrait Compatibility and Amalgamations}]\label{D-sum}
By the \textbf{\textit{amalgamation}} of two equivalence relations on the
same set we will mean the smallest equivalence relation which contains both
of them. For example the amalgamation of the orbit portraits
$\{1/8\simeq 3/8\}$ and $\{2/8\simeq 6/8\}$ is a well behaved
orbit portrait $\{1/8\simeq 2/8\simeq 3/8\simeq 6/8\}$. (All three are visible
in Figure~\ref{f-po}.) However, the amalgamation of two orbit portraits although
well
defined as an equivalence relation, is not always  a possible orbit portrait.
In order to characterize which equivalence relations can be orbit portraits,
we will need the following.

\begin{lem}\label{L-rays} Suppose that three or more dynamic rays of period 
  $q$ land on a periodic point $z$ of period $q'<q$. Then the cyclic order
  of the angles of these rays must be preserved  under multiplication
  by $3^{q'}$. Hence these rays have a common well defined rotation number
  under the map $F^{\circ q'}$.
\end{lem}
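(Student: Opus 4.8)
The statement is the standard "rays landing at a common point respect cyclic order, hence rotate rigidly" fact from Goldberg--Milnor / Milnor's orbit portrait theory, adapted here to the cubic setting with tripling in place of doubling. I would prove it by transporting everything to the circle at infinity: the collection of dynamic rays of period $q$ landing at a fixed periodic point $z$ of period $q' \mid q$ (with $q' < q$) is indexed by a finite subset $\Theta = \{\phi_1 < \phi_2 < \cdots < \phi_k\} \subset \R/\Z$ of angles that are periodic of period exactly $q$ under tripling. Since $z$ has period $q'$, the map $G = F^{\circ q'}$ fixes $z$, and $G$ permutes the rays landing at $z$ — because $G$ is a local homeomorphism near $z$ sending the filled Julia set to itself, it carries each access to $z$ to another access, and on angles this permutation is exactly $\phi \mapsto 3^{q'}\phi$.

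First I would establish that $\phi \mapsto 3^{q'}\phi$ genuinely maps $\Theta$ bijectively to itself. For each $\phi_i$ the ray $\cR_F(\phi_i)$ lands at $z$; applying $F^{\circ q'}$ to the whole ray and using $F\big(\cR_F(\theta)\big) = \cR_F(3\theta)$ (stated in the excerpt), the ray $\cR_F(3^{q'}\phi_i)$ lands at $F^{\circ q'}(z) = z$, so $3^{q'}\phi_i \in \Theta$. Injectivity on $\Theta$ holds because distinct periodic angles of the same period $q$ under tripling stay distinct after $q'$ more triplings (tripling is injective on the set of angles with denominator $3^q - 1$). Hence $\phi \mapsto 3^{q'}\phi$ is a bijection of the $k$-element cyclically ordered set $\Theta$.

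The heart of the argument is that this bijection preserves cyclic order. Here I would use the standard local-dynamics picture near a repelling or parabolic fixed point combined with the fact that rays are pairwise disjoint simple arcs. The complement in $\C$ of the union of the finitely many rays landing at $z$ (together with the point $z$) has exactly $k$ connected components, cyclically arranged around $z$, and their cyclic order matches the cyclic order of the angles $\phi_i$ on the circle at infinity — this is because each ray is a Jordan arc from $\infty$ to $z$ and two rays with adjacent angles bound a single such sector. Since $F^{\circ q'}$ is a non-constant holomorphic map fixing $z$, it is an orientation-preserving local homeomorphism near $z$ (and is orientation-preserving on $\C$); it permutes the rays and therefore permutes the sectors, and an orientation-preserving homeomorphism of a punctured-disk-neighborhood of $z$ must preserve the cyclic order of the sectors around $z$. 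Transporting back to the circle, $\phi \mapsto 3^{q'}\phi$ preserves the cyclic order of $\Theta$.

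Once cyclic order is preserved, the conclusion is immediate and purely combinatorial: an orientation-preserving bijection of a finite cyclically ordered set of size $k$ is a rotation, i.e. there is a fixed $r \in \Z/k\Z$ with $3^{q'}\phi_i = \phi_{i+r}$ for all $i$ (indices mod $k$). This $r/k \in \Q/\Z$ is the common rotation number of all these rays under $F^{\circ q'}$, which is what the lemma asserts. **The main obstacle** is making the ``orientation-preserving homeomorphism near $z$ preserves the cyclic order of the accesses'' step fully rigorous in the parabolic case, where the rays may spiral into $z$ and there is no honest linearizing coordinate; I would handle this by working with the prime-end / access structure (or equivalently by noting that the argument only needs $F^{\circ q'}$ to be an orientation-preserving homeomorphism of a neighborhood of $z$ in $\C$ carrying the ray system to itself, which holds regardless of whether $z$ is repelling or parabolic), and cite the treatment of accesses and rotation numbers at periodic points in \cite[\S18]{M3}. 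I would remark that the hypothesis $q' < q$ (equivalently, that there are at least three such rays, or simply that the set $\Theta$ is nonempty with more than one element) is what forces a nontrivial statement, since for a single ray the claim is vacuous.
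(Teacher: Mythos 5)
Your proposal is correct and is essentially the same argument as the paper's, which simply observes in one line that $F^{\circ q'}$ maps a neighborhood of $z$ diffeomorphically and therefore preserves the cyclic order of the rays landing there. Your write-up merely expands this observation (bijectivity of $\phi\mapsto 3^{q'}\phi$ on the angle set, the sector decomposition, and the combinatorial step from cyclic-order preservation to a rigid rotation) into full detail.
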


\begin{proof}  
  This is clear since the map 
  $F^{\circ q'}$ maps a neighborhood of $z$ diffeomorphically, preserving the
  cyclic order of the rays landing there.\end{proof}
\bigskip

\begin{ex}\label{ex-nOP} 
The orbit portrait $\{1/8\simeq 2/8 \simeq 3/8 \simeq 6/8\}$
described above is well behaved: The corresponding four rays must land at
a common fixed point, and cyclic order is preserved by the tripling map
$\{1/8,\, 2/8,\, 3/8,\, 6/8\} \mapsto \{3/8,\, 6/8,\, 1/8,\, 2/8\}$. 
However the larger equivalence relation
$\{1/8\simeq 2/8 \simeq 3/8\simeq 5/8\simeq 6/8\simeq 7/8\}$
cannot be an orbit portrait since the tripling map sends it to
$\{3/8,\, 6/8,\, 1/8,\, 7/8,\, 2/8,\, 5/8\}$, completely confusing the cyclic
order. More generally, for any $q\ge 2$ it is not hard 
to check that the relation in which {\bf all} 
period $q$ orbits are equivalent can never be an orbit portrait.  \end{ex}
\medskip

By a \textbf{\textit{formal orbit portrait}} we will mean any equivalence
relation which satisfies the conditions of both Lemma \ref{L-unlink} and Lemma
\ref{L-rays}. Conjecturally every formal orbit portrait is realized as an actual
orbit portrait in $\cS_p$ for all sufficiently large $p$. (See Remark
\ref{R-OPques}.)

\msk

 Two or more orbit portraits of the same period $q$ will be called
 \textbf{\textit{compatible}} if they are
  contained in some common formal orbit portrait.  This definition also makes 
  sense for orbit portraits of different periods, providing that we allow
  mixed period orbit portraits.
  However note that two orbit portraits of different period are automatically
  compatible with each other. 
  Evidently a collection of orbit portraits are compatible if
and only if their amalgamation is a formal orbit portrait. 

In many interesting cases, the orbit portrait of a face is the
amalgamation of the orbit portraits
of two neighboring faces.  This seems to happen  when the parabolic point
  ${\mathfrak p}$ of period $p$
  is in the boundary of two different escape regions in ${\mathcal S}_q$.  
  Conjecturally, ${\mathfrak p}$ is the landing point of 3 or at most 4
  parameter rays. (See Section~\ref{s-near-para}.) For simple examples of this
  see Figure~\ref{f-po}, and
for more complicated examples see Figure~\ref{f-T3OP}. \msk

{\bf Caution.} If we choose two arbitrary period $q$ 
  orbit portraits, the amalgamation,
although well defined as an equivalence relation, may not correspond to any
possible orbit portrait.   As an example, consider the three orbit portraits 
  $$ \cO_1=\{1/8\simeq 3/8\},~~~~\cO_2=\{2/8\simeq 6/8\},~~~~\cO_3=\{5/8\simeq 7/8\}~.  $$
  The two-fold amalgamations $\cO_i\uplus\cO_j$
  all correspond to possible orbit portraits, which are visible in the
  outer part of Figure~\ref{f-po}.   However, as noted in Example \ref{ex-nOP},
  the three-fold amalgamation with
  $\{1/8\simeq 2/8 \simeq 3/8\simeq 5/8\simeq 6/8\simeq 7/8\}$ 
  does not correspond to any possible orbit portrait.
\end{definition}\smallskip

Note that the period $q$ orbit portraits of a fixed map $F$ 
for different periods $q$ are always compatible with each other,
so that their amalgamation is a well defined mixed period orbit portrait.
As an example, the orbit portrait  $\{0\simeq 1/2\}$
in Figure \ref{f2} is certainly compatible with the orbit portrait 
$\{1/8\simeq 3/8,\; 5/8\simeq  7/8\}$ in Figure \ref{f-po}.    \medskip

\begin{rem}[\bf Problems]\label{R-OPques}
This discussion raises at least two questions:\ssk
\begin{itemize}
\item[$\bullet$] Is every formal orbit realized by an actual orbit portrait in
some $\cS_p$, or are there additional constraints?\ssk

\item[$\bullet$] If it is realized in
a given $\cS_p$ can it always be realized in $\cS_{p+1}$ also?\msk
\end{itemize}

\begin{figure}[htb!]
  \centerline{\includegraphics[height=4.5cm]{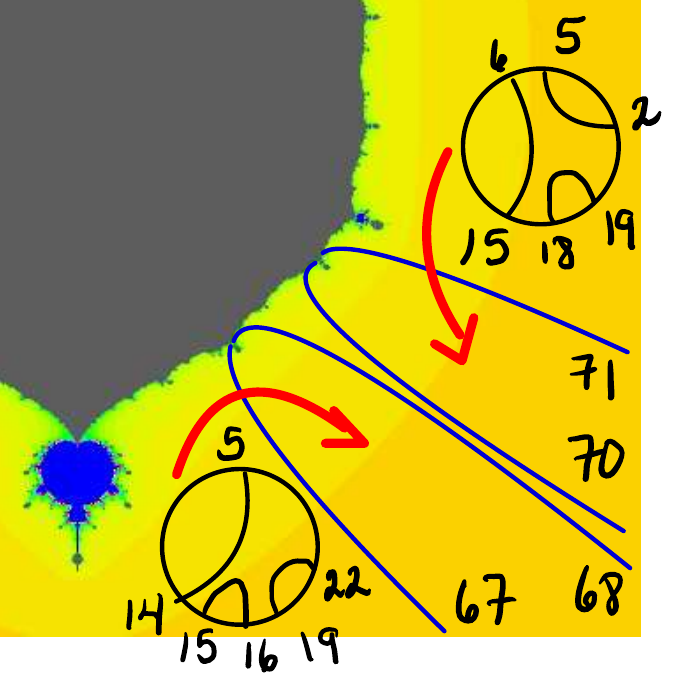}\qquad\qquad\qquad \includegraphics[width=3.7cm]{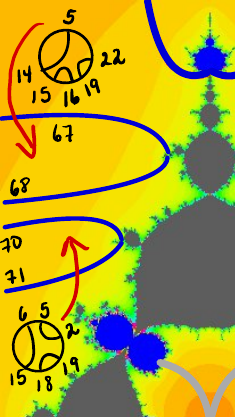}}
  \caption[Bottom part of $\cS_1$ and top left of $\cS_2$]{\label{Orb-S1S3} \sf On the left, the bottom right of $\cS_1$ displaying
    the co-periodic angles $70/78$ and $71/78$ and its corresponding  orbit portrait
    $\{2\simeq 5,\; 6\simeq 15,\; 18\simeq 19\}/26$. On the right, the top left part
    of $\cS_2$ displaying the same orbit portrait and the same co-periodic angles.
    The same orbit portrait appears also in $\cS_3$, see Figure~\ref{F-2sf}.
    In this picture we also display the orbit portraits of the primary
    wakes in $\cS_1$ and $\cS_2$ corresponding to the co-periodic angles $67/78$ and
    $68/78$. These orbit portraits  are very different from the orbit portrait
    of the  primary wake delimited by the same co-periodic angles in the airplane
    region in $\cS_3$.}
\end{figure}

\noindent  Here are some examples. We see from Figures~\ref{f-po},
~\ref{F-t2s1}-right and \ref{f-OPt2s3} that $\{2\simeq 7,\; 5\simeq 6\}/8$
occurs as an actual orbit portrait in all three $\cS_1$,\; $\cS_2$
and $\cS_3$. Similarly we see from Figures~\ref{Orb-S1S3} and \ref{F-2sf}
that $\{2\simeq 5,\; 6\simeq 15,\; 18\simeq 19\}/26$ also occurs as an orbit
portrait in  all three $\cS_1$,\; $\cS_2$
and $\cS_3$. 
However the orbit portrait
$$\{\{5\simeq 14\simeq 25\}/26,\;\, \{15\simeq 16\simeq 23\}/26,\;\, \{17\simeq
19\simeq 22\}/26\}$$
appears in $\cS_3$ but doesn't appear in $\cS_1$, $\cS_2$ or in $\cS_4$.
\end{rem}\msk    

\begin{rem}[\bf Examples of duality]\label{R-dual-ex} 
 According to Remark~\ref{R-dual}, 
  there is a natural one-to-one correspondence between hyperbolic components
  of Type D$(q)$ in $\cS_p$ and hyperbolic components of Type D$(p)$ 
  in $\cS_q$. The center points of two dual components actually represent
  the same polynomial map, and hence have the same Julia set.
  It follows of course that they have the same period $h$ orbit portrait
  for any choice of $h$.
  As an example, there are six components of Type D$(2)$ in $\cS_1$,
  which are visible in Figure~\ref{F-t2s1}.
  Hence there are six corresponding components of Type D$(1)$ in $\cS_2$.
  These six components lie at the endpoints of the eight parameter rays
  in Figure~\ref{f2}; but it is easier to identify them 
  by their period two orbit portraits, as shown in
  the central part of Figure \ref{f-po}. 
  There are also eight components of Type D$(2)$ in $\cS_2$ (see the outer
  parts of Figures~\ref{F-S2rays} and \ref{f-po}).  This yields 
  four dual pairs with the same orbit portrait. Finally there are  two 
  large components of type B to the left and right of Figure~\ref{F-S2rays}
which are dual to each other,   and hence have the same orbit portrait.
\end{rem}
\ssk

\begin{definition}[{\bf Edge Numbers}]\label{D-pm}
We return to the study of tessellations.
One very simple invariant 
of a face $\F_k$ of a tessellation $\Tes_q(\ocS_p)$
is the number of its edges, counted with multiplicity. (By definition,
an edge has multiplicity two if it is the only edge landing at a cusp
point, so that we must traverse it twice as we follow the boundary
of the associated face.) This edge number is always even, since ideal and
parabolic vertices alternate as we traverse
any component of the boundary of $\F_k$. Note that the sum of edge numbers
over all faces is equal to twice the total  number of edges.  
As an example, in Figure~\ref{f2} the middle face $\F_2$ has twelve
edges counted with multiplicity, since each of the four edges in 
the middle is counted twice; while the top and bottom faces
have two edges each.

As a very rough principle, faces with many edges will tend to have
relatively small orbit portraits, since each
additional  parabolic boundary point will impose 
an additional upper bound on the associated orbit portrait. In fact the
orbit portrait for any point of the face is contained in the orbit portrait
for any parabolic boundary point. (Compare Corollary \ref{usc} in
Appendix~\ref{a-para-stab} as well as the proof of the
Edge Theorem~\ref{T-edge}.) In particular, faces with only two
edges will tend to have larger orbit portraits. 
\end{definition}
\smallskip

\begin{definition}[\bf Wakes]\label{D-wake} The concept of wake is
closely related to this discussion. Let $H\subset\cS_p$ be a hyperbolic
component of Type D$(q,p)$.  By definition, $H$ has a \textbf{\textit{wake}}
$W$ if $H$ is  contained in  a simply-connected open subset $W\subset\cS_p$
which is bounded by two edges of $\Tes_q(\ocS_p)$ which
\begin{itemize}
\item[(1)] lie in the same escape region $\cE$, and

\item[(2)] share a common parabolic landing point $\p\in\partial H$.
\end{itemize}
  \smallskip

\noindent
A wake may consist of a single face of $\Tes_q(\ocS_p)$, in which case
it is described as a \textbf{\textit{minimal wake}}. 
Many examples of minimal wakes can be seen in
 Figures~\ref{f2}, \ref{F-S2rays} and \ref{F-t2s1}.

 But in many cases
wakes may be \textbf{\textit{nested}}, with one contained in another.
For many examples of nested wakes, see Figure \ref{F-t3s3air-010}.
Any wake which is not minimal must be made up out of two or more of the
faces of the tessellation, together with all of the common boundary
edges between two of these faces.

It will be convenient to say that a hyperbolic component $H$ and an escape
region $\cE$ are \textbf{\textit{adjacent}} if they have infinitely many
shared boundary points; or in other words if the intersection 
$\overline H\cap\overline\cE$ is infinite. Since every boundary point of a
hyperbolic component must be a boundary point of at least one escape
region, it follows that every hyperbolic component is adjacent to at
least one escape region. \medskip

Here are two conjectural statements, which are true in every case that we
have observed:
\begin{itemize}
\item[(3)] {\sf Any two parameter rays which are contained in the
    same escape region $\cE$, and have a common parabolic landing point,
    bound a uniquely determined wake $W=W(H)$ which intersects no escape 
    region other than $\cE$. In particular, the component $H$ of Type 
    D is uniquely determined.
}

\item[(4)] {\sf Every Type D component which is adjacent to 
  only one escape region $\cE$ must have a wake in $\cE$.}
\end{itemize}
On the other hand, it follows from $(3)$ that a component $H$ which
is adjacent to two or more escape regions can never have a wake. 
See Figure~\ref{f-M(53)bdry} for examples of Type D components which
are adjacent to two different escape regions.\medskip

 Since no two parameter rays can intersect each other, it
also follows from $(3)$ that:
\begin{itemize}                                            
\item[(5)] {\sf  Any two wakes must be either disjoint or nested.}
\end{itemize}
\noindent (This is true even if the wakes are associated with different
values of $q$. As an example, the period two wake labeled by angle
$17/24$ and $19/24$ at the top of  Figure~\ref{F-S2rays} is a subwake of the
period one wake with angles $2/3$ and $5/6$ at the top of
Figure~\ref{F-manpic}.) 
\end{definition}\medskip

The complexity of the tessellation $\Tes_q(\ocS_p)$ increases 
rapidly as either $p$ or $q$ increase. For period $q=3$,
there are $48$ angles of co-period three, all 
of the form $j/26\pm 1/3$ with denominator $3\,d(3)=78$. 
(Compare Table~\ref{t3} in Section \ref{s-count}.)
\medskip

$\Tes_3(\ocS_1)$ consists of a central lemon shaped hyperbolic component
which is contained in a large face $\cF_0$ with trivial orbit portrait.
It is surrounded by two pairs consisting of a wake and minimal subwake,
together with $20$ other minimal wakes. 
(Compare Figure~\ref{F-t3s1}-left for the top part of this 
tessellation, and Figure~\ref{F-t3s1}-right for a cartoon of this tessellation.)

$\Tes_3(\ocS_2)$ consists of an annulus shaped face with trivial orbit
portrait which separates the two escape regions, together with $24$
faces in the inner $(1,0)$ escape region,  and $24$ in the outer
basilica region. More precisely, the inner region contains six pairs
consisting of wake and minimal subwake, plus twelve additional minimal
wakes; while the outer region contain only two
such pairs, plus twenty additional minimal wakes.

See Remark~\ref{R-S3} for a detailed description of
$\Tes_1(\overline{\cS_3})$, $\Tes_2(\overline{\cS_3})$ and
$\Tes_3(\overline{\cS_3})$. In particular note
that the statistics for the outer region of $\Tes_3(\ocS_2)$ are identical
with those for the outer part of $\Tes_3(\ocS_1)$, or for the airplane or
rabbit regions of $\Tes_3(\ocS_3)$. (Compare the Zero Kneading Conjecture
in \cite{BM}, as 
well as Figures~\ref{F-t3s1}, \ref{F-air}, and \ref{F-3rab}.)

\medskip

 We will prove the following sharper 
 version of Lemma~\ref{L-land-stab}. Let $F$ vary over any one of the faces
 $\F_k$ of $\Tes_q(\ocS_p)$.  

\begin{figure}[htb!]
  \begin{center}
    \begin{minipage}{1.7in}\includegraphics[width=1.7in]{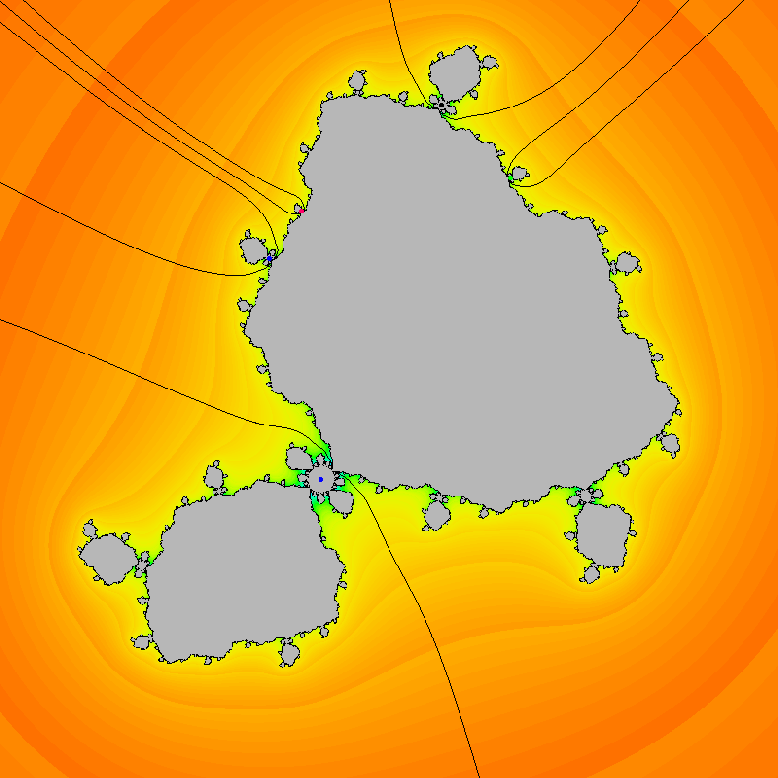}
    \end{minipage}
    \begin{minipage}{1.7in}\includegraphics[width=1.7in]{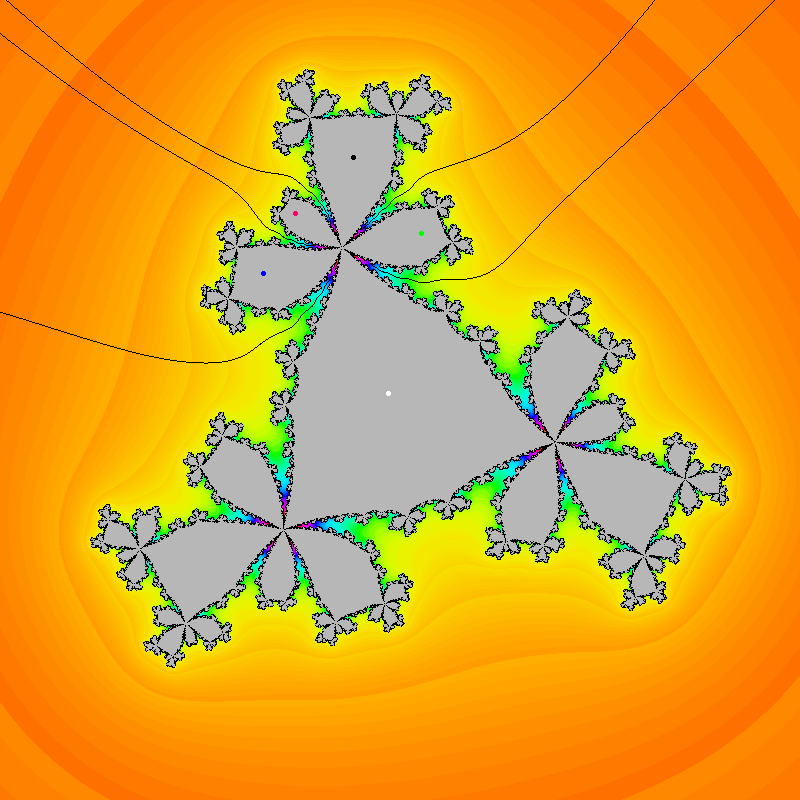}
      \vskip .1cm
      
    \includegraphics[width=1.7in]{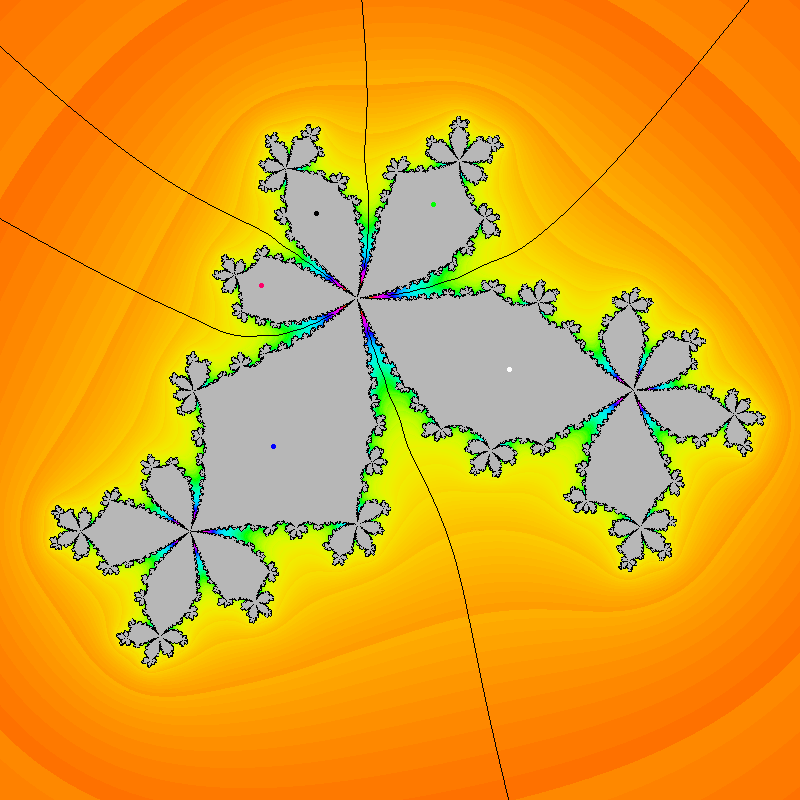} \end{minipage}
    \begin{minipage}{1.7in}\includegraphics[width=1.7in]{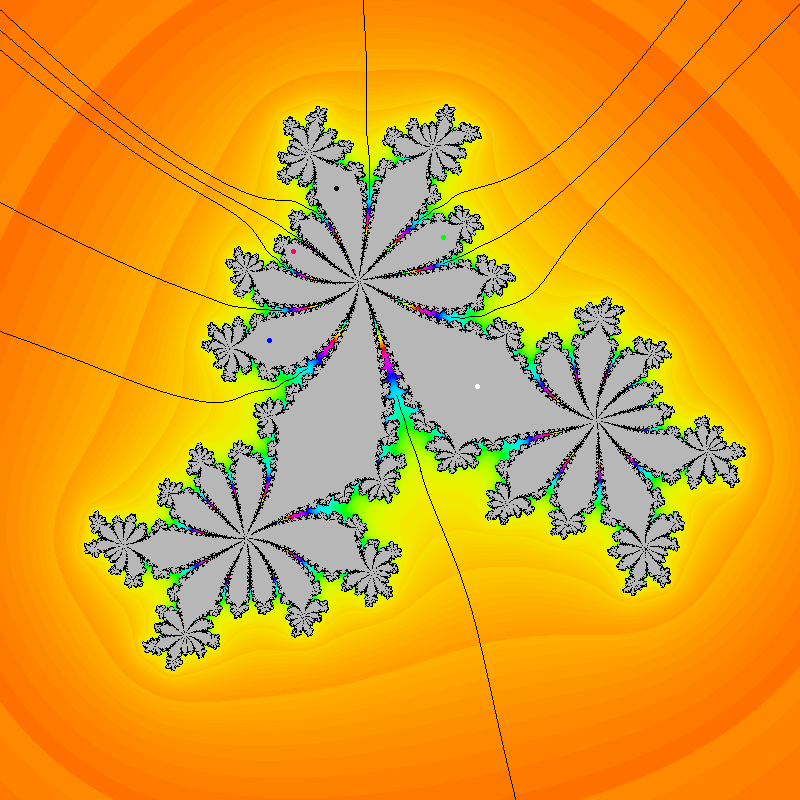}
    \end{minipage} 
    \caption[Julia sets for maps in $\Tes_5(\overline\cS_5)$ of components in  Figure \ref{F-s5rab-par}]{\sf Julia sets for maps in the four hyperbolic
components surrounding the parabolic map in Figure \ref{F-s5rab-par}; and belong
to four different faces of $\Tes_5(\overline\cS_5)$. The left component
of Type D and period one has trivial period $5$ orbit portrait. The top
and bottom figures, of Type A and  B respectively, each have a cycle of
$5$ periodic rays landing at the middle fixed point; but these are different
cycles. For the right hand figure, of Type D and period $5$, both
of these cycles of rays land at the same point. The Julia set
of the parabolic map (Figure \ref{F-4wall}) is topologically equivalent
to the right hand figure. (Here we can hope to understand
the local tessellation; but it
would hardly be possible to understand the full $\Tes_5(\overline\cS_5)$,
with its $38400$  edges. Compare Section \ref{s-count}.) \label{F-4Js}}
\end{center}
\end{figure} 

\begin{theo}[{\bf Tessellation Landing Theorem}]\label{T-decomp}
  For each $F\in \F_k$, and each angle $\theta_0\in\Q/\Z$ of period $q$
  under tripling, the  dynamic ray
  $\cR_F(\theta_0)$ lands at a repelling periodic point
  $z(F)\in  J(F)\subset\C$.
Furthermore, the correspondence $ F\mapsto z(F)$ defines a holomorphic 
function $\F_k\to\C$.

The orbit portrait for dynamic rays of period $q$
is the same for all maps $F\in \F_k$.
\end{theo}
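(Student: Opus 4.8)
The plan is to fix once and for all an angle $\theta_0\in\Q/\Z$ of period $q$ under tripling and to study the \emph{exceptional set}
\[
B~=~\bigl\{F\in\cS_p:\ \text{some dynamic ray of period }q\text{ of }F\text{ fails to land at a repelling periodic point}\bigr\}.
\]
Since there are only finitely many angles of period $q$, Lemma~\ref{L-land-stab} shows that $\cS_p\ssm B$ is open, so $B$ is closed, and on $\cS_p\ssm B$ the same lemma already gives that the landing point of $\cR_F(\theta_0)$ varies continuously. The heart of the argument is the identity
\[
B~=~\Tes^{(1)}_q(\ocS_p)\ \cup\ \{\text{parabolic vertices of }\Tes_q(\ocS_p)\},
\]
that is, $B$ is exactly the union of the co-period $q$ parameter rays together with their landing points. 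Granting this, every face $\F_k$ is contained in $\cS_p\ssm B$ (by Definition~\ref{D-copdec} the faces are the connected components of $\cS_p$ with precisely this set removed), which is the first assertion of the theorem.

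For the inclusion $B\supseteq\Tes^{(1)}_q\cup\{\text{parabolic vertices}\}$: if $F$ lies on a co-period $q$ parameter ray with co-critical angle $\theta$, then in the dynamic plane of $F$ the rays of angles $\theta+1/3$ and $\theta-1/3$ crash together at the escaping critical point $-a$ (Figure~\ref{Faa-1}); one of these angles is $\theta_q=3^q\theta$, which is periodic of period $q$, so a period $q$ dynamic ray crashes and $F\in B$; and at the parabolic landing point of such a ray (Lemma~\ref{L-ratland}, Theorem~\ref{T-main}) the period $q$ rays land on the parabolic orbit, whose multiplier is a root of unity, so that point also lies in $B$. For the reverse inclusion take $F\in B$ and a period $q$ ray $\cR_F(\psi)$ of $F$ that does not land at a repelling periodic point. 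If $F$ is in the connectedness locus, then $\cR_F(\psi)$ does land, at a point which is periodic (its image under $F^{\circ q}$ is itself) but not repelling, hence parabolic, so $F$ is a parabolic map of ray period $q$. If $F$ lies in an escape region, then $F$ has no parabolic cycle, since its marked critical point $a$ is superattracting periodic and its free critical point escapes; hence $\cR_F(\psi)$ cannot land at a parabolic point and must crash into a critical or precritical point, i.e.\ $3^n\theta_0\equiv\theta\pm1/3$ for some $n\ge0$, where $\theta$ is the co-critical angle of $F$. Now $3^n\theta_0$ has period $q$, while the angle $3^n\theta_0\mp1/3$ has, in lowest terms, a denominator divisible by $3$ (its numerator over the common denominator $3(3^q-1)$ is $\not\equiv0\pmod 3$), and a periodic angle has reduced denominator coprime to $3$; hence $\theta$ is co-periodic of co-period exactly $q$, so $F$ lies on an edge of $\Tes_q$. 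Thus the reverse inclusion reduces to showing that every parabolic map of ray period $q$ is the landing point of some co-period $q$ parameter ray.

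This last point — which is Corollary~\ref{C-allpara} — is the step I expect to be the main obstacle, since it cannot be settled by the soft continuity arguments above. The plan here is to approximate such a parabolic map $F^\ast$ by maps in a nearby escape region and invoke the parabolic landing-stability results of Appendix~\ref{a-para-stab}: under perturbation the period $q$ dynamic rays landing at the parabolic orbit split, and along a suitable approach they sweep out a co-periodic dynamic ray passing through the co-critical point $2a$, which forces the approaching parameter to lie on a co-period $q$ parameter ray that lands at $F^\ast$. (This is the cubic analogue of the Douady--Hubbard theorem that every parabolic parameter of the Mandelbrot set is a landing point of parameter rays, and the "suitably restricted parabolic limit" hypothesis of the Appendix is exactly what makes this delicate.)

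The remaining two assertions then follow by standard arguments on a face. Fixing $F_0\in\F_k\subset\cS_p\ssm B$, the landing point $z_0=z(F_0)$ of $\cR_{F_0}(\theta_0)$ is a repelling periodic point of some period $q'$ dividing $q$, hence a simple fixed point of $F_0^{\circ q}$; by the implicit function theorem there is a unique fixed point $w(F)$ of $F^{\circ q}$ near $z_0$, holomorphic in $F$ near $F_0$, and by Lemma~\ref{L-land-stab} the landing point $z(F)$ coincides with $w(F)$ for $F$ near $F_0$. These local holomorphic branches agree on overlaps and so define a holomorphic map $z:\F_k\to\C$. Finally, for angles $\phi,\psi$ of period $q$ the set $\{F\in\F_k:z_\phi(F)=z_\psi(F)\}$ is closed by continuity of the landing points, and it is open because when $z_\phi(F_0)=z_\psi(F_0)=z^\ast$ the point $z^\ast$ is a simple fixed point of $F_0^{\circ q}$, so a neighborhood of $z^\ast$ contains a single fixed point of $F^{\circ q}$ for $F$ near $F_0$ and therefore contains both $z_\phi(F)$ and $z_\psi(F)$, forcing them to agree; being clopen in the connected set $\F_k$ this set is empty or all of $\F_k$. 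Hence the relation $\phi\simeq\psi$ defining the period $q$ orbit portrait (Definition~\ref{D-op}) does not depend on $F\in\F_k$.
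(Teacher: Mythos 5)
Your overall strategy is sound up to one point, but that point is the heart of the matter and it is not filled in. Your reduction works: you correctly show that for $F$ in a face, a period $q$ dynamic ray can neither crash (that would put $F$ on an edge) nor land at a non-repelling point \emph{unless} $F$ is a parabolic map of ray period $q$; and the holomorphy and constancy-of-portrait arguments at the end (implicit function theorem plus a clopen argument on the connected face) are fine. The gap is the step you yourself flag: you need every parabolic map of ray period $q$ to be a \emph{vertex} of $\Tes_q(\ocS_p)$, i.e.\ the landing point of some co-period $q$ parameter ray. That statement is precisely Corollary~\ref{C-allpara}, which in the paper is \emph{deduced from} Theorem~\ref{T-decomp}; invoking it here is circular unless you supply an independent proof. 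Your sketched plan — perturb into an escape region and "invoke the parabolic landing-stability results of Appendix~\ref{a-para-stab}" — does not do this: the Appendix proves that the landing point of a \emph{dynamic} ray varies continuously as one perturbs a parabolic map with one free critical point; it says nothing about a \emph{parameter} ray landing at the parabolic map. The latter is a genuinely hard theorem (the cubic analogue of Douady--Hubbard for parabolic parameters, proved by Arfeux--Kiwi for parabolic boundary points of escape regions, and one would additionally need the Lyubich/Ma\~n\'e--Sad--Sullivan/Levin fact that every parabolic map is such a boundary point). As written, your proof rests on an unproved, and logically downstream, assertion.

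The paper avoids this entirely by a more local and more elementary argument that you may want to adopt: within a fixed face $\F_k$, the set $X$ of parameters at which the $\theta_0$-ray lands at a parabolic point is finite (there are only finitely many parabolic maps of ray period $q$ in $\cS_p$); on $\F_k\ssm X$ the multiplier of the landing point is a holomorphic function with values in $\C\ssm\overline\D$; since multipliers of fixed points of $F^{\circ q}$ are locally bounded, the singularities at $X$ are removable, and the minimum modulus principle forces the extension to have modulus $>1$ everywhere; taking a sequence in $\F_k\ssm X$ converging to a point of $X$ then exhibits a fixed point of multiplier modulus $>1$ where the ray was supposed to land parabolically, a contradiction, so $X=\varnothing$. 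This proves the theorem without knowing which parabolic maps are vertices, and Corollary~\ref{C-allpara} then falls out for free — the opposite logical order from the one your proposal requires.
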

\medskip

In particular, any two hyperbolic components contained in the same face must
have the same period $q$ orbit portrait, even if they are surrounded by
different  escape regions.  
\smallskip

\begin{proof}[{\bf Proof of Theorem \ref{T-decomp}.}]
  Since $\theta_0$ has period $q$ under tripling, there are just
  three possibilities for a dynamic ray  $\cR_F(\theta_0)$. Either it
  lands on a repelling periodic
  point, or a parabolic point of ray period $q$, or
  it crashes into $-a$ or some pre-image of $-a$. In the last case,
  some forward image  of $\cR_F(\theta_0)$ would hit $-a$ itself.
  But this would mean that $F$ lies on a parameter ray of co-period
  $q$, or in other words to  the edge of the period $q$ foliation.
  Since $F$ belongs to a face by hypothesis, this Case cannot occur.

There are only finitely
many parabolic maps of ray period $q$ in $\cS_p$. This means there
is at worst a finite subset $X\subset \F_k$ of such points. Thus for
all $F\in \F_k\ssm X$ the $\theta_0$ ray must land on a repelling fixed
point of the iterate $F^{\circ q}$. The multiplier $\lambda_F$ of
this fixed point depends holomorphically on $F$. Thus we have a
holomorphic map $F\mapsto \lambda_F$ from $\F_k\ssm X$ to
$\C\ssm\overline\D$ for each choice of $\theta_0$.

But it is not hard to check that the multipliers of fixed points of
$F^{\circ q}$ are bounded on any compact subset of $\cS_p$. Therefore
the ``singularities'' of the map $F\mapsto \lambda_F$
at the points of $X$ are removable. In other words,  we can
extend to a holomorphic function from $\F_k$ to $\C\ssm\D$.
In fact, by the minimum modulus principle, it must actually take
values in $\C\ssm\overline\D$.

But this leads to a contradiction.
Given any  $F_0\in X$, choose a sequence of maps 
$F_j\in \F_k\ssm X$ which converge to $F_0$. Passing to an 
infinite subsequence, we can assume that the landing points of the
associated dynamical $\theta_0$-rays converge to a limit point $z_0$.
Evidently
the multiplier $\lambda$ of $F_0$ at $z_0$ satisfies $|\lambda|>1$.
This contradicts the hypothesis that the landing point of the
$\theta_0$ ray is  parabolic. This shows that the set $X$ must
be empty;
and completes the proof of Theorem~\ref{T-decomp}.
\end{proof}
\medskip

\begin{coro}\label{C-allpara}
Every parabolic map $F_0\in\cS_p$ is the landing point
of at least one co-periodic ray. 
\end{coro}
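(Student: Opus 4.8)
The plan is to obtain Corollary~\ref{C-allpara} directly from the Tessellation Landing Theorem~\ref{T-decomp} by a short combinatorial argument, carried out inside the period~$q$ tessellation for the appropriate $q$. First I would fix notation: let $F_0\in\cS_p$ be parabolic, and let $q\ge 1$ be its ray period. By Definition~\ref{D-PM} (and the classical Fatou--Julia theory of rays landing at parabolic cycles, which is what makes the ray period well defined for a map in the connectedness locus), there is an angle $\theta_0\in\Q/\Z$ of exact period $q$ under tripling such that the dynamic ray $\cR_{F_0}(\theta_0)$ lands at a point of the parabolic cycle of $F_0$. Note also that $F_0$, being parabolic, lies in the connectedness locus, hence not on any parameter ray (these all lie in escape regions), and $F_0$ is not an ideal vertex of $\Tes_q(\ocS_p)$ since ideal vertices lie in $\ocS_p\ssm\cS_p$.

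Next I would argue by contradiction: suppose $F_0$ is not the landing point of any co-period~$q$ parameter ray. Then, combining this with the previous remarks, $F_0$ lies outside the union of all co-period~$q$ parameter rays together with their landing points. By Definition~\ref{D-copdec} that complement is precisely the disjoint union of the open faces $\F_k$ of $\Tes_q(\ocS_p)$, so $F_0\in\F_k$ for some $k$. But Theorem~\ref{T-decomp} asserts that for every $F\in\F_k$ and every period~$q$ angle, the corresponding dynamic ray lands at a repelling periodic point of $J(F)$; applying this with $F=F_0$ and the angle $\theta_0$ chosen above, the ray $\cR_{F_0}(\theta_0)$ would land simultaneously at a repelling periodic point and at a parabolic periodic point, which is impossible. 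This contradiction forces $F_0$ to be the landing point of at least one co-period~$q$ parameter ray, which is the assertion of the Corollary.

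As for the main obstacle: there really is none of analytic substance left at this stage, since all the work---that rational parameter rays land, that co-periodic ones land at parabolic maps of the correct ray period (Theorem~\ref{T-main}), and that the period~$q$ orbit portrait, and in particular the repelling nature of the relevant periodic points, is constant throughout each face (Theorem~\ref{T-decomp})---has already been established. The present step is just a matter of locating $F_0$ among the cells (faces, edges, vertices) of the tessellation. The one point deserving a word of care is the claim that the ray period $q$ is genuinely defined, i.e.\ that \emph{some} dynamic ray lands at the parabolic cycle of $F_0$; but this is classical for polynomials with connected Julia set and is in any case already implicit in Definition~\ref{D-PM}, so I would simply cite it rather than reprove it.
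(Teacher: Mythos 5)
Your argument is correct and is essentially identical to the paper's own proof: both pick a period-$q$ dynamic ray landing on the parabolic cycle, observe via Theorem~\ref{T-decomp} that $F_0$ cannot lie in any open face of $\Tes_q(\ocS_p)$ (nor on an edge or at an ideal vertex), and conclude that $F_0$ must be a parabolic vertex, hence the landing point of a co-period~$q$ parameter ray. The extra care you take in locating $F_0$ among the cells of the tessellation matches the intent of the paper's shorter write-up.
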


\begin{proof} Choose a parabolic periodic  point $z_0$ for $F_0$,
  and a dynamic 
ray $\cR_{F_0}(\theta)$ landing at $z_0$. (For the existence of such
  a ray,  see the beginning of \S\ref{s-rays}.) Let $q$ be the period
of this ray. Then $F_0$ cannot belong to one of the sets $\F_k$ in the
co-period $q$ tessellation of $\cS_p$. In fact, by Theorem  \ref{T-decomp},
any landing point of a period $q$ dynamical ray in $\F_k$ is repelling.
It follows that $F_0$ must be one of the
vertices of $\Tes_q(\overline\cS_p)$, the period $q$ tessellation;
and hence must be the landing point of some co-period $q$ parameter ray.
\end{proof}
\medskip

We have seen in Theorem \ref{T-decomp} that the
\hbox{correspondence} \hbox{$F\mapsto \cO_q(F)$} is constant
throughout each period $q$ region $\F_k$.
\medskip

\begin{figure}[htb!]{
  \newcommand{\cskp}{-.4ex}
  \newcommand{\lskp}{1ex}
  \newcommand{\mvright}[1]{\hspace*{.1\textwidth}#1\hspace*{-.1\textwidth}}
\centering
  \begin{tabular*}{.5\textwidth}%
    {cc}
 \includegraphics[width=3in]{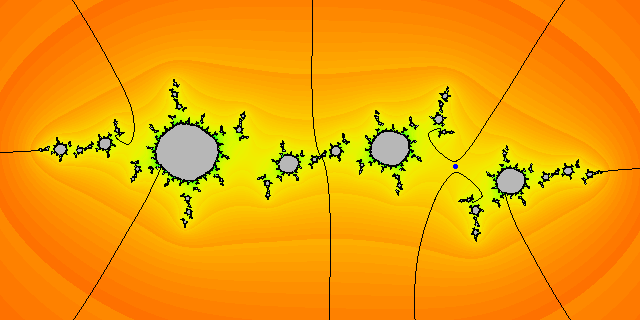} &
\includegraphics[width=3in]{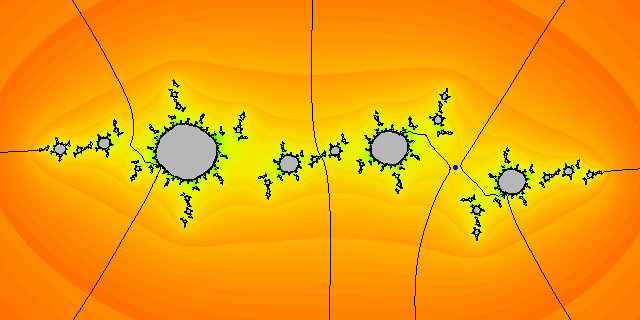}
      \\[\cskp]
{\small \sf On parameter ray $11/24-\epsilon$.} &    
{\small\sf  On parameter ray \hbox{$11/24+\epsilon$.}}
  \end{tabular*}
}
\caption[Julia sets in parameter rays \hbox{$11/24\pm\epsilon$}]{\sf The
  parameter angle $11/24=1/8+1/3$ is co-periodic. On the left,
  for a map on the parameter ray of angle  $11/24-\epsilon$, the
  $1/8$ dynamic ray (which is periodic of period two) bounces up,
 away from the free critical point
  $-a$, and lands on the Julia set. Similarly its
  pre-image, the 3/8 ray is pushed up, while the other pre-image, the
$19/24$ ray is pushed down, so that both land on the Julia set. For a map
  precisely on the $11/24$ parameter ray, the $1/8$ and $19/24$ rays would
  crash together. However, after we cross the $11/24$ ray, the $1/8$ and $3/8$
  rays both bounce to the right,   so as to share a landing point with the
  $7/8$ and $5/8$ rays respectively. Thus the period two
  orbit portrait jumps in size from one to  three. Note that only
  the $1/8$ ray and its iterated preimages jump discontinuously as we
cross the $11/24$ parameter ray. All other dynamic rays vary continuously.
\label{F-11-24}}
\bigskip \bigskip

  \newcommand{\cskp}{-.4ex}
  \begin{tabular*}{.5\textwidth}%
    {ccc}
 {\includegraphics[width=2.8in]{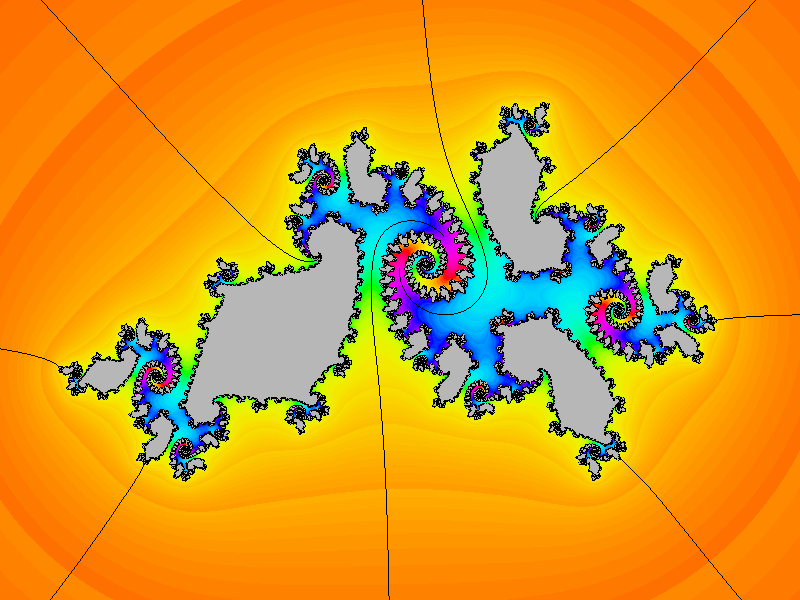}} & &
{\includegraphics[width=2.8in]{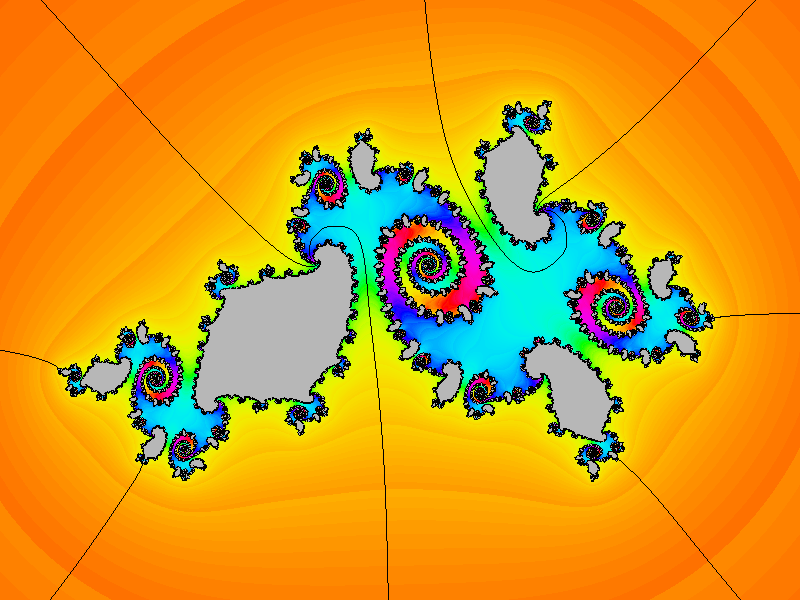}}
      \\[\cskp]
{\small \sf On parameter ray $7/12-\epsilon$.} & &   
{\small \sf  On parameter ray \hbox{$7/12+\epsilon$.}}
  \end{tabular*}
\caption[Julia sets on parameter rays $7/12\pm \epsilon$]{\sf The landing
  points of both the $1/4$ and $3/4$ rays jump  discontinuously  as we cross
  the $7/12$ ray. \label{F-7-12}}
\end{figure}
\medskip

\begin{lem}[{\bf Crossing a Parameter Ray}]\label{L-crossing}
  If the map $F$ lies precisely on a parameter ray with co-periodic angle
  $\psi$, then the  two dynamic rays of angle $\psi\pm 1/3$ crash together
  at the
critical point $-a$. This pair of rays divides the dynamic plane into two
simply-connected open sets $U$ and $V$. Now consider a one-parameter family
of maps which cross over this parameter ray. Just before the crossing, the two
dynamics rays will bounce in opposite directions, so that one lands at a
periodic
point in $U$ and the other lands in a periodic point in $V$. After the
crossing,
the roles are reversed, so that the dynamic ray which formerly landed in $U$
will now land in $V$ and vice versa.\end{lem}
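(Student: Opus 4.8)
The plan is to establish three things: that $\cR_F(\psi+1/3)$ and $\cR_F(\psi-1/3)$ crash at $-a$; that the set $\Gamma:=\cR_F(\psi+1/3)\cup\{-a\}\cup\cR_F(\psi-1/3)$ cuts $\C$ into two simply-connected pieces $U,V$; and that these two pieces are interchanged as landing sets as the family crosses $\fR$. The statement is local near a point $F\in\fR$, and I take the crossing family to be a continuous arc $t\mapsto F_t$, $t\in(-\delta,\delta)$, with $F_0=F$, contained in the escape region $\cE_h\supset\fR$ and crossing $\fR$ transversally at $t=0$, so that the parameter angle $\psi_t:=\psi(F_t)$ satisfies $\psi-\psi_t<0$ for $t>0$ and $\psi-\psi_t>0$ for $t<0$ (after reorienting $t$ if necessary; recall that under an anchoring the parameter rays foliate $\cE_h$ as a radial foliation, Remarks~\ref{R-mult} and \ref{r-mpt}). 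For the crash: equation~(\ref{e-ray-def}) gives $2a=2a_F\in\cR_F(\psi)$, so $\cR_F(3\psi)$ passes through $v':=F(2a)=F(-a)$; since the full $F$-preimage of $\cR_F(3\psi)$ is $\cR_F(\psi)\cup\cR_F(\psi+1/3)\cup\cR_F(\psi-1/3)$ with $-a$ a double preimage of $v'$ and $2a$ a simple one, $\cR_F(\psi)$ runs smoothly through $2a$ while $\cR_F(\psi\pm1/3)$ fold together and crash at $-a$, meeting it along two opposite directions (the statement recorded just after Definition~\ref{D-cp}). Then $\Gamma\cup\{\infty\}$ is a Jordan curve in $\C\cup\{\infty\}$, so by the Jordan curve theorem $\C\ssm\Gamma=U\sqcup V$ with $U,V$ simply connected.

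Next, for $t\ne0$ small, neither $\cR_{F_t}(\psi+1/3)$ nor $\cR_{F_t}(\psi-1/3)$ crashes. Indeed, a crash would put a precritical point on one of these rays; pushing forward and using that the forward images of $\cR_{F_t}(\psi\pm1/3)$ have angles among $\theta_1,\dots,\theta_q$ (the tripling orbit of $3\psi=\theta_1$; notation as in Definition~\ref{D-cp} with $\theta=\psi$), this forces $-a_{F_t}$ onto one of $\cR_{F_t}(\psi+1/3),\cR_{F_t}(\psi-1/3),\cR_{F_t}(\theta_1),\dots,\cR_{F_t}(\theta_q)$, and then $v'_{F_t}=F_t(-a_{F_t})$ onto some $\cR_{F_t}(\theta_j)$. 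But $v'_{F_t}\in\cR_{F_t}(3\psi_t)$, and as $v'_{F_t}$ is escaping, hence off $J(F_t)$, two dynamic rays through it coincide; so $3\psi_t=\theta_j$ in $\R/\Z$, which for $\psi_t$ near $\psi$ forces $\theta_j=3\psi$ and $\psi_t=\psi$, i.e.\ $t=0$ --- a contradiction. Hence for small $t\ne0$ both rays land, necessarily at repelling (pre)periodic points (in the escape region the only non-repelling cycle is the superattracting cycle of $a_{F_t}$, both critical points being accounted for), and these landing points depend holomorphically on $F_t$ on each half-arc by Lemma~\ref{L-land-stab}. By the real-analytic dependence of dynamic rays on $(F,\theta,\g)$ (Section~\ref{s-rays}), as $t\to0$ these rays converge, on $\{z:\g_{F_0}(z)>{\bf G}(F_0)\}$, to the crashing pair $\cR_{F_0}(\psi\pm1/3)$; so for small $|t|$ each follows the corresponding $t=0$ ray down into a small disk $D$ about $-a_{F_0}$, then leaves $D$ and lands on $J(F_t)$ off $\Gamma$. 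Being disjoint and entering $D$ from the two opposite crash-directions, one lands on the $U$-side and the other on the $V$-side of $\Gamma$.

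The reversal is the main obstacle; I would get it from the local picture at $-a$. In $D$ the map $F_t$ is the double branched cover $z\mapsto v'_{F_t}+c_t(z+a_{F_t})^2+\cdots$ ($c_t\ne0$), with deck involution the half-turn about $-a_{F_t}$, and $\cR_{F_t}(\psi\pm1/3)\cap D$ are its two preimages of the germ of $\cR_{F_t}(3\psi)$ at $v'_{F_t}$. For $t\ne0$, $v'_{F_t}$ is off $\cR_{F_t}(3\psi)$ (it lies on the distinct ray $\cR_{F_t}(3\psi_t)$), so in the local conjugate-harmonic coordinate the germ of $\cR_{F_t}(3\psi)$ is a level line $\{{\rm Im}(\cdot)=h_t\}$ with $h_t\ne0$, and $h_t$ is, to leading order, a nonzero multiple of $\psi-\psi_t$, hence vanishes only at $t=0$ and changes sign there. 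Pulling back by the square, the two arcs together form the level set $\{{\rm Im}(\mu_t(z+a_{F_t})^2)=h_t\}$ --- a hyperbola whose two components asymptote the two crash-directions of $\Gamma$ --- and this hyperbola rotates by a quarter turn about $-a_{F_t}$ as $h_t$ reverses sign; thus the component asymptoting a fixed crash-direction lies on the $U$-side of $\Gamma$ for one sign of $t$ and on the $V$-side for the other. Since the B\"ottcher coordinate is continuous across $t=0$ near $\infty$, the angle labels are not switched (the incoming end of $\cR_{F_t}(\psi+1/3)$ always runs toward $-a_{F_t}$ along the same crash-direction), so $\cR_{F_t}(\psi+1/3)$ leaves $D$, and therefore lands, on $U$ for $t$ of one sign and on $V$ for $t$ of the other, with $\cR_{F_t}(\psi-1/3)$ doing the reverse --- exactly the claimed interchange. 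What still needs care: the conjugate-harmonic normal form for the ray germs (real-analyticity of rays from Section~\ref{s-rays} together with the holomorphic model of $F_t$ at its critical point); that ${\rm sign}(h_t)={\rm sign}(\psi-\psi_t)$ near $t=0$ (transversality of the family to $\fR$); and the passage from "which side of $\Gamma$ the arc leaves $D$ on" to "which of $U,V$ contains the landing point", which is fine because $U$, $V$ and the ray pieces outside $D$ all vary continuously with $F$ near $F_0$ away from the single degenerating point $-a$.
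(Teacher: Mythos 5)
The paper does not actually prove this lemma: it says only ``We will not provide any detailed argument; but examination of examples such as Figures~\ref{F-11-24} through \ref{F-7-12a} illustrate this behavior.'' So you are supplying an argument where the paper supplies pictures. Your outline is the right one, and it is essentially the mechanism the figures are illustrating: the crash at $-a$ comes from pulling back $\cR_F(3\psi)\ni v'$ under the local degree-two branched cover; $\Gamma\cup\{\infty\}$ is a Jordan curve because the two incoming directions at $-a$ are opposite; for $t\ne 0$ neither ray crashes (your grand-orbit argument forcing $3\psi_t=\theta_j$ is correct, and both critical points being accounted for rules out non-repelling landing); and the reversal is exactly the level set $\{\mathrm{Im}\,(c_t(z+a_t)^2)=h_t\}$ swapping its pair of opposite sectors as $h_t$ changes sign, with $\mathrm{sign}(h_t)=\mathrm{sign}(\psi-\psi_t)$ coming from transversality of the crossing.

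The one step whose stated justification does not hold up is the passage from ``which side of $\Gamma$ the ray exits $D$ on'' to ``which of $U,V$ contains the landing point.'' You justify it by continuity of ``the ray pieces outside $D$,'' but the portion of $\cR_{F_t}(\psi\pm 1/3)$ at Green's levels below ${\bf G}(F_0)$ has no $t=0$ counterpart at all ($\cR_{F_0}(\psi\pm1/3)$ terminates at $-a$), and that new portion can be long and complicated --- it may ricochet past a whole sequence of precritical points, as in Figure~\ref{f4}(b) and Figure~\ref{F-7-12a} --- so one cannot argue that it shadows anything. The correct repair is a Green's function level argument: $\Gamma$ is contained in $\{z:\g_{F_0}(z)\ge {\bf G}(F_0)\}$, and $\g_{F_t}\to\g_{F_0}$ uniformly on compact sets (with both functions large near infinity), so any point of $\cR_{F_t}(\psi\pm1/3)$ lying on $\Gamma$ has $\g_{F_t}$-value at least ${\bf G}(F_0)-\epsilon_t$ with $\epsilon_t\to 0$. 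Hence the entire tail $\{\rb_t(\g):0<\g<{\bf G}(F_0)-\epsilon_t\}$ is a connected set disjoint from $\Gamma$, and its closure contains the landing point, which therefore lies in whichever of $U,V$ the tail starts in --- and that is determined by your hyperbola analysis at the top level $\g\approx{\bf G}(F_0)$. The same estimate shows the landing point cannot lie on $\Gamma$ itself. With that substitution your proof is complete; the remaining items you flag (the conjugate-harmonic normal form for the ray germ at $v'_{F_t}$, and $\mathrm{sign}(h_t)$) are routine.
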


We will not provide any detailed argument; but
examination of examples such as Figures~\ref{F-11-24} through
\ref{F-7-12a} illustrate this behavior.\qed\bigskip

It is important to emphasize that a period $q$ dynamic ray for a map $F$ in
an escape region nearly always
lands on a periodic point of ray period $q$. This fails to be true only
in the cases where the map $F$ lies precisely on a parameter ray of co-period $q$,
or on the iterated pre-image of such a ray, so that the dynamic ray
crashes into a critical or pre-critical point. 
Similarly periodic points nearly always vary continuously as functions of the
map $F$. This fails to be true only when $F$ is a parabolic map.

What can happen as we cross a parameter ray of co-period $q$ is that the
rotation number of the former landing point of the ray changes, so that
the ray period increases, and
this point no longer plays any role in orbit portraits of period $\le q$.
\medskip

\begin{definition}\label{D-GO}
  The \textbf{\textit{grand orbit}} of a co-periodic angle
  $\psi$ under angle tripling will be denoted by $\( \psi\)$. 
This consists of all of the periodic points $\psi_j$, together with all
of their iterated pre-images under angle tripling. (If the period $q$ has been
specified, then a convenient identifying label for the grand orbit is the
minimum of the angles $0<\psi_j<1$, 
multiplied by the common denominator $3^q-1$.)\end{definition}

\begin{coro}[\bf The cycle of jumping rays]\label{C-jump}
As we cross any parameter ray with angle $\psi$ of co-period $q$, the
landing point of a dynamic ray will jump discontinuously if and only if
its dynamic angle belongs to the grand orbit $(\! (\psi)\! )$. 
In particular, a periodic ray will jump if and only if its angle is
one of the $\psi_j$. For all periodic or
preperiodic angles which are not in this grand orbit, the landing point
will vary continuously as a function of the point $F$ in $\cS_p$. 
\end{coro}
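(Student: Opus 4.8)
The plan is to reduce the whole statement to one piece of elementary angle arithmetic together with the stability results already in hand. Write $F_0$ for the escape-region map at which the path meets the parameter ray $\fR$ of co-periodic angle $\psi$, and recall from Definition~\ref{D-cp} that the three tripling pre-images of $\psi_1=3\psi$ are $\psi$, $\widehat\psi$ and $\psi_q$: the first is the co-critical angle of $F_0$, so its dynamic ray passes through $2a$, while the rays of angle $\widehat\psi$ and $\psi_q$ are exactly the two rays that crash together at the free critical point $-a$. I would first record the bookkeeping fact that a rational angle $\alpha$ has some forward iterate $3^{j}\alpha\in\{\widehat\psi,\psi_q\}$ if and only if $\alpha\in\(\psi\)$ (one direction because then $3\widehat\psi=3\psi_q=\psi_1$ is a periodic point of the $\psi$-cycle; the other by chasing the orbit of $\alpha$ forward into that cycle). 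Since the only critical point of $F_0$ in the basin of infinity is $-a$, this says precisely: the ray $\cR_{F_0}(\alpha)$ crashes into a critical or pre-critical point exactly when $\alpha\in\(\psi\)$, and otherwise lands at a (necessarily repelling, since an escape-region map has no parabolic cycle, both critical orbits being accounted for) pre-periodic point.

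Next I would set up the crossing. Let $F_t$, $|t|<\epsilon$, be a path with $F_0\in\fR$ and with $F_t$ in a face of $\Tes_q(\ocS_p)$ for every $t\ne 0$. Running the same angle computation with the co-critical angle $\theta(F_t)$ in place of $\psi$, and noting that the non-constant real-analytic function $t\mapsto\theta(F_t)$ avoids the countable set $\(\psi\)$ for $0<|t|<\epsilon$ after shrinking $\epsilon$, shows that for such $t$ no ray $\cR_{F_t}(\alpha)$ with $\alpha\in\(\psi\)$ crashes; by Theorem~\ref{T-decomp} applied to the terminal periodic ray and an unramified pull-back for the pre-periodic ones, this makes $z_t(\alpha)$ well defined and real-analytic in $t$ on each of $(0,\epsilon)$ and $(-\epsilon,0)$. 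Thus ``jump'' versus ``no jump'' becomes a clean dichotomy, decided by whether the two one-sided limits of $z_t(\alpha)$ at $t=0$ agree.

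The ``if'' half (no jump when $\alpha\notin\(\psi\)$) is then immediate: $\cR_{F_0}(\alpha)$ lands at a point whose periodic iterate is repelling, so Lemma~\ref{L-land-stab} together with an unramified pull-back along the finitely many forward iterates of the ray — legitimate precisely because none of them meets $\pm a$ — shows that the landing point $z_t(\alpha)$ extends real-analytically through $t=0$, in particular does not jump. For the ``only if'' half I would bootstrap from the two seed angles: by Lemma~\ref{L-crossing} the one-sided limits of $z_t(\psi\pm 1/3)$ lie in the two different complementary components of the crashing pair of rays, hence differ, so those rays jump; for a general $\alpha\in\(\psi\)$ pick $j$ with $3^{j}\alpha\in\{\widehat\psi,\psi_q\}$, note that $z_t(3^{j}\alpha)=F_t^{\circ j}(z_t(\alpha))$ for $t\ne 0$, and observe that a two-sided limit of $z_t(\alpha)$ at $t=0$ would produce a two-sided limit of $z_t(3^{j}\alpha)$, contradicting the previous sentence. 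The ``in particular'' clause about periodic rays is the special case $j=0$ with $\alpha$ already periodic.

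The step I expect to be the genuine obstacle is the input from Lemma~\ref{L-crossing}: to conclude that the seed rays $\psi\pm 1/3$ really \emph{jump}, I need the two one-sided limiting landing points to be distinct, i.e.\ separated by the limiting kneading wall through $-a$, rather than merely ``bouncing in opposite directions'' — which is exactly the content that Lemma~\ref{L-crossing} asserts without detailed proof. Everything else is bookkeeping with angle tripling plus the Douady--Hubbard stability and Theorem~\ref{T-decomp} already quoted.
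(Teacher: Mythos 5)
Your argument is correct and is essentially the paper's own proof, which is recorded only as ``straightforward'': the two crashing rays of angle $\psi_q$ and $\widehat\psi$ jump by Lemma~\ref{L-crossing}, every angle in the grand orbit maps forward onto one of these two (forcing a jump by functoriality under $F_t^{\circ j}$), and every angle outside the grand orbit lands at a repelling (pre)periodic point of the limit map, so Lemma~\ref{L-land-stab} plus an unramified pull-back gives continuity. You also correctly locate the only real content in Lemma~\ref{L-crossing}, which the paper itself states without detailed proof.
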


The proof is straightforward. \qed \medskip

 As an immediate consequence of Lemma \ref{L-crossing},
 we obtain the following statement. 

\begin{theo}\label{T-OPjump}
Let $\mathfrak R$ be a parameter ray with angle $\psi$ of co-period $q$.
Suppose that for maps on one side of $\mathfrak R$ there
is a dynamic ray with angle belonging to the cycle $\{\psi_j\}$
which shares a landing point with a dynamic ray of some other angle
(necessarily also of period $q$). 
Then after crossing the parameter ray these two dynamic rays will no longer
share a landing point. Thus the period $q$ orbit portrait will definitely
change.\end{theo}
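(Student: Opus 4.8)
The plan is to reduce the assertion to a statement about the single ``cycle ray'' $\psi_q$ and then to extract it from Lemma~\ref{L-crossing} together with Corollary~\ref{C-jump}. By Theorem~\ref{T-decomp} the period $q$ orbit portrait is constant on each face of $\Tes_q(\ocS_p)$, so ``one side'' and ``the other side'' of $\mathfrak R$ refer to the two faces bordering a chosen arc of $\mathfrak R$; fix a map $F_0$ on that arc. Let $\beta\ne\psi_j$ be the angle of the second dynamic ray, necessarily of period $q$. The iterate $F^{\circ(q-j)}$ carries the ray of angle $\psi_j$ to the ray of angle $\psi_q$, carries the ray of angle $\beta$ to the ray of angle $\gamma:=3^{\,q-j}\beta$, and carries common landing points to common landing points; moreover $\gamma$ is again periodic of period $q$, $\gamma\ne\psi_q$ (if $\beta$ is not among $\psi_1,\dots,\psi_q$ then neither is $\gamma$; if $\beta=\psi_i$ with $i\ne j$ then $\gamma=\psi_{\,i+q-j}$ with $i+q-j\not\equiv 0\pmod q$), and $\gamma\ne\widehat\psi$ since $\widehat\psi$ is not periodic. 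Hence it suffices to prove: if the $\psi_q$ ray and the $\gamma$ ray share a landing point for maps on one side of $\mathfrak R$, they do not share one for maps on the other side.

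By Lemma~\ref{L-crossing} applied to $F_0$, the dynamic rays of angle $\psi_q$ and $\widehat\psi:=\psi\mp 1/3$ (the twin co-periodic angle, cf.\ Definition~\ref{D-cp}) crash together at $-a$; their union together with $-a$ is an arc $A$ cutting the dynamic plane of $F_0$ into two simply connected open sets $U$ and $V$, and, since $-a$ escapes, $A\subset\C\ssm K(F_0)$ is disjoint from $J(F_0)$. As $F$ crosses $\mathfrak R$, the landing point of the $\psi_q$ ray moves from $U$ to $V$. The crux is to show that, by contrast, the landing point of the $\gamma$ ray stays on one fixed side of $A$ for all maps $F$ near $F_0$ on either side of $\mathfrak R$; granting this, the $\psi_q$ and $\gamma$ landing points can agree on at most one side of $\mathfrak R$, which is precisely what is needed, and the final clause follows at once, since then $\psi_j\simeq\beta$ holds on one side of $\mathfrak R$ and fails on the other, so the period $q$ orbit portrait is genuinely different after the crossing.

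To establish this ``fixed side'' claim I would argue in two cases. If $\gamma$ is not one of the $\psi_i$, then no forward image of $\gamma$ equals $\psi_q$, so the $\gamma$ ray of $F_0$ lands without crashing at a repelling periodic point of $F_0$, necessarily in $J(F_0)$ and hence strictly inside $U$ or strictly inside $V$; by Corollary~\ref{C-jump} the $\gamma$ landing point varies continuously across $\mathfrak R$, so for $F$ near $F_0$ it remains on that same side. If instead $\gamma=\psi_i$ with $1\le i<q$, the $\gamma$ landing point does jump, but here I would use that the $\psi_i$ ray of $F_0$ crashes not at $-a$ but at a precritical point $z_i$ of strictly smaller potential, $\g_{F_0}(z_i)=\g_{F_0}(-a)/3^{\,q-i}$. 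Since distinct dynamic rays of a fixed map are pairwise disjoint and the potential decreases monotonically along each ray, the $\psi_i$ ray of $F_0$ avoids $A$ and lies entirely on one side of it; and for $F$ near $F_0$, the $\psi_i$ ray of $F$ agrees closely with that of $F_0$ above the level $\g_{F_0}(z_i)$ and below that level stays at potential well under $\g_{F_0}(-a)$. In particular it never approaches $-a$, so it cannot slip through the small gap that $A$ develops near $-a$ when $F$ is perturbed off $\mathfrak R$, and — meeting neither the $\psi_q$ ray nor the $\widehat\psi$ ray of $F$ — its landing point therefore stays on the same side of $A$ as $z_i$.

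Thus in both cases the $\gamma$ landing point stays put while the $\psi_q$ landing point switches sides, which completes the plan. The step I expect to require the most care is exactly this last ``never approaches $-a$, hence cannot change side of $A$'' assertion in the case $\gamma=\psi_i$: it rests on the monotonicity of potential along dynamic rays, on the convergence of the $\psi_i$ ray of $F$ to that of $F_0$ uniformly on compact potential intervals as $F\to F_0$, and on a modest amount of plane topology to pass from ``does not approach $-a$ and is disjoint from the $\psi_q$ and $\widehat\psi$ rays of $F$'' to ``does not change which complementary region of $A$ its landing point lies in'' — all of it carried out at the level of rigor of Lemma~\ref{L-crossing} itself.
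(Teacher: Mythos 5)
Your proposal is correct, and in the crucial case it takes a genuinely different route from the paper's proof. The paper works directly with $\cR(\psi_j)$ and $\cR(\beta)$ and splits into: (i) $\beta$ not in the cycle, where only the $\psi_j$ ray jumps and Lemma~\ref{L-crossing} finishes; and (ii) both angles in the cycle, where at the exact crossing \emph{two} pairs of rays crash (one pair at $-a$, one at a precritical point), cutting the plane into three regions, and the paper then asserts --- with reference to Figures~\ref{F-7-12} and \ref{F-7-12a} but without argument --- that the two periodic rays are ``oriented in opposite directions,'' so that one crossing direction sends both landing points into the middle region and the other sends them into the two outer regions. You instead first push forward by $F^{\circ(q-j)}$ to reduce to the single angle $\psi_q$ (a clean step the paper omits, and one that is needed anyway to make its appeal to Lemma~\ref{L-crossing} literal, since that lemma concerns the crash at $-a$ rather than at a precritical point; your one-directional use of the reduction --- landing together pushes forward, so failure upstairs forces failure downstairs --- is logically sound). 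Then, in the ``both rays jump'' case, you use only the single separating arc $A$ through $-a$ and the potential estimate $\g(z_i)=\g(-a)/3^{\,q-i}<\g(-a)$ to trap the $\psi_i$ ray and its landing point on a fixed side of $A$, while Lemma~\ref{L-crossing} makes the $\psi_q$ landing point switch sides. This buys you something real: you bypass the paper's unproved orientation dichotomy entirely, because ``lands together on one side'' is already the hypothesis and you only need impossibility on the other side. One expository caution on the step you yourself flag: for perturbed $F$ the union of the $\psi_q$ and $\widehat\psi$ rays of $F$ no longer separates the plane (the two rays land at distinct points of $J(F)$), so the side-counting must be run against the fixed arc $A=A(F_0)$; disjointness of $\cR_F(\psi_i)$ from $A$ then follows exactly as you indicate --- the high-potential part by uniform convergence to $\cR_{F_0}(\psi_i)$, the low-potential part because it lies in $\{\g_F<\g_{F_0}(z_i)+\delta\}$, which misses $A\subset\{\g_{F_0}\ge\g_{F_0}(-a)\}$. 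With that reading your argument is complete, at (indeed somewhat above) the level of rigor of the paper's own sketch.
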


\begin{proof}
There are three possible cases. If the other angle is not in the same
periodic cycle, then its ray will not jump, and the conclusion follows
immediately from Lemma~\ref{L-crossing}.
  
Now suppose that the two rays belong to the same periodic cycle, and hence
both jump. Then at the exact crossing
point we will have two different pairs of dynamic rays crashing together,
as illustrated in Figure~\ref{F-7-12a} (upper right), dividing the plane
into three regions.  We claim that these two periodic dynamics rays
must be  oriented in opposite directions, as in Figures~\ref{F-7-12}
and \ref{F-7-12a}. Hence as we cross the parameter ray in
one direction both periodic dynamic rays will jump into the middle region,
where they will find a common landing point by hypothesis. But crossing
in the other direction they will jump into the two outer regions,
and can never land together.

If on the other hand these two rays were oriented in the same direction,
then after crossing to either side of the parameter ray they would always
jump into different   regions, and could  never share a landing point.
\end{proof}\medskip

{\bf Note.} 
The concept of an edge $E$ of the tessellation $\Tes_q(\ocS_p)$
is of course completely equivalent to the concept of a parameter ray
$\fR$ of co-period $q$ in $\cS_p$. However, since we want to discuss
tessellations, it will be convenient to switch to the edge terminology.

\medskip

\begin{definition}[{\bf The three kinds of edge in $\Tes_q(\ocS_p)$}]\label{D-3E}
Using the discussion above, we can separate the edges $E$ of the
tessellation $\Tes_q(\ocS_p)$ into three
fundamentally different cases, as follows. Let $\psi$ be the co-periodic
angle of $E$.\smallskip

\begin{description}
\item[{\bf Case 0.}] $E$ is \textbf{\textit{ inactive:}} For neighboring
maps on either side of $E$, no dynamic ray with angle in the
cycle $\{\psi_l\}$ shares a landing
point with any other dynamic ray. In other words, throughout some 
neighborhood of $E$ there is no non-trivial orbit
relation involving any angle in this cycle.

{\sf It follows that the 
period $q$ orbit portrait is the same on both sides of $E$.}
\smallskip

\item[{\bf Case 1.}] $E$ is a \textbf{\textit{primary edge:}} For maps
on one side of $E$ every ray with angle in $\{\psi_j\}$
shares a landing point with at least one other dynamic ray
(which itself may or may not have angle in this cycle); but
for maps on the other side, no ray with angle in $\{\psi_j\}$
shares a landing point with any other ray.

{\sf It follows that the change of orbit portraits between the two
sides is strictly \textbf{\textit{monotone}}: 
The orbit portrait on one side is a proper subset of the orbit portrait
on the other side.}\smallskip 

\item[{\bf Case 2.}] $E$ is a \textbf{\textit{secondary edge:}} Every
ray with angle in $\{\psi_j\}$ on either side shares a landing
point with at least one other ray. (Therefore the other ray or rays
on one side must be disjoint from those on the other side.)

{\sf It follows that the change of orbit portrait is 
strictly \textbf{\textit{non-monotone}}: Neither orbit portrait
contains the other.}
\end{description}\end{definition}
\msk

Many examples of each of the three cases will be provided in the pages
which follow; but here are some preliminary examples.\smallskip

\begin{description}
\item[{\bf Inactive Edges.}] These will usually  be colored gray. 
  As examples there are four
  such edges in Figure~\ref{f2}, and many others in Figures~\ref{F-t1s3},
  \ref{f-OPt1s3} and \ref{f-OPt2s3}.
\medskip

\item[{\bf Primary Edges}] will usually\footnote{One exceptional case
    is Figure \ref{F-s5rab-par} where the ray colors are chosen to match the
    colors of the corresponding triad in the associated Julia set picture.}
 be colored blue. These occur in every
 tessellation. See 
 for example Figures~\ref{f2}, \ref{F-S2rays}, \ref{F-t2s3},  \ref{F-T3S3}.
\item[{\bf Secondary Edges}] will usually  be colored red. These occur in
  every tessellation with \hbox{$q=p>1$}, and for many with $q\ne p$ and
  $p,\,q>1$. See Figures~ \ref{F-S2rays}, \ref{F-t2s3}, \ref{F-T3S3}. 
\end{description}

\medskip 

In particular, a wake will be called either primary or secondary according
as its edges are primary or secondary. In  $\Tes_2(\ocS_2)$
(Figures~\ref{F-S2rays} 
and \ref{f-po}), the six wakes in the outer basilica region are primary,
while the inner region with kneading $(1,0)$ contains two primary wakes 
and four secondary wakes. In $\Tes_2(\ocS_3)$, Figure~\ref{F-t2s3} 
shows $32$ wakes, of which $28$ are primary. (The four secondary wakes
are all located in the $100\pm$ regions, near the $x$-axis.) 
Figure~\ref{F-t3s3air-010} shows several examples of sub-wakes, which
are necessarily primary. 
\medskip

\begin{rem}
Empirically it is very easy to distinguish between the three different
kinds of edge without computing a single orbit portrait, just by 
looking at the local geometry near the parabolic
endpoint. See Theorem \ref{T-edge} and Conjecture \ref{CJ-main}
below. 

First note that, in practice every parabolic vertex $\p$ in
$\Tes_q(\ocS_p)$ is the \textbf{\textit{root point}} of a unique hyperbolic
component $H=H_\p$ which has Type D$(q,p)$. (This means that each map $F\in H$
has an attracting orbit of period $q$ which is disjoint from the
marked periodic orbit of period $p$.) Such a component $H$ is conformally
isomorphic to the open unit disk $\D$ by mapping each $F\in H$ to the
multiplier of this period $q$ orbit. (See for example \cite{M1}.) In
our cases this conformal isomorphism always seems to extend to a
homeomorphism from
the closure $\overline H$ onto the closed unit disk $\overline\D$.
By definition, the \textbf{\textit{root point}} $\p$ of $H$ is the unique
parabolic boundary point for which this multiplier is $+1$.
\end{rem}
\bigskip

By Corollary \ref{C-allpara} every parabolic vertex of the tessellation
is the endpoint of at least one edge.
\bigskip

\bigskip

\begin{theo}[\bf Edge Theorem]\label{T-edge}  \hfill{}

\begin{itemize}
\item[{\bf(1)}] If there is only one edge $E$ of $\Tes_q(\ocS_p)$
with $\p$ an endpoint, then $E$ is an inactive edge.

\item[{\bf (2)}] If there are two or more edges landing at $\p$,
which is the root point 
of $H$, then the two which are closest to $H$ are primary edges.
\end{itemize}\end{theo}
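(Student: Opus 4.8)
The plan is to analyze what happens to the dynamic rays whose angles lie in the periodic cycle $\{\psi_l\}$ as $F$ winds once around the parabolic vertex $\p$, using the three‑case dichotomy of Definition~\ref{D-3E} together with Theorem~\ref{L-para} and the local picture of the Type~D$(q,p)$ component $H=H_\p$. For part~(1), suppose $E$ is the unique edge at $\p$. Crossing $E$ is the only way to change the period‑$q$ orbit portrait near $\p$ (by Theorem~\ref{T-decomp} the portrait is locally constant on faces, and a face boundary near $\p$ consists only of edges at $\p$). A small punctured disk around $\p$ in $\cS_p$ minus $E$ is connected (it is an annulus slit once, hence a topological disk), so the orbit portrait takes a single value on both sides of $E$; therefore $E$ cannot be primary or secondary, and must be inactive by the trichotomy. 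One should note $\p$ still \emph{is} a parabolic vertex, i.e. a genuine vertex of the tessellation, which is guaranteed by Corollary~\ref{C-allpara}; the point is only that crossing $E$ does not alter the period‑$q$ portrait.

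For part~(2), let $E_1,\dots,E_m$ (in cyclic order) be the edges landing at $\p$, with $E_1$ and $E_m$ the two closest to $H$ on either side. By Theorem~\ref{L-para}, for any map $F$ on an edge $E_i$ with co‑periodic angle $\psi^{(i)}$, the dynamic rays of angle $\psi^{(i)}_q$ and $\widehat{\psi^{(i)}}$ crash together at the free critical point $-a$, dividing the dynamic plane into two regions (Lemma~\ref{L-crossing}); and the periodic ray $\psi^{(i)}_q$ lands at the root point of $U(-a)$ for the parabolic map $F_\p$ itself. The key structural fact is that for maps $F\in H$ (the Type~D component), the free critical point $-a$ lies in a genuine attracting basin of period $q$, so \emph{no} period‑$q$ dynamic ray can land at the associated attracting orbit — in particular, throughout $H$ the rays whose angles lie in a relevant cycle $\{\psi^{(i)}_l\}$ do \emph{not} share a landing point with the "free" orbit. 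Concretely, I would argue that the orbit portrait of maps in $H$ (equivalently, in the face on the $H$‑side of $E_1$, resp. $E_m$) is as small as possible among the faces around $\p$: each time we cross an edge moving \emph{away} from $H$, the relation among the rays in that edge's cycle can only be \emph{created}, not destroyed, because once a ray in the cycle $\{\psi^{(i)}_l\}$ has "escaped" toward the $H$ side it lands freely on the Julia set with no orbit relation, and this is the configuration realized throughout $H$. This forces the crossing of $E_1$ (and symmetrically $E_m$) to be monotone in the direction "portrait grows as you leave $H$", which is precisely the definition of a primary edge in Case~1 of Definition~\ref{D-3E}.

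To make the monotonicity rigorous I would use Corollary~\ref{usc} (upper semicontinuity of orbit portraits at parabolic boundary points, referenced from the Appendix): the portrait of any point in a face adjacent to $\p$ is contained in the portrait of $F_\p$, and the portrait of $F_\p$ "amalgamates" the portraits of the surrounding faces. Combined with the fact that the face on the $H$‑side of $E_1$ has the portrait of (a map in) $H$, which does not involve the cycle $\{\psi^{(1)}_l\}$ at all — since for $F_\p$ the ray $\psi^{(1)}_q$ lands at the root of $U(-a)$, which for nearby $F\in H$ deforms into the attracting basin, so the ray lands at a repelling point unrelated to any other period‑$q$ ray (Theorem~\ref{T-decomp}) — we conclude that across $E_1$ the cycle $\{\psi^{(1)}_l\}$ is "off" on the $H$ side and must be "on" on the other side (otherwise $E_1$ is inactive, contradicting that it separates two faces around $\p$ which, by Remark~\ref{R-edge-conj}/the Edge Monotonicity discussion, have different portraits). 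Hence $E_1$ is primary, and likewise $E_m$. I expect the main obstacle to be the precise justification that the face immediately on the $H$‑side of $E_1$ has the \emph{same} period‑$q$ portrait as maps in $H$ itself — i.e. that no other edge of $\Tes_q(\ocS_p)$ sneaks between $H$ and $E_1$ — which follows from $E_1$ being, by hypothesis, the closest edge to $H$, so that $H$ lies in the same face as points just across $E_1$ from $\p$; but pinning down "closest" and the local face structure around a Type~D root point is the delicate geometric input.
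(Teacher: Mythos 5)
Your argument for Statement (1) is fine and is essentially the paper's: since $E$ is the only edge at $\p$, both sides of $E$ near $\p$ lie in a single face, so the portrait cannot change across $E$, which rules out the primary and secondary cases.

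Your argument for Statement (2), however, contains a genuine error: you have the monotonicity running in the wrong direction, and the claim on which you base it is false. You assert that the face containing $H=H_\p$ has the \emph{smallest} portrait among the faces around $\p$, that its portrait ``does not involve the cycle $\{\psi^{(1)}_l\}$ at all,'' and that relations are \emph{created} as one crosses away from $H$. In fact the opposite holds. Since two or more parameter rays, say of angles $\alpha$ and $\beta$, land at $\p$, Corollary~\ref{C-cop} (via Theorem~\ref{L-para}) shows that the dynamic rays of angles $\alpha$ and $\beta$ land together at the root point of $U(2a)$ for $F_\p$, so that $\alpha_j\simeq\beta_j$ for all $j$ in the portrait of $\p$; and Lemma~\ref{L-rootOP} (the Ha\"issinski surgery step, which you never invoke) transfers exactly this portrait to every $F\in H$, hence to the whole face containing $H$. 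So the $H$-side face carries the \emph{maximal} portrait, equal to that of $\p$ itself, and it very much involves the cycles of the edges landing at $\p$. The correct conclusion is then immediate: by upper semi-continuity (Corollary~\ref{usc}) the portrait on the far side of $E_1$ is contained in the portrait of $\p$, i.e.\ in the portrait of the $H$-side face, and by Theorem~\ref{T-OPjump} the relation $\alpha_j\simeq\beta_j$ is destroyed on crossing, so the change is a strict monotone \emph{decrease} away from $H$ and $E_1$ is primary. Your proposed repair of the final step --- deducing that the far side must be ``on'' because adjacent faces have different portraits by Remark~\ref{R-edge-conj} --- is also circular, since that remark records a consequence of the unproven Edge Monotonicity Conjecture~\ref{CJ-main} rather than an available fact. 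The observation you do lean on (that no period-$q$ ray can land on the attracting orbit of a map in $H$) is true but irrelevant: orbit portraits record rays landing together at repelling points of the Julia set, and those relations are present throughout $H$.
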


In fact Statement (1) is obvious since both sides of $E$ belong to the
same face. The proof of Statement (2) will be given later in this section.
Here is an example to illustrate
Statement (2): In Figure~\ref{F-s5rab-par}, taking $H$ to be the small disk
enclosed by the  blue and red rays, we see that these two rays must be
primary.
 \medskip

We will assume  Conjecture MC1 in Remark \ref{R-MandelC},
which asserts that:

\begin{quote}\sf Every parabolic
point $\p$ of ray period $q$ is the root point of one and only one 
hyperbolic component $H=H_\p$ of Type D.\end{quote}

\noindent We will also assume the following.

\begin{conj}[\bf Edge Monotonicity Conjecture]\label{CJ-main} There can be at 
most four edges with a given parabolic endpoint $\p$.
If there are two or more such edges, then the two edges which are closest
to $H_\p$ are primary edges, so that the orbit portrait increases monotonically
as we cross into the face containing $H_\p$. Any further edges are secondary.
Furthermore a primary edge and a secondary edge with the same parabolic endpoint
always lie in different escape regions.
\end{conj}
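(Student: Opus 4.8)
The plan is to establish that each of the two edges of $\Tes_q(\ocS_p)$ closest to $H$ is a primary edge; Statement~(1) of Theorem~\ref{T-edge} is immediate, as both sides of a lone edge lie in one face. Granting Conjecture MC1 of Remark~\ref{R-MandelC}, $\p$ is the root point of a unique hyperbolic component $H=H_\p$ of Type~D$(q,p)$, so every parameter ray landing at $\p$ has co-period $q$ and is an edge of $\Tes_q(\ocS_p)$. I would list the edges at $\p$ as $E_1,\dots,E_n$ ($n\ge 2$) in cyclic order about $\p$, which by Remark~\ref{R-cyc} is the cyclic order of their co-periodic angles $\theta(1),\dots,\theta(n)$. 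Near $\p$ the arc $\partial H$ separates $H$ from all the $E_j$, so the edges subdivide the complementary side into sectors and the face $\F$ containing $H$ is bounded near $\p$ by the two extreme edges; after relabelling these are $E_1$ and $E_n$. By symmetry it suffices to prove $E_1$ is primary. Set $\psi=\theta(1)$, with periodic cycle $\{\psi_j\}$, $\psi_q=\psi\pm 1/3$ and $\widehat\psi=\psi\mp 1/3$. Writing $\F'$ for the face on the other side of $E_1$, I must show, via Definition~\ref{D-3E}, that the cycle $\{\psi_j\}$ is \emph{active} (each of its rays shares a landing point with some period $q$ ray) in $\F$ but \emph{inactive} in $\F'$.

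\emph{Active in $\F$.} Since all the $E_j$ land at the one parabolic map $F_\p$, Corollary~\ref{C-cop} and Theorem~\ref{L-para} give that for every $j$ the dynamic ray of the periodic angle $\theta_q(j)=\theta(j)\pm 1/3$ lands at the single root point $w_\p$ of the Fatou component $U(-a_\p)$; hence $\cO_q(F_\p)$ contains the coincidence $\theta_q(1)\simeq\dots\simeq\theta_q(n)$ together with its whole tripling cycle. I then push this down into $\F$: for $F\in H$ near $\p$ the free critical point lies in an attracting Fatou component $V$ of period $q$, whose root point $z_V$ (the $F^{\circ q}$-fixed point on $\partial V$) is repelling, depends holomorphically on $F$, and tends to $w_\p$ as $F\to\p$; and the landing of each periodic ray $\theta_q(j)$ at $z_V$ is stable through this (parabolic, not merely repelling) limit --- which is exactly the type of statement proved in Appendix~\ref{a-para-stab}, used here in place of Lemma~\ref{L-land-stab}. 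So all the $\theta_q(j)$ rays still land at the common point $z_V$ for $F\in H$, and applying $F^{\circ i}$ the rays $\psi_i$ coincide along the rest of the cycle of characteristic points. Thus $\{\psi_j\}$ is active in $H$, hence in all of $\F$ by Theorem~\ref{T-decomp}.

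\emph{Inactive in $\F'$.} By Corollary~\ref{usc} of Appendix~\ref{a-para-stab}, $\cO_q(\F')\subseteq\cO_q(F_\p)$, so any coincidence holding in $\F'$ that involves an angle of $\{\psi_j\}$ must already be one of the coincidences at $w_\p$. Suppose $\psi_q\simeq\eta$ in $\F'$ for some period $q$ angle $\eta\ne\psi_q$; then the $\eta$ ray lands at $w_\p$ at $F_\p$. By Corollary~\ref{C-jump}, crossing $E_1$ moves exactly the dynamic rays whose angle lies in the grand orbit $\(\psi\)$ and fixes every other period $q$ ray. If $\eta\notin\(\psi\)$, then $\eta$ does not jump and (by the same stability as above) lands at $z_V$ in both $\F$ and $\F'$, whereas $\psi_q\in\(\psi\)$ jumps off $z_V$ as we cross $E_1$, so $\psi_q\not\simeq\eta$ in $\F'$. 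If $\eta=\psi_i\in\(\psi\)$, then since $\psi_q$ and $\psi_i$ share the landing point $z_V$ on the $\F$-side, the opposite-orientation dichotomy in the proof of Theorem~\ref{T-OPjump} --- with Lemma~\ref{L-crossing}: on $E_1$ the rays $\psi_q,\widehat\psi$ crash at $-a$ and, together with their iterated preimages, cut the dynamic plane so that the jumping rays of the cycle enter complementary components on the two sides of $E_1$ --- forces $\psi_q$ and $\psi_i$ to land apart in $\F'$. Either way the supposition fails, so $\{\psi_j\}$ is inactive in $\F'$, and $E_1$, and symmetrically $E_n$, is primary.

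The main obstacle is the second part: excluding the possibility that crossing $E_1$ merely \emph{replaces} the coincidence of $\{\psi_j\}$ at $z_V$ by a new, smaller one, which would make $E_1$ secondary rather than primary. This is precisely where the hypothesis that $E_1$ is \emph{adjacent} to $H$ must be used: it should force $\psi_q$ to jump ``outward'' past the entire cluster of rays meeting at $w_\p$, rather than into a neighbouring (sub)wake in which a residual coincidence among the $\psi_j$ could persist. Making this rigorous requires controlling the full cyclic arrangement of dynamic rays about $w_\p$ --- equivalently about the co-critical root point of $F_\p$, as studied in Section~\ref{s-near-para}, with Lemma~\ref{L-rays} on rotation numbers as the key tool. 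A second, more technical, difficulty is the parabolic degeneration in the first part, which blocks a direct use of Lemma~\ref{L-land-stab} and calls for the stability result of Appendix~\ref{a-para-stab}. (The remaining assertions packaged into Conjecture~\ref{CJ-main} --- that there are at most four such edges, that any further edges are secondary, and that a primary and a secondary edge at $\p$ lie in different escape regions --- seem to need the same local picture and are left open here.)
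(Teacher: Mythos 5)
The statement you were asked to prove is presented in the paper as a \emph{conjecture}; the authors offer no proof of it, so there is no argument of theirs to measure yours against in full. What the paper does prove is Theorem~\ref{T-edge}(2) --- that when two or more edges land at $\p$, the two nearest $H_\p$ are primary --- and that is also the only part of Conjecture~\ref{CJ-main} your proposal actually addresses. You are right to flag the bound of four edges, the secondariness of any further edges, and the escape-region separation as open: those are exactly the assertions for which the paper has only the empirical case analysis of Section~\ref{s-near-para}. For the fragment you do treat, the paper's route is much shorter: Lemma~\ref{L-rootOP} identifies the period $q$ orbit portrait of $\p$ with that of the interior of $H_\p$ (hence of the whole face containing it, by Theorem~\ref{T-decomp}), Corollary~\ref{usc} makes the portrait upper semi-continuous at $\p$, and since the portrait must genuinely change across each of the two bounding edges (Theorem~\ref{T-OPjump}) while only being able to shrink, both of those edges are primary.

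The one genuine gap in your argument is in the step ``Active in $\F$.'' You need the coincidence $\theta_q(1)\simeq\cdots\simeq\theta_q(n)$, which holds at the parabolic map $F_\p$, to persist for $F$ in the interior of $H_\p$. Appendix~\ref{a-para-stab} cannot deliver this: the Parabolic Landing Stability Theorem gives continuity of each landing point through the limit, and Corollary~\ref{usc} gives only the inclusion $\cO_q(F)\subseteq\cO_q(F_\p)$ for nearby $F$ --- both the wrong direction. Several landing points that vary continuously and agree in the limit need not agree before the limit. The tool the paper supplies for exactly this transfer is Lemma~\ref{L-rootOP}, proved by quasiconformal surgery; with that substitution your first step closes, and your ``Inactive in $\F'$'' step becomes a hand-unrolled version of the two cases already packaged in Corollary~\ref{C-jump} and Theorem~\ref{T-OPjump}. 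As it stands, then, your proposal establishes (modulo that repair) only the already-proved Theorem~\ref{T-edge}(2), and leaves the genuinely conjectural content of Conjecture~\ref{CJ-main} untouched --- which, to your credit, you say explicitly.
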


One easy consequence of this conjecture is the following:

\begin{quote}\sf The parabolic endpoint of every
secondary ray must be a common boundary point between two or more 
escape regions.\end{quote}

\noindent For example, there can be no secondary edges in $\cS_1$
since there is only one escape region. Another easy consequence is the
following: 

\begin{quote}\sf If $\p$ is a parabolic boundary point of the escape region
  $\cE$ then at most two parameter rays from $\cE$ can land at $\p$. (There is
  always at least one such ray. See  Arfeux and Kiwi \cite{AK}
  Theorem 5.8; and compare Remark \ref{R-access}  and Lemma~\ref{L-acc}.) 
\end{quote}

\noindent Another important consequence is that $E$ is inactive only if it
is a ``dead end'' edge which does not share its parabolic endpoint with
any other edge. A completely equivalent statement is the following:
\smallskip

\centerline{\sf Any two faces which have an edge in common must have
different orbit portraits.}
\smallskip

\noindent Furthermore:

\centerline{\sf Any face with trivial orbit portrait must be bounded 
    by primary edges.}\ssk

\noindent  (See Figures~\ref{f2}, \ref{f-OPt1s3} and \ref{f-OPt2s3}.)\bigskip

 Arfeux and Kiwi \cite[Section 5]{AK} have proved that every parabolic
boundary point of an escape region is the landing point of at
least one parameter ray in that region. 
It seems likely that every parabolic point which is not the cusp point 
of a Mandelbrot copy is the landing point of at least two parameter rays.
We can try to make this more precise as follows.
\bigskip

\def\fA{{\mathfrak A}}

\begin{rem}[\bf Accessibility]\label{R-access} Let $\p$ be a boundary
point of an escape region $\cE\subset\cS_p$. By definition, $\p$ is
\textbf{\textit{accessible}} from $\cE$ if there exists a Jordan path in
$\cE\cup\{\p\}$ with endpoint $\p$. More precisely, we can define an
\textbf{\textit{access}} to $\p$ from $\cE$ as a homotopy class of such paths.
\end{rem}

\begin{lem}\label{L-acc} For every access to $\p$ from $\cE$, there exists
  one and only one parameter ray\footnote{Caution: The corresponding statement
    for dynamic rays is false if we allow Julia sets which are not
    connected. In Figure~\ref{F-q8ex}, we show a connected Julia set where the are
    infinitely many
    accesses to the central point, but only four rays landing there.}
in $\cE$, necessarily co-periodic,  which lands on $\p$.
\end{lem}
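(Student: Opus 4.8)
\emph{The plan} is to combine classical prime end theory with the dynamical landing results already in hand. Since we are assuming $\mu=1$, fix an anchoring, i.e.\ a conformal isomorphism $\psi\colon\C\ssm\overline\D\to\cE$ carrying the round circles to equipotentials and each radial segment $\{t\,e^{2\pi i\phi}:t>1\}$ to the parameter ray $\fR_\cE(\phi)$, with $t\to1$ corresponding to $\g\to0$. By Lindel\"of's theorem together with the classical correspondence between accesses to a boundary point of a simply connected domain and the prime ends realizing that point, the accesses to $\p$ from $\cE$ are in canonical bijection with the set of angles $\phi\in\R/\Z$ for which $\psi$ has radial limit $\p$ at $e^{2\pi i\phi}$; equivalently, for which $\fR_\cE(\phi)$ converges to $\p$ as $\g\to0$. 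Moreover the radial segment $\fR_\cE(\phi)$ is itself a representative of the corresponding access, and distinct accesses give distinct angles. This already gives the ``one and only one'' half of the statement: a parameter ray of $\cE$ is by definition one of these radial segments, so a fixed access --- being a fixed prime end, hence a fixed angle $\phi$ --- contains exactly one parameter ray, and by the displayed equivalence that ray lands on $\p$.

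It remains to show that the angle $\phi$ attached to an access is \emph{co-periodic} --- necessarily of co-period equal to the ray period $q$ of $\p$, since by Theorem~\ref{T-main} a co-periodic ray lands at a parabolic map whose ray period equals the ray's own co-period. First, $\phi$ cannot be rational and non-co-periodic: by Lemma~\ref{L-ratland} the ray $\fR_\cE(\phi)$ would then land at a Misiurewicz map, and a Misiurewicz map is never parabolic (if the orbit of $-a_F$ is eventually periodic repelling then no critical orbit is attracted to a cycle, so by the Fatou--Julia classification $F$ has no parabolic orbit), contradicting that $\p$ is a parabolic boundary point of $\cE$. Second --- and this is the substantive point --- $\phi$ cannot be irrational. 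Here I would invoke the local normal form of the co-period $q$ tessellation near a parabolic limit, the content of Appendix~\ref{a-para-stab}. By Corollary~\ref{C-allpara} and Arfeux--Kiwi \cite{AK} at least one co-period $q$ parameter ray lands at $\p$, and there are only finitely many such rays altogether (\cite[Corollary 5.12]{M4}); together with the appendix this shows that in a sufficiently small punctured neighborhood $N\ssm\{\p\}$ the co-period $q$ rays landing at $\p$ cut $N\ssm\{\p\}$ into finitely many closed sectors, each lying in a single face of $\Tes_q(\ocS_p)$, with the property that every way of approaching $\p$ from within $\cE$ runs along, or immediately adjacent to, one of the co-periodic rays landing at $\p$ that lie in $\cE$. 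Hence the access coincides with the prime end of a co-periodic parameter ray of $\cE$, so $\phi$ is co-periodic; and by Theorem~\ref{T-main} the ray $\fR_\cE(\phi)$ genuinely lands at a parabolic map of ray period $q$, which --- having $\p$ among its limit points --- must be $\p$ itself.

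Assembling the pieces: each access to $\p$ from $\cE$ yields a co-periodic angle $\phi$, the parameter ray $\fR_\cE(\phi)$ is the unique parameter ray of $\cE$ lying in that access, and it lands on $\p$. Conversely each co-periodic parameter ray of $\cE$ landing on $\p$ determines an access, and distinct rays determine distinct accesses, so the correspondence is in fact a bijection.

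\emph{The main obstacle} is the exclusion of irrational accesses. The cheap dynamical argument of Lemma~\ref{L-paraland} --- transporting the orbit of $-a$ along the twin dynamic rays $\cR_F(\phi\pm1/3)$ --- breaks down for irrational $\phi$, because $3^k\phi$ is never periodic and so never contradicts the escape of $-a$ to infinity; one therefore genuinely needs control of the local geometry of $\cE$ near the parabolic point $\p$. The connectedness-plus-countability argument used in Lemma~\ref{L-ratland} does not supply this (a connected countable set can reduce to a single point), which is precisely why the parabolic normal form of Appendix~\ref{a-para-stab} is the essential ingredient here.
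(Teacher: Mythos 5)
Your prime-end setup is fine, and it gives the uniqueness half (one parameter ray per access) cleanly --- arguably more cleanly than the paper, which disposes of uniqueness with the remark that two rays landing at the same point must have part of the connectedness locus between them and hence define different accesses. Your exclusion of rational non-co-periodic angles (Misiurewicz landing point cannot be parabolic) also matches the logic of Theorem~\ref{T-main}. The problem is the step you yourself flag as ``the substantive point'': ruling out an irrational access angle. The argument you give there does not work. Appendix~\ref{a-para-stab} is a statement about the continuity of landing points of \emph{dynamic} rays under perturbation of the map; it does not establish any ``local normal form of the co-period $q$ tessellation near a parabolic limit,'' and nothing in it (or in Corollary~\ref{C-allpara} plus finiteness of co-periodic rays) shows that every approach to $\p$ from within $\cE$ runs adjacent to one of the finitely many co-periodic rays landing at $\p$. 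Even granting that those finitely many rays cut a neighborhood of $\p$ into sectors, each sector is an open piece of a face of $\Tes_q(\ocS_p)$ and contains a continuum of parameter rays of other angles, including irrational ones; the sector picture by itself does not preclude such a ray from landing at $\p$ and furnishing a new access. So the bijection you assert in the last paragraph is exactly what remains to be proved.

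The paper closes this gap dynamically rather than geometrically. Following the access path, one extracts a limit angle $\theta_0$ of the parameter rays through points of the path and observes that the $\theta_0$ parameter ray accumulates on (hence lands at) $\p$. If $\theta_0$ were irrational, then --- running the mechanism of Lemma~\ref{L-paraland}: for maps on the ray the twin dynamic rays of angle $\theta_0\pm1/3$ crash at $-a$, and in the limit $-a$ falls into the parabolic basin --- one would produce a dynamic ray of irrational angle landing on the parabolic orbit, which is impossible since only rays of periodic angle land at parabolic periodic points. Hence $\theta_0$ is rational, and then your Misiurewicz-versus-parabolic dichotomy forces it to be co-periodic. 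If you replace your appeal to the Appendix with this dynamical exclusion of irrational angles, the rest of your prime-end argument assembles into a correct proof.
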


\begin{proof} Choose such an access path $\alpha:[0,1]\to \cE\cup\{\p\}$ 
  with $\alpha(0)=\p$. For every $t>0$ there exists a unique angle $\theta$ 
  such that the $\theta$ parameter ray passes through  $\alpha(t)$. Choose a
  sequence $\{t_j\}$ converging to zero. After passing to a countable
  subsequence, we may assume that the associated angles $\theta_j$ converge
  to a limit $\theta_0$. Evidently this limit ray must land on $\p$.
  It follows that the angle $\theta_0$ is co-periodic. If it were
  irrational, then there would be an irrational dynamic rays landing
  on the parabolic orbit, which is impossible. Therefore, by
  Theorem~\ref{T-main},   it is co-periodic. It is unique since there cannot
  be two rays landing at the same point without some part of the
  connectedness locus between them.  \end{proof}

  
 \medskip
 

The proof of Theorem~\ref{T-edge} will depend on two preliminary statements.

\begin{lem}\label{L-rootOP}
  The period $q$ orbit portrait for the root point $\p$ of a
hyperbolic component $H$ of Type D$(q)$
is identical to the period $q$ orbit portrait
for any map $F$ in the interior of $H$.\end{lem}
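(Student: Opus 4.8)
The plan is to prove the two inclusions $\cO_q(F)\subseteq\cO_q(F_\p)$ and $\cO_q(F_\p)\subseteq\cO_q(F)$, where $\p$ is the root point of $H$ and $F$ is an arbitrary map in the interior of $H$; note that $\cO_q(F_\p)$ is defined because $\p$ lies in the connectedness locus. First I observe that $\cO_q(F)$ does not depend on the choice of $F$ in the interior of $H$: being connected and disjoint from every co-period $q$ parameter ray (these lie in the escape regions, whereas $H$ lies in the connectedness locus), $H$ is contained in a single face $\F_k$ of $\Tes_q(\ocS_p)$, so Theorem~\ref{T-decomp} already gives that $\cO_q$ is constant on $H$. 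One can also see this directly: for every $F\in H$ both critical points lie in attracting cycles --- the marked one in the superattracting period $p$ cycle, the free one in the attracting period $q$ cycle --- hence every periodic cycle other than these two is repelling, so every period $q$ dynamic ray lands at a repelling periodic point, and Lemma~\ref{L-land-stab} together with the connectedness of $H$ makes $\cO_q$ locally, hence globally, constant on $H$. It therefore suffices to verify each inclusion for a single $F\in H$, which we may take as close to $\p$ as we wish.

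The inclusion $\cO_q(F)\subseteq\cO_q(F_\p)$ is the upper semicontinuity of orbit portraits under degeneration to a parabolic limit: a coincidence among landing points of period $q$ rays can only be created, never destroyed, as $F\to\p$. Since $H$, and hence $F$, lies in a face of $\Tes_q(\ocS_p)$ on whose boundary the parabolic vertex $\p$ sits (the open face cannot contain $\p\in\partial H$), this is precisely Corollary~\ref{usc} of Appendix~\ref{a-para-stab}.

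For the reverse inclusion $\cO_q(F_\p)\subseteq\cO_q(F)$ I must show that every coincidence $\phi\simeq\psi$ among period $q$ angles that holds at $F_\p$ still holds for $F\in H$ near $\p$. The only non-repelling periodic point on which a period $q$ dynamic ray of $F_\p$ can land is a point of the parabolic orbit (of ray period $q$). If the common landing point of the $\phi$- and $\psi$-rays for $F_\p$ is a repelling periodic point, then by Lemma~\ref{L-land-stab} these two rays continue to land at a common nearby repelling point for all $F$ near $\p$, so $\phi\simeq\psi$ throughout a neighborhood of $\p$, in particular on $H$. If instead their common landing point lies on the parabolic orbit, then I would appeal to the structure of the parabolic bifurcation as $F$ moves from $\p$ into $H$: the parabolic orbit splits into the attracting period $q$ cycle that parametrizes $H$ together with a persistent repelling cycle lying on the boundary of the newly created attracting Fatou components, and --- this being the ``tame'' direction of parabolic implosion, in which the relevant Fatou coordinate survives and no ray wanders --- the external rays that landed on the parabolic orbit continue to land, with their mutual coincidences unchanged, on that persistent repelling cycle. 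Hence once more $\phi\simeq\psi$ for $F\in H$ near $\p$, and combining the two inclusions yields $\cO_q(F_\p)=\cO_q(F)$.

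The delicate point is this last step. Unlike the repelling case, Lemma~\ref{L-land-stab} does not apply at $\p$, since the landing point in question is not repelling there; one must instead invoke the non-wandering half of parabolic bifurcation theory --- for quadratics this is in Douady--Hubbard, and the polynomial version needed here should follow from the stability results of Appendix~\ref{a-para-stab} --- to locate the rays that formerly landed at the parabolic orbit and to check that their coincidences persist. Everything else is a routine combination of Theorem~\ref{T-decomp}, Lemma~\ref{L-land-stab}, and Corollary~\ref{usc}.
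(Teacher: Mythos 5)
Your argument is correct, but it is a genuinely different route from the paper's. The paper disposes of this lemma in one line by invoking Ha\"issinsky's parabolic surgery: $F_\p$ and any $F\in H$ are $J$-equivalent, i.e.\ topologically conjugate on their Julia sets by a conjugacy that preserves the landing pattern of external rays, so the orbit portraits (for every period at once) coincide. You instead prove the two inclusions separately: constancy of $\cO_q$ on $H$ via Theorem~\ref{T-decomp}, the inclusion $\cO_q(F)\subseteq\cO_q(F_\p)$ via Corollary~\ref{usc}, and the reverse inclusion by splitting into the repelling case (Lemma~\ref{L-land-stab}) and the parabolic case. What the surgery approach buys is brevity and a stronger conclusion (all periods simultaneously) at the cost of importing heavy machinery; what your approach buys is a self-contained argument using only the tools already in the paper, at the cost of having to handle the parabolic bifurcation by hand.

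On the one step you flag as delicate: it does close, and with less than the full "non-wandering half of parabolic implosion." Suppose the $\phi$- and $\psi$-rays of period $q$ land at a point $z_0$ of the parabolic orbit, which has period $k\mid q$ and hence $q/k$ attracting petals at $z_0$. Theorem~\ref{pls-T1} (the one-free-critical-point hypothesis holds in $\cS_p$, and no period-$q$ ray crashes for $F$ in the connectedness locus) gives that each landing point varies continuously, so for $F\in H$ near $\p$ both rays land at fixed points of $F^{\circ q}$ close to $z_0$. The fixed point $z_0$ of $F_\p^{\circ q}$ has multiplicity $q/k+1$, so it splits under perturbation into exactly $q/k+1$ fixed points of $F^{\circ q}$: the $q/k$ points of the attracting period-$q$ cycle lying near $z_0$, plus one repelling continuation of the period-$k$ cycle. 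A dynamic ray cannot land at an attracting periodic point, so both rays must land at the unique repelling candidate; hence they still land together, and all coincidences at the parabolic orbit persist into $H$. With that count supplied, your proof is complete.
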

\smallskip

\noindent In other words, two or more
dynamic rays for the map $\p$ land together at a periodic point for this map
if and only if the corresponding rays for any $F\in H$ will land together
at a periodic point of $F$. This can be proved by Ha\"isinski
surgery.\footnote{See \cite{H1}, \cite{H2}.  For example
  Kawahira in the sketch of \cite[Theorem 2.1]{Ka}  says that ``To check the
  dynamical stability on the
Julia sets, we check the stability of the ray equivalence, which is defined
by shared landing points of external rays.''}\qed
\medskip

We will also need the Upper Semi-Continuity Corollary of
Appendix~\ref{a-para-stab} which states that:

\begin{quote}\sf The orbit portrait for any map in a neighborhood of $\p$
is a subset of the orbit portrait for $\p$.\end{quote}\smallskip

\begin{proof}[Proof of Theorem \ref{T-edge}]
We know that the orbit portrait must change as we move out of the face
containing $H$ by crossing one of the two bounding edges near $\p$. Since
the change is monotone by upper semi-continuity, both of these rays
must be primary.\end{proof}\medskip

Although the fact that an edge is secondary tells us exactly which
dynamic rays jump,
it does not provide much information about where they jump to. For a
more explicit discussion of the precise changes in orbit portrait as we 
cross an edge of the tessellation, see Section \ref{s-near-para}.
\medskip

  Diagrams such as Figure~\ref{f-po}
  provide a great deal of information about changes in the orbit portrait
  as we cross an edge (= parameter ray). Note that there are twenty four
  edges shown in this figure. Sixteen of these are primary, so
 that the orbit 
  portrait is replaced by a proper subset of itself as we cross the ray in
  the right direction. But for the remaining eight, all in the inner
  escape region, there is a more complicated behavior.
   (Again compare Section~\ref{s-near-para}.) 
  \medskip

 How does the change actually take place? Two 
  of these twenty four cases are illustrated in Figures~\ref{F-11-24}
  through \ref{F-7-12a}.  
If we cross $11/24$ parameter
ray  of the internal escape region of $\cS_2$, the change seems relatively 
easy to understand. (Compare Figure~\ref{F-11-24}.) Note that
$$ \frac{11}{24}~=~\frac{1}{8}+\frac{1}{3}~,$$
where $1/8$  has period two under tripling.
As we cross this ray in the direction of increasing parameter angle,
the landing point of the $7/8$ dynamic ray  varies continuously; 
but the landing point of the $1/8$ rays jumps discontinuously to join it.
Thus the orbit portrait jumps to a larger orbit portrait
which contains the original one.
More generally, all of the preimages of the $1/8$
ray under iterated angle tripling behave in the same way. For example
the landing point of the $3/8$ ray jumps to the landing point of the $5/8$
ray, which varies continuously.  However, the landing
points of all rays which are not iterated pre-images of $1/8$ vary
continuously. For example the $1/4$ and $3/4$ 
rays land together throughout this transition. However the period two orbit
portrait jumps to a larger orbit portrait which contains the original one.

  The transition as we cross the parameter ray of
co-critical angle
$$ \theta~=~ \frac{14}{24}~=~\frac{7}{12}~=~\frac{1}{4}+\frac{1}{3} $$
in the inner $\overline{10}$ escape region\footnote{There is similar behavior
at the inner $2/24$, $5/24$, $7/24$, $10/24$, $17/24$, $19/24$, and $22/24$
  rays. All of the crossings in the outer (basilica) region are monotone.}
seems harder to understand. 
(See Figure \ref{F-7-12}.)
Only the dynamic rays of angle $1/4$ and $3/4$ (and their preimages)
jump discontinuously, and yet we obtain a new orbit portrait which neither
contains nor is contained in the original one.

Figure~\ref{F-7-12a} helps to provide a more detailed understanding of what is
happening. Here we use a different coloring algorithm: Within the basin of
infinity,   the color depends on the value of the $F$-invariant angular
variable
$$  \log_3\big(\g_F(z)\big) \quad ({\rm mod}~~\Z)~.$$
Here $\g_F(z)$ stands for the Green's function for $F$,
which is multiplied by  $d=3$ if we replace $z$ by $F(z)$.
This coloring is useful since it makes it easier to see the critical
and precritical points, which are located in the centers of red cross shaped
configurations.

Each map  in the central escape region has three repelling fixed points.
There is the landing
point of the zero ray to the right and the $1/2$ ray to the left, and there is
also a central fixed point. Generically this central
fixed point is the landing point
of finitely many dynamic rays, which map to each other with a well defined
rational rotation number. (Compare the Rotation Number Section in \cite{BM}.)

There are infinitely many precritical points in each of the two spiral
channels leading to the central repelling point.
For parameter angles in the open interval $5/12< \theta<7/12$ (top left in
Figure~\ref{F-7-12a}), the $1/4$ dynamic ray misses
these precritical points, which act as local repellors. If $\theta=7/12$,
then the $1/4$ dynamic ray crashes into the free critical point (top right).
However if $\theta>7/12$, then the $1/4$ ray is pushed to the right
and lands at the same point as the $1/8$ ray. 
For each rational rotation
number $\rho=m/n$ in an interval $(1/2,~1/2+\epsilon)$ there is:

\begin{itemize}
\item[(1)] an angle $\theta_\rho$ of rotation number $\rho$ under tripling, and

\item[(2)] an entire open interval of $\theta$ values around $\theta_\rho$
such that the $n$ dynamic rays with angle of the form $3^j\theta_\rho$
manage to wiggle through the spiral to land at the central fixed point.
\end{itemize}

\begin{figure} [htb!]
  \centerline{\includegraphics[width=3in]{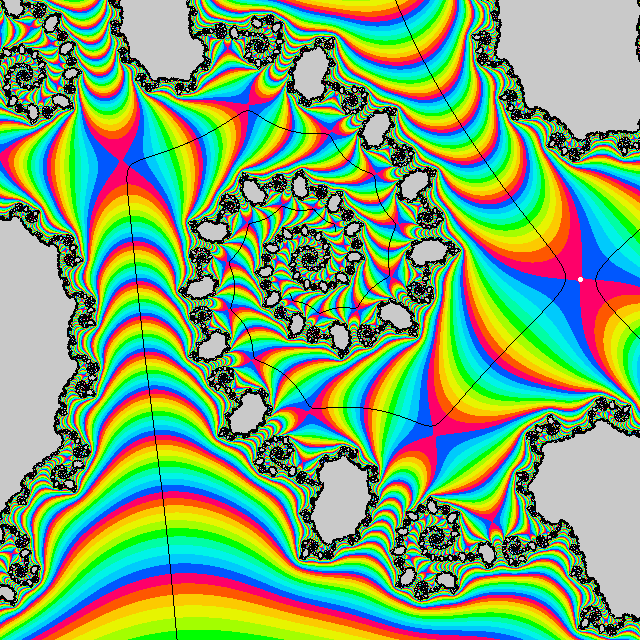}\qquad
    \includegraphics[width=3in]{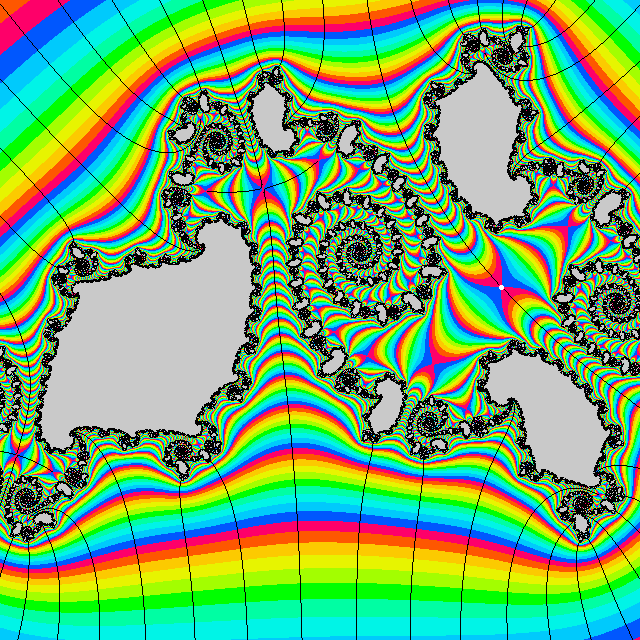}}\bigskip
  \bigskip
  
  \centerline{\includegraphics[width=3.7in]{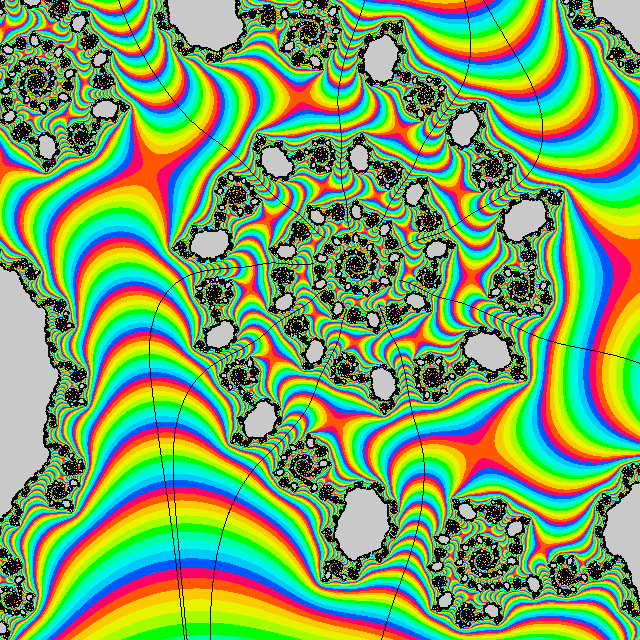}}
  \caption[Detail of Figure~\ref{F-7-12}]{\sf A more detailed view of the
    crossing in Figure~\ref{F-7-12},
   using a different coloring scheme. Top Left: For
    $\theta< 7/12$ the dynamics rays of angle $1/4$ and $11/12$ come close
    to the free critical point $-a$ and shoot off in opposite directions, so
    that the $1/4$ ray (like the $3/4$ ray) spirals into the central fixed
    point, which has rotation number $\rho=1/2$.
    Right: if  $\theta= 7/12$, the rays of angle $1/4$ and $11/12$ crash
    together at $-a$. 
    Below: For $\theta>7/12$, the $1/4$ and $3/4$ rays land on a period two
    orbit, while the central fixed point has a rotation number
    $\rho>1/2$ which varies continuously with $\theta$. 
     \label{F-7-12a}}
  \end{figure}

\noindent(The figure illustrates the case $m/n=5/9$.) At both
endpoints of such an interval of constant rotation number, these $n$ rays
crash into $n$ pre-critical points (visible as the centers of red X shaped
configurations). For $\theta$ values in  the complementary Cantor
set, if we exclude those which are endpoints of intervals of constancy,
the rotation number is irrational, and infinitely many dynamic
rays land on the central fixed point.

However, unless $\theta$ is one of the
countably many co-periodic angles, some dynamics rays will manage to wobble
though the various narrow channels to land at the central fixed point, with
rotation number $1/2<\rho<1$.
The lower part of Figure \ref{F-7-12a} shows the special case where the
rotation number is $5/9$, so that nine rays land.
\medskip

{\bf Note.} We have described what is happening in Figure \ref{F-7-12a}
in some detail; but there is actually a similar 
problem in understanding Figure \ref{F-7-12}. 
The problem in both cases is that the period $q$ orbit portraits are only
aware of periodic orbits of ray period $q$. If the ray period changes,
then, although orbit itself will usually vary continuously, 
anything involving that orbit will disappear from the
orbit portrait. However, the ray period can
change, only if the  rotation number of the first return map to the periodic
point changes. Since rotation numbers change continuously, it follows that
any change must involve VERY large denominators, and hence VERY large ray
periods.

In Figure \ref{F-7-12a} the change involves a fixed point.  We will
see in the Rotation Number Section of \cite{BM} that the rotation numbers of
fixed points are relatively
easy to deal with, but in Figure \ref{F-7-12} the change involves a period
two orbit. In fact on the left side of this figure 
the 1/8 and 3/8 rays land on a period two orbit. On he right side, this
period two orbit still exists; but its rotation number has changed.
The first return map $F^{\circ 2}$ now has degree nine;
and the computational problems involved in understanding
exactly what happens become much more difficult.

Thus to understand what is happening, we need to
consider not only period two orbits, but orbits of arbitrary period.
The central repelling fixed point remains ``visible from infinity''
as the co-critical angle increases. 
That is, there are always dynamic rays which land at this point (except at
the countably many co-periodic angles, where the appropriate dynamic
rays crash into  precritical points). In fact, for an entire Cantor set
of co-critical angles, there are infinitely many  dynamic rays landing at
the fixed point. The concept which unifies all of these cases is
the \textbf{\textit{rotation number}} $\rho$ of this fixed point, which varies
continuously as a function of $\theta$, with an interval of constancy for
each rational rotation number.  For further details,
see \cite{BM}.

\begin{figure}[htb!]
  \begin{center}
  \begin{overpic}[width=2.8in]{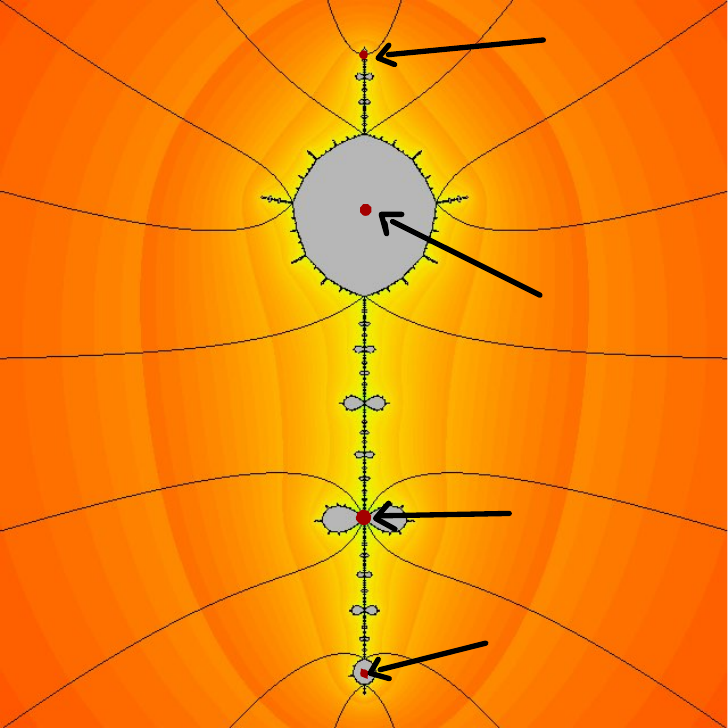}
    \put(153,190){$2a$}
    \put(153,116){$a$}
    \put(145,55){$-a$}
    \put(140,20){$-2a$}
    \put(203,100){$0$}
    \put(203,144){$1$}
    \put(203,200){$2$}
    \put(142,207){$3$}
    \put(110,207){$4$}
    \put(85,207){$5$}
    \put(55,207){$6$}
    \put(-12,200){$7$}
    \put(-12,145){$8$}
    \put(-12,98){$9$}
    \put(-17,50){$10$}
    \put(-17,0){$11$}
    \put(55,-10){$12$}
    \put(80,-10){$13$}
    \put(100,-10){$14$}
    \put(135,-10){$15$}
    \put(203,0){$16$}
    \put(203,50){$17$}
\end{overpic}
\caption[Julia for a Misiurewicz map $F$ in $\cS_1$]{\sf The Julia set for
  a Misiurewicz map $F$
  in $\cS_1$. The $4/18$ and $5/18$ parameter rays in $\cS_1$ (indicated
  by dotted white curves in  Figure \ref{F-t3s1}) land at this map.
  Note that these angles are rational, but not co-periodic.
These two rays bound a simply connected open subset of
$\ocS_1$; but this set is not a wake. See Remark \ref{R-not-wake}. Here
for example\\ 
\centerline{\quad $\{4,\,10,\,16\}/18\,\mapsto\,12/18\,\mapsto\,0\,\mapstoself$ \qquad
and \quad $\{5,\,11,\,17\}/18\,\mapsto\,15/18\,\mapsto\,9/18\,\mapstoself~$.}
\label{F-cap-ara}}
\end{center}
  \end{figure}

\begin{figure}[htb!]
  \centerline{\includegraphics[width=5.8in]{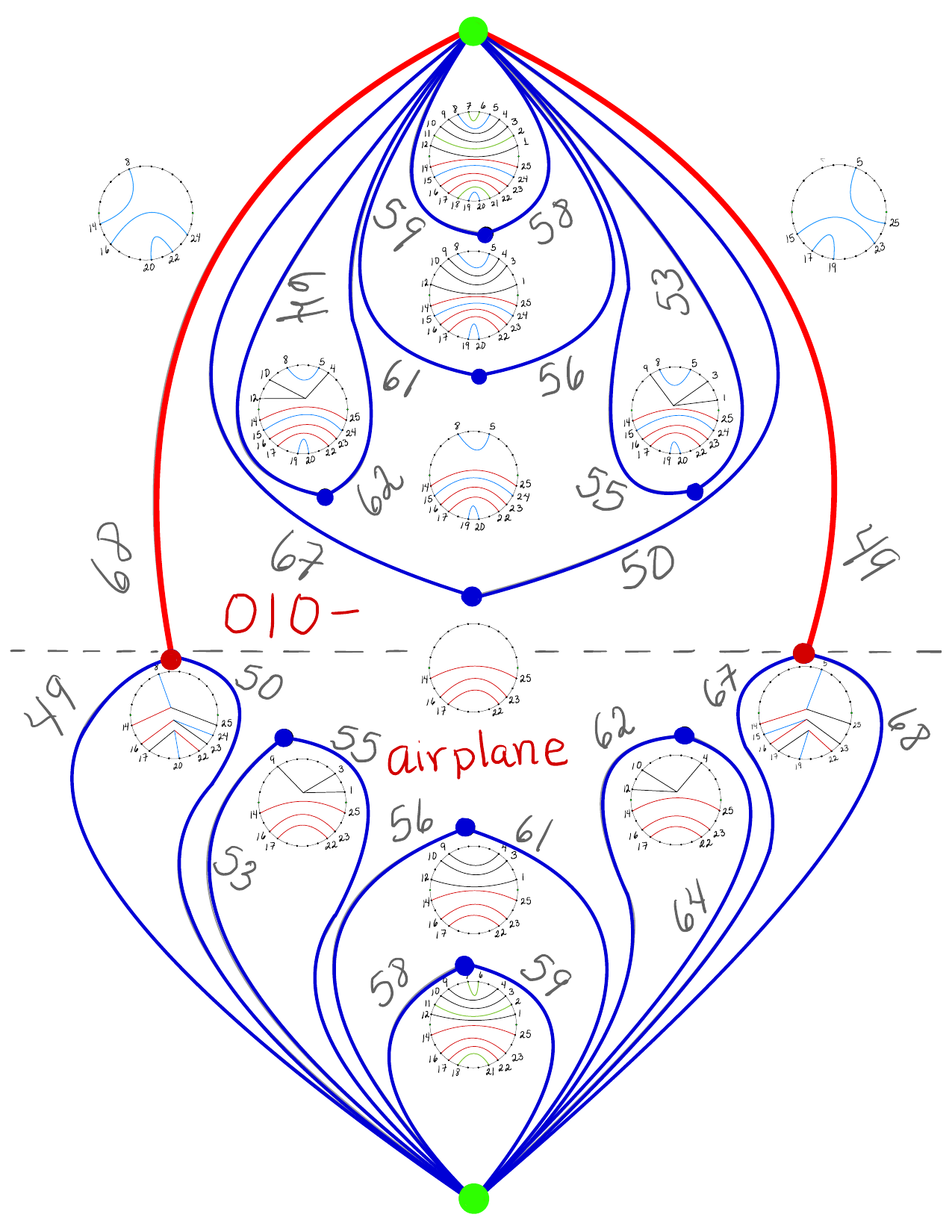}}
  \caption[Period $3$ orbit portraits of $\ocS_3$ between $010-$ and airplane]{\label{F-t3s3air-010} \sf Period $3$ orbit portraits in a part
   of $\ocS_3$ between the $010-$ escape region above, and the airplane
   region below. 
(Here $010$ is an abbreviation for the kneading invariant $(0,1,0)$.
   Compare Figures \ref{F-t1s3} through \ref{f-T3OP}.)
Note the five nested
 wakes above, and the two nested wakes among the six wakes below.
 Only the central orbit portrait represents a shared face. (Denominators 26
 and 78.)
}
\end{figure}

\begin{figure}[htb!]
  \centerline{\includegraphics[width=4.6in]{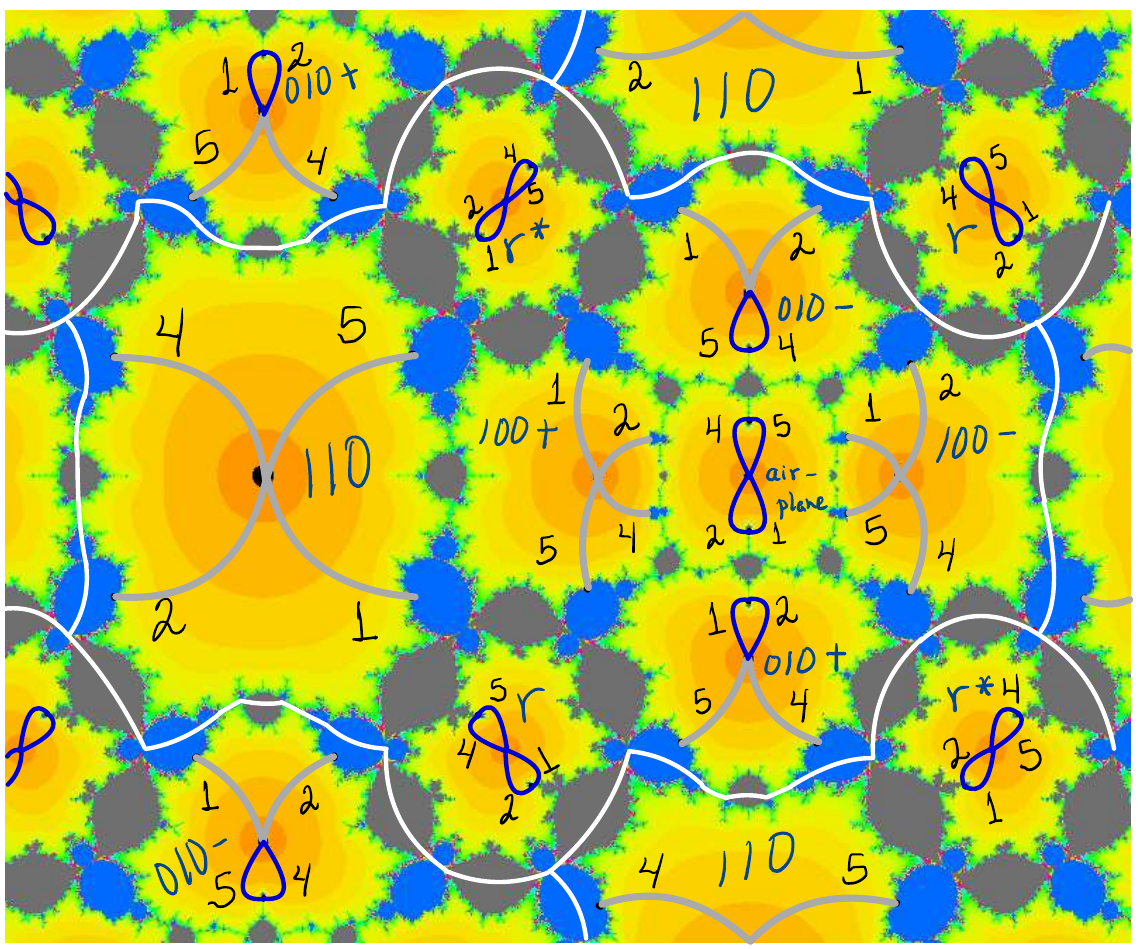}}
  \vspace{-.2cm}
  \caption[$\Tes_1(\ocS_3)$]{\sf The tessellation $\Tes_1(\ocS_3)$, lifted to
    the universal
    covering space of $\ocS_3$. (Compare \cite[Appendix]{BM}.)  A fundamental
      domain which contains one copy of each escape region
      is outlined in white,  with $110, ~100+$, the airplane, and
      $100-$ in the central row,
    with the co-rabbit and $010-$ above, and with the rabbit and $010+$ below.
(Note that this white curve cuts across faces of the tessellation.) 
 The orbit portrait is non-trivial for maps 
in each of the eight  simply-connected wakes, 
 but is trivial within the large non simply-connected shared face.
\label{F-t1s3}
}

\bigskip

  \centerline{\includegraphics[width=4.8in]{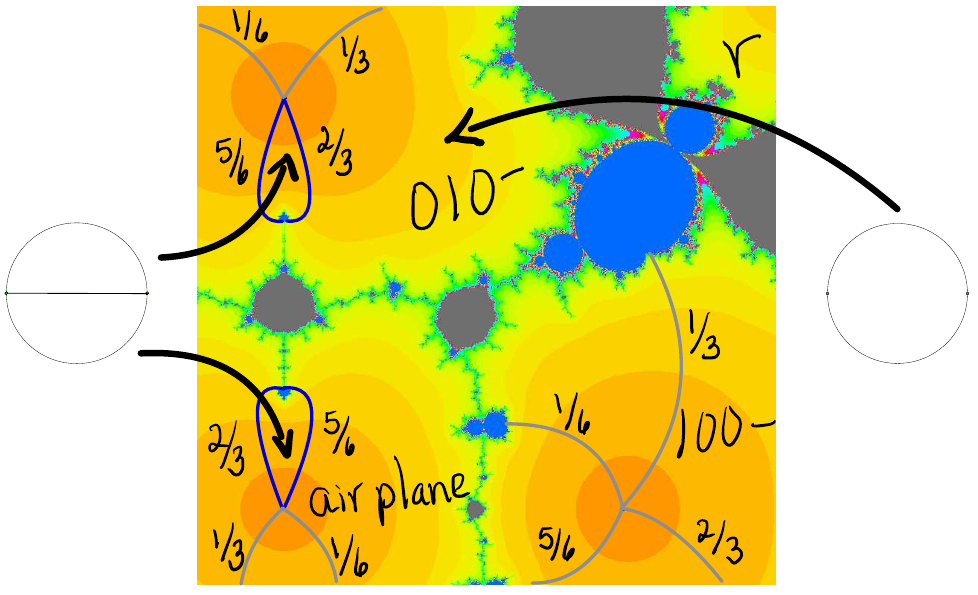}}
  \vspace{-.2cm}
  \caption[Part of period one tessellation of $\cS_3$]{\sf Magnified picture 
    of a region in the upper right of Figure \ref{F-t1s3}, showing the
    associated orbit portraits.\label{f-OPt1s3}  }
 \end{figure}

 \begin{figure}[htb!]
   \centerline{\includegraphics[width=4.5in]{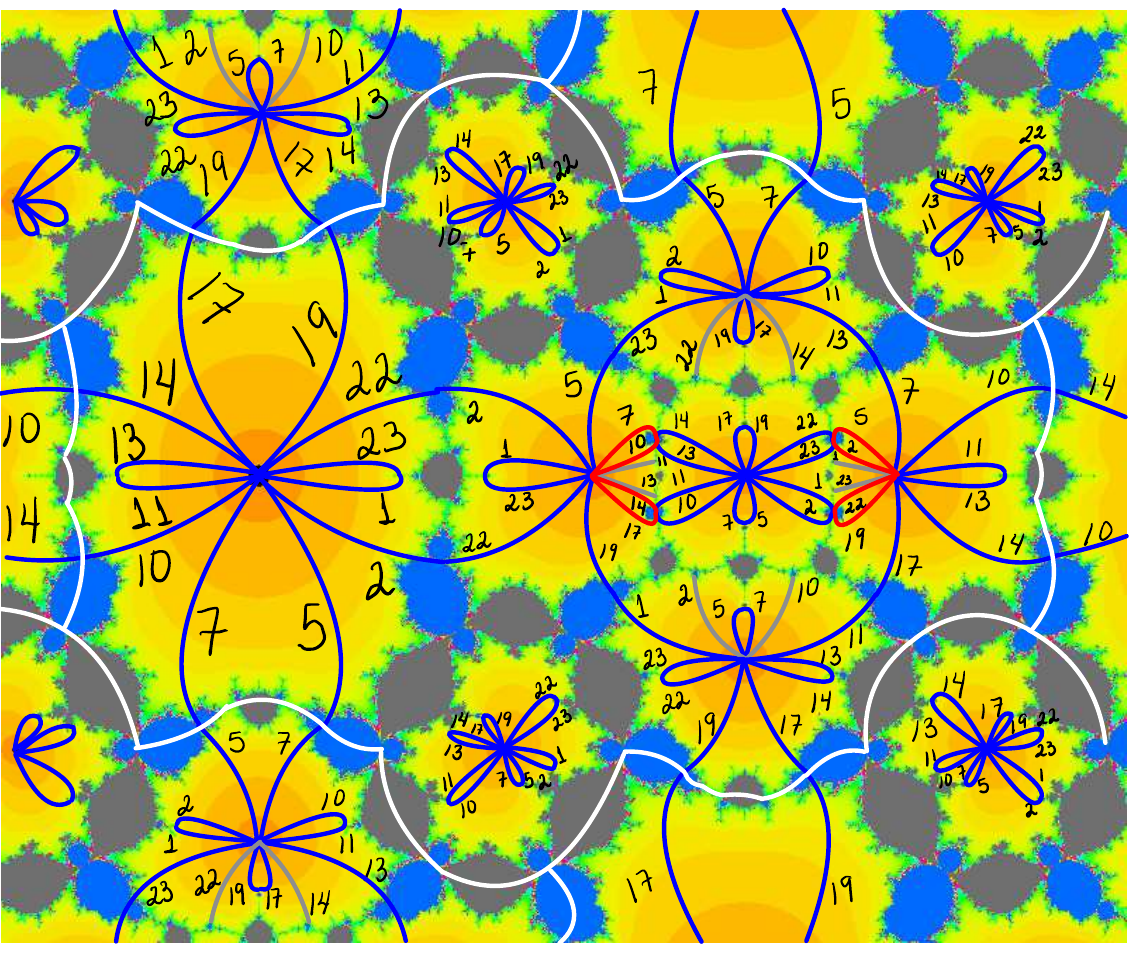}}
   \vspace{-.4cm}
  \caption[$\Tes_2(\overline\cS_3)$ ]{\sf The tessellation
    $\Tes_2(\overline\cS_3)$ contains $32$ wakes, together with $8$ 
    simply-connected shared faces, and two shared faces surrounding the 
    two rabbit regions which are not simply-connected. As usual, primary edges
    are blue and secondary edges are red.
\label{F-t2s3}}

  \medskip

  \centerline{\includegraphics[width=4.5in]{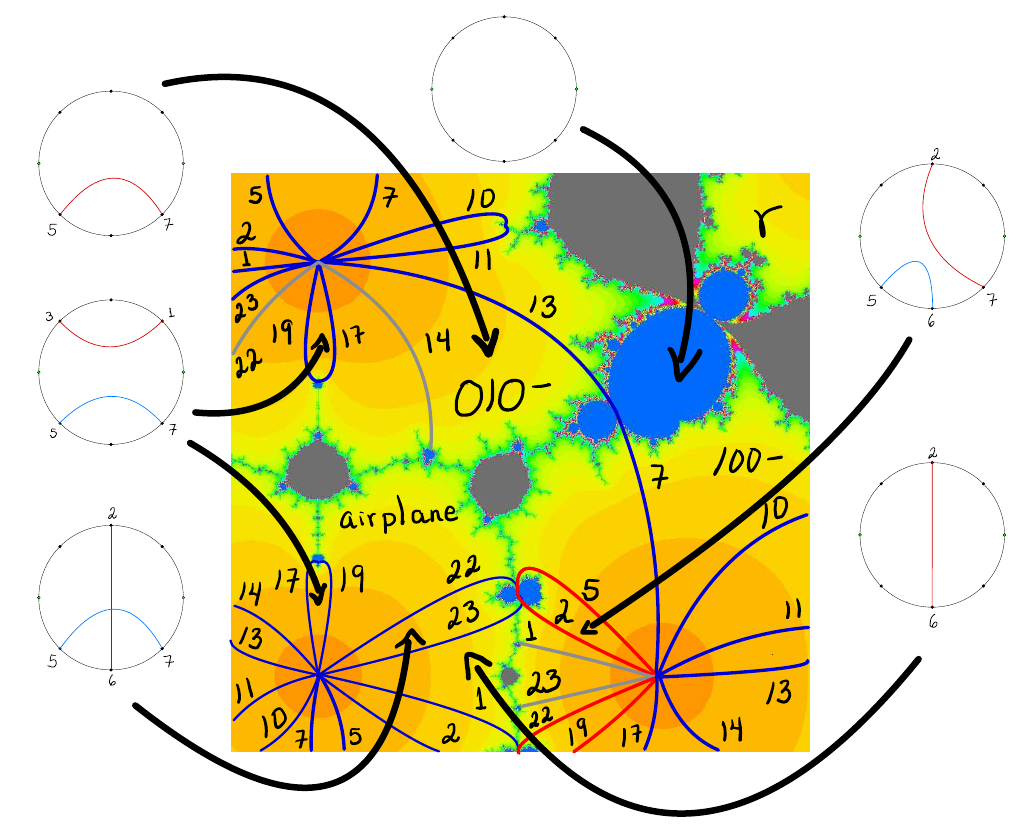}}
  \vspace{-.4cm}
  \caption[Part of period two tessellation of $\cS_3$]{\sf Again a 
    magnified picture of a region to the upper right, showing the associated
 orbit portraits. Each of these  non-trivial orbit portraits
    is found also    in $\Tes_2(\ocS_2)$, Figure~\ref{f-po}.
    \label{f-OPt2s3}}
\end{figure}

If we assume the Edge Monotonicity Conjecture \ref{CJ-main}, then it follows that 
every wake is either \textbf{\textit{primary}}, bounded by two primary edges,
or \textbf{\textit{secondary}}, bounded by two secondary edges. Furthermore
it follows that every primary wake contains a  component $H$ of Type D which
has $\p$ as its root point. (Similarly a secondary wake always seems to
contain a component of Type D with $\p$ as a boundary point; but never as
its root point.)

\begin{rem}\label{R-subwake} Still assuming  Conjecture \ref{CJ-main},
if $W$ is a subwake of some other wake $W'$, then 
$W$ is necessarily a primary wake, since the parabolic endpoint
$\p$ for $W$ has an open neighborhood $W'$ which intersects only one escape
region.\medskip

Evidently the orbit portrait is not constant throughout a wake unless it
is a minimal wake: The orbit portrait always becomes strictly larger every
time we pass to a sub-wake.
Examples are easy to find. Thus Figures~\ref{F-S2rays} and \ref{f-po}
clearly show twelve wakes, each consisting of a single face.\medskip

Although we are primarily concerned with wakes for a given period $q$, it is
certainly possible to consider nested wakes which do not 
have the same value of $q$. Such wakes of different period still have
the basic constraint that they must be either nested or disjoint. 
For example the period $q=1$ wake bounded by the $2/3$ and $5/6$ rays
in Figure~\ref{f2} contains the period $q=2$ wake bounded 
by the $17/24$ and $19/24$ rays in Figure~\ref{F-S2rays} or \ref{f-po}.
\end{rem}
\msk

\begin{rem}[{\bf Shared Faces}]\label{R-face-class} 
  Every tessellation contains two essentially different kinds of face.
  For $p>1$, the distinction is between  \textbf{\textit{shared faces}},
  which intersect two or more escape regions, and the remaining faces which
  intersect only one escape region. If we assume that any two edges in an
  escape region with a common landing point bound a wake (see Definition
  \ref{D-wake} $(3)$), then it is not hard to prove 
  the following:
  \begin{quote}\sf A face of $\Tes_q(\cS_p)$ with $p>1$ is shared if
 and only if it is neither a wake nor a sub-face of a wake.\end{quote}

\noindent As a very rough rule, the shared faces tend to have smaller
  orbit portraits, but larger numbers of edges. 

  The case $p=1$ is different. There is only one escape region, and
  every tessellation    $\Tes_q(\ocS_1)$ contains a unique 
``exceptional'' face, which contains the
map  $F(z)=z^3$, and which has trivial orbit portrait. Every other
  face is either a wake or a sub-face of a wake, and has non-trivial
  orbit portrait. This dichotomy is perhaps more visible if
  we invert $\ocS_1$, placing the ideal point at zero and the map $F(z)=z^3$
  at infinity, as in  Figure~\ref{F-t2s1}-right. \smallskip

We can also distinguish between \textbf{\textit{shared vertices}}
which are boundary points of at least two escape regions, and
\textbf{\textit{unshared}} vertices,
which are boundary points of only one escape region.    \end{rem}

\begin{rem}[\bf Not a wake]\label{R-not-wake}
  Note that a wake must be bounded by co-periodic
  parameter rays. A region bounded by two rays which are not co-periodic is
  not called a wake. As an example, the $4/18$ and
  $5/18$ rays in $\cS_1$ (the dotted white curves in  Figure \ref{F-t3s1})
land at a Misiurewicz map which  is the bottom boundary point
  of an oval shaped hyperbolic component of Type C. Since these rays and their
  landing point are all contained in the interior of a single face of
  $\Tes_3(\ocS_1)$, the period 3 orbit portrait does not change as we cross
  these rays. In fact a similar argument shows that the period $q$ orbit
  portrait does not change for any $q$. The Julia set for this landing
  point is shown in Figure \ref{F-cap-ara}.\end{rem}\smallskip

The rest of this section will discuss many detailed pictures of
tessellations; while the theoretical discussion will resume in Section
\ref{s-near-para}.\smallskip

\begin{rem}[{\bf The torus $\ocS_3$}]\label{R-S3} 
  Before describing tessellations of $\ocS_3$,
  it may be useful to describe this space briefly. (See 
  Figures~\ref{F-t3s3air-010} through \ref{f-T3OP}, and see
  \cite[Appendix D]{BM}
  for details.)  There are eight escape regions. Three of these
belong to the zero kneading 
 family of Remark~\ref{R-Mand}, and correspond
to the rabbit, co-rabbit, and airplane. There is also one escape region
with kneading invariant $110$, and two each with kneading invariants
$100$ and $010$. 
 The three involutions of Remark~\ref{R-sym} interchange these eight escape
 regions as follows. The airplane and $110$  regions map to themselves under
 all three involutions, and hence have a full group of symmetries. Each of the
remaining six is invariant under only one of the three involutions.
Thus the $180^\circ$ rotation maps each rabbit region to itself but
interchanges the remaining four in pairs, with
$$\xymatrix{010- ~~\ar@{<->}[r] & ~~010+} \qquad{\rm and}\qquad
\xymatrix{100- ~~ \ar@{<->}[r] &~~ 100+~}.$$
Similarly, complex conjugation maps each $010\pm$ region  to itself, but
interchanges the two rabbits and the two $100\pm$ regions; while
left-right reflection interchanges the rabbits and the $010\pm$ regions.
\end{rem}
\medskip

{\bf The tessellation $\Tes_1(\ocS_3)$.} 
In Figure~\ref{F-t1s3} we show the period 1 tessellation of
the full torus; while Figure~\ref{f-OPt1s3} concentrates on an smaller region
in the upper right of the airplane region. In most of Figure~\ref{f-OPt1s3},
the period one orbit portrait is trivial; but inside the three wakes to the
left, the portrait is  $\{\{0,1/2\}\}$. The boundary rays  of these wakes are
primary rays  (Definition~\ref{D-3E});  but   the other rays shown are all
  non-separating.
  \smallskip
  
This tessellation  is similar to the period one tessellations of
$\ocS_1$ and $\ocS_2$ in the following sense: For
each of these cases, there is a one-to-one correspondence between
escape regions and connected components of the 1-skeleton, with each such
component contained in the closure of a corresponding escape
region.\footnote{It seems possible that the analogous statement is true 
for every $\Tes_1(\ocS_p)$. However, it would fail if there were a Mandelbrot
copy with cusp point of period one which lies along the boundary between two
escape regions (as in Figure \ref{f-M(53)bdry} for the period five case). If
this happens, then the two regions would be connected within $\Tes_1(\ocS_p)$.}
For any $\Tes_1(\ocS_p)$ where this is true,
it follows easily that the face $\F_0$ which separates the various
components of the 1-skeleton satisfies
$$ H_1(\F_0)~\stackrel{\simeq}{\longrightarrow}~ H_1(\cS_p) 
~\stackrel{\rm onto}{\longrightarrow}~H_1(\ocS_p)~.$$
For example,
if $p=3$, $H_1(\F_0)$ is isomorphic to the group $H_1$ of an $8$-fold
punctured torus, which has rank $9$. (The first puncture does not change the
rank; but each additional puncture adds one.)

If the escape region has multiplicity one (as it always does when $p\le 3$),
note that a component of the $1$-skeleton of $\Tes_1$
which is contained in its closure 
can only have the shape of an $X$, an $\boldsymbol\alpha$, or of a figure
eight.

\medskip

\begin{figure}[htb!]
  \centerline{\includegraphics[width=4.1in]{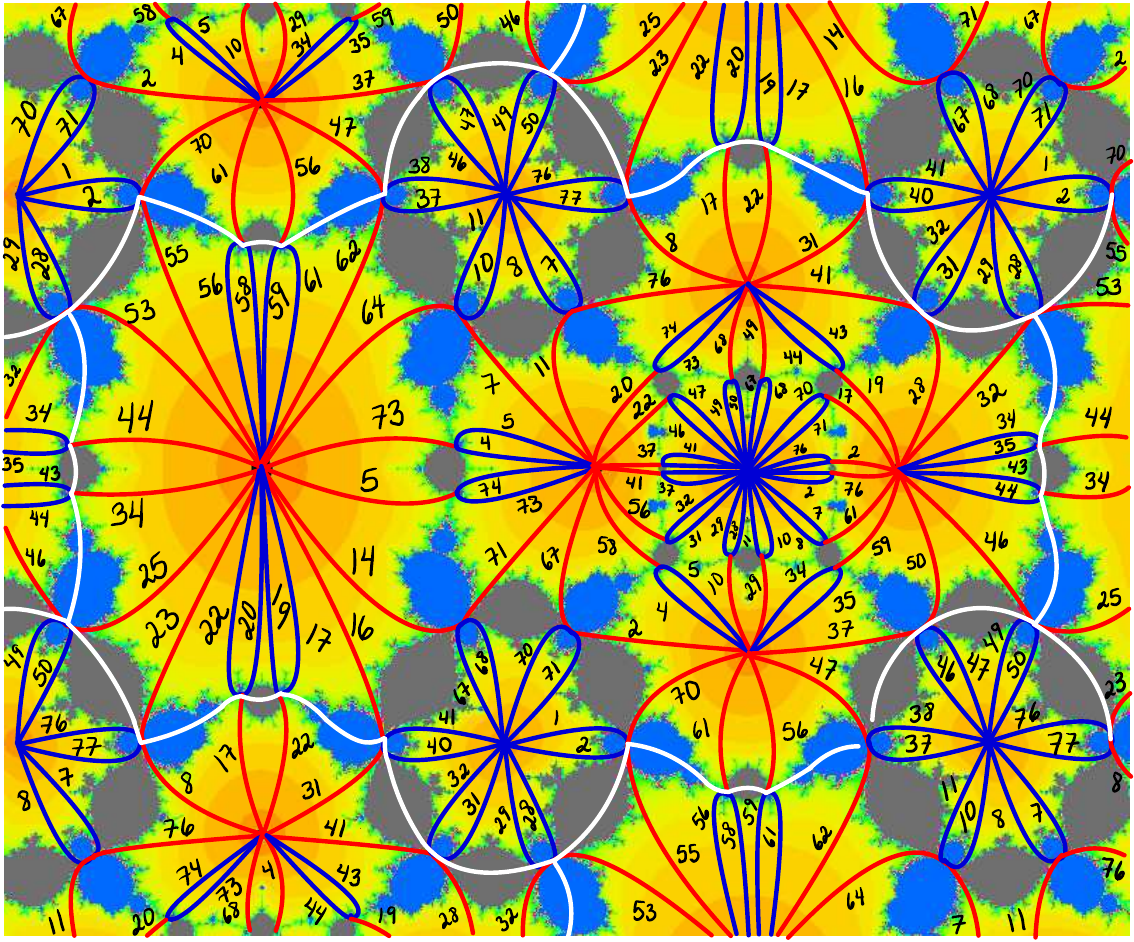}}
  \vspace{-.2cm}
  \caption[$\Tes_3(\ocS_3)$]{\label{F-T3S3}  \sf Simplified picture of
    $\Tes_3(\ocS_3)$.
    Only the rays which land on
    a common boundary point between two or more escape regions are 
    shown. 
   Here the common denominator for parameter angles is $78$.}
\vspace{-.5 cm}

  \centerline{\includegraphics[width=4.2in]{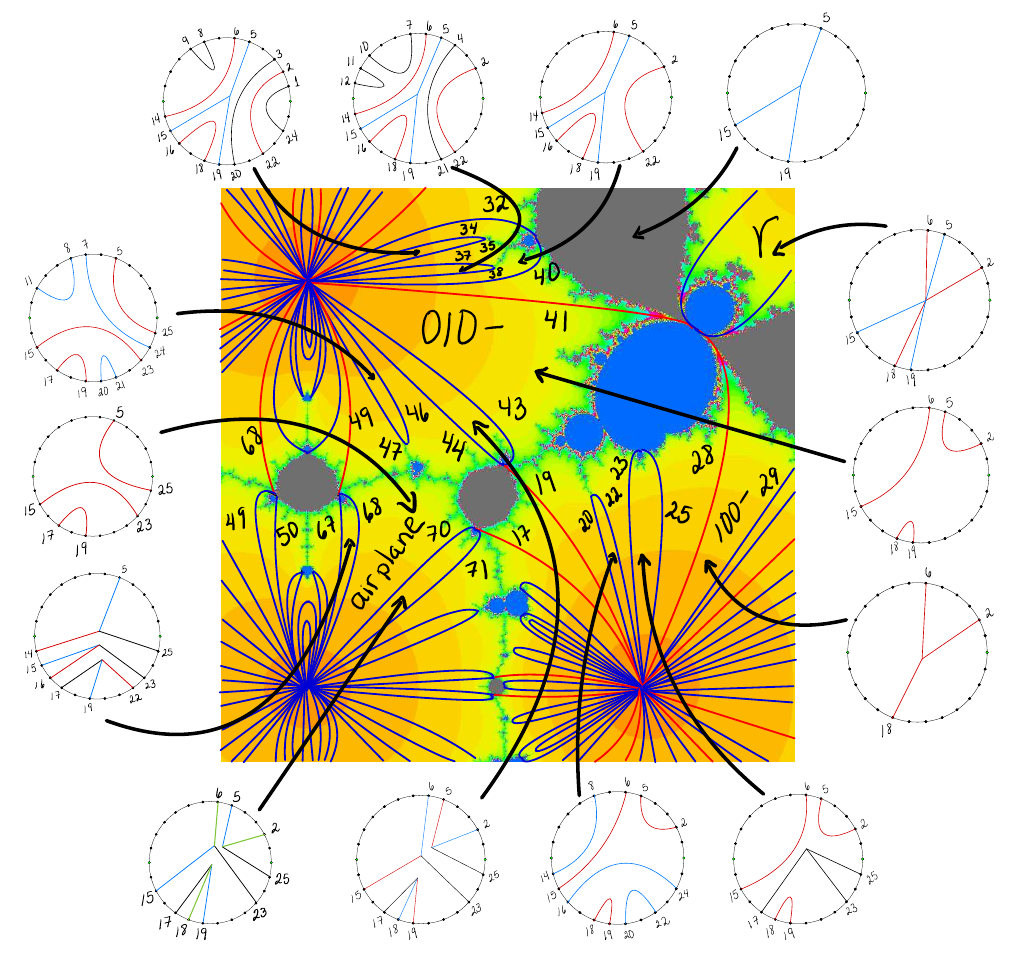}}
  \vspace{-.5cm}
  \caption[Part of period three tessellation of $\cS_3$]{\sf Period three 
    tessellation and orbit portraits  for an area to the upper right
    The wake in the rabbit region, and the wake which lies between
    the $43$ and $44$ rays in $010-$ provide examples    
for which the orbit portrait is the amalgamation of the orbit portraits
for the two neighboring shared faces.  $($See Proposition~\ref{P-amalg}.$)$ \label{F-2sf}}
  \end{figure}

\medskip

  \begin{figure}[htb!]
    \centerline{\includegraphics[width=4.5in]{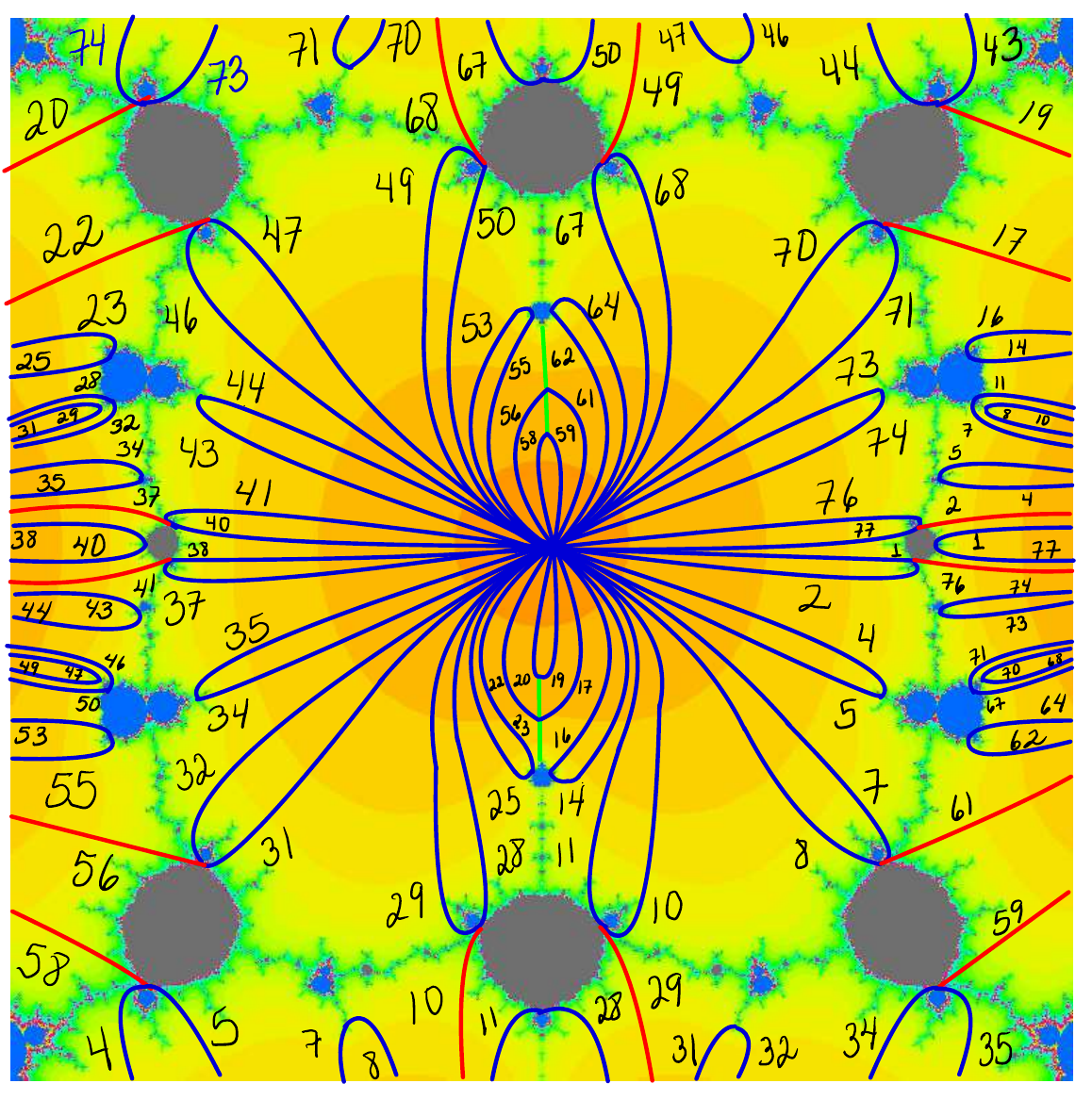}}
    \caption[$\Tes_3(\ocS_3)$-airplane]{\label{F-air} \sf Detail of the
      airplane region of
      $\Tes_3(\ocS_3)$ showing all of the co-period three rays.}
  \end{figure}\medskip

{\bf The tessellation $\Tes_2(\ocS_3)$.}
In Figure~\ref{F-t2s3} we show the 2 tessellation of the full
torus; while in Figure~\ref{f-OPt2s3} we concentrate on a smaller region in
the upper right of the airplane region. In this figure, the two
thin wakes on the left with co-periodic angles 17/24 and 19/24 are primary,
and lie inside the corresponding period one wakes 4/6 and 5/6 shown in
Figure~\ref{f-OPt1s3}. The fatter wake in the airplane region with angles
22/24 and 23/24, is also primary: Its orbit portrait is the amalgamation
of the orbit portraits
of the two neighboring faces (Definition~\ref{D-sum}). However, the nearby
wake in the $100-$ region is secondary. Note that the face with trivial orbit
portrait is not simply connected, and completely surrounds the rabbit region.

Consider the large Mandelbrot copy in $\cS_3$ which lies between the
$010-$, ~$100+$, and rabbit regions, as shown in these  figures. 
The cusp point of this copy is  near the bottom right of the
  copy, and is the landing point of a non-separating
  ray in the period one tessellation. (See Figures~\ref{F-t1s3} or
  \ref{f-OPt1s3}.)
  The $1/2$ limb  of this copy is to the lower left, and its root point is
  the landing point of two rays in the period two tessellation which are
  primary; but do not bound any wake. (See Figures~\ref{F-t2s3} or
  \ref{f-OPt2s3}.)
  
This tessellation has two faces which are not
simply-connected. In fact there is one such face surrounding each of the
two rabbit regions, and each of these has the topology of an annulus.

Here the common denominator for the edge angles is $3\,d(2)=24$ (as is true 
for any $\Tes_2(\ocS_p)$).
\medskip

  \begin{figure}[htb!]
  \centerline{\includegraphics[width=5in]{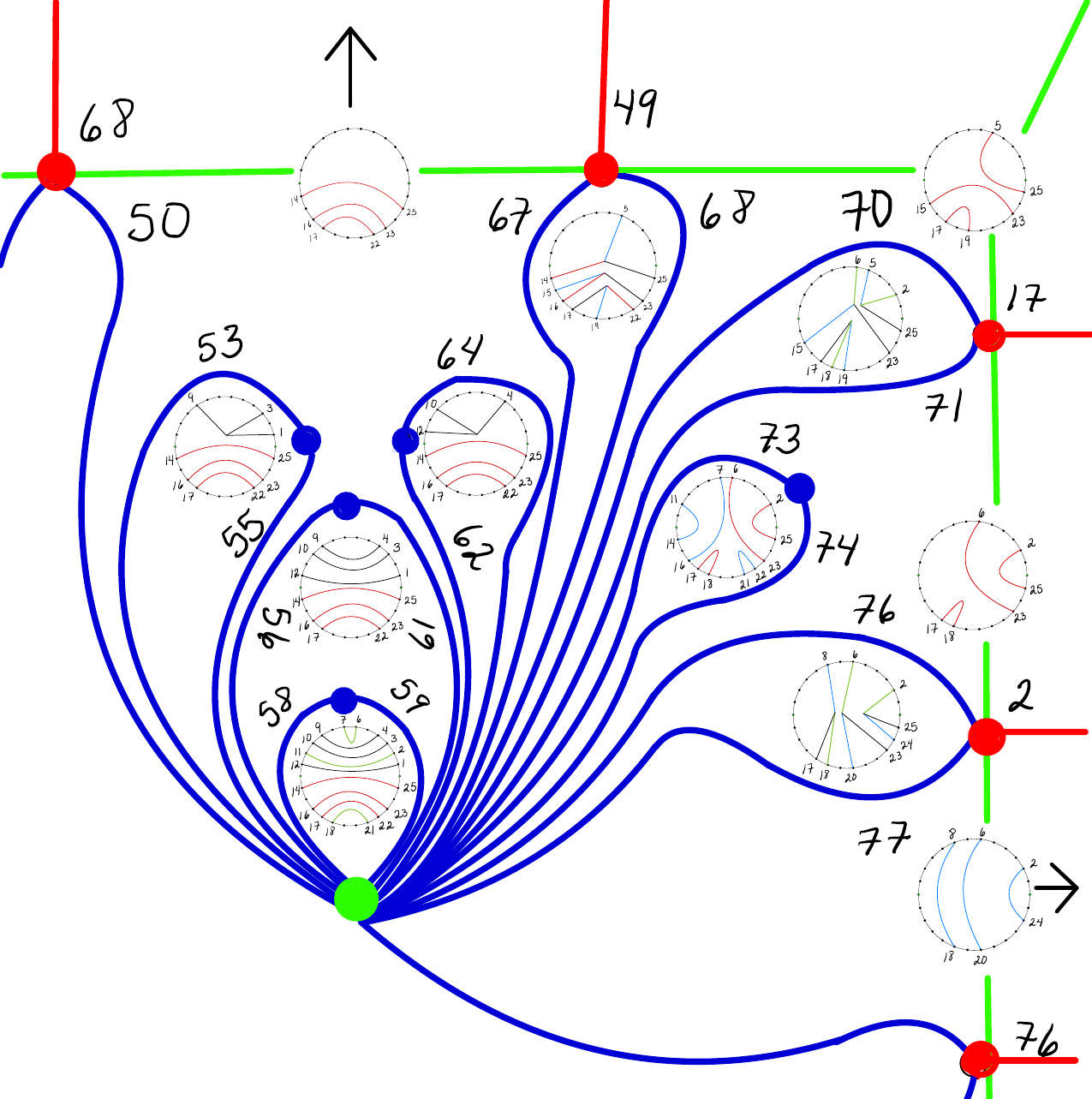}}
  \caption[Orbit portraits top right quadrant airplane region in $\Tes_3(\ocS_3)$]{\label{f-T3OP} \sf These orbit portraits correspond to faces
    in the top right quadrant of the airplane region of $\Tes_3(\ocS_3)$,
    with the $010-$ region  above and the $100-$ region to the right.
Compare Figures~\ref{F-t3s3air-010}, \ref{F-air}
  and \ref{F-2sf}.   Note that the three portraits on a vertical line
through the ideal point are invariant under left-right
  reflection. Corresponding portraits for the upper left  quadrant could
  be obtained by left-right reflection around this vertical line. Similarly
  the portrait on a horizontal line through the ideal point is invariant
  under up-down reflection (= complex conjugation); and portraits for
  the two lower quadrants could be obtained by up-down reflection in this
  horizontal line. Three of the wakes in each quadrant 
  border on two different
  shared faces, and have orbit portrait equal to the amalgamation of 
  the portraits for these two faces. (See Definition \ref{D-sum}.)}
\end{figure} 
\medskip

{\bf The tessellation$\Tes_3( \ocS_3)$} (Figures \ref{F-T3S3}, and  \ref{F-2sf}
through \ref{f-T3OP}). This is  
the most complex tessellation we have tried to understand in detail.
Figure \ref{F-T3S3} shows a simplified picture of $\Tes_3(\ocS_3)$, where
an edge of the tessellation is shown only if its parabolic endpoint belongs
to the boundaries of two (or three) different escape regions. This figure is
enough to show that every face of $\Tes_3(\ocS_3)$is simply-connected; but
of course does not give a complete picture of the tessellation.\smallskip

Figure~\ref{F-2sf} shows the orbit portraits in the area between the airplane
and the rabbit region.  The $1/3$-limb of the big Mandelbrot copy lying between
the $010-$ and $100-$ regions 
   lies in the rabbit region, and its root point is the landing point
  of four rays in the period three tessellation. The two rays in the rabbit
  region   are primary and bound a primary wake, while the other two are
  secondary  and do not bound any wake. 

Figure \ref{F-air} provides a complete picture for a large open subset of
$\Tes_3(\ocS_3)$ centered around the ``airplane'' escape region. There
are $36$ faces 
of the tessellation which intersect the
airplane region; but up to symmetry, as described in  Remark~\ref{r-mpt},
only eleven of these are distinct. In fact it suffices to study the eleven
which lie in the upper right hand quadrant of the figure.
\ssk

Figure \ref{f-T3OP} shows the orbit portrait for each of these eleven faces.
Four of these are shared faces (two shared with the $100-$ region, one with
the $010-$ region, and one shared with both). The remaining seven 
 consist of six wakes and one sub-face of a wake.
The shared faces all have orbit portraits of size three, although
each shared face has an edge in common with each of its neighboring 
shared faces.\footnote{It is
curious that the twelve shared faces around the airplane, together with
their common edges, form an annulus surrounding the ideal point.}
The remaining  faces (wakes and sub-faces of wakes) have portraits
of size either six or nine. The number of edges of these faces vary 
from two for the wakes and four for the
sub-face of a wake to six, eight, or twelve for the shared faces. (For shared
faces, we must also count edges outside of the 
of the airplane region, which are visible only in Figure \ref{F-air}.)

\bigskip

However further details definitely depend on the choice of escape region.
For example, corresponding faces in different escape regions often have
different orbit portraits. This is because the surroundings often
impose different background relations. (Compare Section \ref{s-near-para}.)

\setcounter{lem}{0}
\section{Around a Parabolic Point}\label{s-near-para}
This section will help to describe $\Tes_q(\cS_p)$ by giving a precise
description of the changes in the period $q$ orbit
portrait as we circle around a parabolic  vertex of the tessellation. 
Much of the discussion will assume  that the Edge Monotonicity
Conjecture \ref{CJ-main} is satisfied. 

First a definition. If an orbit relation $\phi\simeq\psi$ is true for all maps
in a neighborhood of $\p$, then it will be called a \textbf{\textit{background
    orbit relation}} around $\p$. 
It is essential to specify the point $\p$ since different parabolic 
points will often have different background relations. 
As examples, in Figure~\ref{f-T3OP} the five points where
exactly two rays meet all have background relations, while the five points
where three rays meet do not.

Here we are concerned only with 
 orbits relations of period $q$. For $q'\ne q$ 
the point $\p$ will be an interior point of some face of the $q'$
tessellation, and the period $q'$ orbit portrait will be constant throughout
this face.
If the point $\p$ is contained in a wake $W$, then  every relation which holds
for all points of $W$ is certainly a background relation around $\p$. 

This section will often ignore background relations and concentrate on
the \textbf{\textit{distinguishing relations}} which hold 
for some maps near $\p$ but not for all.\medskip

{\bf Notation:} It will be convenient to use Greek letters, such as 
$\theta$, for parameter angles of co-period $q$, 
and to use the notation $\{\theta_j\}$ for
the associated cycle of periodic dynamic angles, where $j$ is an integer
modulo $q$, and where $\theta_j=3^k\theta$ for any integer $k\ge 1$ which
represents $j$ mod $q$. Thus if
$$ \theta=\frac{n}{3(3^q-1)}\quad {\rm then}\quad
\theta_1=\frac{n}{3^q-1}~.$$
\def\ul{\underline}
However we must be careful with this notation since
the correspondence  $\theta\mapsto\theta_j$ is two-to-one. Here is
a more precise statement.

\begin{lem}\label{L-2to1} 
For every periodic critical value angle $\theta_1=n/(3^q-1)$ there are
two possible choices for the associated co-periodic angle $\theta
=n'/3(3^q-1)$. The numerator $n'$ must satisfy
$$ n'\equiv n \quad ({\rm mod}~~ 3^q-1)~, \qquad{\rm and}\quad n'
\equiv \pm 1~~({\rm mod}~~ 3)~.$$
\end{lem}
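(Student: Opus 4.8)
The plan is to reduce everything to elementary arithmetic of angle tripling on $\R/\Z$, expressed as congruences on numerators. Recall from Definition~\ref{D-cp} that an angle $\theta$ co-periodic with periodic critical value angle $\theta_1$ is exactly a \emph{non-periodic} preimage of $\theta_1$ under tripling, and that the triad $(\theta,\theta_q,\widehat\theta)$ consists precisely of the three tripling-preimages of $\theta_1$, equally spaced by $1/3$, of which only $\theta_q=3^{q-1}\theta_1$ is periodic. So the whole statement is really a count of the preimages of $\theta_1$, sorted into the periodic one and the two co-periodic ones, read off from numerators modulo $3(3^q-1)$.

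First I would write $\theta=n'/\big(3(3^q-1)\big)$, so that $3\theta=n'/(3^q-1)$; then $3\theta\equiv\theta_1=n/(3^q-1)\pmod\Z$ holds if and only if $(3^q-1)\mid(n'-n)$, i.e. $n'\equiv n\pmod{3^q-1}$, which is the first asserted congruence and is exactly the condition that $\theta$ is a tripling-preimage of $\theta_1$. Concretely the three preimages are $n'=n+k(3^q-1)$ for $k=0,1,2$, taken modulo $3(3^q-1)$. Next I would pin down which of these is periodic: $\theta=n'/\big(3(3^q-1)\big)$ is periodic under tripling iff $(3^q-1)\theta\in\Z$, i.e. iff $n'/3\in\Z$, i.e. $3\mid n'$. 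Since $3^q-1\equiv-1\pmod 3$, we have $n+k(3^q-1)\equiv n-k\pmod 3$, so $3\mid n'$ occurs for exactly one $k\in\{0,1,2\}$; that $n'$ is the numerator of the periodic angle $\theta_q$. The remaining two values of $k$ give $n'\equiv 1$ or $n'\equiv 2\pmod 3$, that is $n'\equiv\pm1\pmod 3$, which is the second asserted congruence.

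It then remains to check these two surviving preimages are precisely the co-periodic angles with critical value angle $\theta_1$, and that there are exactly two of them. For the first point: each of the two non-periodic preimages $\theta$ differs from $\theta_q$ by $\pm1/3$, so one of $\theta\pm1/3$ equals $\theta_q$, which is periodic; hence each is co-periodic (of co-period the period of $\theta_1$), and the two together form the twin pair $\theta,\widehat\theta$. Conversely any co-periodic angle with $3\theta=\theta_1$ is a non-periodic preimage of $\theta_1$, hence one of these two. For the count I would invoke the Chinese Remainder Theorem: $3$ does not divide $3^q-1$ (indeed $3^q-1\equiv 2\pmod 3$), so for each of the two sign choices the pair of congruences $n'\equiv n\pmod{3^q-1}$ and $n'\equiv\pm1\pmod 3$ determines $n'$ uniquely modulo $3(3^q-1)$, giving exactly two co-periodic angles.

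I do not expect a genuine obstacle; the computation is pure bookkeeping with the modulus $3(3^q-1)$ and the fact $3^q\equiv0\pmod 3$. The one point to state carefully is why exactly one preimage of $\theta_1$ is periodic: a periodic preimage $\alpha$ satisfies $\alpha=3^{m}\alpha=3^{m-1}(3\alpha)=3^{m-1}\theta_1$ for some $m\ge1$, so $\alpha$ lies in the (periodic) forward orbit of $\theta_1$, and within that cycle the only point mapping to $\theta_1$ under tripling is $\theta_q=3^{q-1}\theta_1$; hence $\alpha=\theta_q$.
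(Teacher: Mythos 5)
Your proof is correct and follows essentially the same route as the paper: the first congruence is exactly the condition $3\theta=\theta_1$, and the second is exactly the condition that $\theta$ is co-periodic rather than periodic (equivalently $3\nmid n'$), with the count of two coming from the three equally spaced tripling-preimages of $\theta_1$, of which precisely one is periodic. You simply spell out the modular bookkeeping that the paper's terse proof leaves to the reader, including the useful observation that $n+k(3^q-1)\equiv n-k\pmod 3$ singles out the periodic preimage.
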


Thus the two possible choices of $\theta$ correspond to the two possible
choices of sign $\pm 1$. 
The proof is easily supplied: The first congruence must be satisfied so that
$3\theta=\theta_1$, and the second must be satisfied so that $\theta$
is co-periodic rather than periodic. (Recall that the two choices are referred
to as {\textbf{\textit twin}} co-periodic angles. Of course both choices 
belong to the same grand orbit.) 
\qed\medskip

A completely equivalent statement would be that 
\begin{equation}\label{E-fig1}
  \theta~=~\theta_q~\mp~ 1/3~=~3^{q-1}\theta_1~\mp~ 1/3~,
\end{equation}
where $\mp$ is the opposite of the sign $\pm$ above. (Compare
Figure~\ref{Faa-1}.)
Here $3^{q-1}\theta_1=\theta_q$
is the angle of the dynamic ray landing at the free critical point,
since $F^{\circ q-1}$
maps the free critical value $v'$ to the free critical point $-a$. If we multiply
Equation~(\ref{E-fig1}) (which can be thought of as a congruence modulo one)
by $3(3^q-1)$ then we obtain the congruence
$$n'~\equiv~ 3^q n~\mp~ (3^q-1) ~~\big({\rm mod}~~3(3^q-1)\big)~.$$ This last
congruence must hold both modulo three and modulo $3^q-1$, yielding the
two congruences of the lemma. 
\bigskip

{\bf Face Notation.} If there are $n$ parameter rays landing at the parabolic
point $\p$, then the $n$ corresponding faces around $\p$ will be numbered in
counter-clockwise order as $\cF_k=\cF_k(\p)$ where $1\le k\le n$. Here the
\textbf{\textit{number one face}} $\cF_1$ will always be the one which contain the
unique component $H_\p$ of type D which has $\p$ as its root point. \bigskip

\begin{figure}[htb!]
\begin{center}
  \begin{overpic}[width=2.7in]{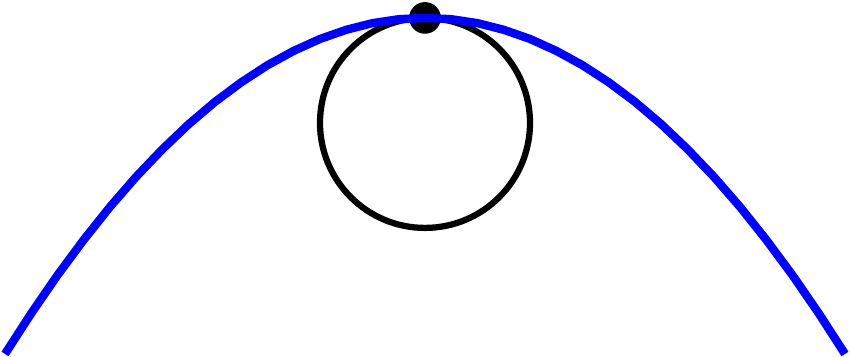}
    \put(45,80){${\mathcal F}_2$}
    \put(45,20){${\mathcal F}_1$}
    \put(89,50){$H_{\mathfrak p}$}
    \put(85,87){${\mathfrak p}$}
    \put(0,20){$\alpha$}
    \put(185,20){$\beta$}
 \end{overpic}
 \caption[Cartoon for the Two Ray Case]{\label{F-2ray} \sf Cartoon for the Two Ray Case}
 \end{center}
\end{figure}

\begin{figure}[htb!] 
  \centerline{
    \includegraphics[width=2.2in]{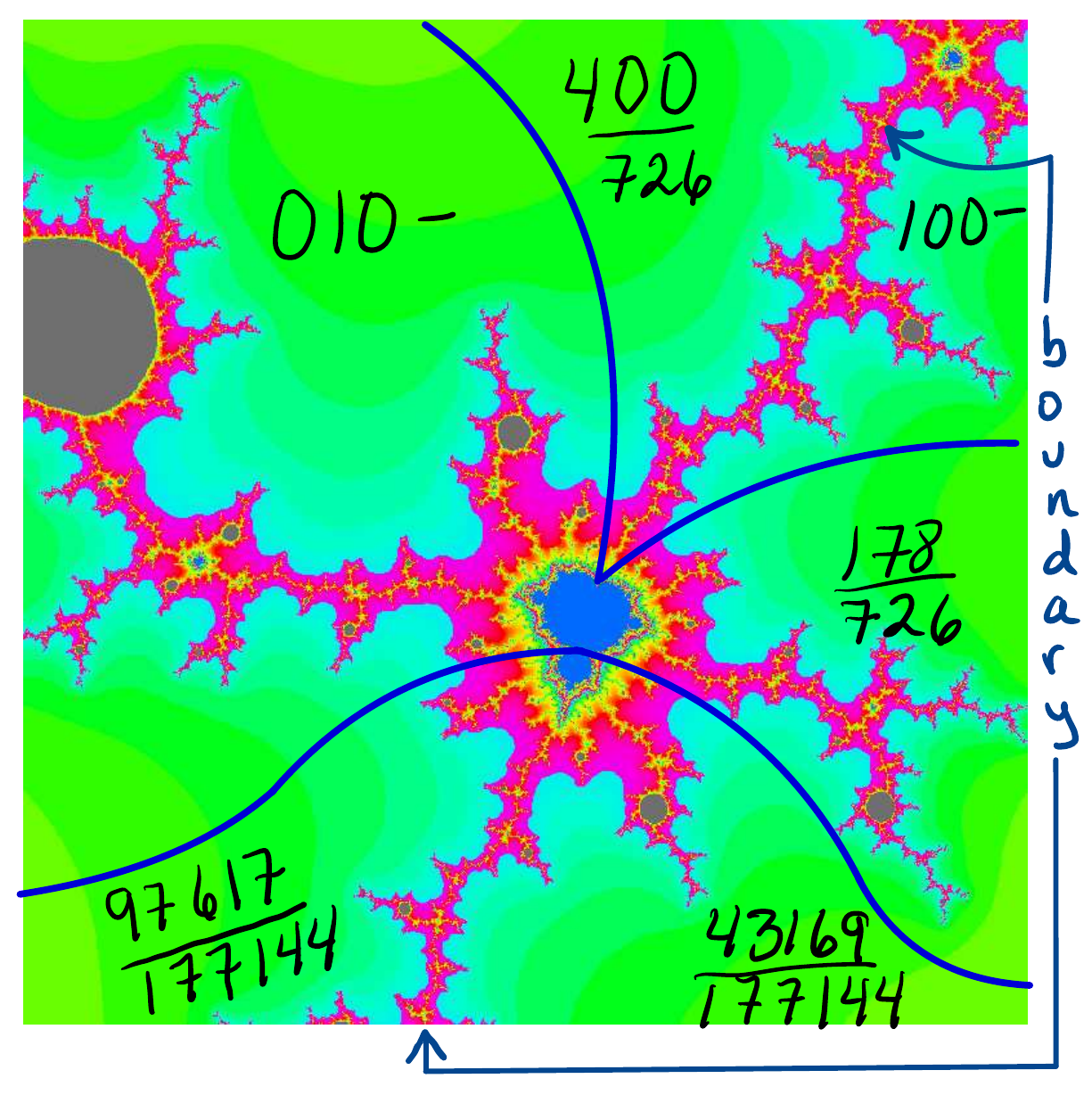}
   \includegraphics[width=2.2in]{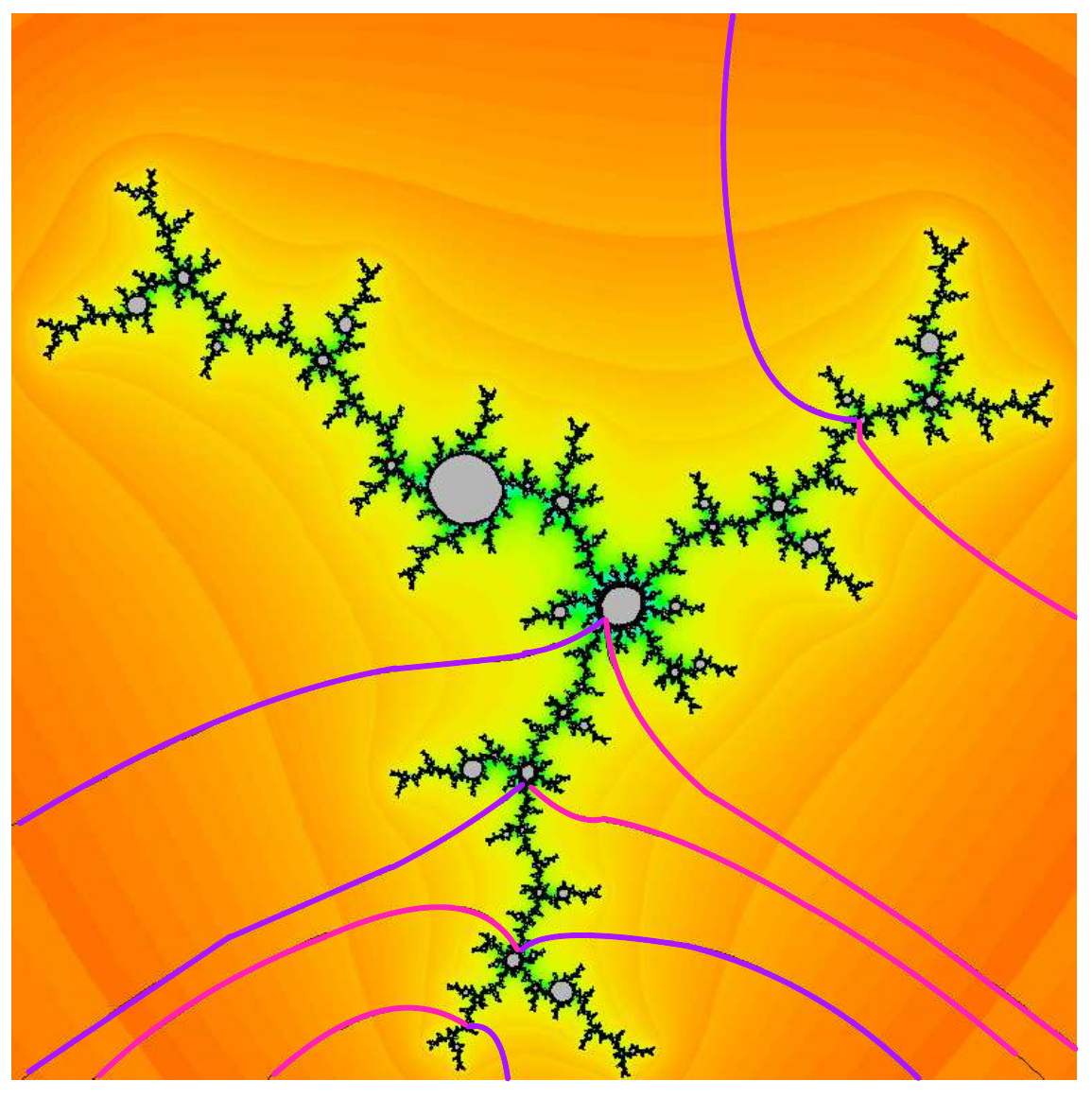}
    \includegraphics[width=2.2in]{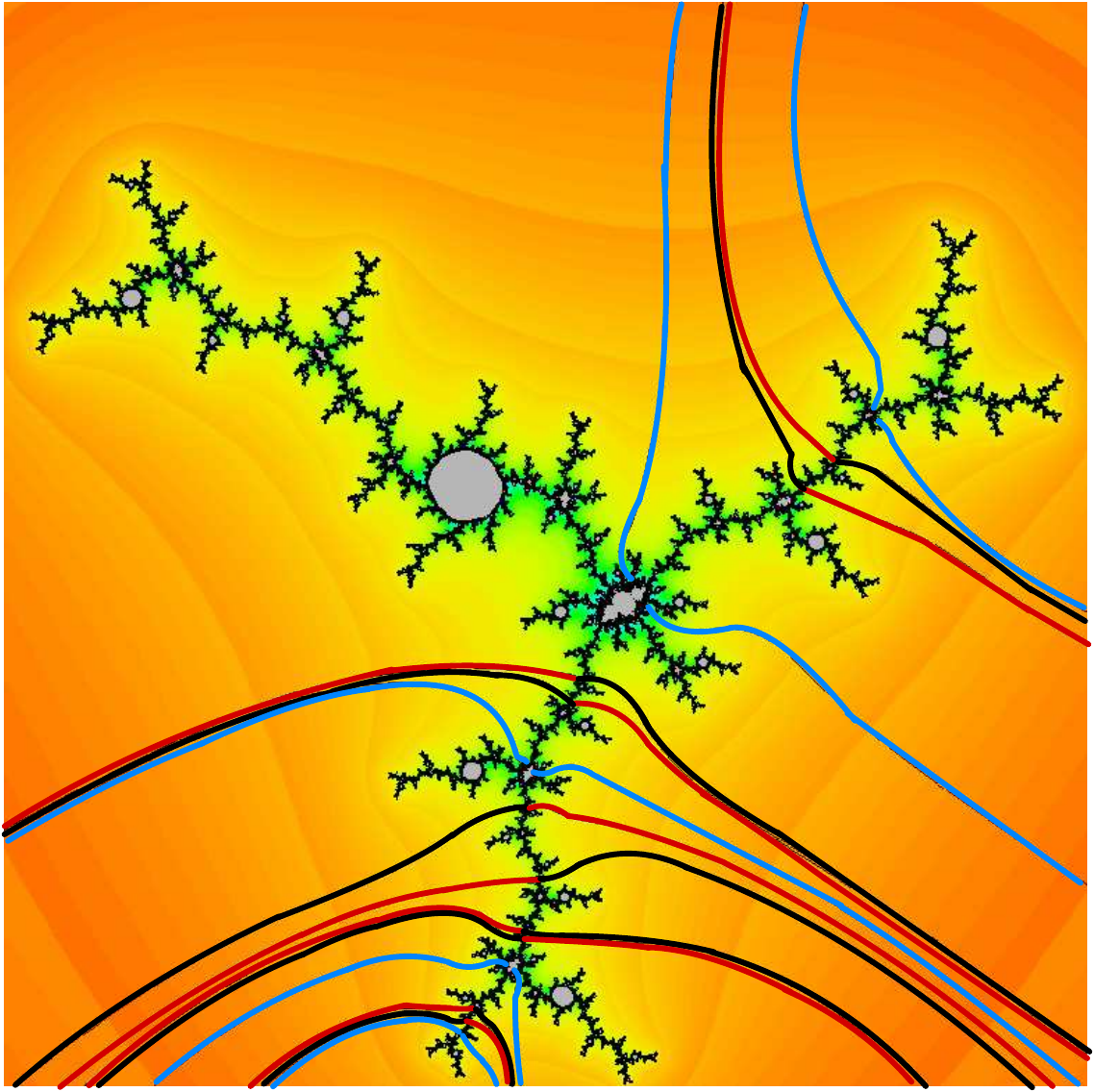}}
  \caption[Copy of Type M$(5,3)$ which lies \textbf{\textit{along}}  the boundary between the $010-$ region]{\label{f-M(53)bdry} \sf Illustrating the
    two ray case.  {\bf On the left:}  magnified picture of a
  Mandelbrot  copy of Type M$(5,3)$ which lies \textbf{\textit{along}}
  the boundary between the   $010-$ region on the left and  $100-$ region
  on the right, as in Figure \ref{F-J2K}. There are many 
  pairs of primary rays, one from the left and one from the right, which
  land together at points of $M$. We have drawn the pair of co-period 
  five landing at the cusp point, and the pair of co-period ten landing
  in the middle. (The others all have higher co-period.)\break
  \noindent {\bf Middle:} Julia set picture showing the period five orbit
  portrait for
  a map below the cusp point. (The corresponding portrait for points above the
  co-period five rays is   trivial.)\break
  \noindent {\bf Right:} The period ten orbit portrait has size ten above the
  co-period ten
  parameter rays and size fifteen below. This figure illustrates the larger
  orbit portrait. The rays in the size ten part, which is unchanged in the
  crossing,
  are colored red and black. Thus the landing orbit has period ten, and also
  ray period ten. The dynamic rays in the part which appears only below the
  co-period ten are colored blue. They land on an orbit which has period five,
  but ray period ten.\break
  \noindent ({\bf The most conspicuous smaller  Mandelbrot copies} in
  the left hand  figure are of Type M$(8,3)$ on the left, and M$(7,3)$
  upper right. The black disks, including the large one on the left, are
  capture components. This figure is located in the short stretch of
  common boundary between the  $010-$ and $100-$ regions in
  Figures~\ref{f-OPt1s3} through \ref{F-2sf}.)}
\end{figure}

\subsection*{\bf The Two Ray Case}

If only two parameter rays land at $\p$, the discussion is fairly easy.
We will label these two rays by their angles as
in Figure \ref{F-2ray}. Since the $\alpha$ and $\beta$ parameter
rays land together at $\p$, it follows as in Theorem \ref{T-edge}
that the $\alpha_j$ and $\beta_j$ dynamic rays land together for all
$F\in H_\p$ and hence for all $F$ in the face $\cF_1$. Thus we have the
orbit portrait relations 
\begin{equation}\label{E-2ray}
  \alpha_j\simeq \beta_j\quad{\rm for ~every~~} j \in \Z/q\end{equation}
throughout $\cF_1$. On the other hand, since the $\alpha$ and
$\beta$ edges are primary, the relations (\ref{E-2ray})
disappear when we cross to the face $\cF_2$.
In all the cases  we have observed, the relations (\ref{E-2ray}) are the only
distinguishing relations. (In principle there could be others. If there is a
background relation $\phi\simeq\psi$,  and if the landing point
of $\alpha_j$ jumped to the landing point of $\phi$, 
then two new distinguishing relations
$\alpha_j\simeq\phi$ and $\beta_j \simeq\psi$ would be generated. But this
 does not happen in any case we have seen.)

\medskip

{\bf Caution.} This numbering of faces is only a local description with respect
to $\p$.
As an example, in Figure~\ref{f-T3OP} the face surrounded
by rays 56, 58, 59 and  61 is the number one face as seen from the parabolic
point above, but the number two face as seen from the parabolic point below.
\medskip
\def\({{(\!(}}
\def\){{)\!)}}

Here $\{\alpha_j\}$ and $\{\beta_j\}$ may be the same periodic orbit. 
(Equivalently, the grand orbit $\( \alpha\)$ may be equal to the grand orbit
$\( \beta\)$.) For an example, see the
parameter rays with angle $\alpha=53/78$ and $\beta=55/78$ in
Figure~\ref{f-T3OP}. These correspond to dynamic angles $\alpha_1
=1/26$ and $\beta_1=3/26=\alpha_2$, in the same periodic orbit. It follows
that $\alpha_1\simeq\alpha_2\simeq\alpha_3$, so that there is an equivalence
class with three elements. 

On the other hand, the nearby parameter angles $\alpha=56/78$ and $\beta=61/78$
correspond to angles $\alpha_1=4/26$ and $\beta_1=9/26$, which are not in the 
same periodic orbit. In this case, each equivalence class has at most two
elements.

Although in many two ray cases the $\alpha$ and $\beta$ rays
form the boundary of a wake, this is not always the case.
See Figure \ref{f-M(53)bdry} for a configuration where these
two parameter rays lie in different escape regions.
\medskip

\begin{figure}[htb!]
  \begin{center}
    \begin{overpic}[width=2.5in]{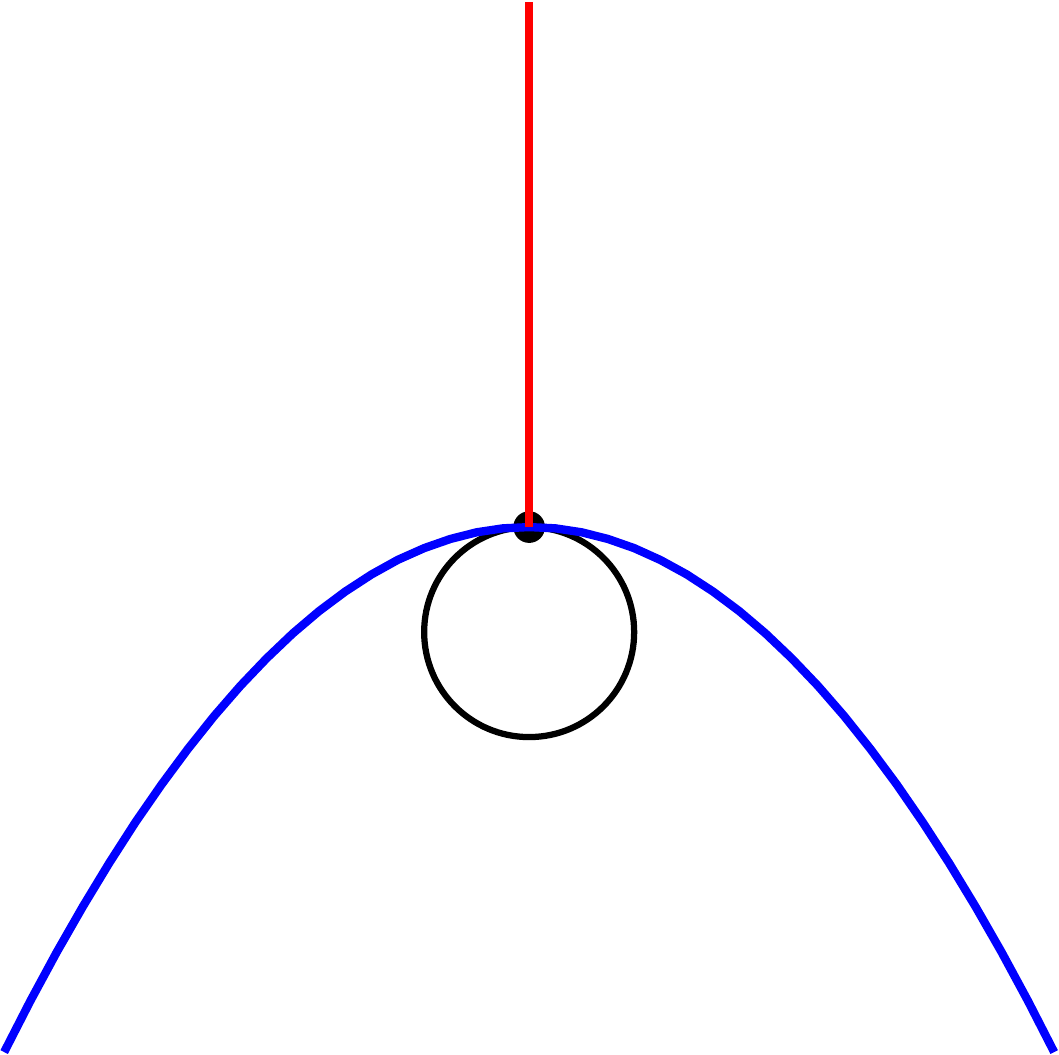}
      \put(-12,10){$\alpha$}
      \put(180,10){$\beta$}
      \put(50,20){${\mathcal F}_1$}
      \put(80,70){$H_{\mathfrak p}$}
      \put(95,95){${\mathfrak p}$}
      \put(30,150){${\mathcal F}_{3}$}
      \put(140,150){${\mathcal F}_2$}
      \put(75,130){$\gamma$}
      \end{overpic}
      \caption[Cartoon of the 3 ray case]{\sf Cartoon of the three faces
        surrounding a parabolic point where
  three rays land.  \label{F-rc3}}
\end{center}
\end{figure}

\subsection*{\bf The Three Ray Case.}
If there are three parameter rays of co-period $q$ landing at a point
$\p\in\cS_p$, labeled
as in Figure \ref{F-rc3}, then the period $q$ orbit relations in $\cF_1$ are clearly generated by
\begin{equation}\label{E-3R-1}
 \alpha_j~\simeq~ \beta_j~\simeq~ \gamma_j\qquad{\rm for~all}~~ j \in \Z/q~,
\end{equation}
(together with any background relations which may exist). 
{\sf However, in all of the three ray cases we have seen:}\medskip


  \begin{itemize}

  \item[{\bf(a)}] {\sf There are no period $q$ background relations at $\p$;}
    \smallskip

\item[{\bf(b)}] {\sf the grand orbits of $\alpha,~\beta$   and $\gamma$ are distinct:}
\begin{equation}\label{E-GO3}
\lg\alpha\rg ~\ne~ \lg\beta\rg~\ne~ \lg\gamma\rg~\ne~ \lg\alpha\rg~; ~~{\sf and}
\end{equation}

\item[{\bf(c)}] {\sf  $\p$ is the root point of a Mandelbrot copy of period $q$ 
  which is enclosed between the two primary rays with angle $\alpha$ and $\beta$.}
\end{itemize}
\medskip

\begin{figure}[htb!]
  \centerline{\includegraphics[width=5.3in]{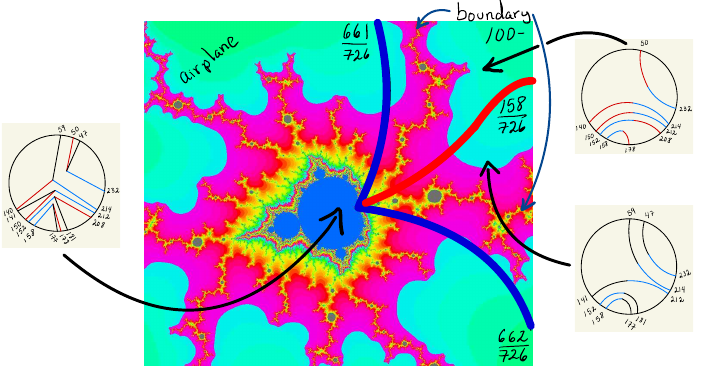}}
  \caption[Little Mandelbrot copy of period $q=5$ which lies in a
  primary wake in the airplane region of $\cS_3$.] 
  {\label{M53-rays} \sf A three ray example.     Magnified picture of a
    little Mandelbrot copy of period 5 which lies in a minimal primary 
    wake in the airplane region of $\cS_3$, but with cusp point on the boundary
    of the $100-$ region. Thus the cusp is the landing point of two primary
    rays in the airplane region and one secondary ray in the $100-$ region.
    Note that the orbit portrait in the principal hyperbolic component
    is the amalgamation of the two orbit portraits on the right.
    This figure is located near the top of the stretch of common boundary
    between the airplane and $100-$ regions which is visible 
    in Figures~\ref{f-OPt1s3} through \ref{F-air};  very close to the 
    ray of angle $71/78$ which lands at the bottom root point of the type A
    component in the upper right of Figure \ref{F-air}.}

  \bigskip

\centerline{\includegraphics[width=2.4in]{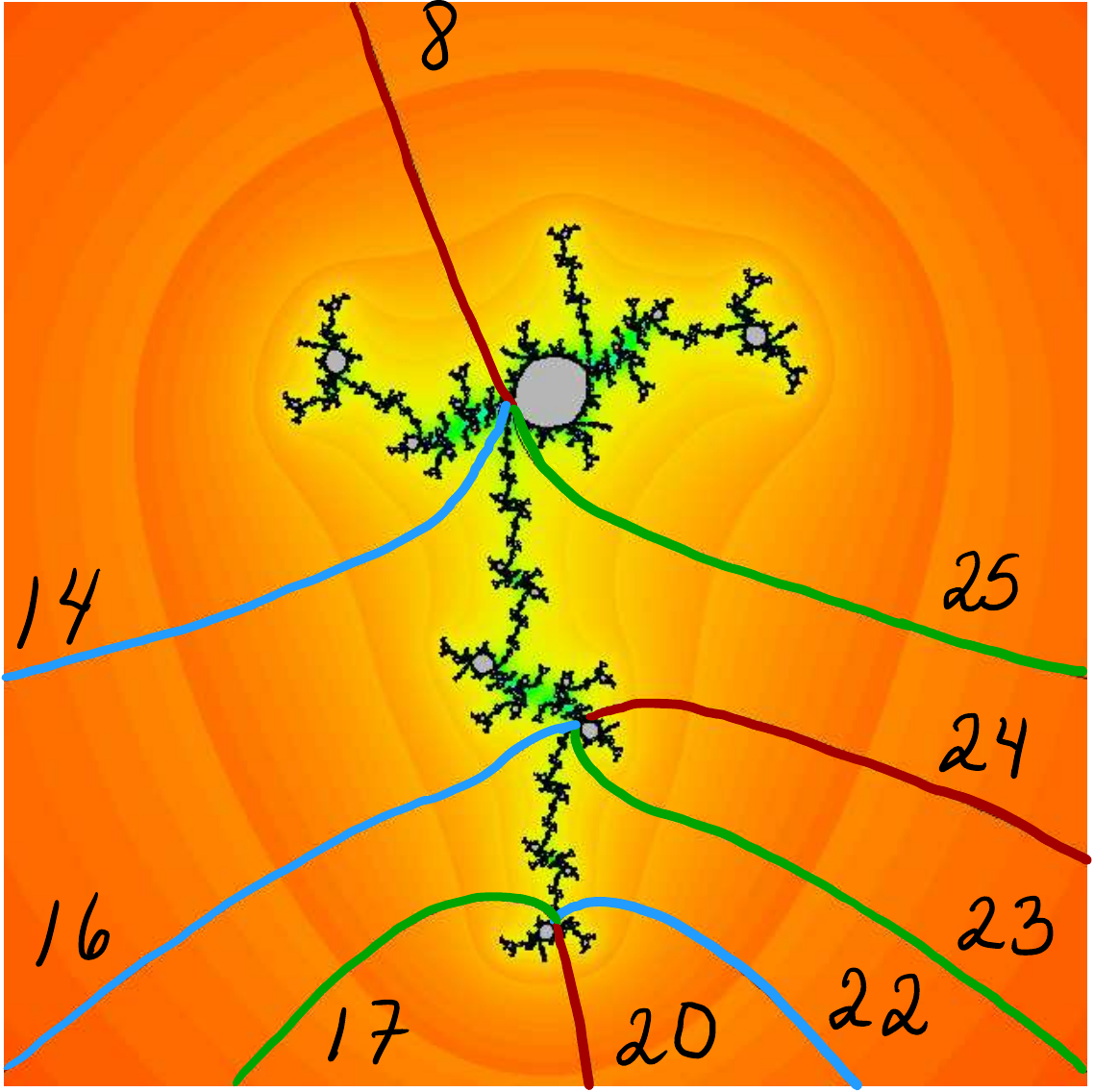}}
\caption[Julia set for 3-ray case  between the $110$ and $010-$ regions of $\Tes_3(\ocS_3)$]{\label{F-33jul} \sf Julia set for the $\cF_0$ face of a three ray
parabolic point which lies between the $110$ and $010-$ regions of
$\Tes_3(\ocS_3)$, with $\alpha=20/78,~\beta=22/78$ and
$\gamma=17/78$. This point is visible twice in Figure \ref{F-T3S3},
both to the
upper right and to the lower left. In this example, $\alpha$ and $\beta$ are
contiguous as co-periodic angles, since the intermediate angle $21/78$ is
not co-periodic. Note that $\alpha_1=20/26\mapsto\alpha_2=8/26\mapsto
\alpha_3=24/26$. Thus for example $-a$ is at the landing point of the 24 ray
while $v'=F(-a)$ is at the landing point of the 20 ray.}    \end{figure}

\begin{figure}[htb!]
  \centerline{\includegraphics[width=2.4in]{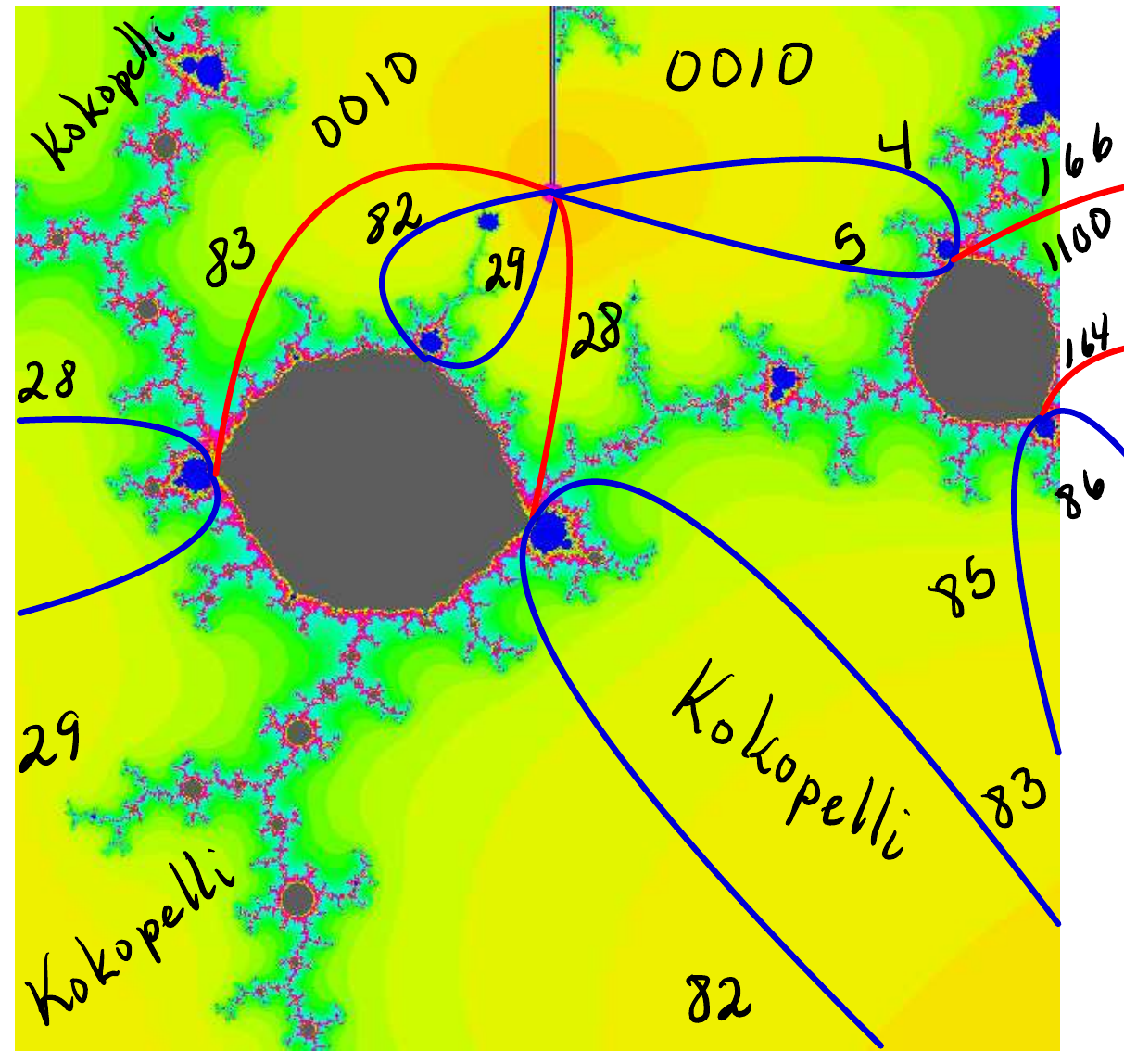}}
  \centerline{\includegraphics[width=4in]{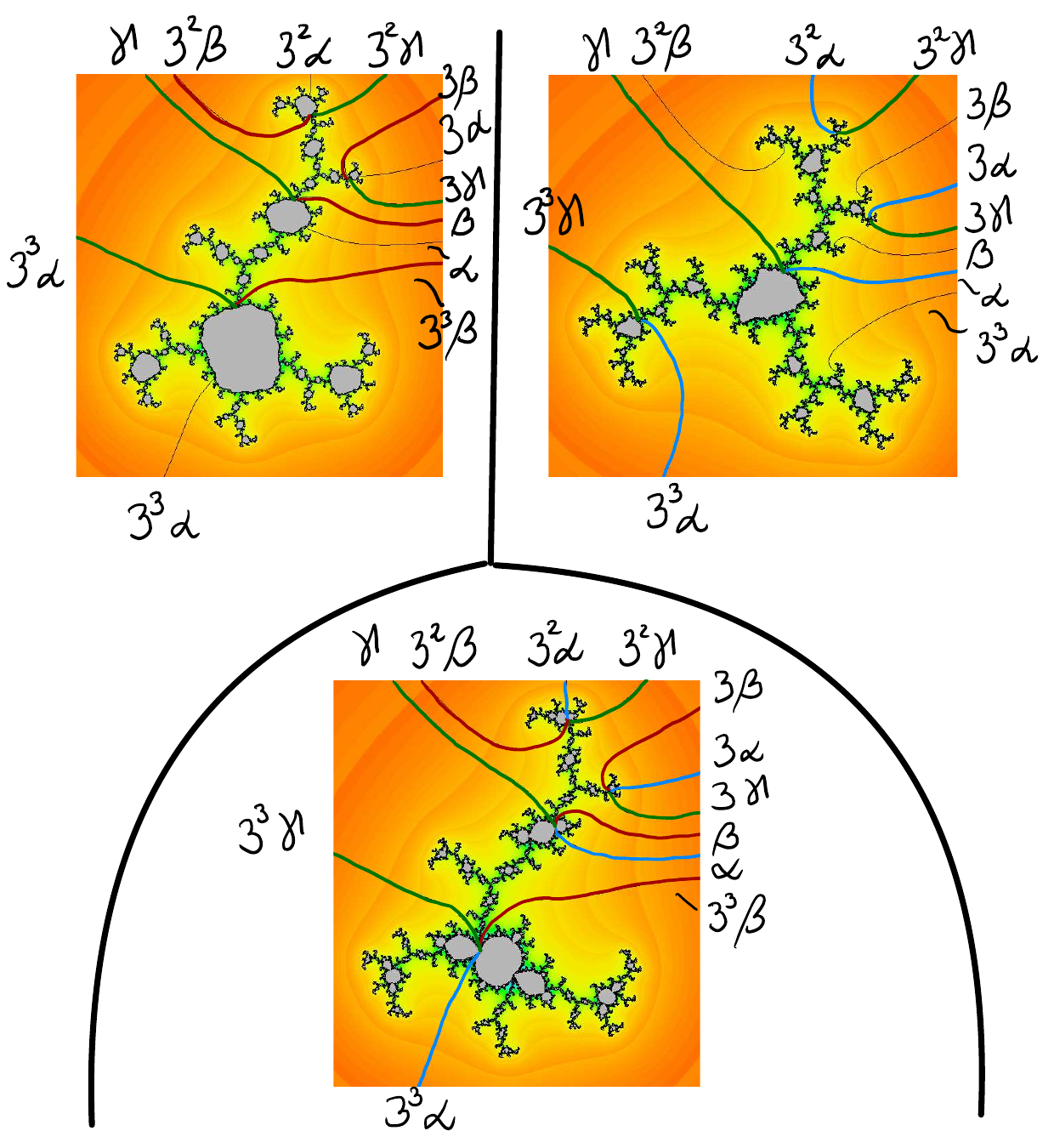}}
  \caption[3-ray case in part of $\Tes_4(\ocS_4)$]{\label{F-44}\sf  Above: a
 3-ray point  in $\Tes_4(\ocS_4)$. Below:
  Julia sets for the three faces around the parabolic vertex
near the center of the top figure. Here
  $(\underline\alpha, \underline\beta,~\underline\gamma)~=~(82, 83,28)/240$,~~
  and $~~(\alpha,~\beta,~\gamma)~=~(2,~3,~28)/80$.}
\end{figure}

\begin{lem}\label{L-3ray}
  It follows from {\bf (a)} and {\bf (b)} that the period $q$ 
  orbit portrait for the face $\F_1$
  consists of $q$ disjoint triples $\alpha_i\simeq\beta_i\simeq\gamma_i$.
  Similarly the orbit portrait for $\F_2$ consists of $q$ disjoint
  pairs $\alpha_i\simeq\gamma_i$, while the orbit portrait for $\F_3$
  consists of $q$ disjoint pairs $\beta_i\simeq\gamma_i$.
\end{lem}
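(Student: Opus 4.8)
The plan is to take as given the statement preceding the lemma — that the period $q$ orbit relations valid throughout $\cF_1$ are generated by the triples $\alpha_j\simeq\beta_j\simeq\gamma_j$, $j\in\Z/q$, together with whatever period $q$ background relations exist at $\p$ — and to read off the three portraits using hypotheses (a) and (b) and the edge-crossing machinery (Corollary~\ref{C-jump}, Lemma~\ref{L-crossing}, Theorem~\ref{T-edge}) already in place, recalling that by Theorem~\ref{T-decomp} each face carries a single well-defined period $q$ orbit portrait.

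First I would dispose of $\cF_1$. By (a) there are no period $q$ background relations at $\p$, so the orbit portrait of $\cF_1$ is exactly the equivalence relation generated by $\{\alpha_j\simeq\beta_j\simeq\gamma_j:j\in\Z/q\}$. By (b) the grand orbits $\lg\alpha\rg$, $\lg\beta\rg$, $\lg\gamma\rg$ are distinct, hence $\{\alpha_j\}$, $\{\beta_j\}$, $\{\gamma_j\}$ are three distinct periodic orbits of exact period $q$ under tripling, so the $3q$ angles $\alpha_j,\beta_j,\gamma_j$ are pairwise distinct. Consequently the generated equivalence relation is precisely the partition into the $q$ three-element classes $\{\alpha_j,\beta_j,\gamma_j\}$; these are automatically disjoint and (being triads of equally spaced angles, each mapping forward by tripling) unlinked, so this is a genuine orbit portrait. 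That proves the first assertion.

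Next I would cross the two primary edges. With the numbering of Figure~\ref{F-rc3}, $\cF_1$ is the face containing $H_\p$, so by Theorem~\ref{T-edge}(2) the bounding edges $\alpha$ and $\beta$ are primary, and $\cF_1$ is adjacent to $\cF_3$ across $\alpha$ and to $\cF_2$ across $\beta$. Consider the crossing from $\cF_1$ into $\cF_2$ over $\beta$. By Corollary~\ref{C-jump} the only dynamic rays whose landing point jumps are those with angle in $\lg\beta\rg$; among period $q$ angles these are exactly the $\beta_j$, and by (b) no $\alpha_j$ or $\gamma_j$ lies in $\lg\beta\rg$, so all of these land continuously across the edge. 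Since the relations $\alpha_j\simeq\beta_j$ hold throughout $\cF_1$, the $\{\beta_j\}$-rays are non-solitary there; as $\beta$ is primary, $\cF_2$ is therefore the side on which no ray with angle in $\{\beta_j\}$ shares a landing point with any ray at all. Hence in $\cF_2$ every relation involving a $\beta_j$ disappears, none is created, and the relations among the $\alpha_j,\gamma_j$ are unchanged, so the portrait of $\cF_2$ is exactly $\{\alpha_j\simeq\gamma_j:j\in\Z/q\}$, which by (b) is a partition into $q$ disjoint pairs. The argument for $\cF_3$ is verbatim with $\alpha$ and $\beta$ interchanged: crossing the primary edge $\alpha$ removes precisely the relations involving the $\alpha_j$, leaving the $q$ disjoint pairs $\beta_j\simeq\gamma_j$.

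The only genuine obstacle is the "clearly generated by'' input to the $\cF_1$ step — namely that $\cF_1$ carries no period $q$ distinguishing relation beyond the triples. In principle a $\beta_j$-ray landing point in $\cF_1$ could coincide with that of some ray whose angle lies outside all three grand orbits, creating an extra distinguishing relation; the lemma asserts on the basis of the observed cases that this does not happen (and (a) then rules out the remaining possibility, a merger with a background relation). Once (\ref{E-3R-1}) is granted, the rest is bookkeeping with the distinctness of cycles guaranteed by (b) and the already-established behaviour of rays across primary edges.
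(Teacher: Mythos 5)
Your proof is correct and follows essentially the same route as the paper's: the key step in both is that crossing the $\beta$ edge makes only the $\beta_j$ rays jump, while the orbit portrait can only shrink on leaving $\cF_1$, which forces every relation involving a $\beta_j$ to disappear and leaves the $q$ disjoint pairs $\alpha_j\simeq\gamma_j$ (and symmetrically for $\cF_3$). The only cosmetic difference is that you derive the ``can only shrink'' step from Theorem~\ref{T-edge} and the definition of a primary edge, whereas the paper invokes the upper semi-continuity consequence of the Parabolic Stability Theorem directly --- which is the same mechanism underlying Theorem~\ref{T-edge}.
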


\begin{proof} As base point in $\F_1$ we can take the center point of the
  component $H_\p$. Then the free critical point will be periodic of period
  $q$. The dynamic rays $\alpha_i, \beta_i,$ and $\gamma_i$ will
  land on $F^{\circ i}(-a)$ for $1\le i\le q$. Now as we cross the $\beta$
  ray into $\F_2$, the landing points of the $\beta_i$ rays and only these
  rays must jump. 
  But the Parabolic Stability Theorem of Appendix~\ref{a-para-stab}
  implies that the
  new orbit relations will be a subset of the old. The only way this
  can happen is for all relations involving the $\beta_i$ to disappear.
  The corresponding statement for $\F_3$ follows similarly.
    \end{proof}

    In particular, as expected, it follows that the orbit portrait changes
    monotonically as we cross the two primary rays, and non-monotonically
    as we cross the secondary ray. Here is another important consequence.

    \begin{coro}\label{C-amalg} It follows that the orbit portrait for
      $\cF_1$ is the
amalgamation of the orbit portraits of the two side faces $\cF_2$ and
$\cF_{3}$. \end{coro}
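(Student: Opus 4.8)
The plan is to read off Corollary \ref{C-amalg} directly from Lemma \ref{L-3ray}, since all the real work has already been done there. By Lemma \ref{L-3ray}, in the three ray case the orbit portrait for $\cF_1$ consists of the $q$ disjoint triples $\alpha_i\simeq\beta_i\simeq\gamma_i$ (for $i\in\Z/q$), the orbit portrait for $\cF_2$ consists of the $q$ disjoint pairs $\alpha_i\simeq\gamma_i$, and the orbit portrait for $\cF_3$ consists of the $q$ disjoint pairs $\beta_i\simeq\gamma_i$. So the entire statement reduces to the elementary fact that, for each fixed $i$, the smallest equivalence relation on $\{\alpha_i,\beta_i,\gamma_i\}$ containing both the pair $\{\alpha_i,\gamma_i\}$ and the pair $\{\beta_i,\gamma_i\}$ is precisely the full triple $\{\alpha_i,\beta_i,\gamma_i\}$.

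First I would recall the definition of amalgamation from Definition \ref{D-sum}: the amalgamation of two equivalence relations on a common set is the smallest equivalence relation containing both. Here the common set is the set of all period $q$ angles, and the two relations are $\cO_q(\cF_2)$ and $\cO_q(\cF_3)$, each extended by the diagonal on the remaining angles. Then I would argue, for each index $i$, that transitivity forces $\alpha_i\simeq\beta_i$ once both $\alpha_i\simeq\gamma_i$ and $\gamma_i\simeq\beta_i$ hold; hence the amalgamation contains each triple $\alpha_i\simeq\beta_i\simeq\gamma_i$. Conversely, the amalgamation cannot be any larger: since by hypothesis {\bf (b)} the grand orbits $\lg\alpha\rg,\lg\beta\rg,\lg\gamma\rg$ are disjoint, the only angles appearing at all in $\cO_q(\cF_2)$ or $\cO_q(\cF_3)$ are the $\alpha_i,\beta_i,\gamma_i$, and for distinct $i\ne i'$ none of $\alpha_i,\beta_i,\gamma_i$ is related to any of $\alpha_{i'},\beta_{i'},\gamma_{i'}$ in either relation, so no cross-index relations can be generated. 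Also, by {\bf (a)}, there are no period $q$ background relations to enlarge the picture. Therefore the amalgamation is exactly the collection of $q$ disjoint triples, which by Lemma \ref{L-3ray} is $\cO_q(\cF_1)$.

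I do not expect any genuine obstacle here: this is a short bookkeeping argument once Lemma \ref{L-3ray} is in hand, and indeed the excerpt already states that the proof is immediate (``It follows that \dots''). The one point worth being careful about is making explicit that the disjointness of grand orbits in {\bf (b)} is what prevents the amalgamation from acquiring extra relations --- without it, an angle in $\lg\gamma\rg$ landing together with some other angle could in principle propagate; but since the portraits of $\cF_2$ and $\cF_3$ literally contain only the listed pairs and no others, transitivity produces exactly the triples and nothing more. So the writeup is simply: invoke Lemma \ref{L-3ray}, invoke the definition of amalgamation, check the two inclusions index by index, and conclude.
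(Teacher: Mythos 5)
Your proof is correct and is exactly the ``straightforward'' argument the paper has in mind: the paper simply cites Lemma~\ref{L-3ray} and the definition of amalgamation (Definition~\ref{D-sum}) and leaves the bookkeeping to the reader. Your explicit check of both inclusions --- transitivity through $\gamma_i$ producing each triple, and disjointness of the grand orbits in hypothesis {\bf (b)} preventing any cross-index or extraneous relations --- is precisely the content being elided.
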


The proof is straightforward.\qed\msk

{\bf Examples.} Figure \ref{F-air} provides sixteen examples of three ray
points. Figure \ref{f-T3OP} shows five of these (three with a complete
set of orbit portraits).  The examples in this picture have the
property that  $q=p=3$,
with $(\alpha,~\beta,~\gamma)$ equal respectively to
$$ (67,68,49),\quad (70,71,17),\quad (76,77, 2)\quad{\rm over}~~ 78~.$$
Figure~\ref{F-2sf} contains  six similar examples.
 Figure~\ref{F-33jul} shows
 the Julia set of a period three example in $\cS_3$.
 
 Figure~\ref{F-44} provides an example with $p=q=4$, with
 $(\alpha,~\beta,~\gamma)$ equal respectively to $(82, 83, 28)$ over $240$,
 and with
orbit portrait of  the principal hyperbolic component:
$$\{\{3,28,2\},~\{9, 4, 6\},~ \{27,12, 18\},~\{1,36,54\}\}/80~.$$

 Figure~\ref{M53-rays} provides an example with $p=3$ and $q=5$
together with its orbit portraits.

\begin{rem}
The two primary rays always seem to belong to the same escape region; so that 
the primary face which contains {\sf all} of the associated Mandelbrot copy,
is all or part of a wake. In many cases this wake is minimal.
However in Figure \ref{F-Bcart}-right there two wakes of angular 
width $20/726$ attached to the bottom of the large B component; while in 
Figure~\ref{F-Boffcenter}-left there is one example of a wake 
of angular width $19/726$ attached to the right of the large A component. 
\end{rem}


\begin{quote}
  {\bf Conjecture.} \sf If $p=q$ then we conjecture that $\p$ is also a root
  point of a component of Type A or B which lies in one of the two side faces.
\end{quote}

\noindent Note that there can never be such an A or B component 
when $p\ne q$,
since the root point of an A or B component always has period $p$.
(Compare Figure \ref{M53-rays}.)

\begin{figure}[htb!]
  \centerline{\includegraphics[width=2.7in]{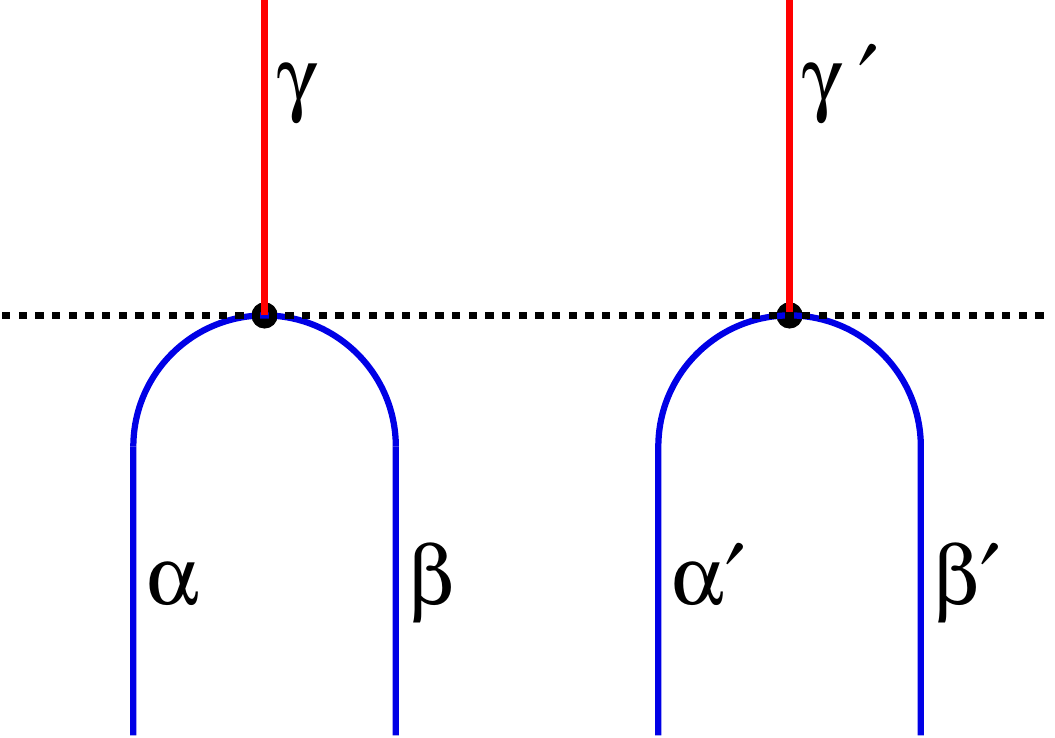}}
  \caption{\label{F-3rp}\sf Cartoon illustrating an adjacent pair of 3-ray
    triples, satisfying condition (1).}
\end{figure}
\bigskip

\begin{figure}[htb!]
  \centerline{\includegraphics[width=5in]{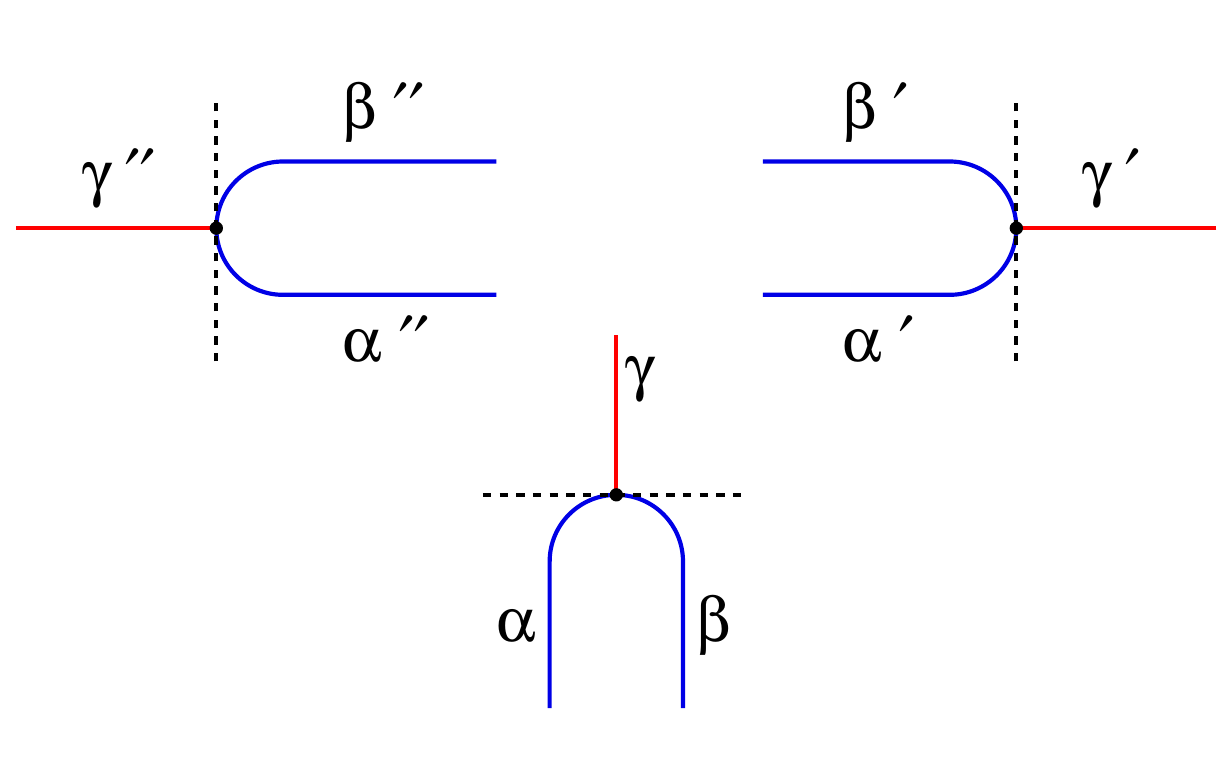}}
  \caption{\label{F-3rpa}\sf Cartoon illustrating two adjacent escape regions
 with 3-ray triples, satisfying condition~(3).}
\end{figure}
\msk

\noindent{\bf Neighboring 3-Ray Cases.} 
Now consider two adjacent 3 ray cases which are separated by a face which
is shared between two escape regions, as illustrated schematically in
Figures~\ref{F-3rp} and \ref{F-3rpa}. If we know the  three parameter angles
$\alpha,~\beta,~\gamma$,
then we can compute the orbit portraits of the three faces around the left
hand triple. Similarly if we know  $\alpha',~\beta',~\gamma'$,
then we	can compute the	orbit portraits	of the three faces around the right
hand triple. This means that 
the orbit portrait for the central face, with its 
dynamics, can be computed in two different ways. From the left it
is generated by the paired orbits
$$ (\alpha_1,~\gamma_1)\,\mapsto\, (\alpha_2,~\gamma_2)\,\mapsto
~\cdots~\mapsto~ (\alpha_p,~\gamma_p)~.$$
But from the right it is generated by the paired orbits
$$ (\gamma'_1,~\beta'_1)\,\mapsto\, (\gamma'_2,~\beta'_2)\,\mapsto
~\cdots~\mapsto~ (\gamma'_p,~\beta'_p)~.$$
Thus the unordered pair $\{\alpha_1, \gamma_1\}$ must be equal to 
$\{\gamma'_j,\,\beta'_j\}$ for some $j$.\bsk

{\bf Notation.}  It will be convenient to use the notation $\(\alpha\)$
for the grand
  orbit of an angle $\alpha$ of co-period $q$, recalling that each 
  such grand orbit contains a unique period $q$ orbit
  $\langle\alpha_1,~\alpha_2,~\cdots,~\alpha_q\rangle$,\quad   where
$\quad \alpha~\mapsto~\alpha_1~\mapsto~\cdots~\mapsto~\alpha_q\qquad
{\rm  under~ tripling} .$
\bsk

\begin{rem}\label{R-cond12}
\noindent{  \sf We believe that all four grand orbits can never be identical.
  Assuming this, it follows that either:}
\begin{description} 
\item[\qquad{\bf Condition (1):}]
  \quad $\(\alpha\) =\(\gamma'\)\quad\ne \quad
  \(\gamma\) =\(\beta'\)$,  \quad  {\sf or}
\item[\qquad{\bf Condition (2):}]
  \quad$\(\gamma\)=\(\gamma'\)\quad \ne\quad 
  \(\alpha\)=\(\beta'\)$, \quad {\sf or}
\item[\qquad{\bf Condition (3):}]
  \quad$\(\gamma\)=\(\gamma'\)\quad \ne\quad {\sf and} \quad
  \(\alpha\)=\(\alpha'\) \quad  {\sf or} \quad  \(\beta\)=\(\beta'\)$. 
\end{description}

\noindent{\sf In fact it seems that Condition (1) always applies in cases where
  the diagram is strictly like Figure~\ref{F-3rp},  meaning when the two
  primary wakes are in the same escape region. However we will see that with
  versions of the diagram which are only slightly modified, the case of
  Condition (2) can also occur.  (Figures~\ref{F-air} and
  \ref{F-Boffcenter}-left illustrate conditions (1) and (2)).
  Condition (3) is satisfied in the
case of Figure~\ref{F-3rpa}. (Figure~\ref{F-Bcart} illustrates Condition (3).)}

 Note that  Condition (1), exemplified in  Figure~\ref{F-3rp},
concerns only the four outer angles $\gamma$, $\alpha$, $\gamma'$ and $\beta'$.
Condition (3), exemplified in Figure~\ref{F-3rpa}, concerns the angles
$\gamma$, $\gamma'$, $\alpha$ and $\alpha'$ on the right; and
$\gamma$, $\gamma''$, $\beta$ and $\beta''$ on the left.
\end{rem}
 \msk

 There are three possible cases which correspond precisely to the diagram in
 Figure~\ref{F-3rp}:
Either there is a unique component of Type A or B occupying the middle of the
central face, or there is no component of Type A or B in this face. 
Condition (3) corresponds to the case in which there is no component of
Type A or B in the face.\msk

\begin{figure}[h!]
  \centerline{\includegraphics[width=2.8in]{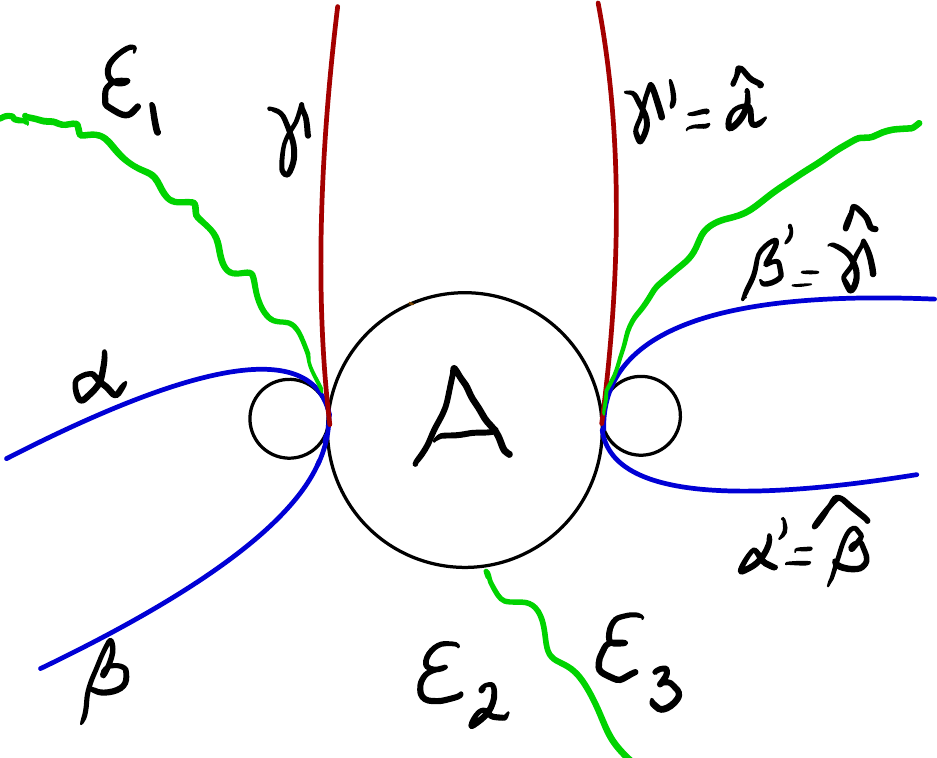}}
  \caption[A-centered face]{\label{F-Acent}\sf Cartoon of a central region
    dominated by
    a hyperbolic component of Type A. Here three rays with angles $\alpha,~
    \beta,~\gamma$ land at the left hand root point, and rays with the twin
    angles $\widehat\alpha,~\widehat\beta$ and $\widehat\gamma$ land in the
    same cyclic order around the right hand root point. (This configuration
    depends on the angles that we start with, if instead in 
    Figure~\ref{F-MC-crash} we set $\alpha=70/78$, $\beta=71/78$ and
    $\gamma=17/78$ we obtain $\alpha'=\widehat{\gamma}$,
    $\beta'=\widehat{\alpha}$ and  $\gamma'=\widehat{\beta}$.) }
  \end{figure}

  \noindent{\bf The A-Centered Case.~} In this case 
  the central face is centered at an A component, with three rays          
  landing at each root point.  (Compare Figure \ref{F-Acent}.)
  Since each root point of a Type A component in $\cS_p$ has  period $p$,
  we must have $q=p$. We conjecture that the three grand orbits
  $\(\alpha\),~\(\beta\),~ \(\gamma\)$  are always distinct, and that the
  cyclic order of $\alpha,~\beta,~\gamma$ is always the
  same as the cyclic order of their twins. 

\begin{lem}\label{L-Apair}
  {\sf Assuming this conjectural description, the angles of the rays
    landing at each root point can be labeled as in
    Figure \ref{F-Acent}, with $\widehat\alpha=\gamma'$, $\widehat\beta=
    \alpha'$, and $\widehat\gamma=\beta'$. It follows that
    $$ \alpha_i=\gamma'_i\,,~~ \beta_i=\alpha'_i\,,~~{\rm and}~~
    \gamma_i=\beta'_i~.$$
    Therefore   $\(\alpha\)=\(\gamma'\)$,~~$\(\beta\)=\(\alpha'\)$, and
    $\(\gamma\)=\(\beta'\)$. Since we have assumed that $\(\alpha\)\ne 
    \(\gamma\)$, it follows that Condition (1) is satisfied.} 
\end{lem}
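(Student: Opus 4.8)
The plan is to work entirely in the dynamical plane of the parabolic map $F_\p$ at the center of the A-centered configuration, using the self-duality structure of Type~A components. Recall from Remark~\ref{R-roots2} that a Type~A component $H$ has exactly two root points, say $\p$ (the ``left'' one) and $\p^*$ (the ``right'' one), and that as one moves through the center of $H$ the two critical points $+a$ and $-a$ pass through each other and are interchanged, so $\p^*$ is the dual of $\p$ in the sense of Remark~\ref{R-dual}. First I would invoke the conjectural description in the statement: three co-period~$q$ parameter rays with angles $\alpha,\beta,\gamma$ land at $\p$ (with $\alpha,\beta$ primary and $\gamma$ secondary, by the Edge Theorem~\ref{T-edge}), and by duality (Remark~\ref{R-dual}, the twin-angle statement at the end of that remark) the three rays landing at $\p^*$ have the twin angles $\widehat\alpha,\widehat\beta,\widehat\gamma$, occurring in the same cyclic order around $\p^*$ by the cyclic-order assumption we are granting. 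Since $\alpha,\beta$ are the two closest to $H$ at $\p$, their twins $\widehat\alpha,\widehat\beta$ are the two closest to $H$ at $\p^*$, and the secondary one at $\p^*$ is $\widehat\gamma$.

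Next I would match this against the ``neighboring 3-ray'' labeling of Figure~\ref{F-3rp}: at the right-hand triple the three angles are $\alpha',\beta',\gamma'$, where $\alpha',\beta'$ are the two primary rays bounding the wake that contains $H_{\p^*}$, and $\gamma'$ is the secondary ray; moreover the labeling convention places $\gamma'$ on the side facing the central shared face (i.e. facing $\p$), while $\alpha'$ is on the outer side and $\beta'$ adjacent. Comparing the two descriptions of the rays at $\p^*$, the unordered triple $\{\alpha',\beta',\gamma'\}$ equals $\{\widehat\alpha,\widehat\beta,\widehat\gamma\}$, with $\gamma'=\widehat\gamma$ forced because both are the unique secondary ray at $\p^*$. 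It then remains to fix which of $\widehat\alpha,\widehat\beta$ is $\alpha'$ and which is $\beta'$; this is determined by the cyclic-order hypothesis together with the orientation in Figure~\ref{F-Acent}, which makes $\widehat\alpha$ the outer primary ray (hence $\widehat\alpha=\alpha'$) and $\widehat\beta$ the adjacent one facing $\gamma'$ (hence... wait — according to the figure the identification is $\widehat\alpha=\gamma'$, $\widehat\beta=\alpha'$, $\widehat\gamma=\beta'$, so the correct matching is read off directly from the cyclic arrangement of the six rays $\alpha,\beta,\gamma,\widehat\gamma,\widehat\beta,\widehat\alpha$ around the A component pictured in Figure~\ref{F-Acent}).

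Granting the identification $\widehat\alpha=\gamma'$, $\widehat\beta=\alpha'$, $\widehat\gamma=\beta'$, the equality of the derived periodic-angle cycles is immediate from Definition~\ref{D-cp}: the twin angles $\theta$ and $\widehat\theta=\theta\pm1/3$ satisfy $3^j\theta=3^j\widehat\theta=\theta_j$ for all $j\ge 1$, so $\widehat\alpha$ and $\alpha$ determine the same periodic cycle, and likewise for the other pairs. Hence $\alpha_i=\gamma'_i$, $\beta_i=\alpha'_i$, $\gamma_i=\beta'_i$ for every $i$, and therefore $\(\alpha\)=\(\gamma'\)$, $\(\beta\)=\(\alpha'\)$, $\(\gamma\)=\(\beta'\)$. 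Since the A-centered case assumes the three grand orbits $\(\alpha\),\(\beta\),\(\gamma\)$ are distinct — in particular $\(\alpha\)\ne\(\gamma\)$ — we get $\(\alpha\)=\(\gamma'\)\ne\(\gamma\)=\(\beta'\)$, which is precisely Condition~(1) of Remark~\ref{R-cond12}.

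The main obstacle is the combinatorial bookkeeping of the cyclic order: one must verify carefully that the two independent labelings — the ``$\alpha,\beta,\gamma$ at $\p$, primary-closest-to-$H$'' convention and the ``$\alpha',\beta',\gamma'$ from the right-hand triple of Figure~\ref{F-3rp}'' convention — fit together around the single A component exactly as drawn in Figure~\ref{F-Acent}, so that $\widehat\alpha=\gamma'$ rather than $\widehat\alpha=\alpha'$. This relies essentially on Remark~\ref{R-cyc} (cyclic order of parameter rays about a common landing point agrees with cyclic order of their angles) applied at both $\p$ and $\p^*$, plus the duality-preserves-cyclic-order hypothesis we are explicitly granting in the statement. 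Once the picture is pinned down, everything else is a one-line consequence of the twin-angle identity and the distinctness of grand orbits.
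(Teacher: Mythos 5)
Your proposal correctly handles the easy second half (once the identification $\widehat\alpha=\gamma'$, $\widehat\beta=\alpha'$, $\widehat\gamma=\beta'$ is granted, the twin identity $3^j\theta=3^j\widehat\theta$ gives $\alpha_i=\gamma'_i$ etc., hence the grand-orbit equalities and Condition (1)). But the crucial first half has a genuine gap. Your intended mechanism is that twinning should match the secondary ray at $\p$ with the secondary ray at $\p^*$, giving $\gamma'=\widehat\gamma$; this is false, and in fact contradicts the lemma's own conclusion ($\widehat\gamma=\beta'$ is a \emph{primary} ray at the other root point), which you noticed mid-argument. Your fallback --- ``the correct matching is read off directly from the cyclic arrangement \dots pictured in Figure~\ref{F-Acent}'' --- is circular: the figure \emph{encodes} the identification the lemma is asserting, and the cyclic-order hypothesis alone only tells you that $(\alpha',\beta',\gamma')$ is some rotation of $(\widehat\alpha,\widehat\beta,\widehat\gamma)$; it does not fix the phase.

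The missing idea is to compute the period-$q$ orbit portrait of the \emph{central} face in two ways using the three-ray rule (Lemma~\ref{L-3ray}). Seen from the left root point, the central face is the side face in which the $\beta$-relations have been destroyed, so its portrait is generated by $\alpha_i\simeq\gamma_i$; seen from the right root point it is generated by $\beta'_i\simeq\gamma'_i$. Since these must be the same set of relations, the unordered pair $\{\beta',\gamma'\}$ must consist of the two twins $\{\widehat\alpha,\widehat\gamma\}$, leaving exactly two possibilities: $\beta'=\widehat\alpha,\ \gamma'=\widehat\gamma$, or $\beta'=\widehat\gamma,\ \gamma'=\widehat\alpha$. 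Only now does the cyclic-order hypothesis enter, to exclude the first possibility; this forces $\beta'=\widehat\gamma$, $\gamma'=\widehat\alpha$, and then $\alpha'=\widehat\beta$ is the remaining twin. Without this two-sided orbit-portrait comparison your argument never actually pins down the matching.
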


\begin{proof} From the three ray rule (Lemma~\ref{L-3ray}),
  we know that the orbit portrait
  of the left hand wake is the disjoint union of the relations
  $\alpha_i\simeq\beta_i\simeq\gamma_i$, while the orbit portrait of the
 central face is the disjoint union of the relations $\alpha_i\simeq \gamma_i$.
  Similarly from the right we see that the central orbit portrait is the
  union of the relations
  $\beta'_i\simeq \gamma'_i$. This is only possible if either
  $~~\beta'=\widehat\alpha$ and $\gamma'=\widehat \gamma~,~$ or
  $~~\beta'=\widehat\gamma$ and $\gamma'=\widehat\alpha$.  But in the first case
  the twins would be in the wrong cyclic order. This proves that
 $$\beta'=\widehat\gamma\quad{\rm and}\quad \gamma'=\widehat\alpha~,$$
 and consequently $\alpha'= \widehat{\beta}$ as asserted. \end{proof}\msk

Here is an example in $\cS_3$. 
For the A component in the upper right of Figure~\ref{F-air} (type A component
in Figure~\ref{F-MC-crash}) setting
$$ \alpha=43\mapsto \langle 25,23,17  \rangle ,\quad \beta=44\mapsto
\langle 2,6,18  \rangle,
\quad \gamma= 19 \mapsto \langle 5,15, 19  \rangle~,$$  
the twin angles having the same forward orbits and the same cyclic order
are \break
$\widehat\alpha=17,~\widehat\beta=70$ and $\widehat\gamma=71$. 
Here the denominator is  $3d(3)=78$ for $\alpha$, $\beta$ and $\gamma$;
and  $d(3)=26$ for the corresponding periodic angles.

As an example in $\cS_4$, in the top right of Figure~\ref{F-44} with denominator
  $3\,d(4)=240$, the twins of $4,~5,$ and $166$ are $\widehat 4=164$, 
  $\widehat 5=85$ and $\widehat{166}= 86$.

An example in $\cS_5$ is given by the large A component in 
Figure~\ref{F-Boffcenter}. In this case we have
$$ \alpha=64\mapsto \langle 192,92,34,102,64  \rangle ,\quad \beta=65\mapsto \langle 195,101,61,183,65  \rangle,
\quad \gamma= 568 \mapsto \langle 10,30,90,28,84  \rangle~,$$   with
twin angles  $\widehat\alpha=548, ~\widehat\beta=307,$ and $\widehat\gamma
=326$. The co-periodic denominator is  $3\,d(5)=726$, while the
periodic denominator is  $d(5)=242$.\msk

\begin{figure}[htb!]
  \centerline{\includegraphics[width=2.7in]{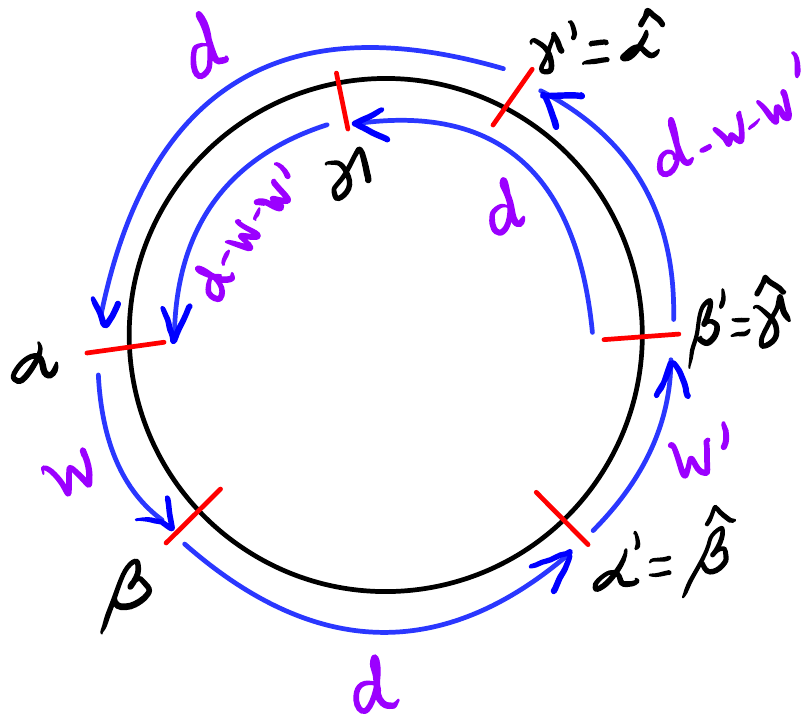}}
  \caption[Angular distances around an A-centered face]{\label{F-cartaw}\sf Cartoon illustrating angular distances around
    an A-centered face, following the notation of Figure~\ref{F-Acent}.} 
\end{figure}

\begin{rem}\label{R-angwidth} 
  Recall that parameter angles can be thought of as integers modulo $3\,d$
  where $d=3^q-1$. By the \textbf{\textit{angular distance}}
  $\Delta=\Delta(\theta, \eta)$,
  measured in the positive direction, between two angles $\theta$ and
  $\eta$ of co-period $q$   we will mean the unique integer $0\le\Delta<3\,d$
  such that $\Delta\equiv \eta-\theta$ modulo $3\,d$.

  By the angular \textbf{\textit{width}} $w$ of the $(\alpha,\beta)$-wake we will
    mean the angular distance $\Delta(\alpha,\,\beta)$.
    In all  of the examples we have seen, this width satisfies $w\equiv 1$
    modulo three, and $w<d$. This implies
  that $  \alpha\equiv 1$ and $\beta\equiv 2$ modulo 3. Therefore
  $$\widehat\alpha=\alpha-d\qquad {\rm but}\qquad \widehat\beta=\beta+d~,$$
  so that $\Delta(\widehat\alpha,\,\widehat\beta)=2\,d+w$.
    It follows easily that
$$\Delta(\beta',\,\gamma')~=~\Delta(\widehat\gamma,\,\widehat\alpha)~=~d-w-w'~.$$
(Compare Figure \ref{F-cartaw}.) A completely analogous argument shows that
$\Delta(\widehat\gamma,\,\gamma)=d$, and hence that
$$ \Delta(\gamma, ~\alpha)~=~ d-w-w'~.$$
In most of the cases we have seen, $w=w'=1$; but in the $\cS_5$ example described 
above we have $w=1$ and $w'=19$.    \end{rem}

\noindent{\bf  The Intermediate Case.} 
By this we mean the case where there is {\sf no 
component of Type A or B in the central face.}
In all such examples that we have seen, each of the two vertices
is a root point of an A or B component.
As an example, in the upper right of Figure~\ref{F-air} we have such a case
lying between an A component in the airplane region of $\cS_3$
and a B component in the $100-$ region, with
$$ \alpha=70,~~ \beta=71,~~ \gamma=17, \quad{\rm and}\quad
\alpha'=76, ~~ \beta'=77, ~~ \gamma'=2~,$$
with denominator ~~78~. For an example in $\cS_5$, lying between
two B components, see the left side of Figure~\ref{F-Boffcenter}-left. 
In this case.
$$ \alpha=28,~~ \beta=29,~~ \gamma=670 \quad{\rm and}\quad
\alpha'=61, ~~ \beta'=62, ~~ \gamma'=574~,$$
with denominator ~~726~.

In  the examples above, and conjecturally in all such examples,
the four grand orbits $\(\alpha\),~\(\beta\),$ $\(\alpha'\), ~ \(\beta'\)$
are disjoint from each other, but $\(\alpha\)=\(\gamma'\)$ and $\(\gamma\)
=\(\beta'\)$. Thus we believe that Condition (1) is always satisfied
in the Intermediate Case when the two primary wakes belong to the same
escape region.\ssk

The symmetry relation $~\beta-\alpha~=~\beta'-\alpha'$ is satisfied in the
examples above; but it is not  always satisfied when the primary
wakes belong to different escape regions, as exemplified in
Figure~\ref{F-Bcart}-right.   In this Figure, consider the triads
determined by \hbox{$\alpha=518/726$}, \hbox{$\beta=520/726$}, $\gamma=515/726$;
and $\alpha'=548/726$, $\beta'=568/726$,  $\gamma'=521/726$.
Here $\beta-\alpha=2$, and $\beta'-\alpha'=20$. In the Intermediate
Case, when the primary wakes belong to different escape regions, as exemplified
in this Figure, Condition (3) holds.
\msk

\begin{figure}[htb!]
  \centerline{\includegraphics[width=2.8in]{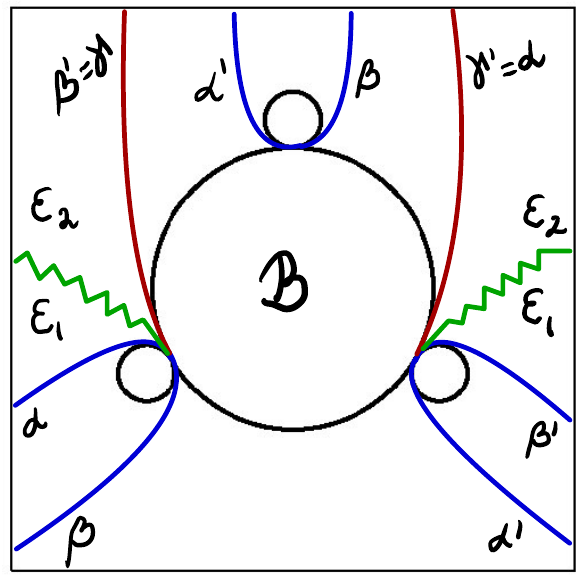}\quad \includegraphics[width=2.7in]{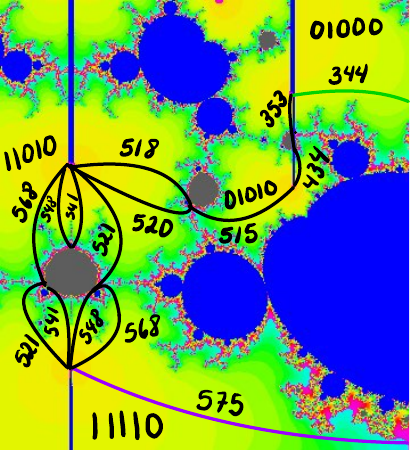}}
  \caption[B-centered face]{\label{F-Bcart} \sf  On the left, cartoon of a
type B component  between two escape regions $\E_1$ and $\E_2$, showing 3 rays
    landing on each of its two bottom root points, plus an additional 
two rays landing  on its top root point.  Here $\gamma=\beta'$ and
$\gamma'=\alpha$. 
 In all  3 ray examples that we have seen, the four grand orbits 
    $\(\alpha\),~\(\beta\),~\(\alpha'\),~\(\beta'\)$ are all distinct. 
(This is true even in cases where there are more than two escape regions.) On
the right,  an example in $\cS_5$. In this case the two wakes 
    below the B component have angular width $20/726$, so they are far
    from minimal.  }
\end{figure}
\medskip

\noindent{\bf The B-Centered Case.} Examples with a component of type B in the
middle of the central face are more complicated and interesting. These always
seem to be organized in a very special way as in Figure \ref{F-Bcart}-left
(except that in some cases there are more than two surrounding escape regions).
In the examples we have seen,
the four grand orbits $\(\alpha\),~\(\beta\),~\(\alpha'\)$ and $\(\beta'\)$
are all different. Furthermore 
each of the four angles $\alpha,~\beta,~\alpha'$ and $\beta'$
is the parameter angle for two different rays which  belong to different
escape regions, as indicated in the figure. The angles $\gamma$ and $\gamma'$
of the two  secondary rays always seem to satisfy
$\gamma=\beta'$ and $\gamma'=\alpha$. 

It clearly follows that
 $\(\alpha\) =\(\gamma'\)~\ne ~  \(\gamma\) =\(\beta'\)$ 
in all such examples, so that Condition (1) is satisfied.

Another interesting property of the B case is that 
symmetry condition 
$$ \beta-\alpha~~=~~\beta'-\alpha'\qquad{\rm in} \quad \R/\Z~,$$
is 
satisfied 
in every case where the primary wakes belong to the same escape region.
See the examples below.
\msk

{\bf B-Centered Examples.} At the top of Figure~\ref{F-air} we have 
$$(\alpha,~\beta,~\alpha', \beta')~~=~~(49,~50,~67,~68)/78~,$$
with $\cE_1$ the airplane region in $\cS_3$ 
and $\cE_2$ the $010-$ region. In this case, both primary wakes belong
to the airplane region.
 There is a similar example at the right center of Figure~\ref{F-air} 
 between the airplane and the $100-$ region. The corresponding orbit portraits
 are shown in Figure \ref{f-T3OP}.
 
 For a slightly different
 $\cS_3$ example, at the top of the $110$ region on the left of
 Figure~\ref{F-T3S3},
 the corresponding angles are $(56, ~58,~ 59, ~61)/78$. 
 Thus both wakes have width two (although they are still minimal since the
 intermediate angles $57$ and $60$ are not co-periodic). As noted above, 
 we believe that such a wake of even width can never occur in an A-centered
 case.

In Figure~\ref{F-44}  (top), $\cE_1$ is
the Kokopelli region of $\cS_4$ and $\cE_2$ is a $0010$ region.
The angles are $(28,~29,~82,~83)/240$,  and both primary wakes belong to
the Kokopelli region. Figure \ref{F-Bcart}-right
shows an example in $\cS_5$ where  both of the wakes below the central
type B component  have width 20.  These primary wakes belonging to the
11110 escape region are certainly not minimal.

 For type B-centered examples as corners of different escape regions see
Figure \ref{F-Boffcenter}-left. This figure shows two examples in $\cS_5$ where
the B component is surrounded by four different escape regions.   The
top left example  with angles $(670,~674,~28,~29)/726$ doesn't satisfy the,
 symmetry relation,  and the wake determined by $670$ and $674$
is certainly not minimal. Immediately below  is the B component surrounded by
$(61,~62,~572,~574)/726$. In this case again  the symmetry relation is not
satisfied, however the wake spanned by the angles $572$ and $574$ is still
minimal, since the intermediate angle $573$ is not co-periodic. \bsk

\begin{figure}[h!]
  \centerline{\includegraphics[width =3.5in]{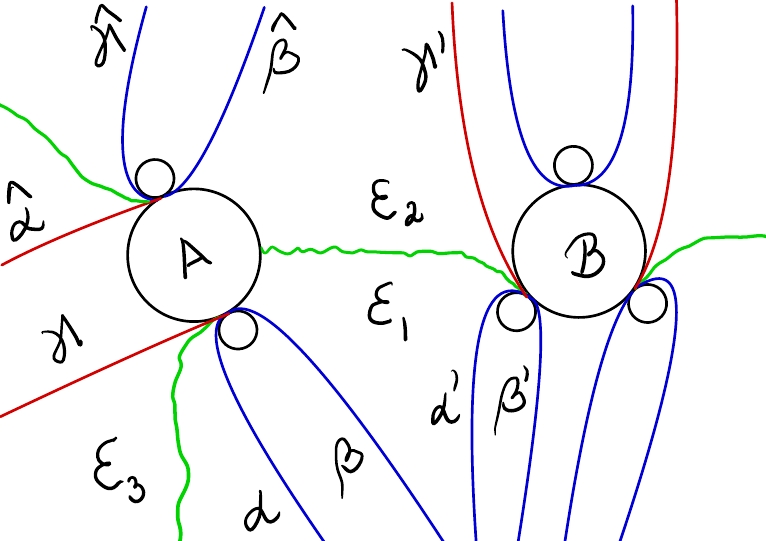}}
  \caption[off-center A component]{\label{F-Aoff} \sf Cartoon showing an off-center A component, as well as the adjacent B component.}
\end{figure}      

{\bf Examples with  A Off-Center.} We now consider cases which do not quite
correspond to the requirements of Figure \ref{F-3rp}. As an example,
Figure \ref{F-Aoff} can represent the face on the upper left of
the airplane region of $\cS_3$, as shown in Figure~\ref{F-air}. In this case
$$\alpha=46,~\beta=47, ~\gamma=22\,,\quad{\rm and}\quad\alpha'=49,~\beta'=50,
~\gamma'=68 \,,$$
while $\quad\widehat\alpha=20,~\widehat\beta=73,~\widehat\gamma=74$,~~ all with
denominator $78$. Note that a mirror image case is shown on the upper right
of the same figure.

On the other hand, Figure \ref{F-Aoff} can also represent
the face of $\cS_5$ which is shown on the bottom of
Figure~\ref{F-Boffcenter}-left. In this case we have
$$\alpha=64,~\beta=65, ~\gamma=568\,,\quad{\rm and}\quad\alpha'=91,~\beta'=92,
~\gamma'=272 \,,$$
while $\quad\widehat\alpha=548,~\widehat\beta=307,~\widehat\gamma=326$,~~ all
with denominator 726.

In both cases we have
$\quad \(\alpha\)=\(\widehat\alpha\)=\(\beta'\)\quad{\rm and}\quad
    \(\gamma\)=\(\widehat\gamma\)=\(\gamma'\)$,~  so that Condition (2) is
    satisfied. In fact we conjecture that it holds in all such cases.
\bsk

\begin{figure}[htb!]
  \centerline{\includegraphics[width=3.3in]{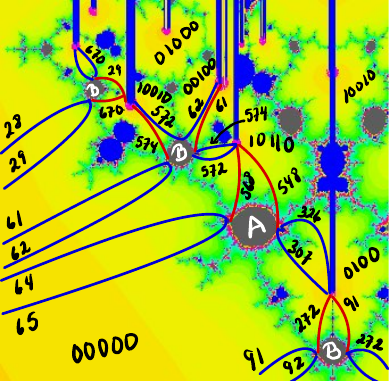}\qquad
    \includegraphics[width=2.7in]{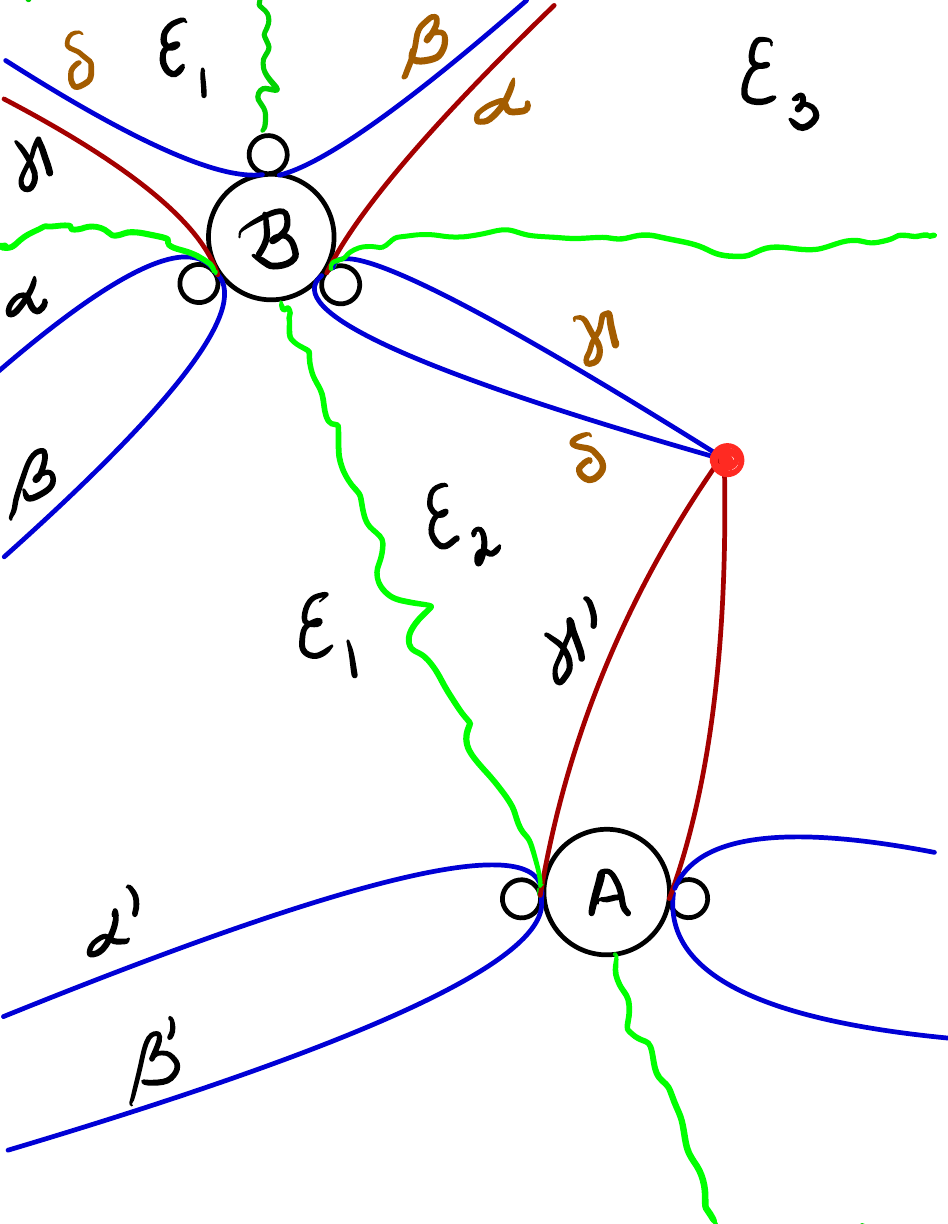}}
  \caption[Off-center B component]{\label{F-Boffcenter} \sf On the left, a region
    in $\cS_5$.
    Roughly in the middle there is a B component contained in a face which extends
    to the left, and which has one root point of a larger A component on its
    boundary.
    This is the only example we have seen of a B off-center face;  but we 
    suspect there are many more for larger values of $p$. On the right, a cartoon
    illustrating this face. The common denominator is $3(3^5-1)=726$.} 
  \end{figure}

  {\bf An Example with B off-center.} Figure \ref{F-Boffcenter}-left 
  illustrates  an example in $\cS_5$ with
  $$ \alpha=61,~\beta=62,~\gamma=574,\quad{\rm and}\quad
  \alpha'=64,~ \beta'=65, ~\gamma'=568,~ ({\rm with~denominator}~ 726). $$
  Here
  $$61 \mapsto \langle 183, 65, 195, 101, 61\rangle \qquad{\rm and}\qquad
  65\mapsto \langle 195, 101, 61, 183,65\rangle$$
  (with denominator 726 for parameter angles, or 242 for dynamic angles).
 It follows that $\(\alpha\)=\(\beta'\)$. Similarly
$$ 574\mapsto \langle 28, 84, 10, 30 , 90\rangle\qquad{\rm while}\qquad
568\mapsto \langle 10, 30, 90, 28, 84\rangle~,$$
so that $\(\gamma\)=\(\gamma'\)$. Thus Condition (2) is satisfied.

\msk

\subsection*{\bf The Four Ray Case.}  

\begin{figure}[ht!]
  \begin{center}
    \begin{overpic}[width=2.3in]{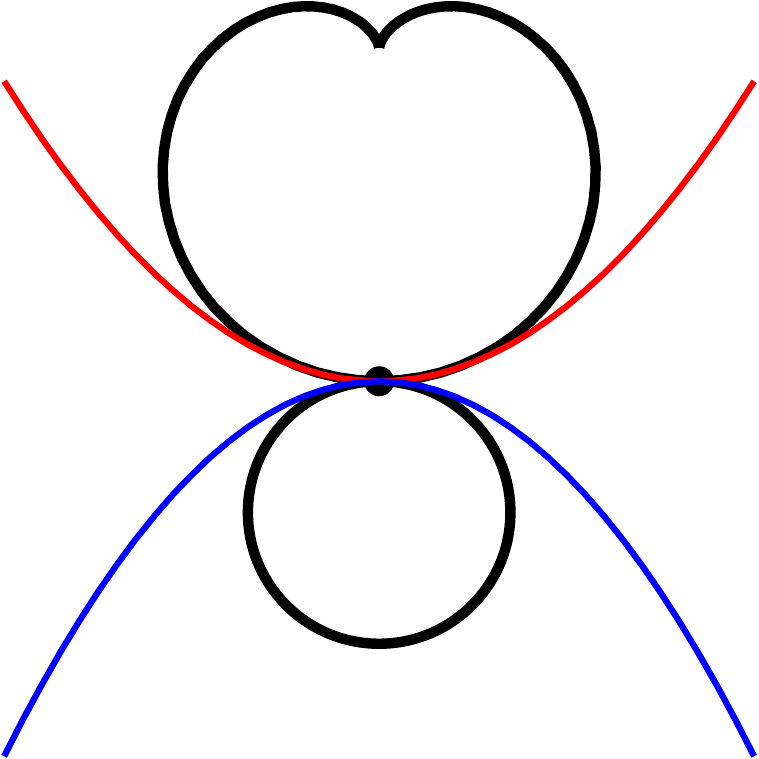}
      \put(-10,10){$\alpha$}
      \put(170,10){$\beta$}
      \put(170,150){$\gamma$}
    \put(-8,150){$\delta$}
    \put(40,5){${\mathcal F}_1$}
    \put(80,170){${\mathcal F}_3$}
    \put(80,90){${\mathfrak p}$}
    \put(70,50){$H_{\mathfrak p}$}
    \put(-10,70){${\mathcal F}_{4}$}
    \put(170,70){${\mathcal F}_2$}
    \end{overpic}
    \caption[Cartoon of 4-ray case]{\label{F-rc1}\sf Cartoon of the four
      faces surrounding a
      parabolic vertex $\p$ in $\Tes_q(S_p)$ where four parameter rays land.
      As usual, face number one contains the component $H_\p$ which
      has $\p$ as root point. The opposite face is number 3 and the side faces
      are numbers 2 and 4.
  The four parameter angles, with denominator $3(3^q-1)$, are denoted
  by $\alpha$ through $\delta$.}
\end{center}
\end{figure}

Now suppose that four edges of the tessellation $\Tes_q(\ocS_p)$ 
have $\p$ as a parabolic vertex, as illustrated in Figure \ref{F-rc1}.
Arguing just as in the two and three ray cases, we see that the
orbit relations in the number one face are generated by
\begin{equation}\label{E-4R0}
\alpha_j~\simeq~ \beta_j~\simeq~ \gamma_j~\simeq~ \delta_j
\qquad{\rm for~all}\qquad j \in \Z/q\end{equation}
(together with any background relations). However in all four ray cases
that we have seen:\ssk


{\bf(a$'$)} We have\begin{equation}\label{E-GOR}
 \lg\alpha\rg~=~ \lg\gamma\rg ~~\ne~~\lg\beta\rg~=~\lg\delta\rg~.\end{equation}

\noindent In fact there is a somewhat more precise observation:\ssk

{\bf(b$'$)} There is a non-zero $k\in\Z/q$ such that
\begin{equation}\label{E-4R1}
 \gamma_j=\alpha_{j+k}\quad{\rm and}\quad \beta_j= \delta_{j+k}\qquad
 {\rm for~all}\quad j\in\Z/q~. \end{equation}

\noindent
In all four rays examples we have seen with $p=q$, there are no background
relations; and $\p$ is also a root point of two components of Type A or B in the
two side faces. (See Figures~\ref{F-S2rays} and \ref{F-s5rab-par}.) Of course
there can be no such A or B components when $p\ne q$
(Figures~\ref{F-q8ex} and \ref{F-43}). \medskip 

Assuming (\ref{E-GOR}) and (\ref{E-4R1}), it follows easily that the
distinguishing relations in the two side faces are generated by
\begin{equation}\label{E-4R2}
\alpha_j\simeq\gamma_j=\alpha_{j+k}\simeq\gamma_{j+k} {\rm ~~in~~} \cF_{2}\qquad{\rm and}\qquad
 \beta_j\simeq\delta_j=\beta_{j-k}\simeq\delta_{j-k} {\rm ~~in~~} \cF_{4}~.\end{equation}


\noindent In fact, as we cross the $\alpha$ edge of the tessellation,
 the $\alpha_j$ and $\gamma_j$ dynamics rays will jump so that only the
 $\beta_j$ and $\delta_j$ rays will continue to land together. Similarly,
 after we cross the $\beta_j$ edge, only the $\alpha_j$ and $\gamma_j$
 rays will land together. An important consequence is the following.

\begin{prop}\label{P-amalg} In both the three ray case and the four ray case,
  if we assume the appropriate  grand orbit condition  $(\ref{E-GO3})$ or
  $(\ref{E-GOR})$, then it follows that the orbit portrait for the number
  one face  is the  amalgamation {\rm (Definition \ref{D-sum})}
of the orbit portraits for the two side faces.  \end{prop}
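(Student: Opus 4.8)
The plan is to read off the result directly from the structure already established in Lemma~\ref{L-3ray} (for the three ray case) and from the relations (\ref{E-4R0})--(\ref{E-4R2}) together with the monotonicity coming from the Parabolic Stability Theorem of Appendix~\ref{a-para-stab} (for the four ray case). In both cases the argument is the same in spirit: the orbit portrait of $\cF_1$ is generated by the full chain of relations among $\alpha_j,\beta_j,\gamma_j$ (and $\delta_j$ in the four ray case), the two side faces each retain exactly the relations not killed when crossing the primary edge adjacent to them, and the union of the retained relations regenerates the full chain. So the amalgamation is at most the $\cF_1$ portrait, and also at least it, hence equal.

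First I would dispose of the three ray case. By Lemma~\ref{L-3ray}, under the grand orbit condition (\ref{E-GO3}) (and absence of background relations), the portrait of $\cF_1$ is the disjoint union of the triples $\{\alpha_i\simeq\beta_i\simeq\gamma_i\}$, that of $\cF_2$ is the disjoint union of the pairs $\{\alpha_i\simeq\gamma_i\}$, and that of $\cF_3$ the disjoint union of the pairs $\{\beta_i\simeq\gamma_i\}$. The amalgamation of $\cF_2$'s and $\cF_3$'s portraits is, by definition, the smallest equivalence relation containing both families of pairs; since $\alpha_i\sim\gamma_i$ and $\beta_i\sim\gamma_i$ force $\alpha_i\sim\beta_i$, transitivity produces exactly the triple $\{\alpha_i,\beta_i,\gamma_i\}$ for each $i$, and nothing more (the index sets for distinct $i$ are disjoint because the three grand orbits are distinct and each periodic orbit has period $q$). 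This is precisely the $\cF_1$ portrait, so Corollary~\ref{C-amalg} is recovered and the three ray half is done.

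Next I would handle the four ray case. Using (\ref{E-4R0}) the $\cF_1$ portrait is generated by $\alpha_j\simeq\beta_j\simeq\gamma_j\simeq\delta_j$; using (\ref{E-GOR}) and (\ref{E-4R1}) this collapses, for each residue class, to the equivalence classes described by the identifications $\gamma_j=\alpha_{j+k}$, $\beta_j=\delta_{j+k}$, so each class is the $\langle k\rangle$-orbit inside $\(\alpha\)$ together with its paired copy inside $\(\beta\)$. By (\ref{E-4R2}), $\cF_2$ retains exactly the relations $\alpha_j\simeq\gamma_j=\alpha_{j+k}\simeq\gamma_{j+k}$ and $\cF_4$ retains exactly $\beta_j\simeq\delta_j=\beta_{j-k}\simeq\delta_{j-k}$ --- one family being the restriction of the $\cF_1$ classes to $\(\alpha\)=\(\gamma\)$, the other the restriction to $\(\beta\)=\(\delta\)$. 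Taking the amalgamation of the $\cF_2$ and $\cF_4$ portraits, and invoking the identification $\beta_j=\delta_{j+k}$ from (\ref{E-4R1}) to link the two pieces, transitivity reconstitutes exactly the classes $\alpha_j\simeq\gamma_j\simeq\beta_{?}\simeq\delta_{?}$ of $\cF_1$; conversely each side portrait is contained in the $\cF_1$ portrait by the Parabolic Stability / Upper Semi-Continuity statement, so the amalgamation is contained in it as well. Equality follows.

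The main obstacle I expect is bookkeeping rather than anything conceptual: one has to check carefully that amalgamating the two side-face relations does not accidentally produce \emph{more} relations than are present in $\cF_1$ --- i.e. that no spurious identification between, say, an element of $\(\alpha\)$ and an element of $\(\delta\)$ arises that was not already forced in $\cF_1$. This is where the precise indexing in (\ref{E-4R1}) (the single shift $k\in\Z/q$) and the disjointness of the grand orbits $\(\alpha\)\ne\(\beta\)$ are used: because the only cross-links between the two grand orbits in $\cF_1$ are exactly the pairings $\alpha_j\leftrightarrow\delta_{j-k}$ etc., and because these same pairings survive in one of the two side faces, the amalgamation closes up to precisely $\cO_q(\cF_1)$ and no further. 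Once this is spelled out the two halves combine into the statement of Proposition~\ref{P-amalg}. \qed
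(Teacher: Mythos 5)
Your three ray argument is fine and is essentially the paper's Corollary~\ref{C-amalg}: the two side portraits share the angles $\gamma_i$, so the transitive closure of their union really does produce the triples $\alpha_i\simeq\beta_i\simeq\gamma_i$ and nothing more. The four ray half, however, has a genuine gap. There the two side portraits are supported on \emph{disjoint} sets of angles: by $(\ref{E-GOR})$ the relations of $\cF_2$ involve only angles in $\(\alpha\)=\(\gamma\)$ and those of $\cF_4$ only angles in $\(\beta\)=\(\delta\)$, and these grand orbits are distinct. The identification $\beta_j=\delta_{j+k}$ that you invoke "to link the two pieces" is an identity \emph{inside} the grand orbit $\(\beta\)=\(\delta\)$; it never produces a relation between an $\alpha$-type angle and a $\beta$-type angle, so transitivity cannot recover $\alpha_j\simeq\beta_j$. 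Concretely, for the four ray point of $\cS_2$ with $(\alpha_1,\beta_1,\gamma_1,\delta_1)=(2,3,6,1)/8$, the side portraits are $\{2\simeq 6\}/8$ and $\{1\simeq 3\}/8$, whose transitive join is $\{1\simeq 3,\;2\simeq 6\}/8$ — strictly smaller than the $\cF_1$ portrait $\{1\simeq 2\simeq 3\simeq 6\}/8$.

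The missing ingredient is the unlinking constraint of Lemma~\ref{L-unlink}, which is how the paper's notion of amalgamation is actually being used (see the worked example in Definition~\ref{D-sum}, where the amalgamation of $\{1/8\simeq 3/8\}$ and $\{2/8\simeq 6/8\}$ is declared to be $\{1/8\simeq 2/8\simeq 3/8\simeq 6/8\}$). Because the four dynamic rays $\alpha_j,\beta_j,\gamma_j,\delta_j$ land at a single point for the parabolic map, they occur in that cyclic order on the circle, so the chord $\alpha_j\,$--$\,\gamma_j$ crosses the chord $\beta_j\,$--$\,\delta_j$; any formal orbit portrait containing both classes must therefore merge them, and it is this forced merge — not transitivity — that pushes the amalgamation up to exactly the $\cF_1$ classes. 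Your worry about the amalgamation producing \emph{too many} relations is the wrong one here; the real issue is that without the linking argument you do not get enough. Once you replace the transitivity step by the crossing-chords step (and check, using the shift $k$ and $\(\alpha\)\ne\(\beta\)$, that no further merges are forced), the four ray case goes through.
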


Finally we must consider the opposite face  $\cF_3$.
The Edge Monotonicity Conjecture~\ref{CJ-main} tells us which rays will
jump  as we cross the $\gamma$ or
$\delta$ ray into  $\cF_3$; however it does not tell us what they will jump to.
A better strategy is to cross
directly from $\cF_0$ to  $\cF_3$ through the point $\p$. The conclusion
as illustrated in Figure \ref{F-imp}, is that the $\alpha_1$ and $\delta_1$
rays will be pushed away from the $\beta_1$ and $\gamma_1$ rays. This will
leave only the distinguishing relations
$\alpha_1\si \delta_1$ and $\beta_1\si \gamma_1$ between these particular
angles, where the first two are definitely not equivalent to the second two.
Since equivalence must be preserved under angle tripling, and since the
equivalence relations must be a subset of those for the parabolic point,
this proves the following.

\begin{prop}\label{P-drel}
  The distinguishing relations for the opposite face $\cF_3$ 
are generated by
$$ \alpha_j~\si~\delta_j\quad{\rm and}\quad\beta_j~\si~\gamma_j
\quad{\rm for~all~~~} j~.$$
\end{prop}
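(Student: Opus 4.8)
\noindent\textbf{Proof plan for Proposition~\ref{P-drel}.}
My plan is to follow the route indicated just above the statement: rather than crossing a single edge into $\cF_3$, I would move along a path in $\cS_p$ running from the number-one face $\cF_1$ straight through the parabolic vertex $\p$ into the opposite face $\cF_3$, and track the four dynamic rays $\alpha_1,\beta_1,\gamma_1,\delta_1$. These coincide at a common landing point throughout $\cF_1$, and by Corollary~\ref{C-cop} the dynamic rays of angle $\alpha,\beta,\gamma,\delta$ land together at $\p$ itself, at the root point of the co-critical component $U(2a)$ of $F_\p$; transporting this forward along the isomorphisms of (\ref{E-Umaps}) as in Theorem~\ref{L-para}, the rays $\alpha_1,\beta_1,\gamma_1,\delta_1$ land, in this cyclic order, at the root point of $U(v')$, and the periodic rays $\alpha_q,\beta_q,\gamma_q,\delta_q$ land, in the same cyclic order, at the root point of $U(-a)$. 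Since the period $q$ orbit portrait is constant on $\cF_3$ (Theorem~\ref{T-decomp}) and, by the Upper Semi-Continuity Corollary~\ref{usc} of Appendix~\ref{a-para-stab}, is contained in $\cO_q(F_\p)$, it suffices to decide which pairs among $\alpha_j,\beta_j,\gamma_j,\delta_j$ (for a single index $j$) remain equivalent once we are inside $\cF_3$; the full statement then follows by tripling-invariance of orbit portraits.

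First I would record the negative relations. Since $\gamma$ is a secondary edge (Conjecture~\ref{CJ-main}, assumed throughout this section), Corollary~\ref{C-jump} says that crossing it from $\cF_2$ into $\cF_3$ moves precisely the rays with angle in $\lg\gamma\rg=\lg\alpha\rg$, in particular $\gamma_q$ and $\alpha_q$; by (\ref{E-4R2}) these share a landing point in $\cF_2$ and both lie in the cycle $\{\gamma_j\}$, so Theorem~\ref{T-OPjump} forces $\alpha_j\not\simeq\gamma_j$ in $\cF_3$, while $\beta_j$ and $\delta_j$ do not move and stay inequivalent. Crossing the secondary edge $\delta$ from $\cF_4$ gives, by the same argument, $\beta_j\not\simeq\delta_j$ in $\cF_3$. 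Together with the unlinking condition of Lemma~\ref{L-unlink} relative to the cyclic order $\alpha_1,\beta_1,\gamma_1,\delta_1$, this leaves only four possibilities for the period $q$ orbit portrait of $\cF_3$ restricted to these angles: trivial, a single adjacent identification (say $\alpha_j\simeq\beta_j$), the pairing $\{\alpha_j\simeq\beta_j,\ \gamma_j\simeq\delta_j\}$, or the pairing $\{\alpha_j\simeq\delta_j,\ \beta_j\simeq\gamma_j\}$; and the trivial case is excluded because $\gamma$ and $\delta$, being secondary, force a non-monotone change, so that $\cF_3$ must carry some relation absent from $\cF_2$ (whose distinguishing relations are only the $\alpha_j\simeq\gamma_j$).

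Next, and this is the crux, I would pin down the pairing $\{\alpha_j\simeq\delta_j,\ \beta_j\simeq\gamma_j\}$ by analysing the passage through $\p$ directly. As $F\to\p$ from $\cF_1$ the free critical point $-a$ lies in the parabolic component $U(-a)$, with orbit converging to its root point $r$; as $F$ moves past $\p$ into $\cF_3$, the parabolic point at $r$ unfolds and $-a$ escapes through a newly opened channel, exactly as in Lemma~\ref{L-crossing}. That channel splits $\partial U(-a)$ into two arcs, hence separates $\alpha_q,\beta_q,\gamma_q,\delta_q$ into two groups; and because $\cF_3$ is the face \emph{opposite} $\cF_1$, the channel opens into the two angular sectors at $\p$ that separate $\cF_1$ from $\cF_3$, namely the sector between $\delta$ and $\alpha$ (containing $\cF_4$) and the sector between $\beta$ and $\gamma$ (containing $\cF_2$). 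Via the cyclic-order correspondence this puts $\alpha_q,\delta_q$ on one side and $\beta_q,\gamma_q$ on the other, so that $\alpha_q\simeq\delta_q$ and $\beta_q\simeq\gamma_q$ hold in $\cF_3$ with the two pairs inequivalent --- which is incompatible with the other two surviving possibilities. Applying tripling together with the containment in $\cO_q(F_\p)$ then gives exactly the stated generators (compare Figure~\ref{F-imp}).

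The hardest part is making this last step rigorous. Lemma~\ref{L-crossing} is phrased for crossing a single parameter ray, whereas here one passes through a point where four edges meet, so what is really needed is a local analysis of the parabolic implosion at $r$ --- Fatou and Lavaurs coordinates, in the spirit of Appendix~\ref{a-para-stab} --- showing that the accesses to $r$ realised by the $\alpha$- and $\delta$-rays lie on the opposite side of the opening repelling petal from those realised by the $\beta$- and $\gamma$-rays. A purely combinatorial deduction from the edge-crossing facts of the second step does not seem to be available, since at that level the two adjacent pairings remain symmetric --- which is exactly why the detour through $\p$ is used.
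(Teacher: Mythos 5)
Your proposal follows essentially the same route as the paper: the paper's own argument is precisely to cross directly from the number-one face to $\cF_3$ through $\p$, observe (via the cartoon of Figure~\ref{F-imp}) that the $\alpha$- and $\delta$-rays are pushed away from the $\beta$- and $\gamma$-rays as the parabolic point splits into a pair of repelling points, and then invoke tripling-invariance together with containment in the orbit portrait of $\p$. Your additional bookkeeping (the negative relations from the secondary edges, the unlinking enumeration) and your candid remark that the passage through $\p$ really requires a parabolic-implosion analysis are both sound; the paper itself leaves that last step at the same informal, picture-based level.
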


As we cross any secondary ray in either direction, note that all of the
distinguishing relations get destroyed.\medskip

Thus we obtain quite explicit  
descriptions of how the period $q$ orbit portrait changes
as we circle around any parabolic vertex $\p$ of the tessellation
$\Tes_q(\ocS_p)$ in all of the two, three, and four ray cases,
provided that the
appropriate conditions $(\ref{E-GO3})$ or $(\ref{E-GOR})$ are satisfied.
 All that we need to know is the parameter angles for
the rays landing at $\p$, or even the dynamic angles for the rays landing
at the critical value $F(-a_F)$. 

\begin{figure}[htb!]
  \centerline{\includegraphics[width=1.8in]{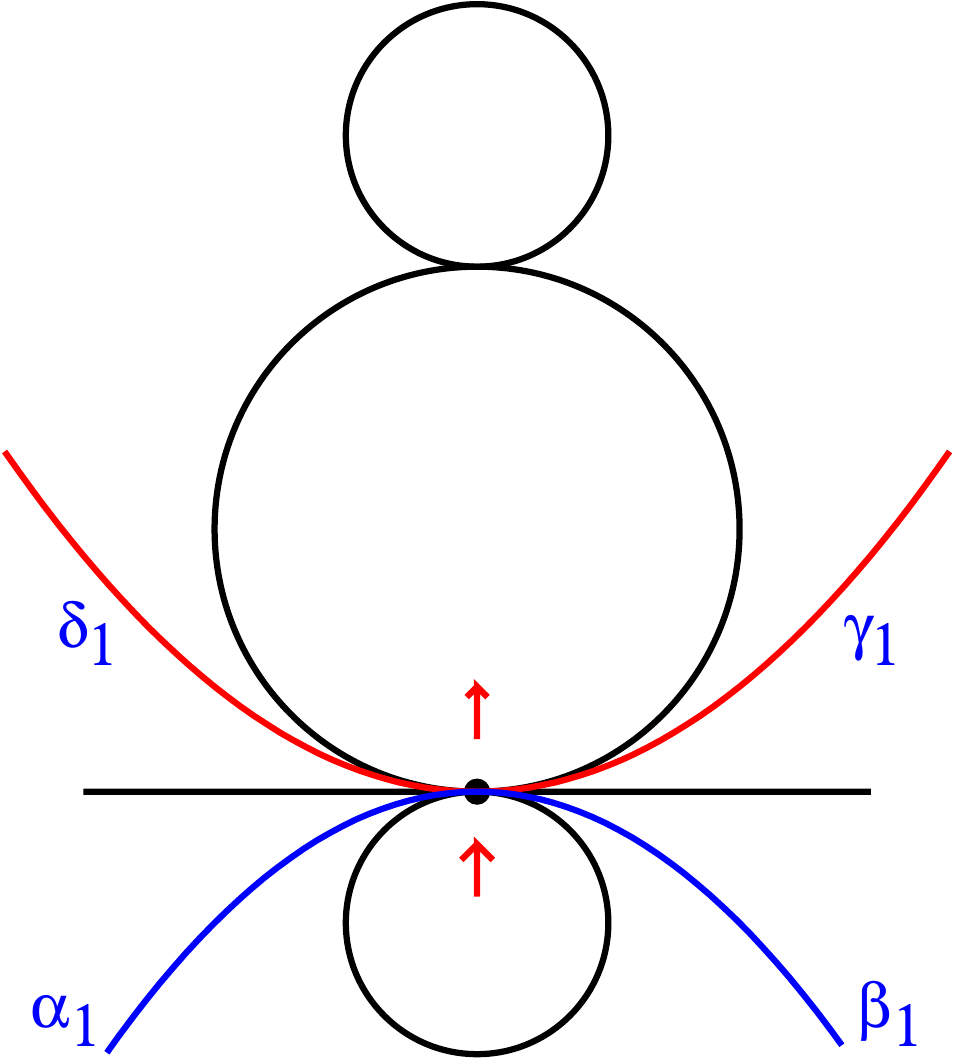}\qquad
    \includegraphics[width=1.8in]{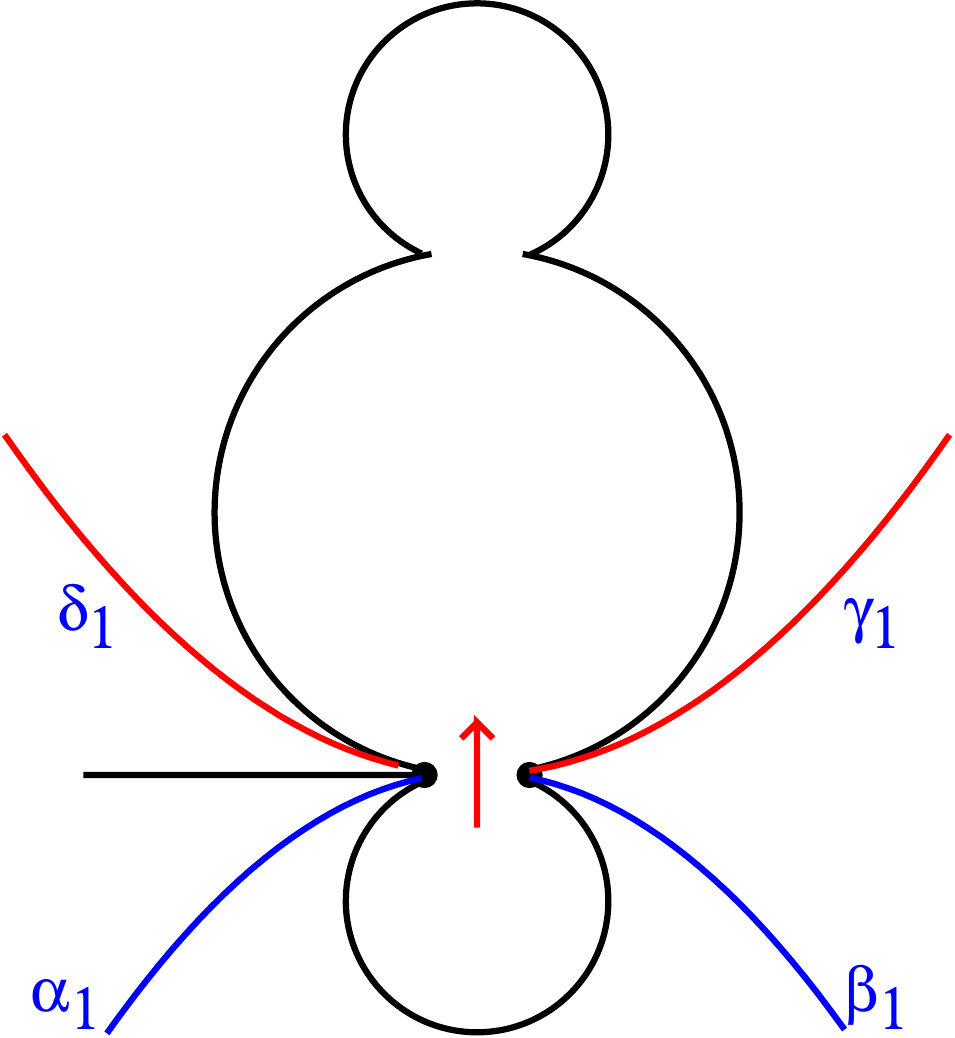}}
  \caption[Orbit portrait as parabolic point splits into a pair of periodic repelling points]{\sf Cartoon illustrating the change of orbit portrait, as a
    parabolic point splits into a pair of repelling points. Compare
    Figures~\ref{F-q8ex} and \ref{F-43}. However note that
    the primary wake is on top in Figure \ref{F-43}. \label{F-imp}} 
\end{figure}

\begin{rem} In all of the four ray cases we have seen, the
  $\alpha$ and $\beta$ rays belong to the same escape region, 
and bound a minimal wake. In fact the angles $\alpha$
  and $\beta$ always seem to be immediately contiguous in the sense that $\beta
  =\alpha+1/3(3^q-1)$. (Thus if $q$ is large the difference is VERY small.)
\end{rem}
\msk

\begin{figure}[htb!]
\centerline{\includegraphics[width=3.3in]{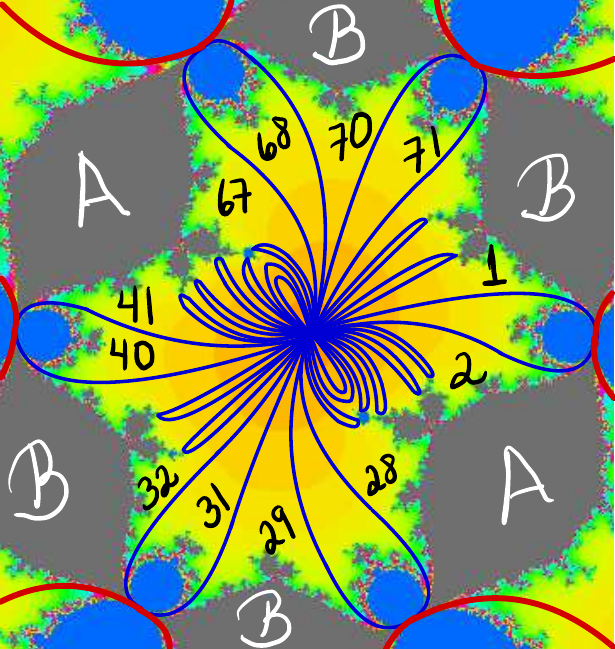}}
\caption[Detail of the $1/3$-rabbit region in $\cS_3$]{\label{F-3rab} \sf The
  $1/3$-rabbit region in $\cS_3$. There are
  $2(3^3-1)=52$ rays of co-period three in this region. Twelve of these
  are connecting rays, labeled by their angles  (multiplied by the common
  denominator of $78$). 
These form $6$ primary wakes, each with the
smallest possible angular width $1/78$. Of the remaining $36$ rays of
co-period $3$, note that $28$ are associated with Type A boundary components,
while only $8$ are associated with Type B components. Each subset of $\E(1/3)$
bounded by the two connecting rays around an A component has
angular width $26/78=1/3$; while the analogous 
subsets adjacent to a B component all have much smaller angular width.
The rotation $\I$ adds $39=78/2$ to each angle.}
\end{figure}

{\bf Note.} If $q$ is a prime number, then it is not hard to conclude,
for maps in $\cF_0$, that all of the the $\alpha_j$ and $\beta_j$ dynamic rays
must land together at the central fixed point. 
This is also true in  many cases where $q$ is not prime. For example
in each ``rabbit''
escape region of $\cS_p$, as discussed in  \cite[Rabbit Section]{BM}
there are $2\,p$ shared boundary points. Each one is the landing point of four
parameter rays,  and all are parabolic of ray period $q=p$.
For the Julia sets corresponding to these boundary points,
the $2\,p$ primary dynamic rays all land on a central fixed
point. 
For an example where the $2\,q$ rays do not all land together,
 see Figure~\ref{F-43}-right with $q=4,~p=3$. 
\medskip

\begin{figure}[htb!]
  \centerline{\includegraphics[width=5in]{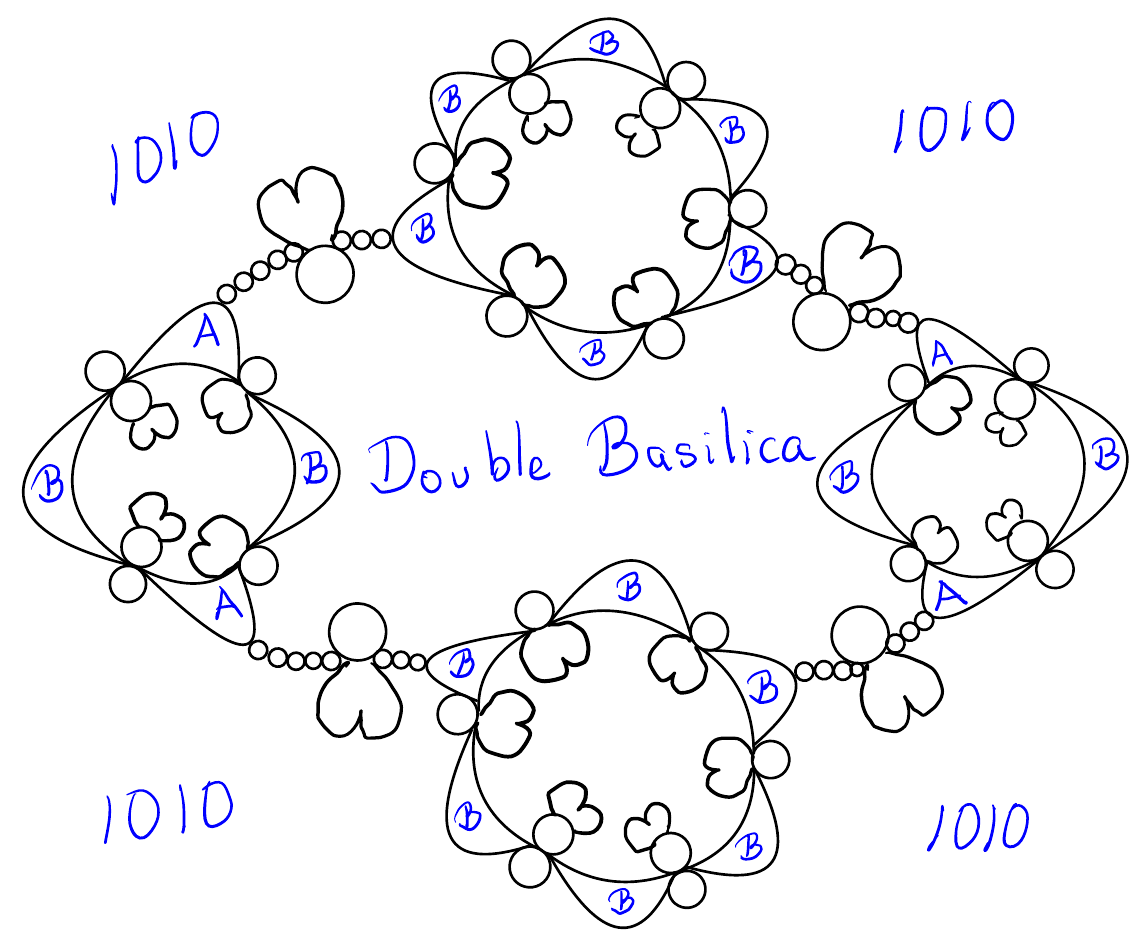}}
  \caption[Part of $\ocS_4$]{\label{F-4rings} \sf Cartoon illustrating
    the Double Basilica and $1010$ regions of $\ocS_4$ which are separated
    from the rest of $\ocS_4$ by four rings. All of the rest of $\ocS_4$
    can be reached by passing through any one of these rings; but 
    cannot be represented in any meaningful way in this figure.}

  \centerline{\includegraphics[height=2.5in]{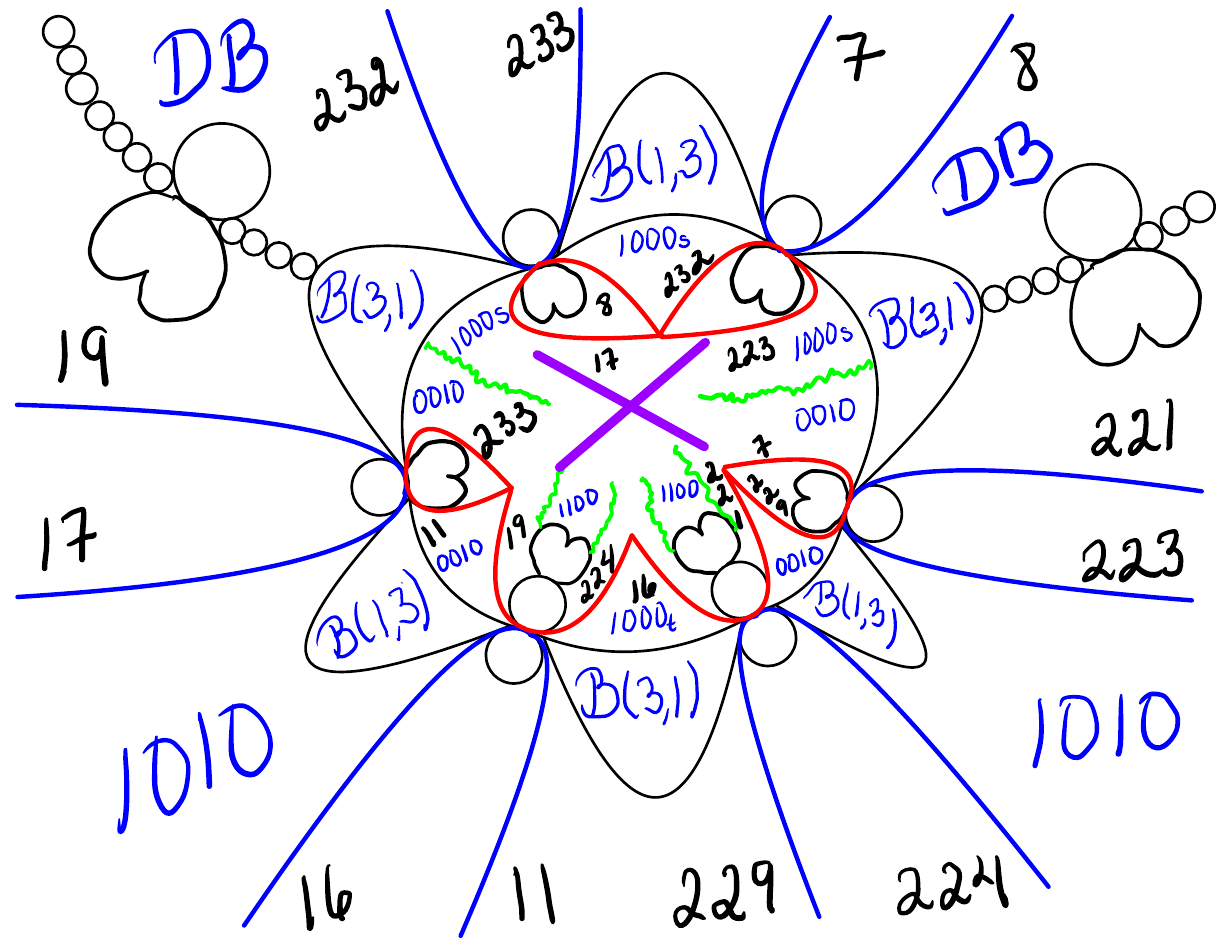}\quad
    \includegraphics[width=2.2in]{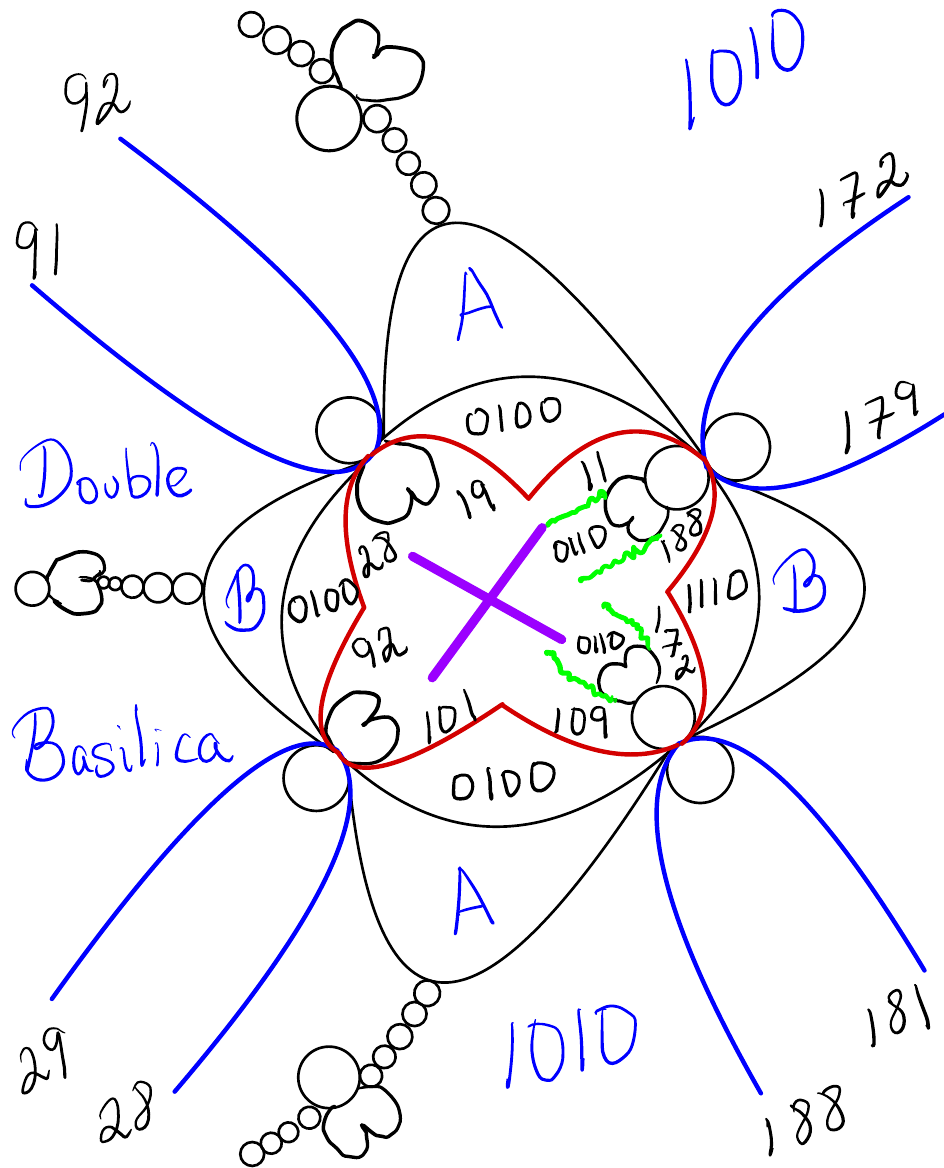}}
  \caption[Two rings]{\label{F-2rings} \sf On the left: a more detailed
    picture of the bottom ring in Figure \ref{F-4rings}. As we circle 
    once around the outside of
    this ring, note that the corresponding angles circle twice
    around $\R/\Z$.   On the right: a more detailed picture of the right
    hand ring. The type B components in this ring are of type B(2, 2), i.e.,
  they are weakly self-dual.}
\end{figure}

\begin{rem}[\bf Rabbit and Non-Rabbit Examples]{\label{R-RNR}
    There is an essential difference between four ray points
    with $p=q$ and those with $p\ne q$. The cases with $p\ne q$
    always seem to be isolated. (Compare Figures~\ref{F-43}-left 
    and \ref{F-q8ex}-left.) There can never be an associated A or B component
when $p\ne q$ since every root point of an A or B component in $\cS_p$ has
period $p$.\ssk

The case $p=q$ is much more interesting.
        In every case that we have seen, 
        the parabolic point $\p$ is part of a compact connected
        \textbf{\textit{ring}} consisting of the closures of $2n$
        components of type A or B, together with their $2n$ common
        parabolic root points, where $n\ge 2$.        
Most of the examples we have seen are rings
    around the boundary of a rabbit region. Compare Figures~\ref{F-S2rays},
    \ref{F-s5rab-par}, \ref{F-3rab} and \ref{F-4rab}. In these
    cases, $n$ is always equal to $p$, and there are always two diametrically
    opposite components of Type A, together with $2n-2$ components of Type B.
    This case will be studied in more detail in \cite{BM}.

    \begin{figure}[htb!]
      \centerline{\includegraphics[width=3.8in]{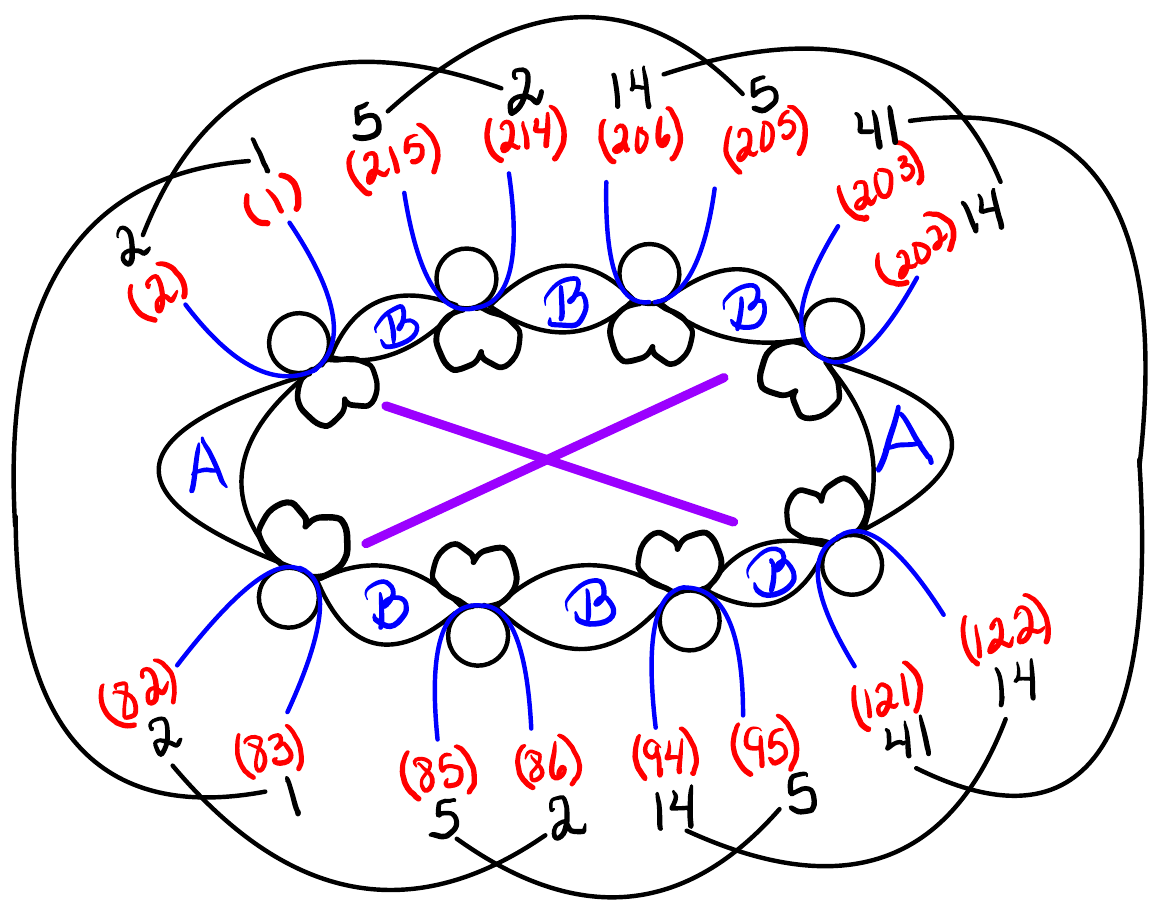}}
      \caption[Cartoon illustrating the ring around the $1/4$-rabbit region in $\cS_4$]{\label{F-4rab}\sf Cartoon illustrating the ring around the
  $1/4$-rabbit region. Here the rabbit region is outside of the ring, while the
        rest of $\ocS_4$ is not shown. The actual parameter angles are shown
        in red, while labels for the associated grand orbits are shown
        in black. (Compare Definition~\ref{D-GO}.) Note that each A or
        B component is associated with a corresponding grand orbit; and that
        complex conjugate components share the same grand orbit.}
      \end{figure}

    Figure \ref{F-4rings} illustrates four rings in $\cS_4$ which are not
   associated with rabbit regions, but rather are associated with both 
the double basilica and 1010 regions. Two of these
  rings have $n=2$ with pattern ABAB, while the other two have $n=3$
    with pattern BBBBBB. Two of these four rings are shown in more detail
    in Figure \ref{F-2rings}. The large $\bf X$ in the middle of each ring
    indicates that there is no meaningful way to fill in the center. We could
    proceed by analytic continuation along any path into the center (as long
    as we don't hit an ideal point). However different paths to the 
same point would lead to very different parts of $\cS_4$.\bsk

  For each of these four rings choose a smoothly embedded circle which 
    passes through the $2n$ parabolic points, connecting them through the
    $2n$ A or B components.

    \begin{lem}[Splitting $\ocS_4$ ]\label{L-S4split}
       If we cut $\ocS_4$ open along all four
      of these embedded circles, then it will be split as the union
      of two connected Riemann-surfaces-with-boundary, $S$ and $S'$, which
      intersect only along the four common boundary circles. Here the surface
      $S$, as shown in Figure~$\ref{F-4rings}$, has 
      genus zero and contains the double basilica escape region and the
      $1010$ escape region; while the other surface $S'$ has
      genus twelve and contains all of the other $18$ escape regions.
 The surface $S$ contains all of the primary rays
 which land at one of the parabolic boundary points; while $S'$ contains all
 such secondary rays.\end{lem}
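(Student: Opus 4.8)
The plan is to treat Lemma~\ref{L-S4split} as the combination of (i) explicit knowledge of the global structure of $\ocS_4$ near the four rings, which is the real content, and (ii) a routine topological and Euler--characteristic bookkeeping argument. Throughout I write $R_1,\dots,R_4$ for the four rings and $C_1,\dots,C_4$ for the chosen smoothly embedded circles, with $C_i\subset R_i$ passing through the $2n_i$ parabolic root points of $R_i$ and through the interiors of the $2n_i$ Type A or B components making up $R_i$ (so $n_1=n_2=2$ with pattern ABAB, and $n_3=n_4=3$ with pattern BBBBBB).

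First I would record the local picture. Each Type A or B component of a ring is homeomorphic to a closed disk (by the multiplier isomorphism together with the remark following Theorem~\ref{T-edge}), and consecutive components of $R_i$ meet only at a common parabolic root point; hence a small annular neighbourhood $N_i$ of $C_i$ in $\ocS_4$ (which exists and is an annulus, since $\ocS_4$ is an orientable smooth surface and $C_i$ a smoothly embedded circle) is cut by $C_i$ into two annuli $N_i^{+}$ and $N_i^{-}$, and these local sides patch together consistently along $C_i$ by orientability. At each of the $2n_i$ parabolic points $\p$ on $R_i$ exactly four parameter rays of co-period $4$ land (the four-ray case, valid since $p=q=4$ for each such $\p$): two primary rays $\alpha,\beta$ bounding the minimal wake containing the Type D component $H_\p$, and two secondary rays $\gamma,\delta$. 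By the four-ray analysis of Section~\ref{s-near-para} the face $\cF_1\ni H_\p$ lies between $\alpha$ and $\beta$, and the ring structure (Figures~\ref{F-2rings} and \ref{F-4rab}) places $H_\p$, its wake, and the two primary rays on one side of $C_i$ near $\p$, and the two secondary rays on the other side. I fix notation so that $N_i^{-}$ is the side carrying the primary rays.

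Next comes the global separation statement, which I expect to be the main obstacle. Let $S$ be the closure of the connected component of $\ocS_4\ssm(C_1\cup\cdots\cup C_4)$ containing the (compactified) double basilica escape region. The claim is that $S$ also contains the $1010$ region, meets no other escape region, has each $C_i$ as a genuine boundary circle with $N_i^{-}\subset S$ and $\mathrm{int}(N_i^{+})\cap S=\varnothing$, and that $S'=\overline{\ocS_4\ssm S}$ is connected. This is established by direct examination of the global structure of $\ocS_4$, as in \cite{BM} and Figures~\ref{F-4rings}--\ref{F-2rings}: the double basilica and $1010$ regions, together with the inner halves $N_i^{-}$ and the primary Type D wakes attached along the $R_i$, form a single connected set, and the only boundary shared between these two regions and the remaining $18$ escape regions consists of the $2(n_1+n_2+n_3+n_4)$ A or B components through which the $C_i$ pass; hence every path from one collection of escape regions to the other must cross some $C_i$. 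Thus the two sides of each $C_i$ lie in different components, cutting produces exactly the two surfaces-with-boundary $S$ and $S'$, each with the four $C_i$ as boundary and meeting only along them, and the primary/secondary assertion of the lemma is precisely the choice $N_i^{-}\subset S$, $\mathrm{int}(N_i^{+})\subset S'$. Planarity of $S$ is read off from Figure~\ref{F-4rings}, which exhibits it as a $2$-sphere with four open disks removed (equivalently, $H_1(S)$ is generated by the four boundary circles modulo the single relation that their sum bounds, forcing $g(S)=0$).

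Finally, the genus count is Euler-characteristic bookkeeping. Capping the four boundary circles of $S$, resp.\ of $S'$, with disks gives closed surfaces of genus $g(S)=0$, resp.\ $g(S')$, so $\chi(S)=2-0-4=-2$ and $\chi(S')=2-2g(S')-4=-2-2g(S')$. Since $\chi$ of a circle is $0$, gluing along the four circles gives $\chi(\ocS_4)=\chi(S)+\chi(S')=-4-2g(S')$; comparing with $\chi(\ocS_4)=2-2g(\ocS_4)$ yields $g(\ocS_4)=3+g(S')$. Inserting the value of $g(\ocS_4)$ determined in Section~\ref{s-count} (namely $g(\ocS_4)=15$) gives $g(S')=12$, and that $S'$ contains the remaining $18$ escape regions has already been noted. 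This completes the argument; the only genuinely hard input is the global connectivity claim, which rests on the detailed exploration of $\ocS_4$, while the rest is formal.
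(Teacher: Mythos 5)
Your Euler--characteristic bookkeeping at the end is exactly the paper's computation ($\chi(S)=-2$, $\chi(S')=-2-2g$, $\chi(\ocS_4)=-28$, hence $g(S')=12$), and your description of $S$ matches the paper's, which likewise treats the genus-zero picture of $S$ as read off from Figure~\ref{F-4rings}. So that part is fine.

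The gap is the connectedness of $S'$. You correctly identify the global connectivity claim as ``the only genuinely hard input,'' but you then dispose of it by appealing to ``direct examination of the global structure of $\ocS_4$, as in \cite{BM} and Figures~\ref{F-4rings}--\ref{F-2rings}.'' Those figures cannot do this job: the paper states explicitly that the complement of $S$ ``cannot be represented in any meaningful way in this figure,'' so nothing in them exhibits $S'$, let alone shows it is connected. A priori $\ocS_4\ssm(C_1\cup\cdots\cup C_4)$ could have more than two components, in which case your $S'=\overline{\ocS_4\ssm S}$ would be disconnected and the capping-off computation would not yield a single genus. The paper closes this hole with a concrete device you are missing: it passes to the quotient $S'/\I$ by the $180$-degree rotation, uses the complete combinatorial description of $\ocS_4/\I$ in \cite[Figure 11]{BKM} to verify that deleting the double-basilica and $1010$ regions leaves a connected set (so $S'/\I$ is connected), and then lifts connectivity back to $S'$ by observing that the $\I$-invariant $1110$ region lies in $S'$, so the two sheets over $S'/\I$ cannot separate. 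Some argument of this kind --- or at least an explicit citation of a statement in \cite{BM} or \cite{BKM} that establishes the connectivity --- is needed before the genus count can be run; as written, the proposal assumes the one nontrivial point it set out to prove.
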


\begin{figure}[htb!]
\centerline{\includegraphics[width=3in]{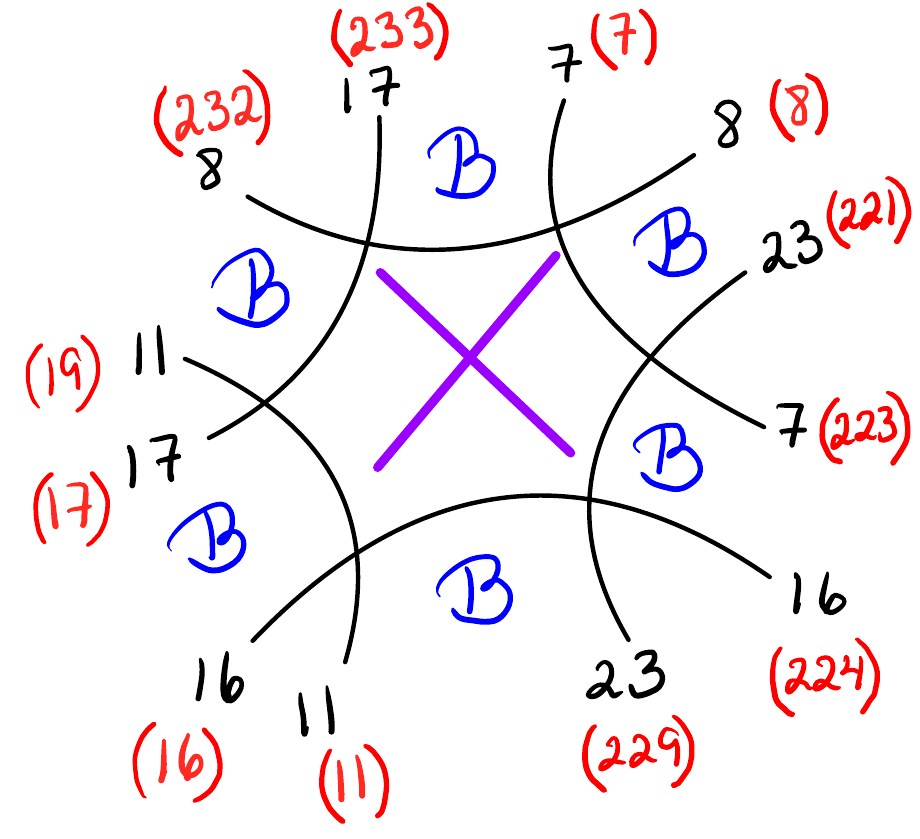}}
\caption{\label{F-Bring} \sf Angles and grand orbits for Figure
  \ref{F-2rings}-left, with notation as in Figure \ref{F-4rab}.}
\end{figure}

\begin{proof}[\bf Proof of Lemma \ref{L-S4split}]
  The surface $S$ is clearly connected, and of genus zero.
  To show that $S'$ is connected, we first consider the quotient $S'/\I$
  where $\I$ is the standard involution ($=$180 degree rotation).
Figure 11 of \cite{BKM}, together with the accompanying text, gives a complete
description of the quotient $\ocS_4/\I$. In this figure, the notation
$\bf d$ stands for the Double Basilica
region, while $\bf 5$ stands for the $1010$ region,
and $\bf 7$ for the $1110$ region. Examining this figure, it is not hard to
check that if we remove the $\bf d$ and $\bf 5$ regions, what is left is
still connected. This proves that $S'/\I$ is connected. Since the $\bf 7$
region is $\I$-invariant, it follows easily that $S'$ itself is connected.

Let $g$ be the genus of $S'$. The computation of $g$ is a fairly
straightforward Euler characteristic computation.  Let $~S'\cup{\rm disks~}$
be the closed Riemann surface of the same genus $g$ which is obtained
by pasting a closed disk into each of the four boundary circles. Then
the Euler characteristic $$\chi(S'\cup{\rm disks}) =\chi(S')+4$$ is equal
to $2-2g$, or in other words $\chi(S')= -2-2g$. Since $S$ has genus zero,
a similar computation shows that $\chi(S)=-2$. Furthermore, since
$\chi(S\cap S')=0$ it follows that
$$ \chi(\overline S_4)~=~ \chi(S)+\chi(S') ~=~ -2+(-2-2g)~=~ -4-2g~.$$
since the Euler characteristic of $\overline S_4$ is $~2-(2\times 15) =-28$,
we can solve for $g=12$.  \end{proof}}\end{rem}\bsk

\begin{figure}[htb!]
  \centerline{\includegraphics[width=2.3in]{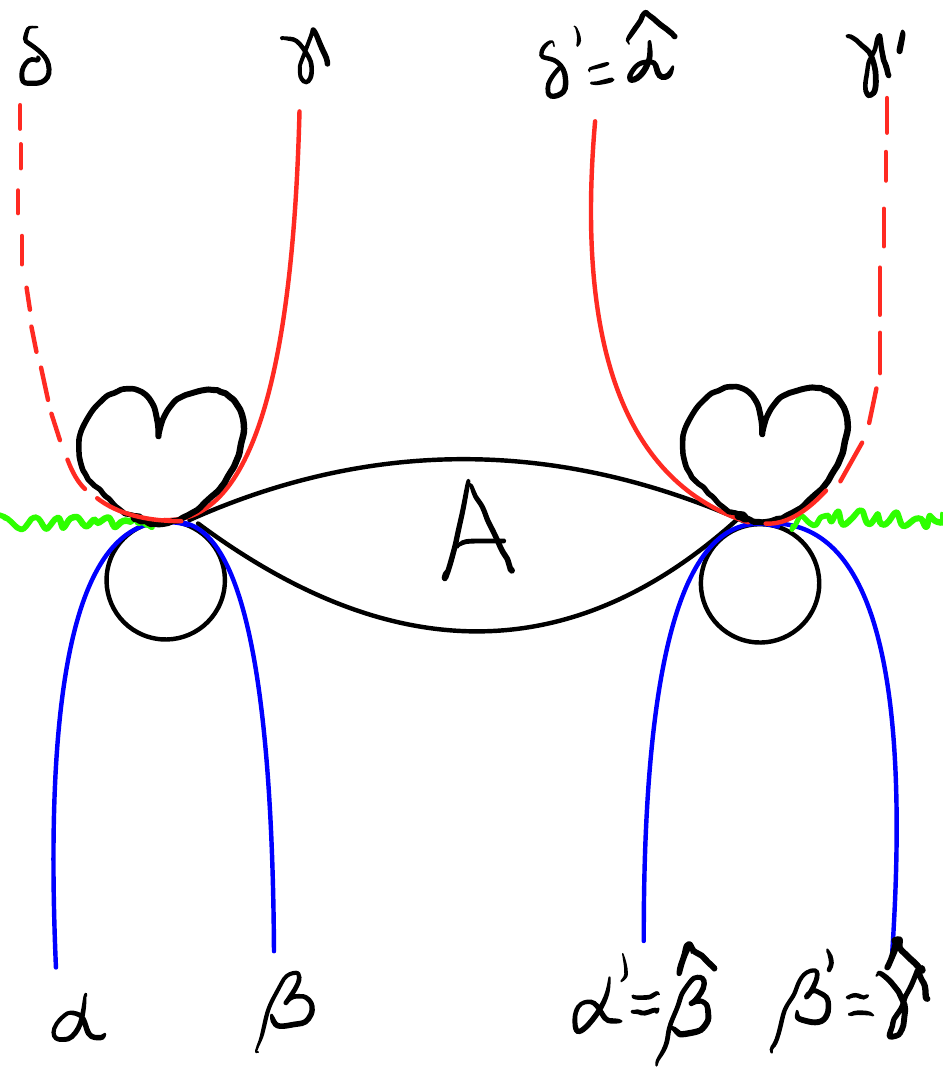}\qquad\quad
    \includegraphics[width=2.3in]{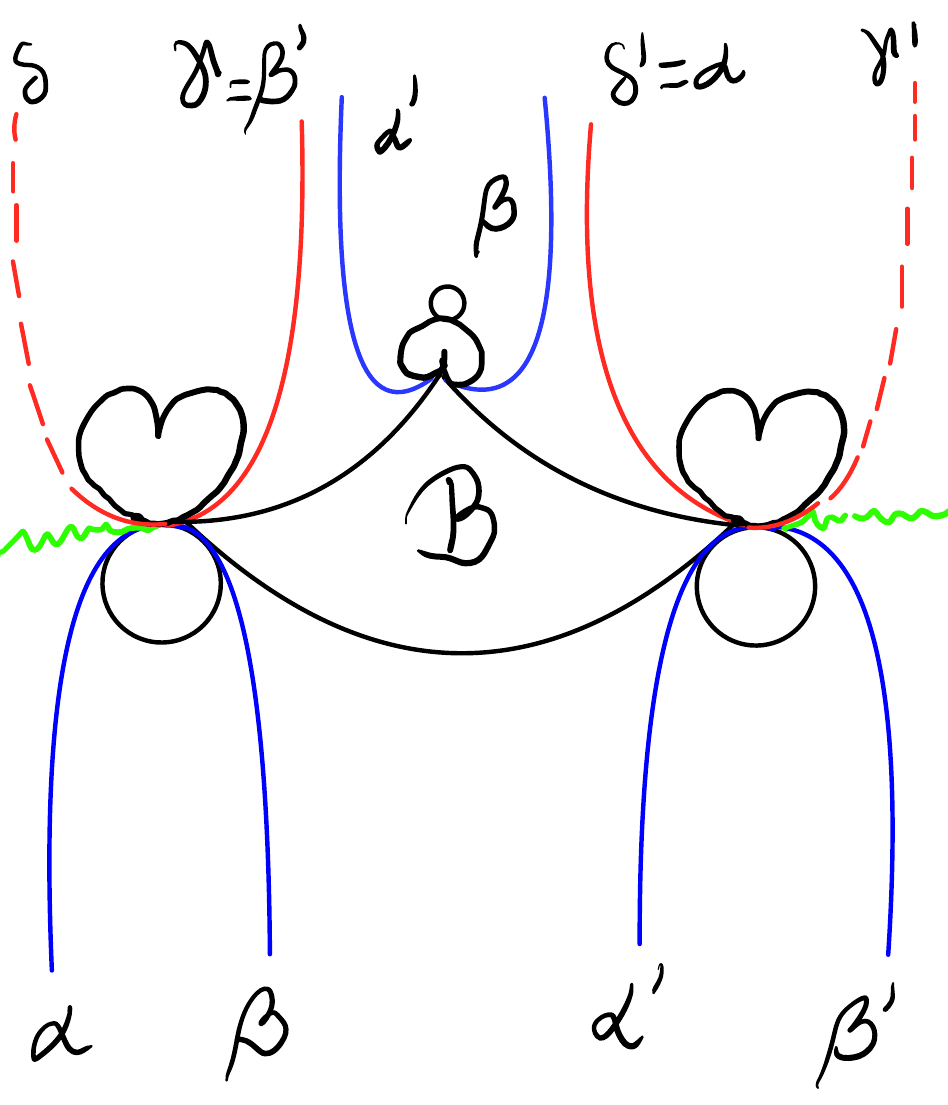}}
  \caption{\label{F-AorB} \sf On the left: two four ray points separated
    by an A component. On the right: separated by a B component}
\end{figure}

  \begin{rem}[\bf Neighboring Four Ray Points]\label{R-AorB}
  As we circle around such a ring, we repeatedly encounter four ray points
  separated by an A or B component. First consider the case of an A component,
  as illustrated in Figure \ref{F-AorB}-left. If the $\alpha,\, \beta,\,
  \gamma,$ and $\delta$ rays land on the left root point, as illustrated,
  then the twin rays $\widehat\alpha,~\widehat\beta,~\widehat\gamma$, and
  $\widehat\delta$ must land in the same cyclic order around the right
  hand root point. (Compare Remark~\ref{R-dual}.)
In fact they always seem to land as indicated, with
  $$ \alpha'=\widehat\beta,~~ \beta'=\widehat\gamma,~~
~  {\rm and}~~ \delta'=\widehat\alpha~.$$
  Recall that $\alpha$ and $\gamma$ always belong to the same grand orbit,
  which also contains $\widehat\gamma=\beta'$. It follows
 that $\alpha$ and $\beta'$ belong to the same grand orbit: 
 $\(\alpha\)=\(\beta'\).$
   Similarly, since $\beta$ and $\delta$ belong to the same orbit, and
  $\alpha'=\widehat{\beta}$, consequently  $\(\alpha'\)=\(\beta\)$.
Assuming as in Equation (\ref{E-GOR}) that $\(\alpha\)\ne\(\beta\)$ it 
follows that 
$$\(\alpha\)=\(\gamma\)=\(\beta'\)=\(\delta'\) ~~\ne~~
\(\alpha'\)=\(\gamma'\)=\(\beta\)=\(\delta\) ~.$$

The key to understanding this situation is to look at the Julia set
for the parabolic root points. Since the two root points of an A
component are dual to each other, they have Julia sets which are identical,
except for the choice of which component to label as $a$ and which to
label as $-a$.

Nearly all of the examples we know are for rabbit regions. As one 
specific example, let us concentrate on the
$1/3$-rabbit region as shown in Figure~\ref{F-3rab}, and on the A component in
the  upper left of this Figure.  
Figure \ref{F-3rabjul} illustrates the Julia set for either of its two
root points. (See Figure~\ref{f-s2par} for an analogous Julia set for the $1/2$
rabbit, and Figure~\ref{F-4wall} for the  $1/5$ rabbit.)

Given any co-periodic angle $\theta$ with denominator $3(3^p-1)$, it is useful
to consider the corresponding periodic angle $3\theta$ with denominator
$3^p-1$. Note that two co-periodic angles $\theta$ and $\theta'$ satisfy 
$3\theta=3\theta'$ if and only if they are twins.

In our example, the root point of each $\pm 2a$ component 
in the Julia set is the landing point of six dynamic rays.
However, only four of the six dynamic
angles correspond to parameter angles for rays landing at the specified
root point of A.  Below is a table describing the angles for the six
dynamic rays landing at the left hand root point in Figure~\ref{F-3rab}
(or \ref{F-3rabjul}),  and specifying which four are the angles of parameter
rays landing on a root of A.
$$\begin{matrix}& \gamma & & & \delta & \alpha &\beta\\
  {\rm angles:}&  16& 19 & 22& 31 & 40 & 41 & /78 \\
  \times ~3: &    16 & 19& 22& 5 & 14 & 15  & /26
\end{matrix}$$
Here is the corresponding table for the right hand root points.
$$\begin{matrix} & \alpha'& \beta'& \gamma'&&& \delta'\\
  {\rm angles:}& 67 & 68 & 71 & 74 & 5 & 14 & /78\\
  \times ~3: &  15& 16& 19& 22& 5 & 14 & /26
\end{matrix}$$
Evidently the last row of the second table is just the last row
of the first table permuted cyclically. It follows that the angles
in the second table are just the twins of the angles in the first
table, again permuted cyclically. In particular
$$ \alpha'=\widehat\beta,\quad \beta'=\widehat\gamma,\quad
{\rm and}\quad \delta'= \widehat\alpha~.$$
However the twin $\widehat\delta$ is an angle which does not correspond
to any parameter ray landing at a root point of this A component, and the
same is true for $\widehat{\gamma'}$.

\begin{figure}[htb!]
  \centerline{\includegraphics[width=3.5in]{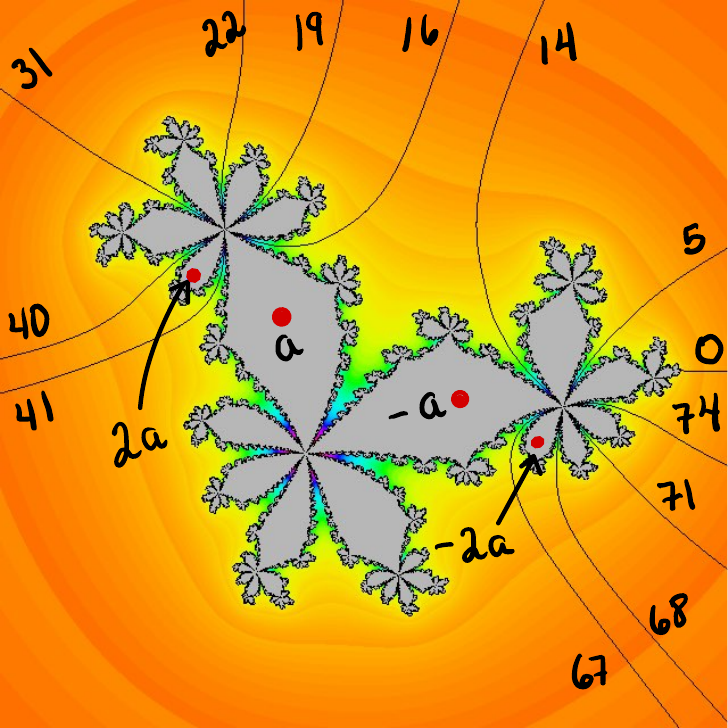}}
  \caption{\label{F-3rabjul} \sf Julia set for the root point of an
    A component in a $1/3$-rabbit, with angles modulo $78$,
     illustrating the tables above.}
  \end{figure}

  Similar remarks apply to any A component for an $n/p$ rabbit ring
  with $p\ge 3$. However for $p=2$, as in Figure~\ref{f-s2par}, there are only
  four rays landing at the root points of the $\pm 2a$ components. Therefore,
  in this case we get the extra relation $\gamma'=\widehat\delta$. Similarly
  we get this extra relation for the left and right rings in
  Figure~\ref{F-4rings}, in a case with $p=4$.  This is probably the
  case since in  both cases previously described  the type B components are
  of type B(2, 2), hence weakly self-dual. 

\bsk

  The B case, as shown in Figure \ref{F-AorB}-right, is quite different.
  There are now ten angles to compare, rather than eight, and twin angles
  do not play any role,  since  type B components  of types
  B(1,3) or B(3,1) are never weakly self-dual.
  Each of the four angles in the lower half of the figure
  is duplicated once in the upper half. The two outer angles above, $\delta$
  and $\gamma'$  do not occur below. However 
  $\delta$ belongs to the same grand orbit as $\beta$, and $\gamma'$
  belongs to the same grand orbit as $\alpha'$. Thus we have three grand orbits
  $$ \(\alpha\)=\(\gamma\)=\(\beta'\)=\(\delta'\),\qquad \(\beta\)=\(\delta\),
  \quad  {\rm and}\quad \(\alpha'\)=\(\gamma'\)~,$$
  which are conjecturally always distinct.\msk

  In the special case of a ring consisting only of components of type
  B, as in Figure~\ref{F-2rings}-left, there is a simple explicit description.
  Let $m$ be the number of B components in the ring. (Perhaps this
  number is always even, but we are not at all sure of this.) Let us
 label the components in cyclic order as $B_1,~B_2,~\cdots B_{m}$. Let $O_j$
  be the common grand orbit for the $\alpha$ and $\beta'$ angles of $B_j$.
  Here $j$ should be understood as an integer modulo $m$. Then as we go around
  the ring, the four grand orbits associated with $B_j$ are respectively
  $$  O_j,~ O_{j-1},~ O_{j+1},~ O_j ~.$$
Each $O_j$ occurs twice, and there are a total of $m$ grand orbits,
conjecturally all distinct.
\msk

  In all other cases that we have seen, there are two diametrically opposite
   A components and $~n-2~$ B components. We will numbers the
  components as
  $$A_0,~B_1,~ \cdots,~ B_{n-1},~ A_n, ~B_{n+1},~\cdots,~B_{2n-1}~.$$
  {\sf  Then conjecturally the grand orbit $O_j$ associated with $A_j$ or
 $B_j$ satisfies  $O_j=O_{2n-j}$ for all $j$, but no other equalities.
  Thus in this case there are $n+1$ distinct grand  orbits.}
Compare Figures~\ref{F-S2rays} and \ref{F-2rings}-right for the case $n=2$,
Figure~\ref{F-3rab} for $n=3$, and \ref{F-4rab} for $n=4$.\msk

However there is one difference between the rings surrounding rabbit regions
and the similar rings in Figure~\ref{F-2rings}.  Each rabbit region in $\ocS_p$
is invariant under the $180^\circ$ rotation $\I$ of $\ocS_p$. Hence each
parameter angle $\theta_j$  around such a rabbit ring satisfies 
$$ \theta_{j+p} ~=~ \theta_j \pm 3(3^p-1)/2~.$$
On the other hand, $\I$ interchanges the left and right rings of
Figure~\ref{F-2rings}.
Therefore for each angle $\theta_j$ in one of these rings, the ray of angle
$\theta_j \pm 3(3^p-1)/2$ lands in the opposite ring. A similar remark
applies to the upper and lower rings in Figure~\ref{F-2rings}.
\end{rem}

\begin{rem}[\bf Three versus Four] There is a strong resemblance
  between the four ray cases  described above and the corresponding three
  ray cases of Figures~\ref{F-Acent}
  and \ref{F-Bcart}. More explicitly, if we delete the $\delta$ and
  $\gamma'$ rays in Figure~\ref{F-AorB} and change the label $\delta'$ to
  $\gamma'$, then we will get the equivalent of Figures~\ref{F-Acent} and
  \ref{F-Bcart}. However there is one essential difference: There are always
  more distinct grand orbits in the three ray case.
  \end{rem}

\begin{figure}[htb!]
\begin{minipage}{3.8in}
  \centerline{\includegraphics[width=3.8in]{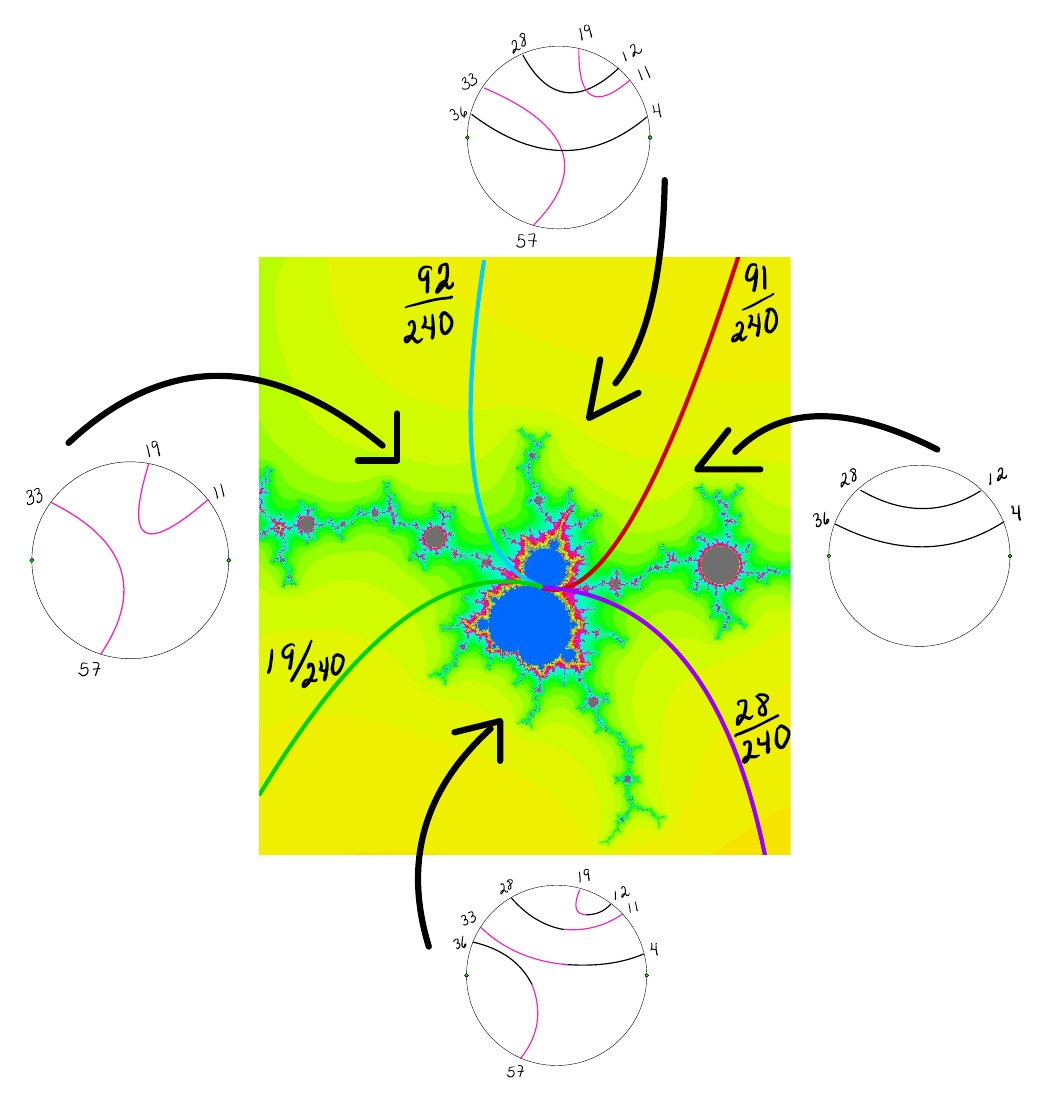}}
\end{minipage}\,\begin{minipage}{3in}
    \centerline{\includegraphics[height=3in]{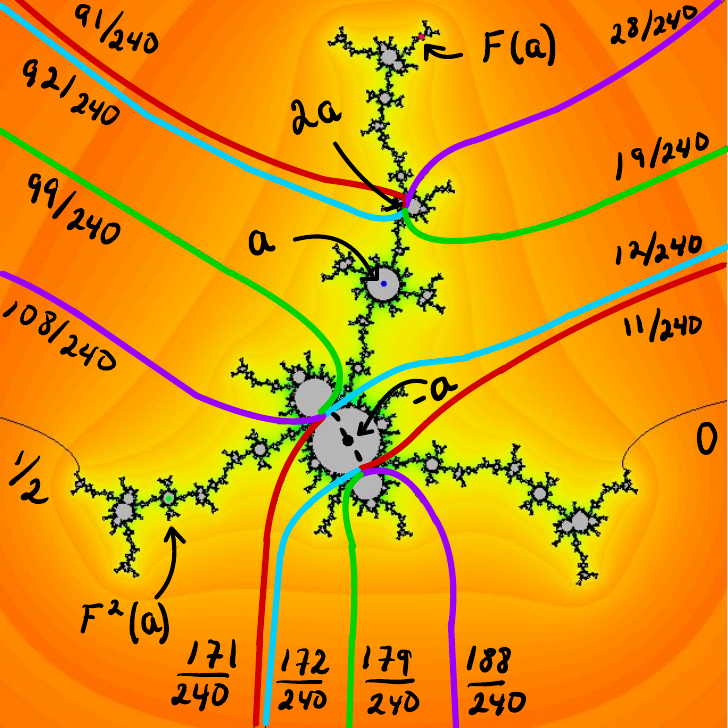}}
    \end{minipage}
    \caption[Period 4 orbit portraits for Mandelbrot copy across the boundary between $010+$ and airplane region]{\label{F-43} \sf {\bf On the left:}
Period 4 orbit portraits for  a small  Mandelbrot copy which lies 
across the bound\-ary between $010+$ region below and the airplane
region above. (This copy is barely visible in Figure~\ref{F-t2s3} as the end
point of the  2 ray in the  $010+$ region to the lower 
middle right, and is somewhat more visible, at the lower left of
Figure~\ref{F-air}, just above the 7 and 8 rays.)
Here $(\alpha,\beta,\gamma,\delta) =(91,~92,~19, ~28)/240$, while
$(\alpha_1,\beta_1,\gamma_1,\delta_1)~=~(11,\,12,\,19,\,28)/80$.\quad\break
{\bf On the right:} Julia set for the center point of the $1/2$ limb of the
Figure on the left. (Compare Figure \ref{F-HT2}-left which shows the same Julia
set with the orbit of $-a$ also marked.) 
Here the co-periodic rays  $(\alpha,\, \beta,\, \gamma,\, \delta)$ 
 land at the root point of $U(2a)$. The
 corresponding period $4$ rays $(\alpha_4,\,\beta_4,\,\gamma_4,\,
 \delta_4) = (171,\,12,\,99,\,108)/240$  land at the root point of
 $U(-a)$, which is periodic of period $2$. Their twins 
 $(\widehat\alpha,\,\widehat\beta,\,\widehat\gamma,\,\widehat\delta) =
 (11,\, 172,\,179,\, 188)/240$  land at the diametrically 
 opposite boundary point of $U(-a)$. The iterate $F^{\circ 2}$ maps $U(-a)$
 to the neighboring Fatou component to its upper left with degree two, mapping
 both the root point  and its opposite point to the common root point.
The point $-2a$ lies between the 172 and 179 rays. 
 Here the coloring
 is chosen  to emphasize kneading walls (Definition \ref{D-wall}). 
  The red or  blue kneading walls yield a zero kneading sequence,
corresponding to the airplane region;
  while the green or purple walls yield $010$. }
\end{figure}

\begin{rem}[\bf Small Mandelbrot Copies]\label{R-sMc}
The space $\cS_3$ contains many Mandelbrot copies
which lie on the boundary between two different escape regions.
  More precisely there are three possibilities: Such a copy $M$ lies:

  \begin{quote}  \textbf{\textit{On one side}} of the boundary if its cusp point
    is the only point which is a boundary point of two escape regions
  (Figure~\ref{M53-rays}). It lies:

\textbf{\textit{Across}} the boundary if the boundary cuts across at a
parabolic point which bisects M (Figures \ref{F-43} and 
\ref{F-q8ex})  or:

\textbf{\textit{Along}} the boundary
if there are many components of $M$ which have boundary points on both
of the two escape region boundaries.  (See Figure  \ref{f-M(53)bdry}.)
\end{quote}\msk

\noindent
In fact any part of the boundary which shows up as a thin wiggly line
in our large scale pictures will contain many such copies. Compare
Figures~\ref{f-M(53)bdry} and \ref{M53-rays}. Note that the 
size of a Mandelbrot copy of period $q$ in $\cS_p$
tends to shrink exponentially as $q$ increases with fixed $p$;
so most of these copies are extremely small.
\end{rem}

\begin{figure}[ht!]
  \begin{center}
  \begin{minipage}{3in}\includegraphics[width=3in]{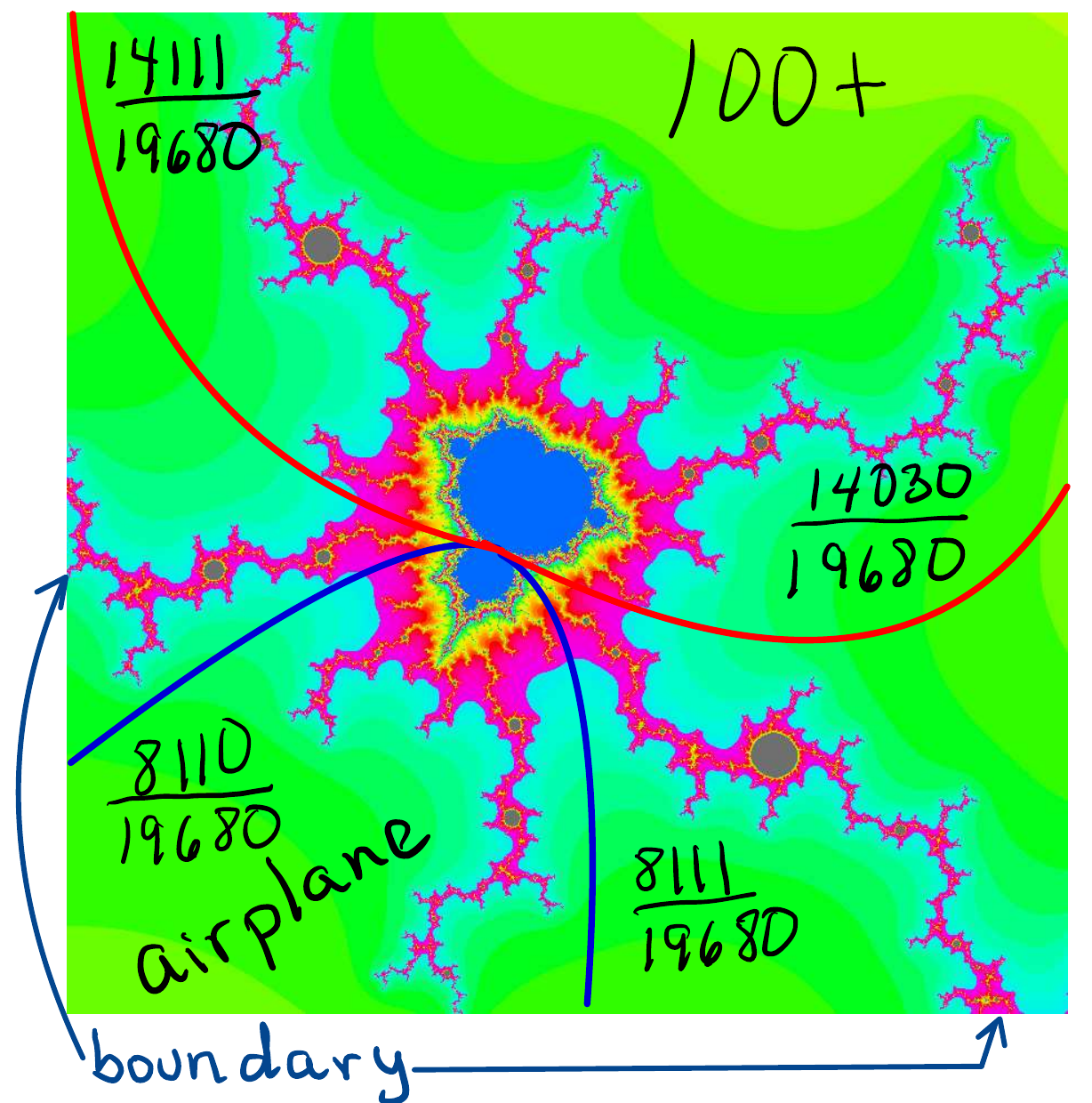}\end{minipage}\quad
  \begin{minipage}{3in} \includegraphics[width=3in]{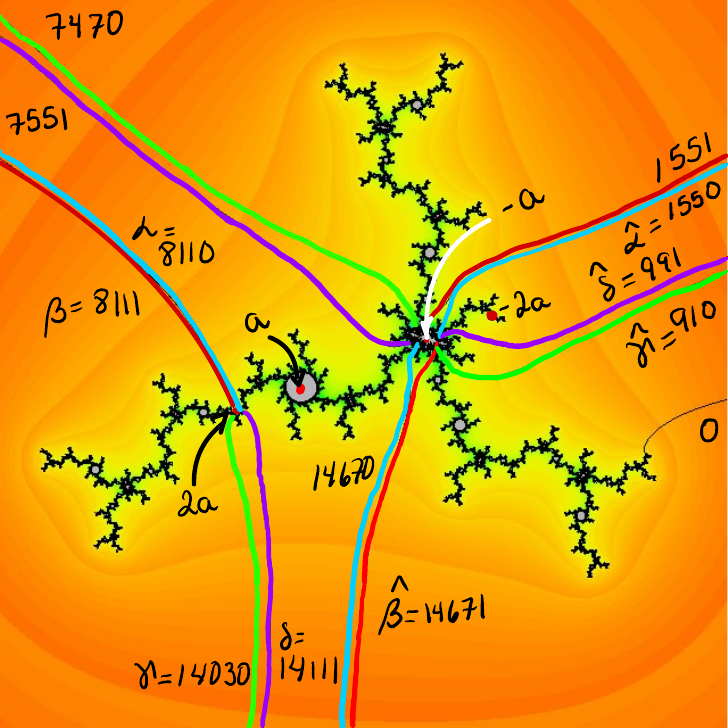}\end{minipage}
  \vspace{.4cm}
    \centerline{\includegraphics[width=2.7in]{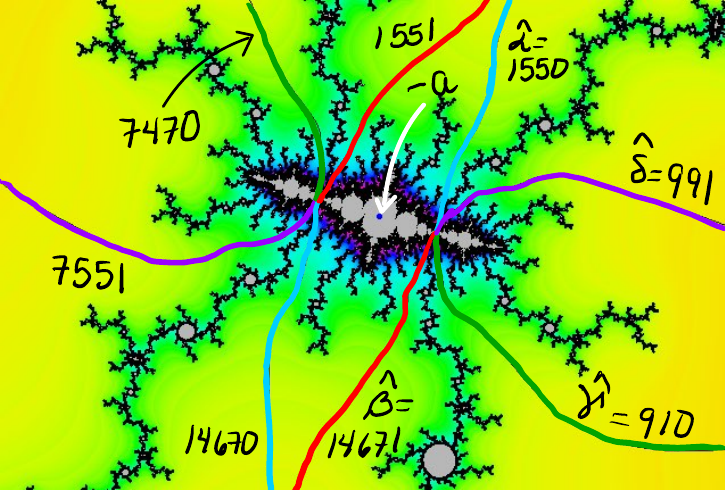}}
  \end{center}
\caption[Small Mandelbrot copy of period 4 between the airplane and $100+$ regions in $\cS_3$.]{\label{F-q8ex} \sf On the left: a small Mandelbrot copy of period 4
across the boundary between the airplane and $100+$ regions of $\cS_3$. On
the right: Julia set for the center point of its $1/2$ limb. Below: a magnified
picture around the $-2a$ component. The denominator is $3(3^8-1)=19680$. The
periodic dynamic rays land at the point of period two which lies
between $-a$ and
$F^{\circ 2}(-a)$; while associated co-periodic rays land at the pre-image
of the fixed point which lies to the right of $-a$. The twin co-periodic
rays land at the root point of the $2a$ component.}
\end{figure}
\medskip

\begin{rem}
There is something  remarkable about Figures~\ref{F-43} and \ref{F-q8ex}.
Note that there are four periodic parameter rays landing at a periodic
point close to the $-a$ component,  and four corresponding co-periodic rays landing
 at a pre-image of this point  on the opposite side of $-a$.
These are precisely the twins of the angles of the rays landing at the $2a$
root point. This is exactly what happens in examples such as
Figure~\ref{F-3rabjul}; but
in such cases there is an explanation in terms of the two root points
of an associated A component. Here there is no 
such A component. How general is this behavior, and why does it occur?
\end{rem}\msk

\begin{conj}[\bf Across the Boundary Conjecture]\label{conj-atbc} 
  If a Mandelbrot copy in $\cS_p$ cuts across the boundary between two escape
  regions at a parabolic point of period $q$, then there are 4 parameter rays
  landing on $\p$ . In the Julia set for $\p$ there are 4 corresponding
  triads. Now suppose that $p\ne q$. The  periodic dynamic rays
  $\theta_q$ land at the point of period at least two  which lies
between the marked critical point $a$ and the free critical point $-a$. The
co-periodic rays $\theta$ land at the root point of the hyperbolic component
containing $2a$, and  their corresponding  twins rays $\widehat{\theta}$
land on a preimage of $-a$. Here the mid-point between the periodic point
of period $q \geq 1$, where the periodic rays $\theta_q$ land and the landing
point of the twin rays $\widehat{\theta}$ is always
the free critical point $-a$.  When the
parabolic point $\p$ is self dual, as in the case of a type A component,  the
twin rays land precisely at the root point of $-2a$. 
  \end{conj}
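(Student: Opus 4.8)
The statement to be proved is the "Across the Boundary Conjecture" (Conjecture~\ref{conj-atbc}), which asserts detailed structural facts about parabolic points $\p$ of ray period $q$ at which a Mandelbrot copy crosses the boundary between two escape regions in $\cS_p$. Since this is labeled as a conjecture, a full proof is presumably out of reach; what one can give is a conditional argument assuming the Edge Monotonicity Conjecture~\ref{CJ-main} together with the empirical observation that such a crossing point is precisely a four-ray point whose primary wake lies on one side of the boundary. The plan is to reduce each assertion in the conjecture to the already-established four-ray machinery of this section (Equations~(\ref{E-4R0}),~(\ref{E-GOR}),~(\ref{E-4R1}) and Propositions~\ref{P-amalg} and~\ref{P-drel}), combined with the Parabolic Maps Theorem~\ref{L-para} and the analysis of kneading walls (Definition~\ref{D-wall}, Lemma~\ref{L-J2K}).

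\textbf{Step 1: locate $\p$ in the four-ray framework.} First I would argue that a parabolic point $\p$ at which a Mandelbrot copy $M$ of period $q$ cuts across an escape-region boundary must be a four-ray point. The copy $M$ has $\p$ as the root point of its period-$q$ main hyperbolic component $H_\p$, which is of Type D$(q,p)$ by Conjecture MC1 (Remark~\ref{R-MandelC}). Because $M$ straddles the boundary, $\p$ is a common boundary point of two escape regions $\cE_1,\cE_2$. By Lemma~\ref{L-acc} each access to $\p$ carries exactly one parameter ray, and by the consequences of Conjecture~\ref{CJ-main} at most two rays of each of $\cE_1$, $\cE_2$ land at $\p$, with at most two primary and the rest secondary; the primary pair lies in a single escape region (say $\cE_1$), bounding the wake $W=W(H_\p)$, and there are two secondary rays in $\cE_2$. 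This gives exactly the four-ray configuration of Figure~\ref{F-rc1}, with $(\alpha,\beta)$ the primary pair and $(\gamma,\delta)$ the secondary pair. Invoking (\ref{E-GOR}) and (\ref{E-4R1}) then supplies the grand-orbit structure $\lg\alpha\rg=\lg\gamma\rg\ne\lg\beta\rg=\lg\delta\rg$ that underlies every later claim, and Proposition~\ref{P-amalg} gives the amalgamation description of the orbit portrait in $\cF_1$.

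\textbf{Step 2: identify the landing points of the various dynamic rays.} For a map $F$ on the primary parameter ray $\fR$ of angle (say) $\alpha$, Lemma~\ref{L-para} tells us that in the dynamic plane for the parabolic landing point $F_\p$ the dynamic $\alpha$-ray lands at the root point of $U(2a)$, the periodic ray of angle $\alpha_q$ lands at the root point of $U(-a)$, and the twin co-periodic ray $\widehat\alpha$ lands at the diametrically opposite boundary point of $U(-a)$ under the $2$-fold covering $F_\p^{\circ q}$; the same holds for $\beta,\gamma,\delta$. The assertion that the periodic rays $\theta_q$ land at a point \emph{between} the marked critical point $a$ and the free critical point $-a$ is then a statement about the position of the root point of $U(-a)$ relative to the Hubbard tree: since $F_\p^{\circ(q-1)}$ carries $U(v')$ to $U(-a)$ and $F_\p$ maps $U(-a)$ onto $U(v')$ with degree two, the root point of $U(-a)$ is the unique fixed point of $F_\p^{\circ q}$ on $\partial U(-a)$, and the claim is that this point separates $a$ from $-a$ in $J(F_\p)$. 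I would prove this by examining the two $\theta$-kneading walls across the dynamic plane (Definition~\ref{D-wall}): each wall bisects the plane into $\K_0$ containing $a$ and $\K_1$ containing $-2a$, and the requirement that the crossing flips the kneading invariant between the two escape regions (Arfeux--Kiwi, cited after Definition~\ref{D-kn}) forces exactly one of the two relevant walls to separate $a$ from $-a$ while the other does not; combining this with Lemma~\ref{L-J2K} pins down the location of the landing point as claimed. The "midpoint" statement --- that $-a$ is equidistant, in the combinatorial sense, between the landing point of $\theta_q$ and that of $\widehat\theta$ --- is then immediate from the fact that $F_\p^{\circ q}$ is a $2$-fold branched cover of $U(-a)$ branched exactly at $-a$, so $-a$ is the unique critical value of the involution on $\partial U(-a)$ swapping the two preimages of the root point, hence lies "between" them.

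\textbf{Step 3: the self-dual (Type A) case.} When $\p$ is self-dual --- the case of a Type A component --- I would invoke Remark~\ref{R-dual} and Lemma~\ref{L-Apair}: duality identifies the two root points of the A component, so the marked co-critical point $-2a$ of one presentation is the marked co-critical point of the dual, and the twin rays $\widehat\alpha,\widehat\beta,\widehat\gamma,\widehat\delta$ land (in the same cyclic order) at the root point of the $-2a$ component exactly as the original rays land at the root point of $2a$. This is precisely the behavior tabulated in Figure~\ref{F-3rabjul} for rabbit rings, and the argument there --- comparing the periodic angles $3\theta$ at the two dual root points and observing that one table's bottom row is a cyclic permutation of the other's --- carries over verbatim. \textbf{The main obstacle} I anticipate is Step~2 in the non-self-dual case: without a Type A (or B) component to furnish the dual root point, there is no purely combinatorial reason forcing the twins $\widehat\theta$ of the $2a$-root angles to coincide with the angles of the periodic rays landing near $-a$; this is flagged in the paper itself ("Here there is no such A component. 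How general is this behavior, and why does it occur?"). To close this gap one would need a direct argument, perhaps via Lavaurs/parabolic perturbation (as in Appendix~\ref{a-para-stab} and the perturbation analysis of Lemma~\ref{L-crossing}), showing that as $F$ moves off $\p$ along a path crossing $\p$ from $\cF_0$ to $\cF_3$ the four crashing ray-pairs at $-a$ re-land in exactly the configuration of Proposition~\ref{P-drel}, and that the periodic orbit thereby created has its rays positioned symmetrically about $-a$. Absent that, the result remains conditional on the Edge Monotonicity Conjecture and on the empirical regularities recorded in Remarks~\ref{R-AorB} and~\ref{R-sMc}; I would present it as such, deriving all the conjunction's clauses from those hypotheses rather than claiming an unconditional theorem.
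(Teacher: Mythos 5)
You should first be aware that the paper offers no proof of this statement at all: it is posed as an open conjecture, distilled from the two examples in Figures~\ref{F-43} and \ref{F-q8ex}, and the remark immediately preceding it explicitly asks ``How general is this behavior, and why does it occur?'' So there is no argument in the paper to compare yours against, and your decision to present a conditional reduction rather than claim a theorem is the right instinct. Your Step~2 identification of the landing points of $\theta$, $\theta_q$ and $\widehat\theta$ via Theorem~\ref{L-para}, and your observation that the ``midpoint'' clause is just the statement that the root point of $U(-a)$ and its $F^{\circ q}$-preimage are swapped by the deck involution of the degree-two branched cover $F^{\circ q}:U(-a)\to U(-a)$ branched at $-a$, are sound and do isolate the parts of the conjecture that follow from established results. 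Your Step~3 correctly reduces the self-dual case to the duality mechanism of Remark~\ref{R-dual} and Lemma~\ref{L-Apair}, and you correctly flag that the genuinely open content is the non-self-dual case, where no A component is available to explain why the twin angles reappear at the periodic point and its preimage flanking $-a$ --- this is exactly the question the authors say they cannot answer.

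That said, two further gaps remain even in your conditional framework. First, in Step~1 you assert that the crossing point is ``exactly the four-ray configuration,'' but Conjecture~\ref{CJ-main} plus Lemma~\ref{L-acc} only bound the count above by four and Arfeux--Kiwi only guarantee one ray per adjacent escape region, so you get between two and four rays; the claim that a transverse crossing forces two accesses from each region (hence exactly four rays) is itself part of what the conjecture asserts and is not derived from anything you cite. Second, your kneading-wall argument in Step~2 for the clause that the landing point of $\theta_q$ separates $a$ from $-a$ leans on the Arfeux--Kiwi kneading-flip result, which is stated in the paper only for crossings at Type~B components; for a Mandelbrot copy of period $q\ne p$ there is no A or B component at $\p$ (their root points have period $p$), so that result does not apply and the separation claim is left unsupported. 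Both points should be stated as additional hypotheses alongside the ones you already list, or the conjecture should simply be left as the empirical statement the authors intend it to be.
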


{\bf Note.} In all of the parabolic Julia sets we have seen with four or more
dynamic rays landing at a common  point, 
there are four parameter rays landing at the corresponding point
in parameter space.
On the other hand there can certainly be three dynamic rays
landing at a common parabolic point in  examples where no three 
co-periodic parameter rays land together. See for example the 
landing point of the $62$ and $64$ rays in Figures $\ref{F-air}$. (We believe
that this happens whenever the two parameter rays belong to the same grand
orbit, and $q$ is odd.)

\bsk

\setcounter{lem}{0}
\section{Counting} \label{s-count}
This section will be concerned with the numbers of vertices, edges, and faces
in the period $q$ tessellation $\Tes_q(\overline\cS_p)$. We will give explicit
formulas for the number of edges, and (conjecturally at least) for
the number of parabolic vertices. The number $\N_p$ of ideal vertices ($=$
number of escape regions) is more difficult; but has been computed for
$p\le 26$ by DeMarco and Schiff \cite{DM-S}. The number of faces
seems to be even more difficult to compute, so we know it only for small values
of $q$ and $p$. (See Table \ref{t3} at the end of this section.)

\begin{definition}\label{D-dp}
One important number is the degree 
$\d_p$ of the affine curve $\cS_p$, which is defined uniquely by the recursive
formula
$$ 3^{n-1} ~=~\sum_{p|n} \d_p .$$
The first few values are shown in  Table~\ref{t1}.
Note that $\d_p\sim 3^{p-1}$ as $p\to\infty$.

\begin{table}[h!]
  \begin{center}
  \begin{tabular}{ccccccccccc}
    \\
$p:$ & 1     & 2& 3& 4 & 5 & 6 & 7& 8&9&$\cdots$\\[1ex]
$\d_p:$ & 1&2& 8 & 24 & 80 & 232 &728&2160& 6552&$\cdots$\\
\end{tabular}
\caption{\sf The degree $\d_p$ of $\cS_p$.   \label{t1}}
\end{center}
\end{table}

\noindent This number $\d_p$ is also equal to the number of hyperbolic
components of Type A in $\cS_p$. (See \cite[Lemma 5.4]{M4}.)
For each $m,n>0$ with $m+n=p$, the number of components of
Type B$(m,n)$ in $\cS_p$ is also equal to $\d_p$ 
(See Theorem \ref{T-Bcount}.). Furthermore, the number of  escape regions
in $\cS_p$ \textbf{\textit{counted with multiplicity}} is also equal to $\d_p$.
(Compare \cite[Remark\,5.5 and Corollary 5.12]{M4}.)
\end{definition}

The actual number $\N_p$ of escape regions in $\cS_p$ is much more difficult
to determine. For relatively small values of $p$ it has been computed by
DeMarco and Schiff \cite{DM-S}. Here are a few values. 
\begin{table}[h!]
\begin{center}
\begin{tabular}{ccccccccccc}
$p:$& 1&2&3&4&5&6&7 & 8&9&$\cdots$\\[1ex]
$\N_p:$& 1&2&8& 20&56&144&404&1112&3120& $\cdots$\\
\end{tabular}
\caption{\sf The number of escape regions in $\cS_p$.
 \label{t2}}
\end{center}
\end{table}

Note that the ratio
$\d_p/\N_p\ge 1$ can be described as the \textbf{\textit{average multiplicity}}
of the various escape regions in $\cS_p$.  (Compare Figure~\ref{F-aven} and
Remark~\ref{R-AM}.)
\smallskip

For any tessellation of a compact surface into simply-connected faces,
the classical Euler characteristic formula takes the form
$\quad\v-\e+\f~=~\chi({\ocS})$, 
where $\v$ is  the number of vertices,  $\e$ is the number of edges,
and $\f$ is the number of faces.
However, in all of our  tessellations ${\Tes}_q(\overline\cS_p)$ with
$1<n\ne q$, some of the faces are not simply-connected, 
so a correction term is needed. 
The appropriate corrected formula is
\begin{equation}\label{E-eul}
  \v\,-\,\e\,+\,\f~=~\chi(\ocS)~+~ {\rm rank}\big(H_1(\F^{\,\cup})\big)
  ~=~\chi(\ocS)~+~ \sum_k{\rm rank}\big(H_1(\F_k)\big)~,
\end{equation}
using homology with rational coefficients,\footnote{There is no
  torsion in the cases we consider, so it doesn't
  matter what coefficient field we use. } 
 where $\F^{\,\cup}=\bigcup\F_k$ is the union of the open faces. 

 Here the Euler characteristic of the compactified moduli space
 is related to the number $\N_p$ of escape regions by the formula
 \begin{equation}\label{E-Eu}
    \chi(\overline\cS_p)~=~2-2\,{\bf g}~=~  \N_p  +(2-p)\d_p~. \end{equation}
 Thus the genus
 $${\bf g}~=~ 1+\big((p-2)\d_p-\N_p\big)/2 ~>~ (p-3)\d_p/2 $$
 
 \noindent
 grows very rapidly with $p$.
 (See ~\cite[Theorem~7.2]{BKM}.) Here the term $(2-p)\d_p$ can be described
 as the Euler characteristic of the open manifold $\cS_p$, or of the
 connectedness locus which is a nested intersection of
 deformation retracts of $\cS_p$.
\medskip

{\bf Angles.} The number of periodic angles of period $q$
can easily be computed as
\begin{equation}
  {  \rm \#~period}~q~~=~\begin{cases} 2 \qquad\;  {\rm if}\quad q=1~,\\
    3\,\d_q\quad{\rm if} \quad q>1~.\end{cases}
\end{equation}
To obtain the number of co-periodic angles $\theta\pm 1/3$ we must multiply
these numbers by two. Here is a list of the first few values

\begin{table}[htb!]
\begin{center}          
  \begin{tabular}{cccccccccc}  \\
    $q:$ & 1     & 2& 3& 4 & 5 & 6 & 7& 8&$\cdots$\\[1ex]
    periodic angles:& 2&6&24&72&240& 696&2184& 6480&$\cdots$\\
co-periodic: & 4& 12 & 48 & 144 & 480 & 1392&4368& 12960&$\cdots$\\
\end{tabular}
\caption{\sf The number of angles of period or co-period $q$.
  \label{t1a}}
\end{center}
\end{table}
\medskip

{\bf Edges.} The number $\e$ of edges 
in our tessellation is equal to
the product of the number of angles of co-period $q$,
and the number of escape regions counted with multiplicity.
Thus it is given by
\begin{equation}\label{E-edge} \e~=~\e_q(\cS_p)~=~\begin{cases}
   4\,\d_p \qquad \; {\rm for}\quad q=1~,\quad{\rm but}\\
  6\,\d_q\,\d_p \quad{\rm for}\quad q>1~.
\end{cases}\end{equation}
\medskip


{\bf Vertices.} We must distinguish between ideal vertices and parabolic
vertices. The number of ideal vertices is clearly equal to the number $\N_p$ of
escape regions, as listed above.

To compute the number of parabolic vertices, we proceed as follows. By
definition a hyperbolic component in $\cS_p$ is of Type D$(q,p)$ if the
free critical orbit for its center map is periodic of period $q$, and
is disjoint from the marked critical orbit. (Here the use of a double index is
convenient, since  there is a natural one-to-one correspondence between
components of type D$(q)$  in $\cS_p$ 
and their dual components of Type D$(p)$ in $\cS_q$ with $p\neq q$.)\medskip

\begin{lem}\label{L-MC} If we assume the Conjecture ${\rm MC1}$ of 
  Remark~$\ref{R-MandelC}$, then it follows that there is a one-to-one
  correspondence between components of Type D$(q)$ in $\cS_p$ 
 and parabolic vertices in $\Tes_q(\ocS_p)$.  \end{lem}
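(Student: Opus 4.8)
The plan is to establish the bijection by exhibiting two maps — one in each direction — and showing they are mutually inverse. First I would start with a component $H$ of Type D$(q)$ in $\cS_p$. By Remark~\ref{R-roots2}, $H$ has a unique root point $\p$, which is a parabolic boundary point of minimal ray period; by the definition of Type D$(q)$, the free critical orbit of the center of $H$ has period $q$, so the attracting orbit has period $q$, and hence $\p$ has ray period $q$. By Corollary~\ref{C-allpara}, $\p$ is the landing point of at least one co-period $q$ parameter ray, so $\p$ is indeed a vertex of $\Tes_q(\ocS_p)$. This gives a well-defined map $H\mapsto\p(H)$ from Type D$(q)$ components in $\cS_p$ to parabolic vertices of $\Tes_q(\ocS_p)$.

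Next I would construct the inverse. Let $\p$ be a parabolic vertex of $\Tes_q(\ocS_p)$. By definition $\p$ is the landing point of at least one co-period $q$ parameter ray $\fR$, so by the Landing Theorem~\ref{T-main} (Case~1), $\p=F_\fR$ is a parabolic map whose free critical orbit converges to a parabolic orbit of ray period $q$. Now invoke Conjecture MC1 (which the lemma statement explicitly permits us to assume): $\p$, being a parabolic point of ray period $q$, is the root point of a unique hyperbolic component $H_\p$ of Type D. It remains to check that this $H_\p$ has Type D$(q)$, i.e.\ that its free critical point lies in a Fatou component of period exactly $q$; this follows because the root point of $H_\p$ has ray period $q$ and, for a Type D component, the ray period of the root point equals the period of the free critical cycle (there is no multiplier ambiguity since at a Type D root the parabolic multiplier is $+1$, so $k=q$ in the language of Remark~\ref{R-pbm}). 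This gives a map $\p\mapsto H_\p$ back to Type D$(q)$ components.

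Finally I would verify that these two assignments are mutually inverse. Going $H\mapsto\p(H)\mapsto H_{\p(H)}$: the component $H_{\p(H)}$ is defined to be the unique Type D component with root point $\p(H)$, and $H$ itself is such a component, so by uniqueness $H_{\p(H)}=H$. Going $\p\mapsto H_\p\mapsto \p(H_\p)$: the root point $\p(H_\p)$ of $H_\p$ is the unique parabolic boundary point of $H_\p$ with multiplier $+1$, and by the construction of $H_\p$ via Conjecture MC1 this is precisely $\p$. Hence the correspondence is a bijection.

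The main obstacle is really a matter of bookkeeping rather than depth: everything substantive — the existence and uniqueness of the Type D component attached to a parabolic point — is supplied by Conjecture MC1, which we are told to assume, and the landing statements come from Theorem~\ref{T-main} and Corollary~\ref{C-allpara}. The one genuinely careful point is the identification of the ray period $q$ of the parabolic vertex with the period $q$ of the free critical cycle of the associated Type D component; here I would appeal to the fact that at a Type D root point the parabolic multiplier is $+1$ (so the cycle period divides into the ray period with quotient one), together with the observation that a Type D center has its free critical point in an attracting cycle whose period equals the ray period of the rays landing at the root. I expect this to be routine given the framework already set up, but it is the step where one must be precise about which ``$q$'' is which.
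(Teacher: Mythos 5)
Your proposal is correct and follows essentially the same route as the paper, which simply observes that Conjecture MC1 forbids two Type D components from sharing a root point and concludes immediately; you have merely spelled out the two inverse assignments ($H\mapsto$ root point, using Corollary~\ref{C-allpara}, and $\p\mapsto H_\p$, using MC1) and the matching of ray period with the period of the free critical cycle, all of which is consistent with the paper's intended argument. One small simplification: you do not need the multiplier-$+1$ discussion to identify the two $q$'s, since the ray period of the root point equals the period of the cycle of parabolic basins, which is the period of the attracting Fatou cycle of the free critical point, namely $q$.
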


\begin{proof} This conjecture implies that no two components of
  Type D can have the same root point, and the conclusion follows
  immediately.\end{proof}\ssk

\begin{lem}\label{L-Dcount} For every $p$ there are precisely $3\,\d_p^{\,2}$
  hyperbolic components of Type either {\rm A, B,} or {\rm D}$(p,p)$ in the
   curve $\cS_p$. Similarly, for $q\ne p$, there are precisely $3\,\d_q\,\d_p$
   components of Type {\rm D}$(q,p)$ in $\cS_p$.
   \end{lem}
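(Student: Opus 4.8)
The plan is to count components of Types A, B$(m,n)$, and D$(q,p)$ in $\cS_p$ by relating them all to the algebraic degree $\d_p$ of $\cS_p$, using the recursion that defines $\d_p$ together with the structure of critically finite maps. The starting point is the fact, recalled in Definition~\ref{D-dp}, that the number of Type~A components in $\cS_p$ equals $\d_p$, and that for each decomposition $p=m+n$ with $m,n>0$ the number of Type~B$(m,n)$ components also equals $\d_p$ (Theorem~\ref{T-Bcount}). So the Type~A and Type~B contributions are already in hand; the real work is counting the Type~D components, and in particular verifying that the combined count of A, B, and D$(p,p)$ components is exactly $3\,\d_p^{\,2}$, while D$(q,p)$ for $q\ne p$ contributes $3\,\d_q\,\d_p$.

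First I would set up the enumeration of critically periodic cubic maps directly. A center point of a Type~D$(q,p)$ component is a cubic polynomial $F$ (up to affine conjugacy, i.e. in monic centered normal form) for which the marked critical point $a$ has period exactly $p$ and the free critical point $-a$ has period exactly $q$, with the two orbits disjoint. Dually, via the correspondence $(a,v)\leftrightarrow(-a,\,4a^3+v)$ of Remark~\ref{R-dual}, such a map is the same data as a center point of a Type~D$(p,q)$ component in $\cS_q$. The key combinatorial input is a ``master count'': the number of monic centered cubics both of whose critical points are periodic, with prescribed periods $(q,p)$ and disjoint orbits (or coinciding orbits, which is the A and B case). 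I would obtain this by a resultant/intersection-number computation on $\cS_p$: the condition ``$-a$ has period dividing $q$'' cuts out a divisor on $\cS_p$ whose degree, by Bézout applied to the defining equations (or by the same recursive bookkeeping that yields $\d_p$), works out to $3\,\d_q\,\d_p$ when $q\ne p$ and to a total of $3\,\d_p^{\,2}$ points when $q=p$ (these being distributed among Type~A, Type~B$(m,n)$ for the various $m+n=p$, and Type~D$(p,p)$). The ``$3$'' comes from the co-periodicity count in Table~\ref{t1a}: the number of period-$q$ angles is $3\,\d_q$ for $q>1$, and each periodic orbit of the free critical value corresponds in this way to a center.

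Concretely, for $q\ne p$ every solution of the period system is automatically a Type~D$(q,p)$ center, because the orbits of $a$ (period $p$) and $-a$ (period $q$) have different lengths and so are disjoint and distinct; there is no A or B contamination. For $q=p$ the solutions split: $a=-a$ gives Type~A (contributing $\d_p$), $a\ne -a$ with the two orbits in a common cycle gives Type~B$(m,n)$ (contributing $\sum_{m+n=p}\d_p=(p-1)\d_p$ — but one must be careful here, since Theorem~\ref{T-Bcount} counts B$(m,n)$ for each ordered pair, and I should double-check whether the master count $3\d_p^2$ is meant to absorb all of these or only D$(p,p)$ plus A plus a single B-type), and $a\ne -a$ with disjoint cycles gives Type~D$(p,p)$. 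Reconciling these pieces so they sum to exactly $3\,\d_p^{\,2}$ is the bookkeeping heart of the argument, and I would do it by carefully tracking the resultant computation rather than by assembling the pieces heuristically.

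The main obstacle I anticipate is precisely this last reconciliation: making the intersection-theoretic count on $\cS_p$ rigorous and matching it term-by-term against the list (Type~A) $+$ (all Type~B) $+$ (Type~D$(p,p)$), including the verification that the relevant divisors meet transversally (no multiplicities beyond the expected ones) and that there are no contributions at the ideal points of $\ocS_p$ where the free critical orbit escapes. I would handle the transversality by invoking the known smoothness of $\cS_p$ (stated in the Introduction) and a standard argument that periodic-orbit loci are reduced away from parabolic loci, and I would handle the behavior at infinity by noting that at an ideal vertex the free critical point has unbounded orbit, so it cannot be periodic, hence no center of a D, A, or B component lies over a puncture. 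Once the count of Type~D$(q,p)$ components is pinned down, Lemma~\ref{L-Dcount} follows, and combined with Lemma~\ref{L-MC} it immediately gives the count of parabolic vertices of $\Tes_q(\ocS_p)$ as promised in the surrounding text.
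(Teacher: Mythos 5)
Your proposal follows essentially the same route as the paper: one intersects $\cS_p$ with the dual curve $\cS'_q$ consisting of maps for which the free critical point $-a$ has period \emph{exactly} $q$ (a curve of degree $3\,\d_q$ in $\C^2$), applies B\'ezout to get $3\,\d_q\,\d_p$ intersection points, and observes that for $q\ne p$ every such point is the center of a Type D$(q,p)$ component, while for $q=p$ the centers of Type A and B components are swept up in the same count. Your identification of the $q=p$ splitting and your observation that no centers can occur over the punctures are both in line with the paper's argument, and the lemma only asserts the combined A$+$B$+$D$(p,p)$ total, so the finer bookkeeping you worry about is not actually needed.

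There is, however, one step where your plan as written would fail: transversality. Smoothness of $\cS_p$ together with reducedness of the periodic-orbit locus away from parabolic parameters does not rule out the two curves meeting tangentially at some critically finite center, and any such tangency would make the B\'ezout number strictly larger than the number of actual centers. The transversality of $\cS_p$ and $\cS'_q$ is a genuinely deep input --- it is a form of Thurston rigidity, proved either via Thurston's topological characterization of critically finite maps \cite{DH2} or by Silverman's purely algebraic argument \cite{Si} --- and it is precisely the ingredient your ``standard argument'' would not supply. Two smaller points: the divisor you want is ``$-a$ has period \emph{exactly} $q$,'' not ``period dividing $q$'' (only the former has degree $3\,\d_q$), and deriving that degree from the count of period-$q$ angles in Table~\ref{t1a} is a heuristic coincidence of numbers rather than a proof; the degree computation is carried out in \cite[Lemma 5.8]{M4}.
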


   The proof in \cite[Lemma 5.8]{M4} involves the dual curve $\cS'_q$
   consisting of all pairs $(a,v)\in\C^2$ such that the critical point $-a$
   has period exactly $q$ under the usual map\break $F(z)=z^3-3a^2z +(2a^3+v)$.
   This dual curve
   has degree $3\d_q$ in $\C^2$. It is important to note that the two curves
   $\cS_p$ and $\cS'_q$ intersect transversally.
   This transversality can be proved using Thurston’s basic construction of
   rational  maps from topological data. (See \cite{DH2}.) A purely algebraic
   proof of transversality has been provided by Silverman \cite{Si}.  The
   number of intersections between these  two curves is then counted using
   Bezout's Theorem. When $p\ne q$, the intersection
points are precisely the centers of components of Type D$(q,p)$; but for $q=p$
the centers of A and B components are also included in this count.\qed
\medskip

As an immediate consequence, assuming Conjecture~{\rm MC1}, 
we have
the following formula for the number of parabolic vertices in $\Tes_q(\ocS_p)$.
\begin{equation}\label{E-q-ne-p}
  {\bf v}_{\rm par}~=~3\, \d_q\,\d_p\qquad{\rm  whenever}~~~ q\ne p~.
\end{equation}
On the other hand, for $q=p$ we need to count the number of components of Type
A and B in order to compute ${\bf v}_{\rm par}$. The number of components
of Type A in $\cS_p$ is precisely equal to $\d_p$. See \cite[Lemma 5.4]{M4}
and its proof. For Type B we proceed as follows.

\subsection*{\bf Counting Type B Components.}
By definition a hyperbolic component in $\cS_p$  is of Type B$(m,n)$ if its
center point $F$ satisfies $F^{\circ m}(a) =-a$ and $F^{\circ n}(-a)=a$, where
$m+n=p$.

\begin{theo}\label{T-Bcount}
For each $m, n>0$ with $m+n=p$ the number of components
of Type {\rm B}$(m, n)$ in $\cS_p$ is equal to the degree $\d_p$ of
the curve $\cS_p$.
\end{theo}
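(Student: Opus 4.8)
The plan is to count the center points of Type B$(m,n)$ components as the transversal intersection points of two curves, exactly in the spirit of Lemma~\ref{L-Dcount} and \cite[Lemma~5.8]{M4}, but now using a curve tailored to the condition ``$-a$ lands on $a$ after $n$ steps.'' First I would set up the relevant varieties inside $\C^2$. Recall $F=F_{a,v}$ with $F(z)=z^3-3a^2z+(2a^3+v)$. The curve $\cS_p\subset\C^2$ is defined by the condition that $a$ has exact period $p$; it has degree $\d_p$ by Definition~\ref{D-dp}. For the free critical point I would introduce the auxiliary curve $\cC_{m,n}\subset\C^2$ cut out by the equation $F^{\circ n}(-a)=a$ together with the requirement that $a$ have exact period $p=m+n$ and that the portrait be genuinely of Type B (i.e. $-a$ is not itself in the cycle of $a$ — equivalently $F^{\circ k}(-a)\ne a$ for $0<k<n$, and no smaller period occurs). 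A center of a Type B$(m,n)$ component is then exactly a point $(a,v)$ lying on $\cS_p$ and satisfying $F^{\circ n}(-a)=a$, because once $a$ has period $p$ and $F^{\circ n}(-a)=a$, automatically $F^{\circ m}(a)=F^{\circ m}(F^{\circ n}(-a))=F^{\circ p}(-a)$, and since $F^{\circ n}(-a)=a$ is in the periodic cycle, chasing the orbit shows $F^{\circ m}(a)=-a$; so the combinatorial Type B$(m,n)$ condition is equivalent to the single polynomial equation $F^{\circ n}(-a)=a$ restricted to $\cS_p$ (after discarding the degenerate loci where $-a$ coincides with a point of the $a$-cycle, which I will argue below contributes nothing).

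Next I would compute the degree in $\C^2$ of the curve $\{F^{\circ n}(-a)=a\}$. Since $F(z)$ has degree $3$ in $z$ and is polynomial of low degree in $(a,v)$, the iterate $F^{\circ n}(-a)$ is a polynomial whose degree grows like $3^{n}$; the precise bookkeeping, done exactly as in the proof of \cite[Lemma~5.8]{M4}, shows that the ``new'' part of this curve (after removing lower-period components coming from $F^{\circ k}(-a)=a$ for divisors, and the branch where $-a$ is periodic of period dividing $n$) has degree matching $3\,\d_n$ — i.e. the same degree as the dual curve $\cS'_n$ appearing in Lemma~\ref{L-Dcount}. Then I would apply B\'ezout's theorem to $\cS_p$ (degree $\d_p$) and this curve, after checking transversality. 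Transversality is the step where I would invoke exactly the same input as in Lemma~\ref{L-Dcount}: Thurston rigidity (via \cite{DH2}) or Silverman's algebraic argument \cite{Si} gives that $\cS_p$ and the free-critical-orbit curve meet transversally at every common point. The B\'ezout count $\d_p\cdot(3\,\d_n)$ must then be distributed among the various combinatorial types; the contributions from $F^{\circ k}(-a)=a$ with $k\mid n$, $k<n$, and from the locus where $-a$ is periodic, are removed by the same inclusion–exclusion / Möbius bookkeeping that defines $\d_\bullet$ recursively, and what remains — the honest Type B$(m,n)$ centers — is exactly $\d_p$. (This is the cubic analogue of the fact that, in $\cS_p$, the number of Type A components equals $\d_p$, proved in \cite[Lemma~5.4]{M4}.)

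Finally, I would need to rule out multiplicities and spurious solutions: points at infinity on the projective closures, and points where $F^{\circ n}(-a)=a$ but the orbit portrait is not of pure Type B$(m,n)$ (for instance where $-a$ is itself periodic of smaller period, giving a Type D rather than Type B picture, or where $a$ fails to have exact period $p$). The first is handled by a local analysis at the line at infinity of $\ocS_p$, using the structure of escape regions described in \S\ref{s-rays} (near an ideal vertex the curve is a punctured disk and $-a$ escapes, so $F^{\circ n}(-a)=a$ has no solutions there); the second by the recursive definition of $\d_p$, which is precisely designed to strip off all proper-divisor and degenerate contributions. Putting the count together: after removing all degenerate loci, $\cS_p\cap\{F^{\circ n}(-a)=a\}$ consists of exactly $\d_p$ reduced points, each a Type B$(m,n)$ center.

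The main obstacle I expect is the transversality/multiplicity bookkeeping — that is, making the passage from the raw B\'ezout number to the clean answer $\d_p$ genuinely rigorous, identifying which intersection points correspond to which combinatorial type and confirming that every honest Type B$(m,n)$ center is counted with multiplicity one. This is exactly the delicate part of \cite[Lemma~5.8]{M4} and Lemma~\ref{L-Dcount} above, so the proof should be structured to reduce to (or directly cite) that machinery — in particular the transversality statement coming from \cite{DH2} and \cite{Si} — rather than redeveloping it. Once transversality is in hand, the degree computation and the Möbius-type recursion that recovers $\d_p$ are essentially formal.
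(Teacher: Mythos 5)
Your reduction of the Type~B$(m,n)$ condition to the single equation $F^{\circ n}(-a)=a$ on $\cS_p$ contains a genuine error, and it is the load-bearing step. From $F^{\circ n}(-a)=a$ and $F^{\circ p}(a)=a$ you correctly get $F^{\circ m}(a)=F^{\circ m+n}(-a)=F^{\circ p}(-a)$, but you then conclude that this equals $-a$, which does not follow: $F^{\circ n}(-a)=a$ only says that $-a$ is an $n$-th preimage of $a$, not that $-a$ lies in the periodic cycle. Consequently the locus $\cS_p\cap\{F^{\circ n}(-a)=a\}$ also contains every center of a Type~C (capture) component for which the free critical point is strictly preperiodic and lands exactly on $a$ after $n$ steps. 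These capture centers are numerous, they are not indexed by divisors of $p$ or $n$, and no M\"obius/divisor bookkeeping of the kind that defines $\d_\bullet$ will strip them out; your proposed passage from the B\'ezout number $\d_p\cdot 3\,\d_n$ down to $\d_p$ is exactly where the argument breaks. (The claimed degree $3\,\d_n$ for the ``new part'' of $\{F^{\circ n}(-a)=a\}$ is also unjustified: that M\"obius structure belongs to the equation $F^{\circ n}(-a)=-a$ defining the dual curve $\cS'_n$ used in Lemma~\ref{L-Dcount}, not to $F^{\circ n}(-a)=a$. And the parenthetical about such points giving ``a Type D picture'' is off: if $F^{\circ n}(-a)=a$ and $-a$ were periodic, the two critical points would lie in a common cycle, so Type~D never arises here.)

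The paper's proof sidesteps all of this by imposing \emph{both} equations $F^{\circ m}(a)=-a$ and $F^{\circ n}(-a)=a$ simultaneously, and by working in the ambient space $\cS\cong\C^2$ rather than on $\cS_p$. The two conditions together force $F^{\circ p}(a)=a$ and $F^{\circ p}(-a)=-a$, so $-a$ automatically lies in the cycle of $a$ and no Type~C solutions appear. The two curves have degrees $3^{m-1}$ and $3^{n}$ (since $F(a)=v$ and $F(-a)=4a^3+v$), so B\'ezout gives exactly $3^{p-1}$ points in $\cB(m,n)$; transversality is proved not by Thurston rigidity or Silverman's theorem but by the explicit local model of a Type~B hyperbolic component, namely the transverse pair of graphs $w=z^2$ and $z=w^2$. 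Finally, stratifying $\cB(m,n)$ by the exact period $p'$ of $a$ (Lemmas~\ref{l-bcount2}--\ref{l-bcount4}) reproduces verbatim the recursion $3^{p-1}=\sum_{p'|p}\d_{p'}$, so induction gives $|\cB_0(m,n)|=\d_p$ with nothing left over to estimate. To salvage your one-equation approach you would have to count the Type~C captures onto $a$ separately, which is harder than the original problem; adding the second equation and moving the intersection into $\C^2$ is the correct fix.
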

\medskip

The proof proceeds as follows.\medskip

Let $\cS\cong\C^2$ be the full space of maps $F=z^3-3a^2z+2a^3+v$.
For any $m,n\ge 0$ with $m+n>0$, let $\cB(m,n)\subset \cS$ be the finite
subset consisting of all $F$ which satisfy the equations
$$ F^{\circ m}(a)=-a\qquad{\rm and}\qquad  F^{\circ n}(-a)=a~.$$
Note that $\cB(m,n)\subset \cB(km, kn)$ for any $k>0$.
If $m+n=p$,  the subset $\cB(m,n)\cap\cS_p$ will be denoted by $\cB_0(m,n)$.
Evidently $\cB_0(m,n)$ is precisely the set of
all center points of hyperbolic components of Type B$(m,n)$ in
$\cS_{m+n}$.
\medskip

{\bf Caution:} The sets $\cB(0,p)$ and $\cB(p,0)$ are identical,   
and should not be distinguished from each other.
This set $\cB(0,p)=\cB(p,0)$ can be described
as the set of all unicritical maps $F(z)=z^3+v$ for which $F^{\circ p}(0)=0$.
In particular, $\cB(1,0)=\cB(0,1)$ consists of a  single map $F(z)=z^3$
in $\cS_1$, and is contained in every $\cB(m,n)$.
\medskip

\begin{figure}[t!]
\centerline{\includegraphics[width=1.8in]{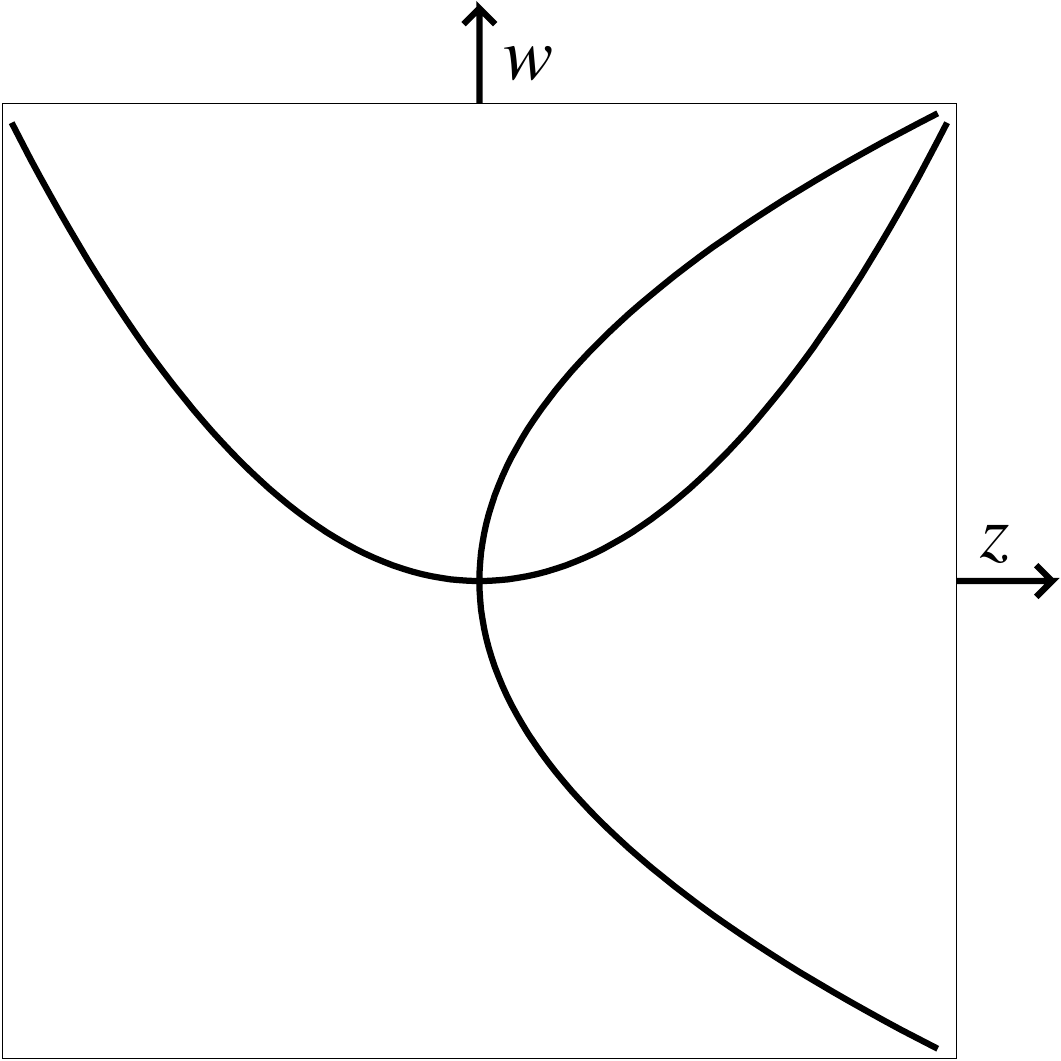}}
\caption[Two transverse curves]{\label{F-Bgraf}\sf Two transverse curves.}
\end{figure}

\begin{lem}\label{l-bcount1}
The curves $F^{\circ m}(a)=-a$ and $F^{\circ n}(-a)=a$ intersect
transversally within $\cS$, and have precisely $3^{p-1}$
intersection points.\end{lem}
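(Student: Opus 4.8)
\textbf{Proof proposal for Lemma~\ref{l-bcount1}.}

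The plan is to apply Bézout's theorem to the two plane curves in $\cS\cong\C^2$, once we have shown they have no common component and meet transversally. First I would set up coordinates: write $F=F_{a,v}$ as in~(\ref{e-1}), and regard $P(a,v):=$ (the $z$-coordinate of $F^{\circ m}(a)$, evaluated symbolically) and the analogous expression for $F^{\circ n}(-a)$. One checks that $F^{\circ k}(a)$ is a polynomial in $(a,v)$ whose degree in $v$ is $3^{k-1}$ (since each application of $F$ triples the degree in the ``leading'' variable, and $F(a)=v$ has $v$-degree one), and similarly for $F^{\circ n}(-a)$; the total degrees of the two defining polynomials $F^{\circ m}(a)+a$ and $F^{\circ n}(-a)-a$ in $(a,v)$ are $3^{m-1}$ and $3^{n-1}$ respectively. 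Hence, if the two curves share no common irreducible component, Bézout gives at most $3^{m-1}\cdot 3^{n-1}=3^{m+n-2}=3^{p-2}$ affine intersection points counted with multiplicity --- but this is off by a factor of three from the claimed $3^{p-1}$, so the degree bookkeeping must be done with some care, perhaps using the curve $\cS'_q$ of degree $3\,\d_q$ mentioned in the proof of Lemma~\ref{L-Dcount} rather than the naive bound. I would follow the same strategy used there: the relevant curves have degrees $3^{m-1}$ and $3\cdot 3^{n-1}$ (or symmetrically), because the ``free critical orbit of period exactly $n$'' condition factors through a curve of degree $3\,\d$-type, giving the product $3^{m-1}\cdot 3^{n} $ --- no; the cleanest route is to observe that $\{F^{\circ n}(-a)=a\}$ cut out inside all of $\cS$ is a curve whose degree accounts for the extra factor of three coming from the cubic map $z\mapsto z^3$ at the puncture, exactly as in~\cite[Lemma~5.8]{M4}.

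The two essential inputs I would invoke, rather than reprove, are: (1) \emph{no common component} --- the curve $F^{\circ m}(a)=-a$ is irreducible (or at least has no component in common with $F^{\circ n}(-a)=a$), which follows because a common component would force a positive-dimensional family of maps with both critical points landing on a single finite orbit, contradicting rigidity; and (2) \emph{transversality}, which is precisely the statement cited in the paragraph after Lemma~\ref{L-Dcount}: ``This transversality can be proved using Thurston's basic construction of rational maps from topological data (see \cite{DH2}). A purely algebraic proof of transversality has been provided by Silverman \cite{Si}.'' So the heart of the argument is to quote Silverman's transversality together with Bézout, and the only genuine work is the degree computation and the verification that \emph{all} $3^{p-1}$ intersection points are genuinely in $\C^2$ (none escape to the line at infinity in $\P^2$).

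The main obstacle, and where I would spend the bulk of the argument, is the degree/infinity bookkeeping: I must verify that when the affine curves are completed to projective curves in $\P^2$, there are no intersection points on the line at infinity, and that the correct product of degrees equals $3^{p-1}$. The subtlety is that $\cB(m,n)\subset\cB(km,kn)$, so the set $\cB(m,n)$ one wants is obtained from the full intersection by removing solutions of lower ``true period'' --- but for the count $3^{p-1}$ this inclusion–exclusion actually is not needed here, because the statement of Lemma~\ref{l-bcount1} counts \emph{all} intersection points of the two period-$\le$ curves, with multiplicity, and it is only afterward (in deducing Theorem~\ref{T-Bcount}) that one subtracts off the $\cB(m',n')$ with $(m',n')$ a proper divisor pair, leaving $\d_p$ by the very recursion $3^{p-1}=\sum_{p\mid n}\d_p$ of Definition~\ref{D-dp}. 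I expect the proof in the paper to: compute the two projective degrees, check transversality via \cite{Si}, check there is no intersection at infinity (both curves pass through the unique point at infinity of $\cS\cong\C^2$ only in a controlled way, or not at all, because $F^{\circ k}$ of a bounded quantity stays ``of lower degree'' in the dominant variable), and conclude $3^{p-1}$ by Bézout. The deduction of Theorem~\ref{T-Bcount} is then the one-line Möbius-type inversion against $3^{p-1}=\sum_{d\mid p}\d_d$, exactly parallel to the count of Type~A components in~\cite[Lemma~5.4]{M4}.
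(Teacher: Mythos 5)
Your overall skeleton (transversality plus B\'ezout plus a check at infinity, with the M\"obius-type inversion deferred to the deduction of Theorem~\ref{T-Bcount}) matches the paper's, but there is a genuine unresolved gap at the crux of the count: the degree computation. You assign degree $3^{n-1}$ to the curve $F^{\circ n}(-a)=a$, obtain $3^{p-2}$ from B\'ezout, acknowledge you are off by a factor of three, and then never actually fix it --- the appeals to $\cS'_q$ and to ``the extra factor of three coming from the cubic map at the puncture'' do not identify the real source of the discrepancy. The point is the asymmetry between the two critical points in the normal form~(\ref{e-1}): $F(a)=v$ has degree one in $(a,v)$, so $F^{\circ m}(a)=F^{\circ (m-1)}(v)$ has degree $3^{m-1}$, exactly as the paper notes; but $F(-a)=4a^3+v$ has degree three, so $F^{\circ n}(-a)=F^{\circ (n-1)}(4a^3+v)$ has degree $3^{n}$, not $3^{n-1}$. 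The product of degrees is then $3^{m-1}\cdot 3^{n}=3^{p-1}$, which is the whole content of the count; without this observation the lemma simply does not come out.

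A secondary difference concerns transversality. The paper does not quote Silverman or Thurston rigidity here (those are invoked for the transversality of $\cS_p$ and $\cS'_q$ in the discussion after Lemma~\ref{L-Dcount}, which is a different pair of curves). Instead it argues directly: each intersection point is the center of a hyperbolic component of Type B, whose canonical model from \cite{M4} is the map sending $z\mapsto w=z^2$ and $w\mapsto z=w^2$ on two copies of $\C$, and the graphs $w=z^2$ and $z=w^2$ visibly meet transversally at the origin. Your proposed citation would need some care to justify for these particular curves, whereas the model argument is self-contained. Your remark that inclusion--exclusion is not needed at this stage, and that the containments $\cB(m,n)\subset\cB(km,kn)$ only enter in deducing Theorem~\ref{T-Bcount} via the recursion defining $\d_p$, is correct and matches the paper's structure.
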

\medskip  

\begin{proof} We must first prove transversality. Each intersection
point is the center of a hyperbolic component in $\cS\cong\C^2$.
The paper \cite{M4} provides a canonical biholomorphic model for such
hyperbolic components. In this particular case, the model is
a polynomial map from the disjoint union of two copies of $\C$
to itself. If $z$ is the coordinate for the first copy and $w$
for the second, then the map sends $z$ to $w=z^2$ and $w$ to
$z=w^2$. If we restrict to the case where $z$ and $w$ are real,
then the graphs of these two maps are shown in Figure~\ref{F-Bgraf}.
Evidently these graphs intersect transversally, so the corresponding
complex graphs also intersect transversally.

The conclusion then follows easily from Bezout's Theorem. (Note
that the equation $F^{\circ m}(a)=-a$ has degree $3^{m-1}$  when
$m>0$, since it can be written as $F^{\circ  m-1}(v)=-a$.
Bezout's Theorem must actually be applied in the complex projective
plane which is obtained from $\cS$ by adding a line at infinity;
but it is not hard to check that there are no intersection points
on this line at infinity.)
\end{proof}
\medskip

\begin{lem}\label{l-bcount2}
Suppose that $m'\le m$ and $n'\le n$. Then
$\cB(m', n')\subset \cB(m, n)$  if and only if
\begin{equation}\label{E-bcount}
 m\equiv m'\quad{\rm and}\quad  n\equiv n'\qquad{\rm modulo}\quad p'=m'+n'~.\end{equation}
In particular, whenever $\cB(m', n')\subset \cB(m, n)$ it follows that 
$p=m+n\equiv 0 \quad({\rm mod}~p')$.
\end{lem}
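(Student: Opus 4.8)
\textbf{Proof proposal for Lemma~\ref{l-bcount2}.}

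The plan is to unwind the definition of $\cB(m,n)$ in terms of the combinatorics of an eventually periodic orbit, and then reduce the inclusion statement to an elementary congruence condition. First I would set up the key observation: if $F\in\cB(m',n')$ with $m',n'>0$, then $a$ maps to $-a$ under $F^{\circ m'}$ and $-a$ maps to $a$ under $F^{\circ n'}$, so the pair $(a,-a)$ lies on a periodic cycle of $F$ whose period divides $p'=m'+n'$; more precisely, $F^{\circ m'}(a)=-a$, $F^{\circ n'}(-a)=a$, hence $F^{\circ(m'+n')}(a)=a$ and $F^{\circ(m'+n')}(-a)=-a$. From this, for any integer $t\ge 0$ one has $F^{\circ(m'+tp')}(a)=-a$ and $F^{\circ(n'+tp')}(-a)=a$. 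So $\cB(m',n')\subseteq\cB(m,n)$ holds whenever $m\equiv m'$ and $n\equiv n'$ modulo $p'$ (with $m\ge m'$, $n\ge n'$), which is the ``if'' direction. I would handle the degenerate cases $m'=0$ or $n'=0$ separately, using the Caution remark that $\cB(0,p')=\cB(p',0)$ consists of unicritical maps $F(z)=z^3+v$ with $F^{\circ p'}(0)=0$, where $a=-a=0$; here the congruence condition reads $m\equiv 0\equiv n\pmod{p'}$, i.e.\ $p'\mid m$ and $p'\mid n$, and one checks directly that $F^{\circ p'}(0)=0$ forces $F^{\circ m}(0)=F^{\circ n}(0)=0$.

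For the ``only if'' direction, the main point is that the single-orbit data of a generic member of $\cB(m',n')$ cannot satisfy any extra relations, so that $\cB(m',n')\subseteq\cB(m,n)$ can only happen if the relations defining $\cB(m,n)$ are formal consequences of those defining $\cB(m',n')$. Concretely: suppose $\cB(m',n')\subseteq\cB(m,n)$. Pick $F\in\cB(m',n')$ whose marked critical point $a$ has \emph{exact} period $p'$ (not a proper divisor of $p'$) under $F$ --- such $F$ exist since $\cB(m',n')$ is a nonempty finite set containing maps with this property (one can take $F$ of Type B$(m',n')$, whose critical points genuinely have period $p'$, not a proper divisor; the centers of the hyperbolic components of Type B$(m',n')$ furnish such maps). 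For this $F$, the orbit of $a$ is a cycle $\{a=x_0,x_1,\dots,x_{p'-1}\}$ of exact length $p'$ with $x_{m'}=-a$. Now $F\in\cB(m,n)$ says $F^{\circ m}(a)=-a$, i.e.\ $x_{m\bmod p'}=-a=x_{m'}$; since the $x_i$ are distinct for $0\le i<p'$, this forces $m\equiv m'\pmod{p'}$. Symmetrically $F^{\circ n}(-a)=a$ gives $n\equiv n'\pmod{p'}$. The final assertion, $p\equiv 0\pmod{p'}$, is then immediate by adding the two congruences: $p=m+n\equiv m'+n'=p'\equiv 0\pmod{p'}$.

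The step I expect to be the genuine obstacle is producing, in the ``only if'' direction, a map $F\in\cB(m',n')$ for which the orbit of $a$ has exact period $p'$; without this, a degenerate $F$ whose orbit has smaller period could satisfy accidental coincidences $x_{m\bmod p'}=x_{m'}$ even when $m\not\equiv m'\pmod{p'}$, and the argument would collapse. The cleanest way around this is to invoke the existence and nonemptiness of the set $\cB_0(m',n')$ of centers of Type~B$(m',n')$ hyperbolic components --- which is exactly $\cB(m',n')\cap\cS_{p'}$, and membership in $\cS_{p'}$ means $a$ has period \emph{exactly} $p'$ --- together with the fact (from Lemma~\ref{l-bcount1} and the preceding discussion, or from the transversality/Bezout count in \cite{M4}) that this set is nonempty. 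Alternatively, if one prefers to avoid any appeal to nonemptiness of $\cB_0$, one can argue at the level of the \emph{ideals}: the scheme-theoretic statement is that the ideal generated by $F^{\circ m}(a)+a$ and $F^{\circ n}(-a)-a$ contains the ideal generated by $F^{\circ m'}(a)+a$ and $F^{\circ n'}(-a)-a$, and one analyzes this by passing to the universal orbit with an indeterminate cycle length, which again reduces to the same congruence bookkeeping; but I expect the orbit-combinatorics argument above, anchored by the existence of a genuinely period-$p'$ example, to be the shortest route.
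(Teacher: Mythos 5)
Your proof is correct and follows essentially the same route as the paper: the ``if'' direction from $F^{\circ p'}(a)=a$, and the ``only if'' direction by evaluating on a map in $\cB_0(m',n')=\cB(m',n')\cap\cS_{p'}$, where the exact period $p'$ of the marked orbit forces the congruence. Your extra attention to the degenerate cases $m'=0$ or $n'=0$ and to the nonemptiness of $\cB_0(m',n')$ (which the paper takes for granted, though it does follow from the inductive count in Lemma~\ref{l-bcount4}) is a welcome refinement rather than a departure.
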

\medskip

The proof is straightforward. Any map $F\in \cB(m', n')$ certainly satisfies
$F^{\circ p'}(a)=a$. Given (\ref{E-bcount}),
since $m\equiv m' ~({\rm mod}~ p')$, we can write $m=m'+kp'$  for some $k$;
and it follows that $F^{\circ m}(a)=F^{\circ m'}(a)=-a$.
On the other hand, given that $\cB(m', n')\subset \cB(m, n)$, choose some
$F\in\cB_0(m',n')$. Then $F^{\circ  j}(a)=-a$ if and only if
$j\equiv m'~~({\rm mod}~p')$. But
$F$ must also belong to $\cB(m,n)$, hence
$F^{\circ m}(a)=-a$, which proves that $m\equiv  m'~~({\rm mod}~p')$.
\qed

\bigskip


\begin{lem}\label{l-bcount3}
For any divisor $p'$ of $p$ we can write
$$   \cB(m, n)\cap \cS_{p'}~=~\cB_0(m',  n')~,$$
where the integers $m'$ and $n'$ are uniquely determined by the
conditions that
$$m'+n'=p',\quad m'\equiv m\,,\quad{\rm and}\quad n'\equiv n~~~{\rm modulo}
~p'~.$$
\end{lem}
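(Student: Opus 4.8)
The plan is to establish the two inclusions $\cB_0(m',n')\subseteq\cB(m,n)\cap\cS_{p'}$ and $\cB(m,n)\cap\cS_{p'}\subseteq\cB_0(m',n')$ separately, keeping in mind that throughout we have $m+n=p$ and $p'\mid p$. First I would check that a pair $(m',n')$ with $m'+n'=p'$, $m'\equiv m$ and $n'\equiv n$ modulo $p'$ exists and is well defined: since $m+n=p\equiv 0\pmod{p'}$ we have $n\equiv -m\pmod{p'}$, so choosing $m'\in\{0,1,\dots,p'-1\}$ with $m'\equiv m$ and setting $n':=p'-m'$ forces $n'\equiv -m'\equiv n\pmod{p'}$ automatically. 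The only ambiguity is the interchange $(0,p')\leftrightarrow(p',0)$, which arises exactly when $m\equiv 0\pmod{p'}$, and it is harmless because $\cB(0,p')=\cB(p',0)$ by the Caution preceding Lemma~\ref{l-bcount1}.

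For the inclusion $\supseteq$ I would simply invoke Lemma~\ref{l-bcount2}. We have $m'\le m$ and $n'\le n$ (since $m'=m\bmod p'\le m$, and likewise for $n'$), together with $m\equiv m'$ and $n\equiv n'$ modulo $p'=m'+n'$, so that lemma gives $\cB(m',n')\subseteq\cB(m,n)$. Intersecting with $\cS_{p'}$ and using the definition $\cB_0(m',n')=\cB(m',n')\cap\cS_{p'}$ yields $\cB_0(m',n')\subseteq\cB(m,n)\cap\cS_{p'}$.

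The inclusion $\subseteq$ is the substantive direction. Given $F\in\cB(m,n)\cap\cS_{p'}$, so that $F^{\circ m}(a)=-a$, $F^{\circ n}(-a)=a$, and $a$ has period exactly $p'$ under $F$, the key observation is that $-a$ lies on the periodic cycle of $a$: indeed $-a=F^{\circ m}(a)$ is a forward image of the periodic point $a$ when $m>0$ (and when $m=0$ one is in the unicritical situation $a=-a=0$, again covered by the Caution). It follows that $F^{\circ k}(a)$ and $F^{\circ k}(-a)$ depend only on $k$ modulo $p'$, so the congruences $m'\equiv m$ and $n'\equiv n$ give $F^{\circ m'}(a)=F^{\circ m}(a)=-a$ and $F^{\circ n'}(-a)=F^{\circ n}(-a)=a$, i.e.\ $F\in\cB(m',n')$. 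Since $a$ has period exactly $p'=m'+n'$, this means $F\in\cB(m',n')\cap\cS_{p'}=\cB_0(m',n')$, as desired.

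I do not anticipate a serious obstacle; the argument is a direct manipulation of the defining equations combined with the periodicity of the $a$-orbit. The one point demanding care is the bookkeeping in the degenerate cases where $m$ or $n$ is divisible by $p'$ --- equivalently where the map is unicritical --- where one must appeal to the identification $\cB(0,p')=\cB(p',0)$ so that both the statement and the asserted uniqueness of $(m',n')$ remain literally true.
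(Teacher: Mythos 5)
Your proof is correct and follows essentially the same route as the paper: the paper's own argument simply notes the unique choice of $(m',n')$ and declares the identity to be an immediate consequence of Lemma~\ref{l-bcount2}, which is exactly your $\supseteq$ inclusion, while your $\subseteq$ direction spells out the periodicity argument (that $-a$ lies on the exact period-$p'$ cycle of $a$, so $F^{\circ k}(a)$ and $F^{\circ k}(-a)$ depend only on $k$ mod $p'$) that the paper leaves implicit and that already appears in its proof of Lemma~\ref{l-bcount2}. Your handling of the degenerate identification $\cB(0,p')=\cB(p',0)$ is a welcome extra care that the paper glosses over by normalizing $0<m'\le p'$.
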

\msk

\begin{proof}
This follows immediately from Lemma~\ref{l-bcount2}. In fact, for each $p'$  
there is a unique
$0<m'\le p'$ with $m'\equiv m\quad ({\rm mod}~p')$. Then $n'=p'-m'$
must satisfy $n'\equiv n\quad ({\rm mod}~p')$. 
\end{proof}
\medskip

\begin{lem}\label{l-bcount4}
The set $\cB(m,n)$ is the disjoint union of the sets
$\cB(m,n)\cap \cS_{p'}$ as $p'$ varies over all divisors of $p$. Since
$\cB(m,n)$ has $3^{p-1}$ elements, and since $\cB(1,0)$ has $\d_1=1$ element,
it follows inductively from the defining formula
$$ 3^{p-1}=\sum_{p'|p} \d_{p'} $$
that $\cB_0(m,n)$ has $\d_p$ elements.
\end{lem}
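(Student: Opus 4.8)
The statement to prove is Theorem~\ref{T-Bcount}: for each $m,n>0$ with $m+n=p$, the number of Type~B$(m,n)$ components in $\cS_p$ equals $\d_p$. The plan is to follow exactly the scaffolding that has just been assembled in Lemmas~\ref{l-bcount1} through \ref{l-bcount4}; essentially the theorem is an immediate corollary once those lemmas are in hand, so the real work is to verify that the chain of lemmas assembles correctly and that the inductive extraction of $\#\cB_0(m,n)$ goes through.

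First I would recall that $\cB_0(m,n) = \cB(m,n)\cap\cS_p$ is precisely the set of center points of Type~B$(m,n)$ components in $\cS_p$, so it suffices to show $\#\cB_0(m,n) = \d_p$. By Lemma~\ref{l-bcount1}, the two curves $F^{\circ m}(a)=-a$ and $F^{\circ n}(-a)=a$ meet transversally in $\cS\cong\C^2$ in exactly $3^{p-1}$ points, so $\#\cB(m,n) = 3^{p-1}$. By Lemma~\ref{l-bcount4}, $\cB(m,n)$ decomposes as the disjoint union $\bigsqcup_{p'\mid p}\big(\cB(m,n)\cap\cS_{p'}\big)$, and by Lemma~\ref{l-bcount3} each piece $\cB(m,n)\cap\cS_{p'}$ equals $\cB_0(m',n')$ for the uniquely determined pair $(m',n')$ with $m'+n'=p'$, $m'\equiv m$, $n'\equiv n \pmod{p'}$. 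Counting cardinalities gives
$$ 3^{p-1} ~=~ \sum_{p'\mid p}\#\cB_0(m',n')~. $$
Since this holds for every $p$ (with the corresponding congruence-reduced pair at each divisor), and since the defining recursion for the degrees is $3^{p-1}=\sum_{p'\mid p}\d_{p'}$, a straightforward induction on $p$ — with base case $\#\cB_0(1,0)=\d_1=1$ — forces $\#\cB_0(m,n)=\d_p$ for all $p$ and all admissible $(m,n)$. This yields the theorem.

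The one point that deserves care, and which I expect to be the main obstacle, is the transversality claim in Lemma~\ref{l-bcount1} together with the disjointness in Lemma~\ref{l-bcount4}: one must be sure that the $3^{p-1}$ intersection points are counted with multiplicity one (so that Bezout gives an honest point count, not an inflated one), and that the various $\cB(m,n)\cap\cS_{p'}$ really are pairwise disjoint rather than merely covering. Transversality is handled via the canonical biholomorphic model of a Type~B hyperbolic component from \cite{M4}, which locally identifies the two center conditions with the graphs $w=z^2$ and $z=w^2$ meeting transversally at the origin; I would also note, as the lemma does, that Bezout must be applied after compactifying $\cS\cong\C^2$ to $\mathbb{P}^2$ and that one must check there are no intersection points on the line at infinity. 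Disjointness of the decomposition follows from Lemma~\ref{l-bcount2}: a map $F\in\cB_0(m',n')$ has $a$ of exact period $p'$, so it lies in exactly one $\cS_{p'}$, and the congruence conditions pin down which reduced pair $(m',n')$ it realizes. Once these book-keeping facts are nailed down, the induction is routine and the theorem drops out; I would present it in at most a sentence or two after invoking the lemmas. As a remark, this also immediately recovers the formula \eqref{E-q-ne-p} and feeds into the count of parabolic vertices ${\bf v}_{\rm par}$ for $q=p$, since the number of Type~A components is already known to be $\d_p$.
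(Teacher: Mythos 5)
Your argument is correct and is exactly the argument the paper intends: the paper's own proof of this lemma is literally ``The proof is straightforward,'' and you supply precisely that straightforward content --- disjointness of the decomposition because the exact period of $a$ singles out one $\cS_{p'}$, identification of each piece as $\cB_0(m',n')$ via Lemma~\ref{l-bcount3}, the count $3^{p-1}$ from Lemma~\ref{l-bcount1}, and induction against the defining recursion for $\d_p$. No substantive difference from the paper's route.
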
\medskip

{\rm The proof is straightforward; and Theorem \ref{T-Bcount} follows 
immediately.}\qed

\medskip
 \begin{theo}\label{T-v-count}
   Assuming the Mandelbrot Copy Conjecture~{\rm MC1}, it follows that the
   number of parabolic vertices in $\Tes_q(\ocS_p)$ is equal to $3\d_q\d_p$ if
   $p\ne q$, but is equal to $3\d_p^{\;2}-p\,\d_p$ if $p=q$.
   \end{theo}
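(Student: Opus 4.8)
The plan is to combine Lemma~\ref{L-MC} with the two counting lemmas just established. First recall from Lemma~\ref{L-MC} that, assuming Conjecture MC1, the parabolic vertices of $\Tes_q(\ocS_p)$ are in one-to-one correspondence with the hyperbolic components of Type D$(q)$ in $\cS_p$; so it suffices to count those components. For $q\ne p$, this is immediate: Lemma~\ref{L-Dcount} gives exactly $3\,\d_q\,\d_p$ components of Type D$(q,p)$ in $\cS_p$, which yields ${\bf v}_{\rm par}=3\,\d_q\,\d_p$ as asserted in equation~(\ref{E-q-ne-p}).

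The case $q=p$ requires one subtraction. Here Lemma~\ref{L-Dcount} says that the total number of components of Type A, Type B, or Type D$(p,p)$ in $\cS_p$ is $3\,\d_p^{\,2}$ — these being exactly the transverse intersection points of $\cS_p$ with the dual curve $\cS'_p$, counted by Bezout. To isolate the Type D$(p,p)$ components, I would subtract off the A and B contributions. The number of Type A components is $\d_p$ (cited from \cite[Lemma 5.4]{M4}), and by Theorem~\ref{T-Bcount} the number of Type B$(m,n)$ components is $\d_p$ for each ordered pair $(m,n)$ with $m,n>0$ and $m+n=p$; there are $p-1$ such ordered pairs, contributing $(p-1)\,\d_p$. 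Hence the number of Type D$(p,p)$ components is
$$ 3\,\d_p^{\,2}~-~\d_p~-~(p-1)\,\d_p~=~3\,\d_p^{\,2}~-~p\,\d_p~, $$
and applying Lemma~\ref{L-MC} gives ${\bf v}_{\rm par}=3\,\d_p^{\,2}-p\,\d_p$, completing the proof.

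The only subtle point — and the one I would be most careful about — is the bookkeeping of Type B components: one must use the ordered count (so that B$(m,n)$ and B$(n,m)$ are tallied separately when $m\ne n$), matching the way $\cS'_p$ records center points, and must confirm that the $p=1$ edge case behaves ($p-1=0$, so no B components, consistent with $\cS_1$ having no Type B components). Since Theorem~\ref{T-Bcount} and \cite[Lemma 5.4]{M4} are already available, together with Lemmas~\ref{L-MC} and~\ref{L-Dcount}, the argument is a short assembly; the genuine content lies in those inputs rather than in Theorem~\ref{T-v-count} itself.
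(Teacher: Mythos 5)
Your proposal is correct and follows essentially the same route as the paper: the paper likewise invokes Lemma~\ref{L-MC} together with Lemma~\ref{L-Dcount}, and for $p=q$ subtracts the $\d_p$ Type A components and the $(p-1)\,\d_p$ Type B components (from Theorem~\ref{T-Bcount}) from the Bezout count $3\,\d_p^{\,2}$. Your extra remarks on the ordered-pair bookkeeping for B$(m,n)$ and the $p=1$ edge case are sensible but add nothing beyond what the paper's two-line assembly already contains.
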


\begin{proof}  We have shown that
$\cS_p$ contains $(p-1)\d_p$ components of Type B plus $\d_p$ components
of Type A, yielding a total of $p\,\d_p$ components of Types A and D.
Combining this with Lemmas \ref{L-MC} and  \ref{L-Dcount}, 
the conclusion follows.\end{proof}
\medskip

\begin{rem}\label{R-face} If we combine Theorem \ref{T-v-count} with the Equations
  (\ref{E-Eu}) and (\ref{E-edge}), then we can solve for the number $\f$ of faces,
  yielding
$$ \f~=~\begin{cases} 3\,\d_q\,\d_p+2\,\d_p-p\,\d_p+{\rm rank}\big(H_1(\F^{\,\cup})\big)
    & {\rm if}\quad p\ne q>1\\
    3\,\d_p^{\,2}~~+~~2\,\d_p \quad+~{\rm rank}\big(H_1(\F^{\,\cup})\big)
    & {\rm if}\quad p= q>1~; \end{cases}$$
  with similar formulas when $q=1$.   (Thus the $\N_p$ terms cancel out.)
  When $p=q$ it seems possible that all faces are simply connected,
  so that the last term is always zero. But for $p\ne q$ and $p>1$, 
 there are always faces surrounding the rabbit regions
 which are not simply connected by 
 \cite[Corollary 4.3]{BM}, so that the last
  term is always strictly positive.
\end{rem}\medskip

{\bf Counting Non-Simply Connected Faces.}
To compute the rank of $H_1(\F^\cup)$, note first that the inclusion map
$$i:\F^\cup\to \ocS_p$$ induces a homomorphism
$~ i_*:H_1(\F^\cup)\to H_1(\ocS_p).$ It follows that
\begin{equation}\label{E-H1}
\rank\big(H_1(\F^\cup)\big)~=~ \rank\big({\rm kernel}(i_*)\big)
~+~\rank\big({\rm image}(i_*)\big).\end{equation}

Alternatively we can compute these two summands in terms of
the \textbf{\textit{1-skeleton}}  \hbox{$X=\ocS_p\ssm\F$} of $\Tes_q(\ocS_p)$,
that is, the union of all vertices and edges.
\medskip

More generally, let $S$ be any closed Riemann surface of genus $\g$, 
and let $X$ be any non-vacuous finite graph which is embedded in $S$.
(In our application, $X$ will be the 1-skeleton and its complement
will be the union $\F^\cup$ of faces.) 
\medskip

\begin{theo}[\bf Lefschetz Duality]\label{t-duality} 
  The kernel of the natural homomorphism
  \begin{equation}\label{e-dual}
h:    H_1(S\ssm X)~~\to~~ H_1(S)
    \end{equation}
has rank equal to one less than the number of connected components of $X$.
Furthermore the sum of the ranks of the images of the homomorphism
$(\ref{e-dual})$ and 
of the corresponding homomorphism $H_1(X)\to H_1(S)$ is always equal to the
rank $2\,\g$ of $H_1(S)$.
\end{theo}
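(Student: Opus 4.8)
The plan is to set up the long exact sequence of the pair $(S, S\ssm X)$ together with Lefschetz–Poincaré duality, and to extract the two claimed rank identities from it. First I would recall that since $X$ is a finite graph embedded in the closed surface $S$, it has a regular (closed tubular) neighborhood $N$ that deformation retracts onto $X$, and $S\ssm X$ deformation retracts onto $S\ssm \mathrm{int}(N)$, which is a compact surface-with-boundary $\overline{\F^\cup}$. Thus $H_*(S\ssm X)\cong H_*(\overline{\F^\cup})$ and $H_*(N,\partial N)\cong H_*(N/\partial N)$, etc. I would work with rational (or field) coefficients throughout, so all groups are finite-dimensional vector spaces and "rank" means dimension; as the footnote in the excerpt notes, there is no torsion in the cases of interest, so this loses nothing. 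The key input is Lefschetz duality for the compact surface-with-boundary $W=\overline{\F^\cup}$: $H_k(W)\cong H^{2-k}(W,\partial W)$, which by universal coefficients over a field is dual to $H_{2-k}(W,\partial W)$; and by excision $H_{2-k}(W,\partial W)\cong H_{2-k}(N', \partial N')$ where $N'$ is the closed complementary neighborhood, i.e. essentially a regular neighborhood of $X$, so $H_{2-k}(W,\partial W)\cong \widetilde H_{2-k}(N/\partial N)$ which relates back to $H_*(X)$.

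The cleanest route for the \emph{first} statement (rank of the kernel of $h:H_1(S\ssm X)\to H_1(S)$) is via the Mayer–Vietoris sequence for $S = N \cup W$ with $N\cap W \simeq \partial N = \partial W$ a disjoint union of circles, combined with the observation that $N\simeq X$ and $W\simeq S\ssm X$. In the portion
$$H_1(\partial N)\to H_1(N)\oplus H_1(W)\xrightarrow{\ (\iota_*,\,-h)\ } H_1(S)\to \widetilde H_0(\partial N)\to \widetilde H_0(N)\oplus \widetilde H_0(W),$$
the map $H_1(N)\oplus H_1(W)\to H_1(S)$ is exactly $(j_* , h)$ where $j:X\hookrightarrow S$. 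Its kernel splits using the fact (true in a surface) that a cycle in $X$ bounding in $S$ already "bounds" among the faces and vice versa; more precisely I would show $\ker(h)$ maps isomorphically, under the connecting-type analysis, onto the reduced zeroth homology of $\partial N$ modulo the components that are "filled in" from both sides — and counting components one gets $\operatorname{rank}\ker(h) = (\#\text{components of }\partial N) - (\#\text{components of }N) - (\#\text{components of }W) + 1$. An Euler-characteristic / component bookkeeping argument on the graph $X$ embedded in the connected surface $S$ then collapses this to $(\#\text{components of }X) - 1$, which is the assertion. Equivalently, and perhaps more transparently, I would dualize: $\ker(h)$ is Lefschetz-dual to the cokernel of $H^0(S)\to H^0(X)$, whose rank is $(\#\text{components of }X)-1$.

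For the \emph{second} statement, I would use the commutative square relating the two homomorphisms $h:H_1(S\ssm X)\to H_1(S)$ and $H_1(X)\to H_1(S)$ to the intersection pairing on $H_1(S)$, which is nondegenerate of rank $2\g$. The point is that, under Lefschetz duality, the image of $H_1(S\ssm X)\to H_1(S)$ and the image of $H_1(X)\to H_1(S)$ are \emph{annihilators of each other} with respect to the cup/intersection pairing: a class carried by a cycle disjoint from $X$ has zero intersection with any class carried by $X$, and conversely the duality shows these two subspaces are exact orthogonal complements, not merely orthogonal. Hence, since the pairing is nondegenerate, $\operatorname{rank}(\operatorname{im}\, \text{from } S\ssm X) + \operatorname{rank}(\operatorname{im}\,\text{from }X) = \dim H_1(S) = 2\g$. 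Feeding this, together with the first statement, back through (\ref{E-H1}) and (\ref{E-eul}) then yields the face count.

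The main obstacle I anticipate is making the orthogonality-\emph{and}-complementarity claim fully rigorous rather than merely the easy "$\le$" orthogonality direction: one must invoke Lefschetz duality for $W$ with its boundary in the correct form and check that the pairing induced between $H_1(W)$ and $H_1(N,\partial N)\cong H^1(N)$ is perfect, then chase the identification $H^1(N)\cong H^1(X)$ and the map $H_1(X)\to H_1(S)$ through. This is standard algebraic topology but requires care with the excision isomorphisms and the precise form of the duality isomorphism; once that bookkeeping is in place, both rank statements drop out simultaneously from the single mechanism "intersection pairing on $H_1(S)$ restricts to a perfect pairing between the face-cycles and the skeleton-cycles."
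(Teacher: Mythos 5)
Your proposal rests on the same mechanism as the paper's proof: Lefschetz (Poincar\'e--Alexander) duality pairing the long exact sequence of the pair $(S,\,S\ssm X)$ against the corresponding sequence involving $X$. In particular your ``equivalent'' dual formulation of part one --- $\ker(h)\cong\operatorname{coker}\big(H^0(S)\to H^0(X)\big)$, of rank one less than the number of components of $X$ --- is exactly the paper's argument (the image of $H_2(S,S\ssm X)\to H_1(S\ssm X)$ is $\ker(h)$, and that map is dual to $H_1(S,X)\to H_0(X)$). Likewise your intersection-pairing statement for part two, that $\operatorname{im}(h)$ and $\operatorname{im}\big(H_1(X)\to H_1(S)\big)$ are exact annihilators of one another under the nondegenerate intersection form, is a repackaging of the paper's observation that $\operatorname{rank} H_1(S)$ equals the sum of the ranks of the maps into and out of it in the exact sequence, the outgoing map $H_1(S)\to H_1(S,S\ssm X)$ being dual to $H_1(X)\to H_1(S)$. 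So the approach is sound and essentially identical.

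One concrete error: the Mayer--Vietoris bookkeeping formula you propose as your primary route to part one,
$$\operatorname{rank}\ker(h)~=~\#\pi_0(\partial N)-\#\pi_0(N)-\#\pi_0(W)+1\,,$$
is false. Take $S=S^2$ and $X$ two points: then $\partial N$ has two components, $N$ has two components, $W$ is an annulus with one component, so the formula gives $2-2-1+1=0$, whereas $H_1(S\ssm X)\cong\Q$ maps to $H_1(S^2)=0$ and $\ker(h)$ has rank $1=\#\pi_0(X)-1$. The Mayer--Vietoris sequence only controls the kernel of the combined map $H_1(N)\oplus H_1(W)\to H_1(S)$, and disentangling the $H_1(W)$ summand requires exactly the duality input you invoke in the alternative argument. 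Since you do supply that correct alternative, the proof goes through; just discard the component-count formula rather than presenting it as an equivalent computation.
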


\begin{proof}
  Here coefficients in the rational field $\Q$ are to be understood.
Hence  any cohomology
group $H^j(*)$ can be identified with the dual ${\rm Hom}\big(H_j(*),\,\Q\big)$
 of the corresponding homology group.

 The Lefschetz Duality Theorem\footnote{See for example \cite{Ha} or \cite{Sp}.
   The Lefschetz Theorem is an amalgamation of Poincar\'e and Alexander
   duality.}
 then asserts that the 
following two exact sequences are dually paired to each other.\vspace{-.3cm}

$$\vcenter{\vbox{
\xymatrix@-1pc{H_2(S)\ar[r] & H_2(S,\,S\ssm X)\ar[r]& H_1(S\ssm X) \ar[r]& H_1(S) \ar[r]&H_1(S,\,S\ssm X)\\
  H_0(S)& \ar[l] H_0(X)& \ar[l] H_1(S,\,X)& \ar[l] H_1(S)& \ar[l] H_1(X)}
}}$$

\noindent That is, each group or homomorphism above is naturally isomorphic to
 the dual of the group or homomorphism below it. In particular, the second
homomorphism above has rank equal to the rank of the kernel of the homomorphism
$(\ref{e-dual})$.
This is equal to the rank of the second homomorphism below, which is easily
seen  to be one less than the number of connected components of $X$.

 On the other hand, the rank of $H_1(S)$, either above or below, is equal
 to the sum of the ranks of the homomorphisms to and from it. This implies
 that it is equal to the sum of the ranks of $H_1(S\ssm X)\to H_1(S)$
 and $H_1(X)\to H_1(S)$, as asserted.
 \end{proof}
\medskip

As an immediate corollary, if all faces of the tessellation are
simply connected, then it follows that the 1-skeleton is connected.
(It seems quite possible that
all faces are simply connected whenever either $p=q$ or $p=1$; but we
have no proof of this.) Combining the above formulas, this would imply that
the number of faces is precisely
$$ \f~=~ 3\d_p^{\,2}+ 2\,\d_p $$
when $p=q>1$. 
However \cite[Corollary 4.3]{BM}  implies that the 1-skeleton is not
connected in all other cases.

\medskip

For all cases with $p,\,q\,\le 3$ see Table \ref{t3}. 
In four of these cases, the 1-skeleton is not connected, 
which means that there is at least one face which is not
simply-connected.  For example  $\Tes_2(\cS_3)$ has 40 simply-connected faces,
but the two faces surrounding the rabbit and co-rabbit have the topology 
of an annulus, so that the rank of $H_1(\F^\cup)$ is two.
Here is a table showing numbers of vertices, 
faces and edges for small values of $p$ and $q$.
\begin{table}[htb!]
  \begin{center}
\begin{tabular}{cccccccccc}
   $q$ & $p$ && vertices & edges& faces&&  $\chi(\cS_p)$&
     ${\rm rank}\big(H_1(\F^\cup)\big)$\\
    &&& $\N_p+\v_{\rm par}$ & $\e$  & $\f$ && $2-2\g$&
   $|$kernel($h$)$|$ +$|$image($h$)$|$\\[3ex]
  1&1&& 1+2& 4&3&&2& 0+0\\
  2&1 &&1+6&12&7&&2&0+0\\
3&1&& 1+24 & 48&25&&2&0+0 \\   [3ex]
  1 & 2  && 2+6 & 8   &   3&&2& 1+0    \\
  2 & 2 && 2+8 & 24  & 16&&2& 0+0 \\
3&2&& 2+48 & 96  & 49&&2 & 1+0\\  [3ex]
1&3&& 8+24&32 & 9&&0& 7+2\\
2&3&& 8+48& 96  &42&&0&  2+0\\
3&3&&8+168&384&208&&0&0+0\\
\end{tabular}

\caption{\sf Statistics of $\Tes_q(\cS_p)$ for small $q$ and $p$.
  The two summands in the third column refer to ideal and
  parabolic vertices; while the two summands in the last
  column refer to the kernel and image 
  of the homomorphism $h:H_1(\F^{\,\cup})\to H_1(\cS_p)$.
  Note that
  \centerline{$ \v-\e+\f~=~ \chi(\cS_p)+{\rm rank}\big(H_1(\F^\cup)\big)~~$}
  in all cases.\label{t3}}
\end{center}
\end{table}

\begin{figure}[h!]
  \centerline{\includegraphics[width=2.8in]{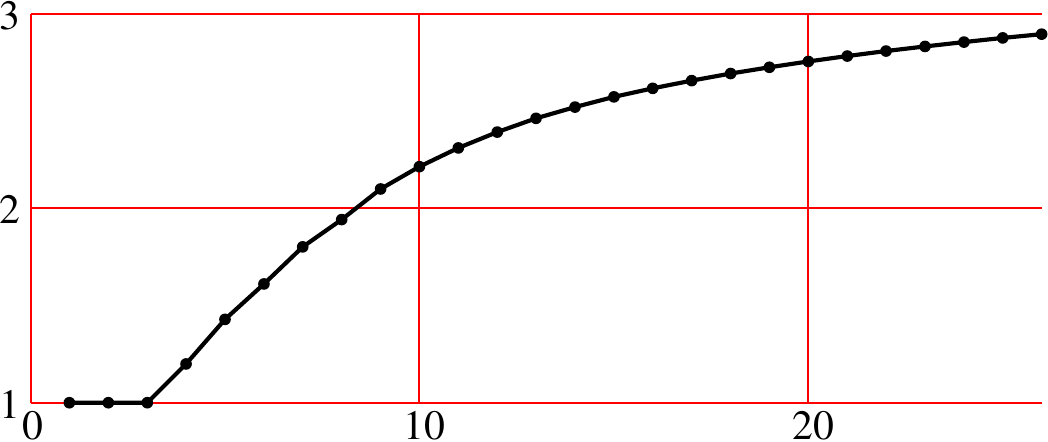}}
  \caption[Graph of the ratio $\d_p/\N_p$ as a function of $p$]{\sf Graph of
    the ratio $\d_p/\N_p$ as a function of $p$  (based on numbers from
    \cite{DM-S}). \label{F-aven}}
\end{figure}
\medskip

\begin{rem}[{\bf Average Multiplicity of the Escape Regions}]\label{R-AM}
Another interesting number associated with $\overline\cS_p$ is the ratio
$r(p)=\d_p/\N_p$
which measures the average of the  multiplicities of the $\N_p$
escape regions.  This seems to increase rather slowly as a function of $p$.
(See Figure \ref{F-aven}.) We have  
no idea of how to estimate its behavior for large $p$. (However,
it seems reasonable to guess that the function $p\mapsto r(p)$
is monotone, with slope tending to zero as $p\to\infty$. It also seem likely,
but not at all certain, that it is unbounded.) 
The Euler characteristic of $\overline\cS_p$
can be computed as $\chi=(1/r(p)+2-p)\d_p$, so it follows that  
$$ p-3 ~\le ~ -\chi(\overline\cS_p)/\d_p ~<~ p-2
\qquad{\rm for~all}~~ p~.$$

Furthermore it follows that $r(p)\to\infty$ as $p\to\infty$ if and only if

\centerline{$p\,+\,\chi(\overline\cS_p)/\d_p$\quad converges to~~~  
  +2~~  as $~p\to\infty $.}\medskip

\noindent   Compare \cite[Table 1]{DM-S}; and note that
$\d_p=3^{p-1}+O(3^{p/2})$, so that $3^{p-1}$ is a very good approximation
to $\d_p$ for large $p$. \end{rem}

\bigskip

\setcounter{lem}{0}
\section{Misiurewicz Maps and Similarity}\label{s-Mis}
Recall that we use the term Misiurewicz map for any $F\in\cS_p$ such that
the orbit of the free critical point $-a$  eventually maps to a repelling
periodic orbit.  Since $F(2a)=F(-a)$, the orbit of $2a$ eventually
maps to the same repelling periodic orbit. But the orbit of the free 
co-critical point $2a$ for a Misiurewicz map can never contain a critical
point. This leads to a great deal of local self similarity. \medskip

Consider for example Figure \ref{F-S2cheb}. Here both $-a$ (at the landing
point of the 5 and 11 rays) and $2a$ (at the landing point of the 17 ray) map to
the free critical value (at the landing point of the 15 ray), which maps to a
repelling fixed point (at the landing point of the 9 ray which is fixed
under tripling mod 18). It follows that the Julia set in a small neighborhood
of $2a$ is conformally isomorphic to the Julia set in a neighborhood of the
free critical value, which is conformally isomorphic to the Julia set in the
neighborhood of this repelling fixed point. 
On the other hand, since $8\mapsto 6\mapsto 0$ under tripling mod $18$, the tiny
configuration at the endpoint of the $8$ ray maps conformally onto the larger
configuration  at the end of the $6$ ray, which maps conformally onto the much
larger configuration at the end of the $0$ ray.

\begin{figure}[t!]
  \begin{center}
    \begin{overpic}[width=3.3in]{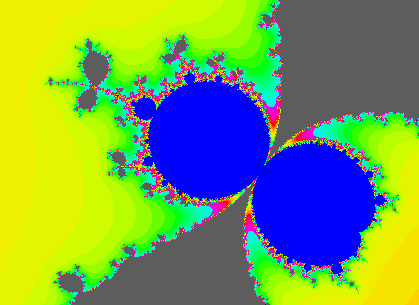}
 \put(215,20){\bf 10}
    \put(15,50){\bf basilica}  
\end{overpic}\,\begin{overpic}[width=3.15in]{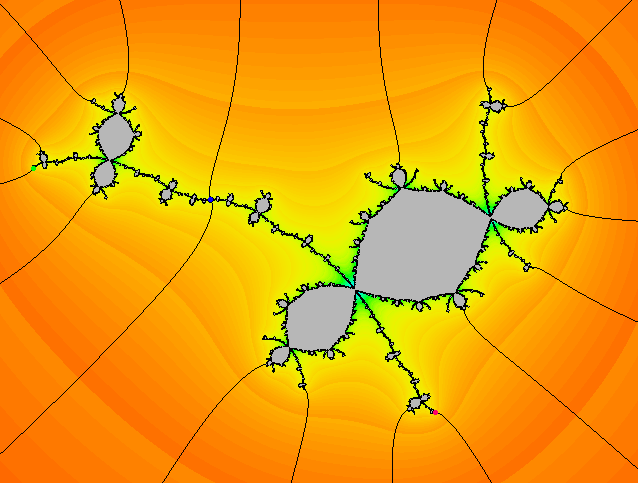}
  \put(221,96){\bf 0}
  \put(221,130){\bf 1}
  \put(207,163){\bf 2}
  \put(179,163){\bf 3}
  \put(139,163){\bf 4}
  \put(89,163){\bf 5}
  \put(47,163){\bf 6}
  \put(9,163){\bf 7}
  \put(1,133){\bf 8}
  \put(1,110){\bf 9}
  \put(1,82){\bf 10}
  \put(1,22){\bf 11}
  \put(43,1){\bf 12}
  \put(107,1){\bf 13}
  \put(143,1){\bf 14}
  \put(175,1){\bf 15}
  \put(214,16){\bf 16}
  \put(214,65){\bf 17}
  \put(145,82){$\mathbf a$}
  \put(73,103){$\mathbf{-a}$}
\end{overpic}
\end{center}
\caption[Parameter space and Julia set for a Chebyshev map in $\cS_2$.]{\label{F-S2cheb}\sf On the left a copy of $\M$ in $\cS_2$, between the
  basilica region to the left and the $(1,0)$ region to the right.
  The Chebyshev point is the landing point of the $17/18$ parameter ray
  at the leftmost tip of this copy. 
  (Compare Figure~\ref{f2} in Section \ref{s-tess}
  for a picture showing the entire connectedness
    locus in $\cS_2$.) On the right is the Julia set for this Chebyshev
    point.   The common denominator $18$ 
    is to be understood for all angles. Here  $+a$ is in the
    largest component; while $2a$ is at the landing point of the 17 ray;
    and $-a$ is at the meeting point of the $5$ 
    and $11$ rays. Both $2a$ and $-a$ map to the landing point of the $15$
    ray; which maps to the fixed point at the end of the $9$ ray. }
\end{figure}
\bigskip

\begin{figure}[htb!]
  \centerline{\includegraphics[height=2.5in]{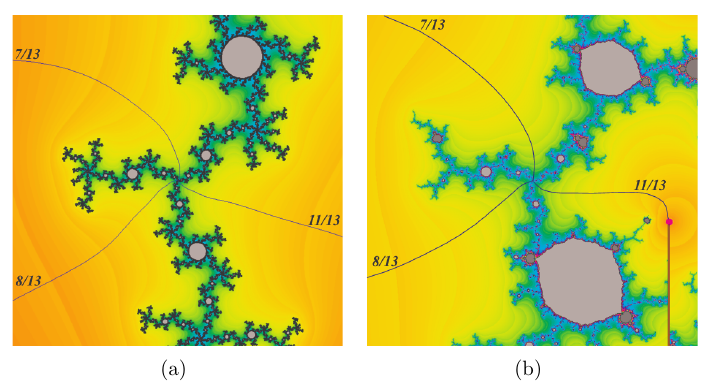}}
  \vspace{-.4cm}
\caption[External rays in Julia set and parameter space for a Misiurewicz
map]{\textsf{{\rm(a)} Julia set picture of width $0.96$
    centered at $2a$ for a Misiurewicz map\break
    $F_0\in\cS_4$. ~ In this example, $2a$ is a fixed point. 
 {\rm(b)} Corresponding parameter space picture, centered at
$F_0$ and magnified by the complex
factor of $\frac{ds}{dt}(F_0)$.~~ Here $t$ is the usual canonical 
coordinate, and $s(F)=2a_F-z_F$ 
is a dynamically defined local coordinate. (See \cite[Corollary 6.4]{BM}.)
 Estimated parameter rays are labeled by the parameter angle.
The parameter values are
$~a=-0.185084 -0.358121\,i,\quad v = -0.629660-0.752726\, i\,.$
The two left parameter rays lie  
in the Kokopelli escape region, 
 while the right hand ray lies in the
 $0010$ region with multiplicity $\mu_h=2$. (The vertical line on the right
 is a slit, terminating at the infinite point for this region.) Conjecturally
 the left and right figures would look more and more alike
as we zoom in towards the central point. 
In other words, there is a Tan Lei type similarity between parameter space
and dynamic plane around this point. (Compare Conjecture~\ref{C-TLS}.)
}
\smallskip\label{f5}}

\bigskip

\centerline{\includegraphics[height=2.5in]{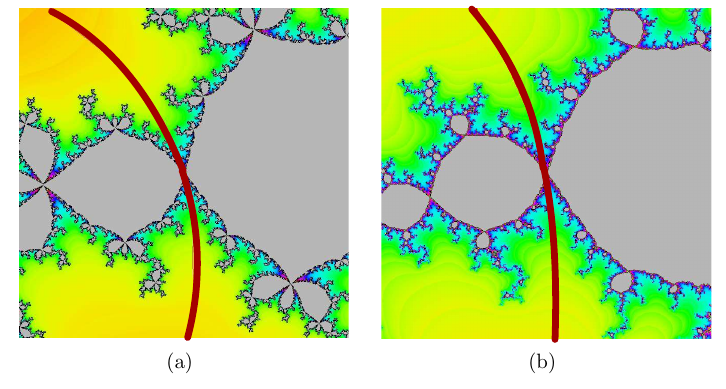}}
\vspace{-.4cm}
\caption[Misiurewicz example from $\cS_4$ in both dynamical and parameter planes]{\textsf{ Another Misiurewicz example 
  from $\cS_4$, again with the dynamical picture  
in {\rm(a)} and the parameter picture, magnified by a factor of
$ds/dt$,
 in {\rm(b)}. In both figures, the rays of angle
 33/80 and 57/80 land at the central point. In the dynamical picture,
 this point  $2a$ has period two. 
 Note that $11\mapsto 33\mapsto 19\mapsto 57\mapsto 11$ under tripling mod
 80. In the parameter picture this central point
lies between the ``double-basilica'' region above, and the 1010-region
below. (Here the parameters are:~ $a=-0.1996 -0.0353\,i$ and
$v=0.2284+ 1.0521\,i$.) 
\smallskip\label{f6}}}
\end{figure}

\begin{conj}[\bf Tan Lei Similarity]\label{C-TLS}  
\rm  For quadratic maps, 
Tan Lei proved that, under increasing magnification, the
connectedness locus $\M$ around a Misiurewicz point $F\in \M$,
and the filled Julia
set of $F$ at the corresponding repelling point, look more and more alike.
(See \cite{TL}, \cite{Ka2}, \cite{RL}.)

\begin{quote}\sf We conjecture that the analogous statement holds in $\cS_p$:
  Under iterated magnification,
the connectedness locus around a Misiurewicz map
$F\in\cS_p$  looks more and more alike the filled Julia set of
$F$ around the free co-critical point $2a_F$ (up to a fixed
scale change and rotation).\end{quote}

\noindent In fact, since the points in the forward orbit of $2a$ all have locally
conformally isomorphic Julia sets, we could very well substitute any one
of these these points for $2a$. For example, in Figure \ref{F-S2cheb},
the neighborhood of the Chebyshev point in parameter space looks very much like
a neighborhood of the post-critical fixed point in the Julia set.\medskip

For other examples, see Figures
\ref{f5}, and \ref{f6}; as well as Figure \ref{F-S2juls} in Section \ref{s-rays}.
Also compare the right hand tip of Figure~\ref{F-percase} with
Figure~\ref{F-S2rays} in Section \ref{s-tess}.\medskip

Closely related is the observation that a dynamic ray of given angle for
a Misiurewicz map  $F\in\cS_p$ seems to land at the point $2a_F$ if and only if
there is a parameter ray in $\cS_p$ of the same angle which lands at $F$.  
Furthermore the dynamic and parameter rays land in the same cyclic order,
even though the various parameter rays may be contained in different
escape regions. (See Figure \ref{f5}.) \medskip

We can make this similarity between Julia set and parameter space
more explicit as follows. For $F$ in a small neighborhood of any Misiurewicz
point $F_0\in\cS_p$, it is not hard to see that there is a unique pre-periodic
point $z(F)$ which varies as a holomorphic function of $F$, with
$z(F_0)=2a_{F_0}$.

\begin{quote}\sf We further  conjecture that the correspondence 
  $$ F~\mapsto~s(F)~=~2a_F-z(F) $$
  is univalent throughout some neighborhood of $F_0$, so that $s(F)$
  provides a dynamically defined holomorphic local coordinate for  $\cS_p$
  throughout this neighborhood. Using this coordinate,
  we conjecture that, under iterated magnification, $\cS_p$ near $F_0$ looks
  more and more like the dynamic plane for $F_0$ near $2a_{F_0}$, without
  any need for a preliminary scale change and rotation.
\end{quote}

\noindent
In practice, since $s(F)$ is defined only locally, it is more convenient to
use this procedure to choose the correct preliminary scale change and rotation.
This was done for Figures \ref{f5} and \ref{f6}. (The computation was relatively
easy since for these two cases $2a_{F_0}$ itself is a periodic
point, so that $z(F)$ is also periodic.) For more on the study of self-similarity
see \cite{BY}.)
\end{conj}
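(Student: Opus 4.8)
The plan is to adapt Tan Lei's original argument for the quadratic family, the one genuinely new ingredient being the identification of the dynamically-defined parameter coordinate $s$ and the proof that it is a local biholomorphism. First I would fix the Misiurewicz map $F_0\in\cS_p$, set $z_0=2a_{F_0}$, and let $\ell\ge 0$ be minimal with $w_0:=F_0^{\circ\ell}(z_0)$ periodic of period $k$ and multiplier $\lambda$, $|\lambda|>1$ (recall $F_0(2a)=F_0(-a)$, so this is the same as the eventual fate of the free critical point). Choose a Koenigs coordinate $\varphi$ linearizing $F_0^{\circ k}$ near $w_0$, and pull it back by the univalent branch of $F_0^{\circ\ell}$ carrying a small disk around $z_0$ onto a small disk around $w_0$, obtaining a holomorphic coordinate $\psi$ near $z_0$ in which the relevant finite pieces of $K(F_0)$ and $J(F_0)$ are asymptotically invariant under $\psi\mapsto\lambda^{-1}\psi$. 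On this ``dynamical side'' the conclusion is essentially classical: the rescaled sets converge, in the Hausdorff topology on a fixed disk, to a model limit set with similarity factor $\lambda^{-1/k}$.

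For the ``parameter side'' I would, for $F$ near $F_0$, follow the analytic continuation $z(F)$ of the point equal to $z_0$ at $F_0$ and satisfying $F^{\circ(\ell+k)}(z(F))=F^{\circ\ell}(z(F))$; this is holomorphic because $w_0$ is a simple repelling periodic point, so the defining equation has nondegenerate derivative. The candidate coordinate is $s(F)=2a_F-z(F)$, and the crux is the transversality claim $\tfrac{ds}{dt}(F_0)\ne 0$, equivalently that $\cS_p$ meets the Misiurewicz subvariety $\{F: F^{\circ(\ell+k)}(2a_F)=F^{\circ\ell}(2a_F)\}$ transversally at $F_0$. I would establish this by the same mechanism the paper already uses for the Type B and Type D counts, namely Thurston rigidity of critically finite branched covers (so the two subvarieties cannot share a tangent at $F_0$), or, alternatively, Silverman's purely algebraic transversality theorem; the payoff is that $s$ is a biholomorphism from a neighborhood of $F_0$ onto a neighborhood of $0$, which already gives the dynamically-defined local coordinate asserted in the stronger form of the conjecture.

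With both ingredients in place, the similarity follows as in the quadratic proof: using that the repelling cycle, its Koenigs coordinate, and hence $\psi$ all move holomorphically with $F$, one shows $\psi_F(2a_F)-\psi_F(z(F))=s(F)\cdot(1+o(1))$ as $F\to F_0$, so that the rescaled parameter pieces $\lambda^{n}\,s\bigl(\cS_p\cap\cC(\cS_p)\cap(\text{small disk})\bigr)$ and the rescaled dynamical pieces $\lambda^{n}\,\psi\bigl(K(F_0)\cap(\text{small disk})\bigr)$ have the same Hausdorff limit; the same bookkeeping, applied to the angles, yields the companion statement that a dynamic ray lands at $2a_F$ exactly when a parameter ray of that angle lands at $F$, in matching cyclic order. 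The main obstacle is the transversality step: this is precisely where rigidity of critically finite maps must be invoked, and verifying its hypotheses in our situation (in particular that $2a_{F_0}$ is not itself critical, and that the post-critical data are as expected) needs genuine care; the remaining quantitative point — uniformity of the Koenigs linearization in $F$ and control of the higher-order error, which I would package as a holomorphic motion of the finite Julia-set pieces over a neighborhood of $F_0$ with $\lambda$-lemma estimates making the errors uniformly $o(\lambda^{-n})$ — is a direct transcription of Tan Lei's argument.
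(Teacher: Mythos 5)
The statement you are proving is not proved in the paper at all: it is stated explicitly as a conjecture (with the similarity phenomenon only illustrated by Figures and deferred to the forthcoming reference \cite{BY}), so there is no proof of record to compare against. Your plan is the natural one --- transplant Tan Lei's quadratic argument, with the new input being a transversality statement that makes $s(F)=2a_F-z(F)$ a local coordinate on $\cS_p$ --- and as a strategy it is sound. But as written it has two genuine gaps.

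First, the transversality step is not delivered by the tools you invoke. Thurston rigidity of the critically finite map $F_0$ gives that $F_0$ is an \emph{isolated} point of the intersection of $\cS_p$ with the Misiurewicz variety $\{F^{\circ(\ell+k)}(2a_F)=F^{\circ\ell}(2a_F)\}$; it does not by itself exclude a tangential (higher-order) intersection, which is exactly what $\tfrac{ds}{dt}(F_0)\ne 0$ rules out. To get transversality one needs the infinitesimal form of rigidity (injectivity of $\mathrm{id}-F_*$ on the relevant space of quadratic differentials, in the style of Epstein, or the Misiurewicz-transversality results of van Strien and Buff--Epstein). Silverman's algebraic theorem, which the paper cites, treats the case where both critical points are \emph{periodic} (the $\cS_p\cap\cS_q'$ count); the preperiodic case you need is not covered by that citation and must be supplied separately. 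Second, the final assertion that the ray-landing correspondence (a dynamic ray of angle $\theta$ lands at $2a_{F_0}$ iff a parameter ray of angle $\theta$ lands at $F_0$, in matching cyclic order) follows ``by the same bookkeeping'' is not right: Hausdorff convergence of the rescaled sets does not transport landing of individual rays, and even in the quadratic family this correspondence is a separate Douady--Hubbard theorem proved by stability of landing at repelling orbits together with a separation argument, not a corollary of Tan Lei similarity. That part needs its own argument (presumably along the lines of Lemma~\ref{L-land-stab} plus the wake/separation structure of Section~\ref{s-tess}). A small inaccuracy besides: the asymptotic self-similarity factor is the multiplier $\lambda$ of $F_0^{\circ k}$ itself (rescale by $\lambda^{n}$), not $\lambda^{1/k}$.
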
\bigskip

\appendix

\setcounter{lem}{0}
\section{Parabolic Landing Stability (in some cases)}\label{a-para-stab} 

In general, a parameter ray landing at
 a parabolic point will not deform continuously as we perturb the map.
As an example, the zero ray for the quadratic map
$z\mapsto z^2+1/4$ will jump discontinuously under a small perturbation of
the map. (Compare Figure~\ref{f4}.) Nevertheless, the actual landing point
of the ray will vary continuously in the quadratic case, and for some 
higher degree cases.
\begin{figure}[h!]
\centering
\subfigure[\label{f4-a}]{\includegraphics[height=2.2in]{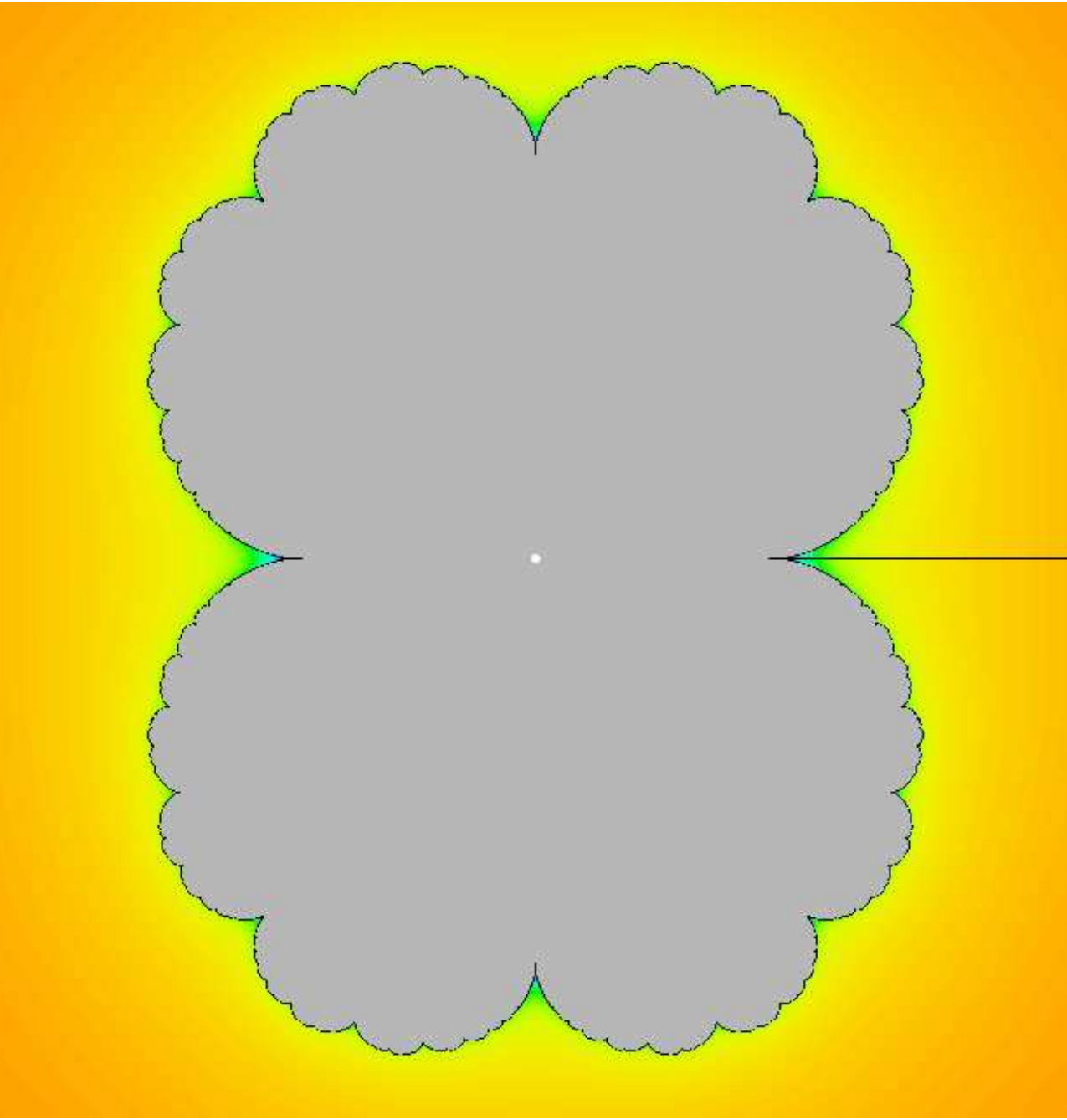}}\quad
\subfigure[\label{f4-b}]{\includegraphics[height=2.2in]{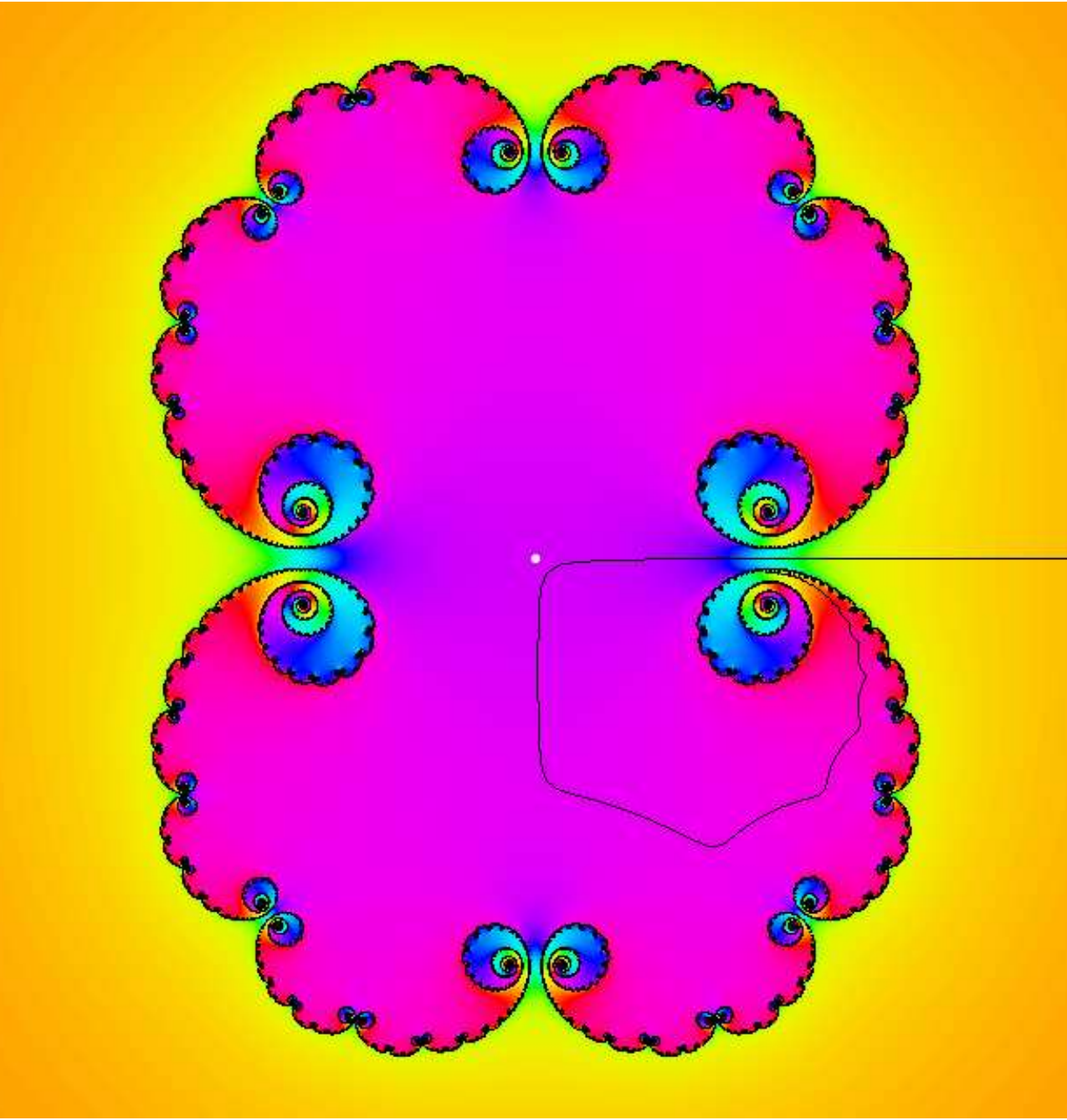}}
\caption[Pictures of the zero ray $~\cR_{F_0}(0)~$ for the quadratic map
\hbox{$~F_0(z)=z^2+1/4~$} and for $~F~$ close to $~F_0$]{\textsf{ {\rm(a)} The
    zero ray $~\cR_{F_0}(0)~$ for the quadratic map \hbox{$~F_0(z)=z^2+1/4~$}
    lands at the parabolic fixed point.
 {\rm(b)} For suitable $~F~$ close to $~F_0$,~ the zero ray passes between
the two fixed points, then ricochets off the critical point and off a sequence
of precritical points, eventually landing on the lower fixed point.
\label{f4}}}
\end{figure}\medskip

We will say that  a holomorphic one-parameter family of monic 
polynomial maps $F_s$ of degree $\d>1$ has only \textbf{\textit{one
free critical point}} if all but one of the critical
points (if there is more than one) is periodic or eventually periodic;
and if this one is a simple critical point. 
To simplify the notation, we will assume that the parameter $s$ varies
over some open subset of $\C$.\msk

\begin{figure}[htb!]
\centerline{\includegraphics[width=2.5in]{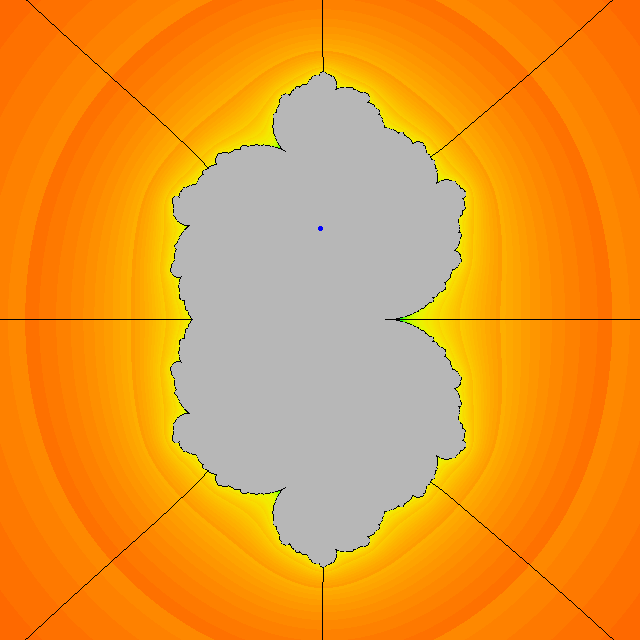}\quad
  \includegraphics[width=2.5in]{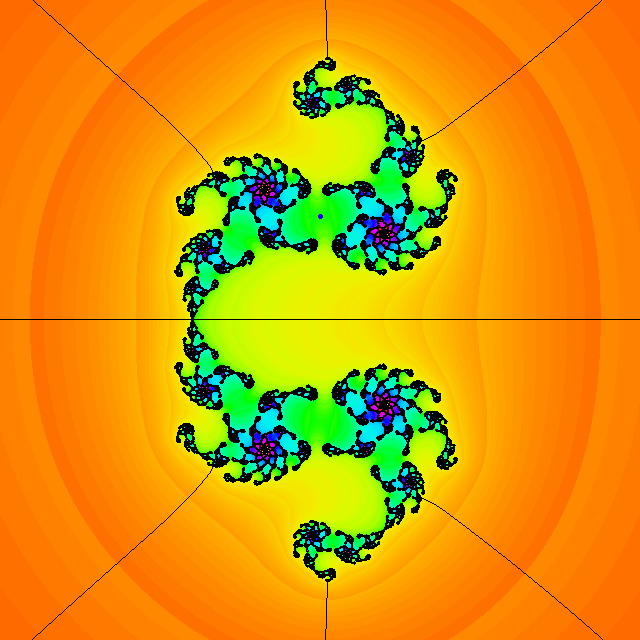}}
\caption[Julia sets for two maps in the family $~F_s(z) ~=~ z^3 +  z^2 + (1+s)z +s~$]{\sf Julia sets for two maps in the family \hfill{}\break
 \centerline{$~F_s(z) ~=~ z^3 +  z^2 + (1+s)z +s~$.}
Note that the ``one free critical point'' condition is not satisfied. 
On the left: For $s=0$ the zero ray lands at a parabolic fixed point, while
the $1/2$ ray lands at a repelling fixed point. On the right, the case $s=0.2$.
For all $s\ge 0$, the two critical points are complex conjugates. 
For $s>0$ the parabolic fixed point splits into a pair of complex
conjugate fixed points,
and the zero ray squeezes through the gap between them to land at the 
same point as the $1/2$ ray. Both critical orbits escape to infinity,
so the Julia set is totally disconnected. 
The top critical point is marked in both figures.
\label{pls-f1}}
\end{figure}\bsk

\begin{theo}[\bf Parabolic Landing Stability Theorem]\label{pls-T1} 
{\sf Let $\theta$ be an angle of period $q$ under
multiplication by $d$, and suppose that the dynamic ray of angle
$\theta$  for $F_0$ lands at a parabolic point  $z_0$. If there is
only one free critical point,  and if we exclude the special case
where the $\theta$ ray for $F_s$
crashes into a critical point of $F_s^{\circ q}$, then the landing point for
this ray varies
continuously with $s$ throughout some neighborhood of $s=0$.
In the exceptional case where this ray does crash into a critical point,
the same statement is true for the limit rays as we approach $\theta$ from
the left or from the right. (The two limits differ; but converge to each
other as $s\to 0$.)}\footnote{This result contrasts with Douady and
Hubbard's proof that repelling landing points always deform continuously
under perturbation. See Lemma \ref{L-land-stab}.}
\end{theo}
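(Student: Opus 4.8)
The plan is to work entirely with the iterate $g_s = F_s^{\circ q}$, so that the parabolic point $z_0$ becomes a fixed point of $g_0$ with multiplier a root of unity, and the ray of angle $\theta$ becomes a $g_s$-invariant ray. The first step is to set up Fatou coordinates: in a small attracting petal of $g_0$ at $z_0$ we have a Fatou coordinate $\Phi_0$ conjugating $g_0$ to $w \mapsto w+1$, and the $\theta$ ray enters this petal. Because there is only one free critical point, the immediate basin containing the petal contains at most that one critical point, and in fact (by the Fatou–Shishikura count for the parabolic cycle) the $\theta$ ray is not blocked by any periodic or preperiodic critical point of $g_0$ other than possibly the free one itself — which is the excluded exceptional case. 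So the ray reaches all the way into the petal and limits on $z_0$.

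The second step is the perturbation analysis. For $s$ near $0$, the parabolic fixed point $z_0$ of $g_0$ splits into finitely many nearby fixed points of $g_s$ (their number governed by the multiplicity of the parabolic point); write $N_s$ for this finite set, with $N_s \to \{z_0\}$ as $s\to 0$. Away from a small disk $D$ around $z_0$, the ray $\cR_{F_s}(\theta)$ — as a subset of the basin of infinity parametrized by the Green's function on $(\g(z_1),\infty)$, where $z_1$ is the first critical or precritical point it meets — deforms continuously with $s$ by the classical stability of rays outside the filled Julia set (the ``one free critical point'' hypothesis guarantees that for $s\neq 0$ the only way the ray inside $D$ can be obstructed is by the free critical point or its $g_s$-preimages, and for the non-exceptional $s$ it is not obstructed, while for the exceptional $s$ we pass to the one-sided limits as in the statement). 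The key local claim is that the portion of $\cR_{F_s}(\theta)$ inside $D$ must land at one of the points of $N_s$, and that as $s \to 0$ this landing point converges to $z_0$. This follows from a Fatou-coordinate / incoming-direction argument: the $\theta$ ray enters $D$ along a fixed ``incoming direction,'' and a ray entering along that direction and staying $g_s$-forward-invariant (after iterating $q$ times) must be trapped in the corresponding perturbed attracting petal or channel, whose diameter shrinks to $0$ with $s$. The hard part will be this last trapping argument: controlling the geometry of the ``gate'' between the split fixed points uniformly in $s$, so that one can show the ray cannot escape past the perturbed parabolic region and must accumulate on $N_s$ — this is exactly the phenomenon visible in Figures~\ref{f4} and \ref{pls-f1}, and making it rigorous requires the Lavaurs-style analysis of the flow through the narrow channel (estimating how many iterates of $g_s$ the ray spends near the gate, and showing the total displacement is $o(1)$ as $s\to 0$).

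The third and final step is to assemble continuity of the landing point: combine the continuous deformation of the ray outside $D$ (for a fixed radius of $D$), the fact that inside $D$ the landing point lies in $N_s$, and the observation that $\mathrm{diam}(N_s \cup \{z_0\}) \to 0$; letting the radius of $D$ shrink and using a diagonal argument over a shrinking family of disks gives that the landing point $z(s)$ varies continuously at $s=0$, with $z(0) = z_0$. In the exceptional case, the same argument applied to the left-limit ray and the right-limit ray (which exist because near a crashing angle the ray depends monotonically on the angle parameter) shows each varies continuously and both converge to $z_0$, and hence to each other, as $s\to 0$. Throughout, the ``one free critical point'' hypothesis is used precisely to rule out the Figure~\ref{pls-f1} pathology, where a second free critical orbit destroys the connected basin and lets the ray wander off to a different part of the Julia set; with that hypothesis the perturbed dynamics near $z_0$ is modeled on the standard cauliflower of Figure~\ref{F-caulpic}, for which the trapping estimates are classical.
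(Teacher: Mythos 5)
Your overall strategy --- pass to $g_s=F_s^{\circ q}$, deform the ray continuously away from a small disk $D$ about $z_0$, and then argue locally that the landing point cannot leave $D$ --- founders precisely at the step you yourself flag as ``the hard part.'' The local trapping claim is not just unproved but, as stated, incorrect: in the model case of Figure~\ref{f4} the perturbed zero ray is \emph{not} confined to a channel of shrinking diameter; it passes clean through the gate between the two split fixed points, ricochets off the critical point and a sequence of precritical points deep inside the former basin, and only then lands. What saves continuity there is not a local $o(1)$ displacement estimate but a global constraint, and the counterexample of Figure~\ref{pls-f1} has exactly the same local geometry at the gate as the good cases --- so no Lavaurs-type analysis of the channel alone can detect the difference that the one-free-critical-point hypothesis makes. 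Your proposal correctly locates where the hypothesis must enter but supplies no mechanism for it.

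The paper's proof runs along entirely different, global-topological lines. One first shows (Lemma~\ref{pRPls-L1}) that the union $B$ of the closures of the $n$ parabolic basins of $f_0=F_0^{\circ q}$ is bounded by Jordan curves (Roesch--Yin), on each of which $f_0$ acts with degree two, so a fixed-point count for the boundary circle map forces $z_0$ to be the \emph{only} fixed point of $f_0$ in $B$; hence a discontinuous limit of landing points $\widehat z$ would have to lie outside $B$. Next (Lemma~\ref{L-many-rays}) a dense family of preimages of invariant rays penetrates from outside into $B$ for every nearby $F_s$. An $f_s$-invariant ray that enters $B$ through a repelling petal and tries to exit through an attracting stretch of the boundary would have to cross one of these penetrating rays, which is impossible since distinct rays are disjoint; so the invariant ray is trapped in $B$, where the only candidate landing point lies in the prescribed neighborhood of $z_0$. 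To complete your argument you would need to import this barrier construction (or an equivalent global input such as the single-fixed-point property of $\partial B$); the local analysis of the gate cannot suffice on its own.
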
\msk

Here the condition that there is at most one free critical
point, or some similar condition, is essential. (See  Figure~\ref{pls-f1}.)
For an example of the exceptional case, see 
Figure \ref{F-near-para}-left.
\bsk

\begin{coro}[\bf Upper Semi-Continuity of Orbit Portraits]\label{usc}
Now suppose that the maps $F_s$ are in $\cS_p$, with degree $\d=3$.   If 
$\theta$ and $\theta'$ are periodic under tripling, and if the $\theta$
and $\theta'$ dynamic rays land together for maps $F_s\in\cS_p$ 
with $s$ arbitrarily close to zero, then they land together for $s=0$.
\end{coro}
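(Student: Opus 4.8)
The plan is to deduce the Corollary directly from the Parabolic Landing Stability Theorem~\ref{pls-T1}, which is the substantive result of this Appendix. The Corollary is essentially a reformulation of that theorem in the language of orbit portraits, once one checks that the hypotheses of the theorem apply in the setting of $\cS_p$.

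First I would verify the ``one free critical point'' hypothesis. A map $F_s\in\cS_p$ has two critical points $a_{F_s}$ and $-a_{F_s}$; by the very definition of $\cS_p$, the marked critical point $a_{F_s}$ is periodic of period $p$. Hence all but one critical point is periodic, and the remaining one, $-a_{F_s}$, is a simple critical point of the cubic $F_s$. Therefore any holomorphic one-parameter family in $\cS_p$ satisfies the ``only one free critical point'' condition required by Theorem~\ref{pls-T1}, with $\d=3$.

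Next, suppose $\theta$ and $\theta'$ are periodic under tripling and that for $s$ arbitrarily close to $0$ the dynamic rays $\cR_{F_s}(\theta)$ and $\cR_{F_s}(\theta')$ land at a common point of $J(F_s)$. I would first argue that $\cR_{F_0}(\theta)$ lands at a parabolic point: a periodic dynamic ray for a map with connected Julia set lands at a point which is either repelling or parabolic (cf.\ \cite[\S18.10]{M3}); if the landing point $z_0$ of $\cR_{F_0}(\theta)$ were repelling, then by the Douady--Hubbard landing stability statement (Lemma~\ref{L-land-stab}) both $\cR_{F_s}(\theta)$ and its landing point would vary continuously near $s=0$, and by the same token $\cR_{F_s}(\theta')$ would vary continuously; taking the limit $s\to 0$ along the given sequence would then force $\cR_{F_0}(\theta)$ and $\cR_{F_0}(\theta')$ to land together, which is a continuity argument I would record directly (and which in fact proves the Corollary in this easy sub-case). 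So I may assume the landing point of $\cR_{F_0}(\theta)$ is parabolic, of ray period dividing the common period of $\theta$ and $\theta'$. Now apply Theorem~\ref{pls-T1} to the angle $\theta$ (taking $q$ to be its period under tripling): away from the exceptional crash case the landing point $z(s)$ of $\cR_{F_s}(\theta)$ varies continuously and $z(0)=z_0$; likewise the landing point $z'(s)$ of $\cR_{F_s}(\theta')$ varies continuously with limit $z'_0$. Along the sequence $s_j\to 0$ with $z(s_j)=z'(s_j)$, continuity gives $z_0=\lim z(s_j)=\lim z'(s_j)=z'_0$, i.e.\ the $\theta$ and $\theta'$ rays for $F_0$ land together, as claimed. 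The exceptional case (where the $\theta$ ray for some $F_s$ crashes into a critical point of $F_s^{\circ q}$) I would handle by the parenthetical refinement in Theorem~\ref{pls-T1}: one-sided limit rays still converge as $s\to 0$, which suffices to pass the common landing point to the limit; alternatively one notes the crash condition holds only for $s$ on the measure-zero set of parameter rays of co-period $q$, so the sequence $s_j$ can be chosen to avoid it.

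The main obstacle is not in the logical structure of this reduction, which is short, but in making sure the quantifier ``for $s$ arbitrarily close to zero'' is used correctly together with the one-sided caveat of Theorem~\ref{pls-T1}: the theorem provides continuity of the landing point only off the crash locus (or of the one-sided limits at the crash locus), so I must be careful to extract a subsequence $s_j\to 0$ along which the hypothesis holds \emph{and} along which the landing points are given by the continuous (two-sided) branch, or else invoke the one-sided statement symmetrically for both $\theta$ and $\theta'$. Once that bookkeeping is done, the conclusion $z_0=z'_0$ is immediate from the limit, and the Corollary follows.
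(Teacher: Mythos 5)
Your proposal is correct and follows the same route as the paper, whose entire proof of Corollary~\ref{usc} is the one line ``This follows immediately'' from Theorem~\ref{pls-T1}. You have simply supplied the bookkeeping the paper leaves implicit (verifying the one-free-critical-point hypothesis in $\cS_p$, splitting into the repelling case handled by Lemma~\ref{L-land-stab} and the parabolic case handled by Theorem~\ref{pls-T1}, and noting that the crash locus is automatically avoided at parameters where the rays actually land together), all of which is sound.
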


\begin{proof}
This follows immediately.
\end{proof}
\bigskip

The proof of Theorem~\ref{pls-T1} begins as follows. The angle 
$\theta$, of period $q$ under multiplication by $d$, will be fixed throughout
the discussion.
Suppose that we have a one parameter family for which the
Theorem is false. Then there exists some small neighborhood 
$U$ of the point $z_0$ and an infinite sequence of parameter points $s_j$
converging to $s=0$ such that the landing point
$\widehat{z}_j$ of the $\theta$ ray
$\cR_j$ for each $F_{s_j}$ lies outside of $U$. After passing to an infinite
subsequence, we may assume that these landing points converge to
some point  \hbox{$\widehat z\not\in U$}.\msk

{\bf Notations:} We will use the notation
$f_s$ for the iterate $F_s^{\circ q}$ of degree $\d^q$.  Evidently $\widehat z$
must be one of the finitely many fixed points of $f_0$.
The parabolic point $z_0$ is the root point of some number $n\ge 1$
of parabolic basins which map to themselves under $f_0$. In 
general\footnote{The only exception that we know is the map $F(z)=z^3+z$,
with $q=1$ but $n=2$. }
$n$ is some divisor of $q$, and $z_0$ is a point of period $q/n$ under
$F_0$.\smallskip

Let $B$ be the union of the closures of these $n$ basins.
First note the following.

\begin{lem}\label{pRPls-L1} The root point $z_0$ is the only fixed
point of $f_0$ in the set $B$. Since $\widehat z$ is a fixed
point not equal to $z_0$, it follows that $\widehat z\not\in B$.
\end{lem}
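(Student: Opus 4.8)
The plan is to show that within each basin $U_i$ whose closure is a piece of $B$, the only fixed point of $f_0 = F_0^{\circ q}$ that can lie in $\overline{U_i}$ is its root point. Recall from Definition~\ref{D-root} that $f_0$ maps each such basin onto itself with degree two, and the root point is precisely the unique fixed point of $f_0$ on $\partial U_i$. So the work is to rule out fixed points of $f_0$ \emph{inside} the open basin $U_i$, and then to reconcile the various root points of the $n$ basins making up $B$.

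First I would handle the interior: a point $z \in U_i$ with $f_0(z) = z$ would have to be either an attracting, neutral, or repelling fixed point of the restriction $f_0|_{U_i}$. But $U_i$ is a parabolic basin of $f_0$, so every orbit in $U_i$ converges to $z_0 \in \partial U_i$; in particular there can be no periodic point of $f_0$ in the open set $U_i$ at all. (Equivalently, $U_i$ is a Fatou component, and the only periodic point a parabolic basin can contain in its \emph{interior} would have to be attracting, contradicting the fact that the whole basin is attracted to the boundary root point; a repelling fixed point lies in the Julia set, hence not in $U_i$.) This kills all interior fixed points, so any fixed point of $f_0$ in $\overline{U_i}$ lies on $\partial U_i$, hence equals the root point of $U_i$ by Definition~\ref{D-root}.

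Next I would argue that all $n$ of these basins share the \emph{same} root point, namely $z_0$. By hypothesis the $n$ basins are exactly those fixed by $f_0$ and having $z_0$ as their common root point — indeed $z_0$ is the parabolic point and, in the terminology of Definition~\ref{D-root} (applied to the cycle of $q$ parabolic basins, of which these $n$ are the ones mapped to themselves by $f_0 = F_0^{\circ q}$), $z_0$ is a root point shared among exactly $q/k$ of the basin boundaries where $k$ is the period of the parabolic orbit; with $n = q/k$ this is precisely our collection. So each of the $n$ basin boundaries passes through $z_0$, and by the previous paragraph $z_0$ is the only fixed point of $f_0$ in the closure of any one of them. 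Taking the union, $z_0$ is the only fixed point of $f_0$ in $B = \bigcup_i \overline{U_i}$.

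Finally, since $\widehat z$ was produced as a limit of landing points $\widehat z_j \notin U$ where $U$ is a neighborhood of $z_0$, we have $\widehat z \neq z_0$; combined with the fact just established that $z_0$ is the \emph{unique} fixed point of $f_0$ in $B$, and that $\widehat z$ is a fixed point of $f_0$, we conclude $\widehat z \notin B$. The only step that requires genuine care is the claim that a parabolic basin contains no periodic point in its interior; everything else is bookkeeping with Definition~\ref{D-root}. That claim is classical (it is part of the standard Fatou–Julia classification of periodic Fatou components, and follows since every orbit in a parabolic basin converges to the root point on the boundary), so I would simply cite it, e.g.\ from \cite{M3}, rather than reprove it.
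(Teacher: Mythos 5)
Your handling of the interior is fine (no fixed point of $f_0$ can lie in an open parabolic basin, since every orbit there converges to the root point on the boundary), and your reduction to ``each basin boundary contains only the one fixed point'' is the right reduction. But at exactly that point your argument is circular: you dispose of the boundary by citing Definition~\ref{D-root}, whose clause ``Hence the boundary of each basin contains a \emph{unique} fixed point of $F^{\circ q}$'' is an unproved assertion embedded in a definition --- and it is precisely the assertion that the paper's proof of this lemma is written to establish. Nothing in the definition rules out a \emph{repelling} fixed point of $f_0$ sitting on $\partial U_i$ in addition to the parabolic root point; boundaries of invariant Fatou components live in the Julia set and can in principle carry repelling periodic points, so this is the only genuinely nontrivial part of the lemma, and your proposal never addresses it.

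The paper closes this gap as follows: by Roesch--Yin each parabolic basin is a Jordan domain, so $\partial U_i$ is a topological circle; since $U_i$ contains a single simple critical point of $f_0$, the restriction $f_0\colon\partial U_i\to\partial U_i$ is an orientation-preserving degree-two covering of the circle; every fixed point on $\partial U_i$ is topologically repelling for this circle map (attracting points cannot lie in the Julia set, and the only parabolic point available is the root point, which acts as a topological repellor on the boundary); and an orientation-preserving circle covering all of whose fixed points are repelling has degree $k+1$ when it has $k$ fixed points. Degree two forces $k=1$, so the root point is the only boundary fixed point. You would need to supply this (or an equivalent) argument --- including the appeal to the Jordan-domain theorem, without which the circle-map counting is unavailable --- for your proof to be complete.
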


\begin{proof} By a Theorem of Roesch and Yin \cite{RY}, every polynomial
  parabolic basin is a Jordan domain. Each of the $n$ parabolic basins in $B$
  contains a
single critical point of $f_0$, and hence  maps onto itself with degree two.
Thus each basin boundary must wrap around itself with degree two. 

Let $\Gamma$ be the boundary of one of the $n$ parabolic basins in $B$.
Since $\Gamma$ is contained in the Julia set, it cannot contain any attracting
points. Furthermore since $F_0$ can have only one cycle of parabolic points,
there cannot be any parabolic point other than the common root point in $\Gamma$.
Thus we need only prove that there is no repelling fixed point in $\Gamma$.

If we think of $f_0:\Gamma\to\Gamma$ as a one-dimensional topological
dynamical system, then the root point is a topologically repelling fixed point.
But it is not hard to check that an orientation preserving circle map with $k$ fixed
points, all repelling, must have degree $k+1$. (Each interval between two
fixed points cannot map into itself, and hence must cover itself twice, and
cover the rest of the circle once.) Since the degree is known to be two,
it follows that $k=1$; and hence 
the root point is the only fixed point of $f_0$ in $B$.
\end{proof}
\msk

\begin{figure}[htb!]
\centerline{\includegraphics[width=2.8in]{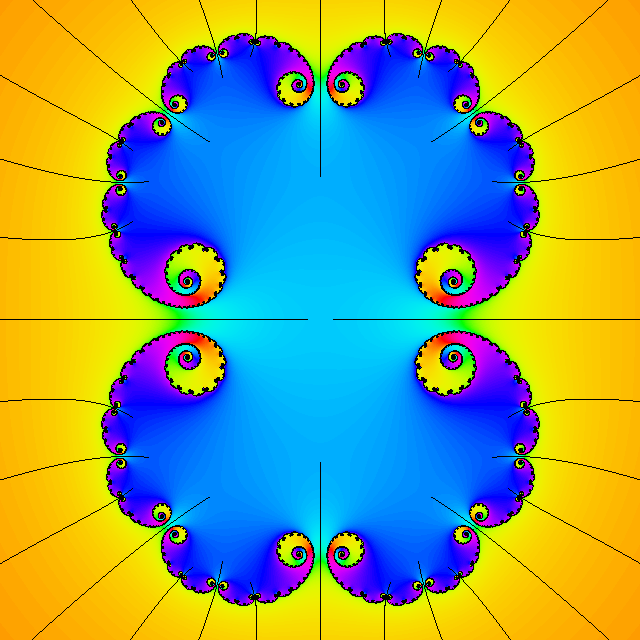}\quad
\includegraphics[width=3.2in]{ 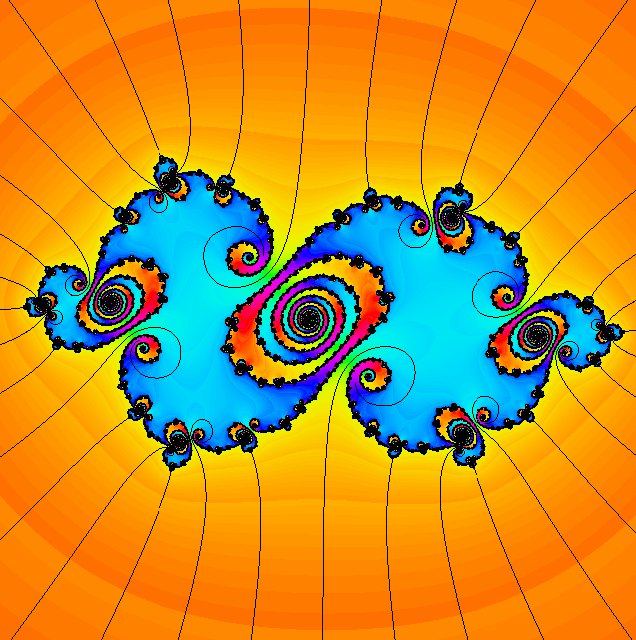}}
\caption[Approximation to the standard cauliflower map]{\label{F-near-para}\sf On
  the left: an approximation $z\mapsto z^2+0.27$
  to the standard cauliflower map. All rays of angle $n/2^k$
  penetrate from the outside into the central domain; but no rays from the
  inside pass to the outside. On the right: an approximation to the parabolic
  map $z\mapsto z^3-z$, for which the $1/4$ and $3/4$ lays land at the central
  parabolic point. Again many rays from the outside penetrate to the
  interior; but no rays pass in the opposite direction. (The illustrated map is
  $z\mapsto z^3+(-1.00878700+.160147\,I )z+ (-.097287+ .008229\,I)$.)}
    \end{figure}\medskip

    Note the following statement:
\begin{quote}
    {\sf Every polynomial map with a
      parabolic point can be approximated arbitrarily closely by
      a map of the same degree with disconnected Julia set.}
  \end{quote}

\noindent We are grateful to Misha Lyubich for pointing out that this
 follows from his paper \cite{Ly}, together with Ma\~n\'e, Sad
and Sullivan \cite{MSS}, and Levin \cite{Le}. In fact it follows from
these papers that the \textbf{\textit{bifurcation locus}}
(consisting of maps where the Julia set
does {\sf not} vary continuously in some neighborhood) is nowhere dense, and
coincides with the closure of the set of parabolic maps. It also coincides
with the set of maps which are \textbf{\textit{active}}, in the sense
that the family of iterated forward images of some critical point is not a
normal family. A map in the 
connectedness locus is active if and only if its
Julia set becomes disconnected under some small perturbation.\msk


\begin{lem}\label{L-many-rays}  Suppose that $F$ is a small perturbation
of a parabolic map $F_0$ with a set $B$ of parabolic basins of period $q$,
as described above. Then there must be an angle $\theta$ of period $q$ under
multiplication by $d$ such that the dynamic ray of angle $\theta$ for $F_0$
lands at the parabolic root point, but the corresponding ray for $F$ penetrates
into the former parabolic basin, as illustrated in Figure~$\ref{F-near-para}$.
It follows that all iterated preimages of this ray under multiplication by
 $d^q$ also penetrate. These preimages are everywhere dense among rays
landing on $B$, and none of then is invariant under the action of $F^{\circ q}$.
  \end{lem}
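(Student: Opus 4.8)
\textbf{Proof proposal for Lemma \ref{L-many-rays}.}

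The plan is to argue by a ``flow'' or ``degree count'' on the boundary of the perturbed basins. First I would set up the combinatorics: since $\theta$ has period $q$ under multiplication by $d$, there is a well-defined finite set $\Theta \subset \R/\Z$ of all angles of period dividing $q$ whose dynamic rays for $F_0$ land somewhere on $B = \overline{B_1}\cup\cdots\cup\overline{B_n}$; this includes the $q$ rays landing at the root point $z_0$ (the common root of the $n$ basins, of period $q/n$), together with the infinitely many rays landing at the iterated preimages of $z_0$ on $\partial B$ under $f_0 = F_0^{\circ q}$. The key structural fact, from Lemma \ref{pRPls-L1} and its proof, is that $f_0$ restricted to each basin boundary $\Gamma_i$ is topologically conjugate to an orientation-preserving degree-two circle map with a single (repelling) fixed point, namely $z_0$. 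Under the perturbation $F \to F_0$ (i.e.\ $F = F_{s_j}$), the parabolic cycle splits: by the standard theory near a parabolic point of multiplier $+1$ with a single attracting petal per basin, the parabolic periodic point $z_0$ splits into two nearby periodic points of $F^{\circ(q/n)}$, and an ``escape gate'' opens up — precisely the phenomenon illustrated in Figures \ref{F-near-para} and \ref{F-caulpic}. Through this gate, dynamic rays from outside $B$ penetrate into the region formerly bounded by $\partial B$.

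The main step is to show that at least one of the rays in $\Theta$ penetrates. I would do this by a counting argument on $\partial B$. Consider the boundary circles $\Gamma_i$ for $F_0$ and their continuations for the perturbed map $F$. For $F$ near $F_0$ there is no longer an invariant Jordan curve bounding an invariant basin; instead there is a ``ghost'' of $\partial B$ which, when we follow it around, must absorb some rays from the exterior. Quantitatively: the $q$ rays landing at $z_0$ for $F_0$ divide a neighborhood of $z_0$ into $q$ sectors, and after splitting, the rays that used to land at $z_0$ cannot all continue to land at the two new periodic points (there are only finitely many accesses, and the combinatorial rotation number around each new point must be compatible with multiplication by $d^{q/n}$). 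Since the whole cycle of rays landing on $\partial B$ is permuted by $F$, and the landing pattern cannot be preserved (the Julia set has become disconnected near $B$, or at least the parabolic basin has been destroyed), some ray $\cR_{F_0}(\theta')$ with $\theta' \in \Theta$ must fail to land where it did — and the only alternative consistent with continuity of rays outside $K(F)$ is that it passes through the gate into the former basin. I would make this precise by invoking the local model near the splitting parabolic point (Lavaurs/Fatou-coordinate picture), which shows explicitly that the gate is nonempty and that rays of all combinatorial types thread through it, together with the fact that dynamic rays vary continuously away from critical points (so a ray that lands outside $B$ for all small perturbations would force, in the limit, a ray landing outside $B$ for $F_0$ as well, contradicting $\theta' \in \Theta$).

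Once one such penetrating ray of period $q$ is found, the remaining assertions are routine. Its iterated preimages under $z \mapsto z^{d^q}$ (equivalently, angles $\eta$ with $d^{qk}\eta \equiv \theta' \pmod{\Z}$ for some $k$) also penetrate, because $F^{\circ q}$ maps a neighborhood of $\partial B$ to itself and maps penetrating rays to penetrating rays while the preimage operation on angles is surjective onto a set that is equidistributed (indeed dense) among the angles landing on $\partial B$ — here one uses that $z_0$ and its $f_0$-preimages are dense in $\partial B$, which follows from the circle-map conjugacy. Finally, none of these preimage rays is $F^{\circ q}$-invariant: an invariant ray of period $q$ would have to land at a fixed point of $f_0$ in $B$, but by Lemma \ref{pRPls-L1} the only such fixed point is $z_0$, and the $q$ rays landing at $z_0$ form the single periodic cycle $\{\theta'_1,\ldots,\theta'_q\}$ which is permuted cyclically (not fixed pointwise) unless $q=1$; in the relevant perturbed picture these particular rays are exactly the ones that jump, so the claim holds. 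The hard part will be the middle step — extracting, from the abstract fact that the parabolic structure is destroyed, the concrete conclusion that a \emph{period-}$q$ ray (as opposed to some aperiodic preimage ray) must be among those penetrating; I expect this to require a careful bookkeeping of accesses to the two new periodic points using the rotation-number constraint of Lemma \ref{L-rays}.
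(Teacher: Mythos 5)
Your proposal has the right global picture (the gate that opens under perturbation, the density of the preimage rays), but it misses the single observation that makes the existence step a two-line argument, and the substitute you offer for that step is not a proof. The paper's argument is simply this: take an angle $\theta$ of period $q$ whose $F_0$-ray lands at the parabolic root point. For the perturbed map $F$ the ray $\cR_F(\theta)$ is \emph{invariant} under $f=F^{\circ q}$, so it must either land at a fixed point of $f$ or crash into a critical or precritical point of $f$; and by Lemma \ref{pRPls-L1} the only $f_0$-fixed point on $\overline B$ is the root point itself, so after the splitting the only available termini (the new invariant points behind the gate, or the critical point of $f$ sitting in the former basin) force the ray to penetrate. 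Your worry about ``extracting\dots\ that a \emph{period-}$q$ ray must be among those penetrating'' therefore never arises: one looks directly at the periodic ray and uses its forward invariance, rather than inferring indirectly that \emph{some} ray somewhere must have moved.

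By contrast, your middle step --- ``the landing pattern cannot be preserved, hence some ray fails to land where it did, hence it must pass through the gate'' --- does not hold as stated and cannot be repaired without doing precisely the local analysis you defer. For a perturbation of the quadratic cauliflower \emph{into} the main cardioid the parabolic point splits into an attracting and a repelling fixed point, the zero ray continues to land at the repelling one just outside the former basin, and nothing penetrates; so ``the parabolic basin has been destroyed'' is not automatic, and even when it is destroyed the conclusion still has to be extracted from the gate picture rather than from an abstract count of accesses and rotation numbers. That extraction is exactly what you flag as ``the hard part'' and leave undone, whereas the invariance observation replaces it entirely. A smaller point: your justification that none of the preimage rays is $F^{\circ q}$-invariant conflates invariance under $F$ with invariance under $F^{\circ q}$ --- each of the $q$ rays landing at $z_0$ \emph{is} invariant under $F^{\circ q}$, since its angle is fixed by multiplication by $d^q$; the claim in the Lemma concerns the strict iterated preimages, whose angles are strictly preperiodic and hence trivially not fixed by $d^q$.
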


  \begin{proof} Since the ray is invariant under $F^{\circ q}$, it must either
    land at an $F^{\circ q}$-invariant point or else crash into a critical
    or precritical point. In either case, it must penetrate into
    the parabolic basin of $F_0$. (Compare Petersen and Zakeri \cite{PZ}.)
\end{proof}

\begin{figure}[htb!]
\centerline{\includegraphics[width=2.5in]{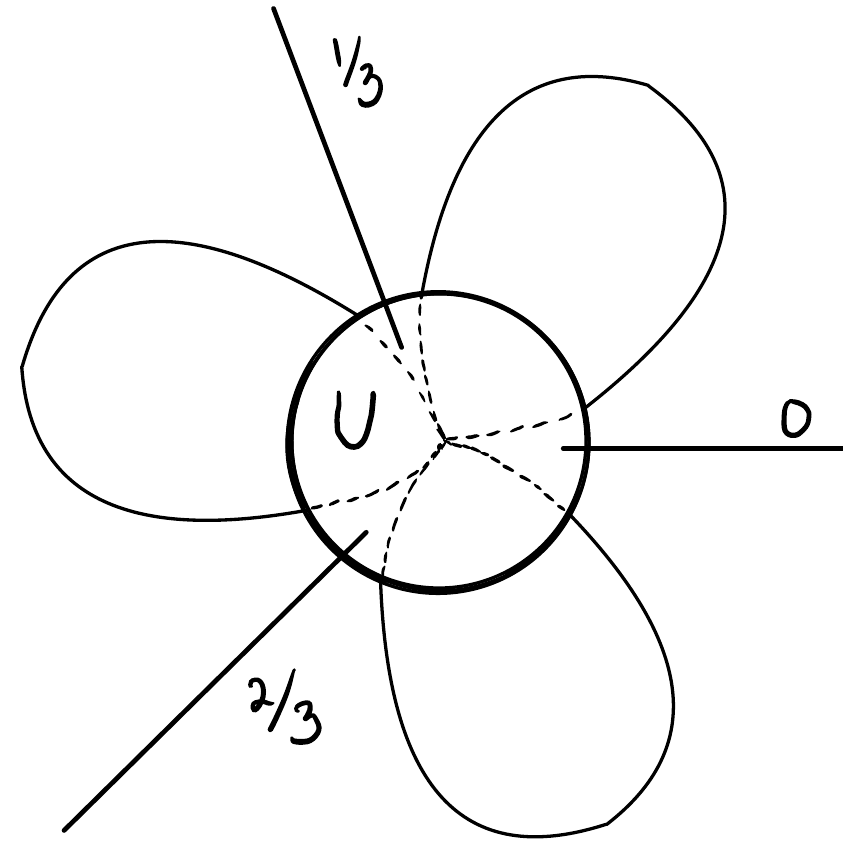}}
\caption{\label{F-tri-leaf}\sf Cartoon of a parabolic map with rotation
  number $1/3$.}
\end{figure}
  
\begin{proof}[{\bf Proof of Theorem \ref{pls-T1}}]
Figure \ref{F-tri-leaf} illustrates a typical example, with $n=3$. Note
that the $n$ loops of $B$ are contained in attracting petals, while the $n$
$f_{j_s}\!$-invariant rays enter through repelling petals. If $f_{s_j}$ is close
enough to $f_0$, then these invariant parameter rays will certainly cross
into $U$, and must eventually land at some repelling fixed point of $f_{s_j}$.
There are no such fixed points in $B\ssm U$,
so if the landing point is not in $U$ then they somehow
escape from $B$. They certainly cannot go back out the way they came in, through
one of the $n$ repelling petals. The only other possibility is that they cross
through the boundary of $B$ in one of the $n$ attracting petals. But this is
impossible. In the limit as $f_{s_j}$ tends to $f_0$, this boundary maps
bijectively to itself without fixed points (other than the root point in $U$).
This looks impossible: If a ray first crosses out of $B$ at a point 
$b$ then it should already have crossed out previously near some pre-image of $b$. 
\msk

Here is an alternative  more explicit argument. 
If an $f_{s_j}$-invariant ray crossed from inside
to the complement of $B$, then it would have to cross between two very close
rays which are entering $B$ from the outside, and which are moved somewhere
else by  $f_{s_j}$. This is clearly impossible; which completes the proof.
\end{proof}
\bsk \goodbreak

\setcounter{lem}{0}
\section{Embedded Trees}\label{a-embdtree}

 By an \textbf{\textit{embedded tree}} we will mean a contractible 
  simplicial complex of dimension zero or one 
  which is topologically embedded in $\C$. However two such trees will be
  regarded as completely equivalent if there is an orientation preserving
  homeomorphism of the plane which takes one to the other.
\begin{figure}[htb!]
  \centerline{\includegraphics[width=3.1in]{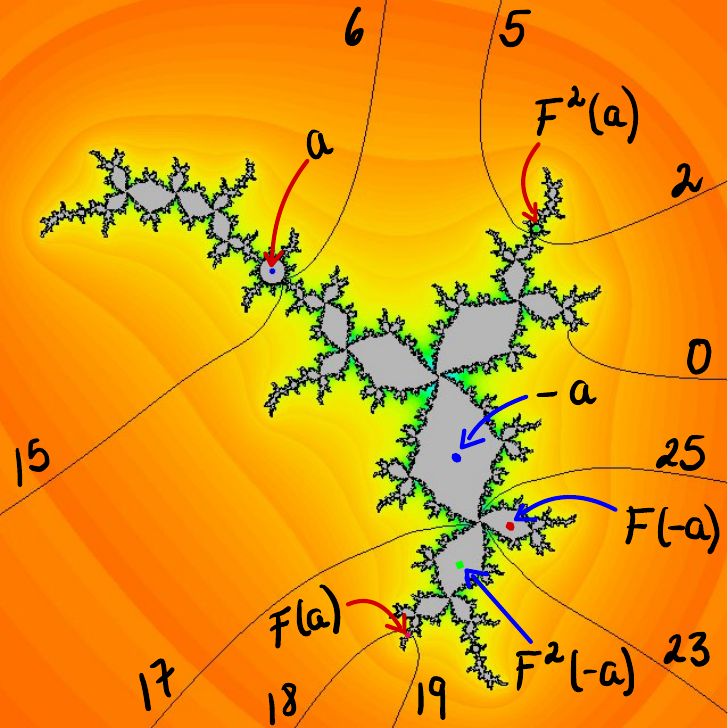}\quad\qquad
    \includegraphics[width=3in]{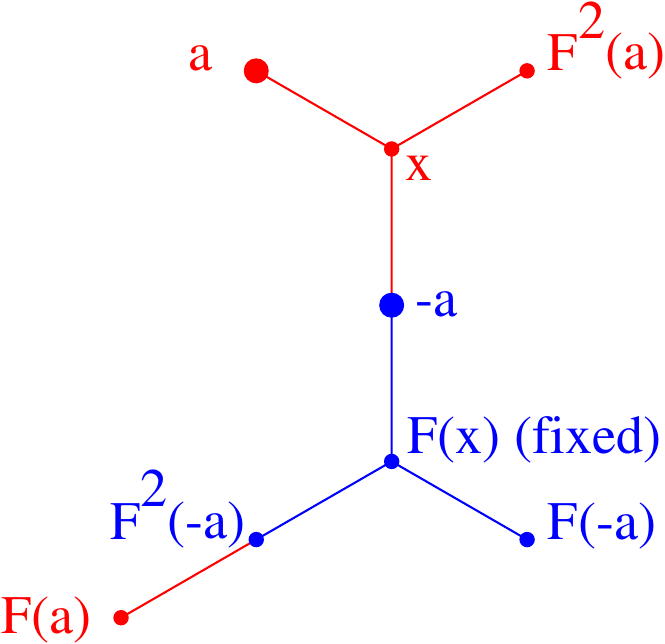}}
  \caption[Julia set for the center
    point of the $2/3$ limb of a Mandelbrot copy associated with the $1/3$
    rabbit and a corresponding Hubbard tree]{\label{F-J+HT}
    \sf  On the left, Julia set for the center
    point of the $2/3$ limb of a Mandelbrot copy associated with the $1/3$
    rabbit.  (All angles over 26.) On the right, a corresponding
    Hubbard tree. For further discussion see Remark \ref{R-figs} below.  }
  \bigskip
\bigskip
  \bigskip
  
 \centerline{\includegraphics[width=3.4in]{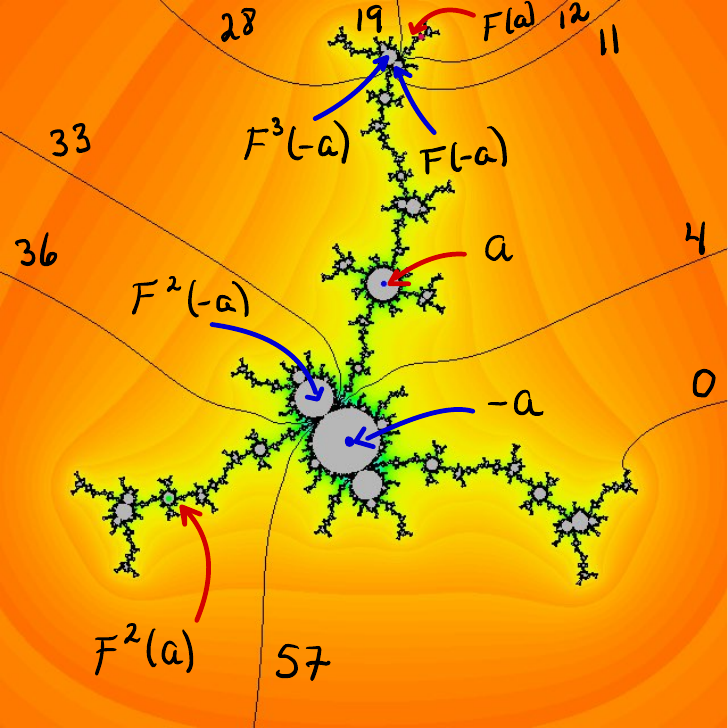}\quad\qquad\includegraphics[width=2.4in]{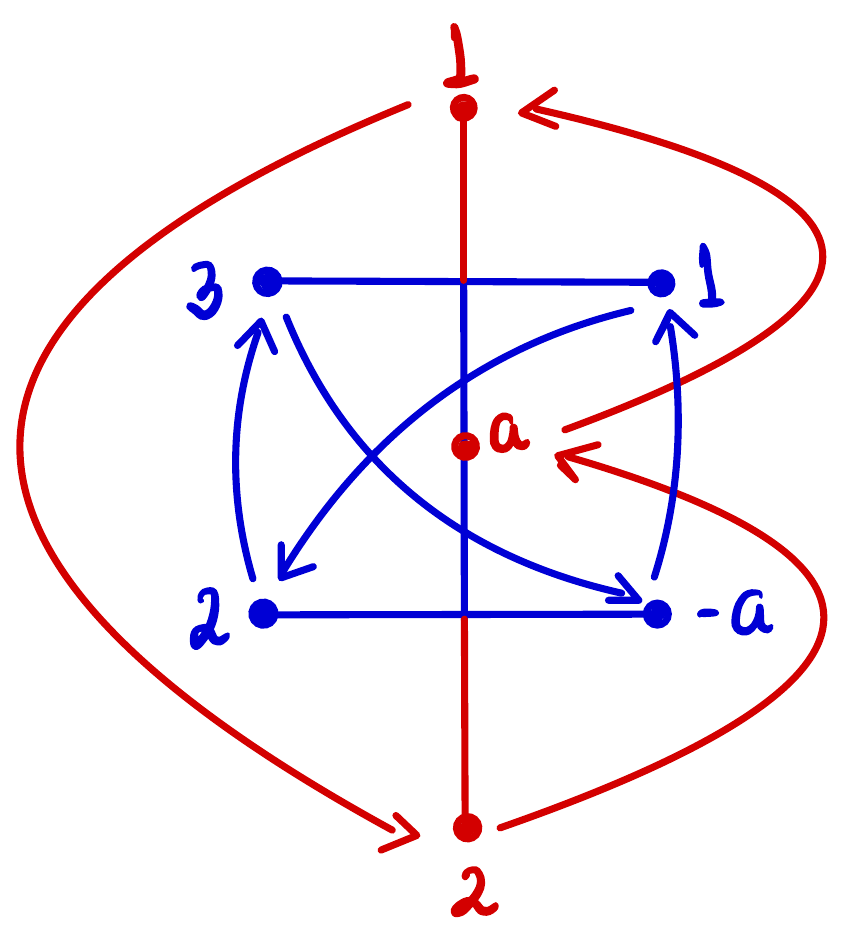}}
 \caption[Julia set for the basilica point]{\label{F-HT2}\sf  On the left, Julia
   set for the ``basilica point'' of  a very small Mandelbrot copy in $\cS_3$.
   (See Figure~\ref{F-43} in Section \ref{s-near-para}.) On the right,  the
   corresponding  Hubbard tree $T$  is quite different from the previous one.
   The sub-tree $T'$ is again colored blue. (All angles over $80$.)}
\end{figure}

\goodbreak

  The \textbf{\textit{ $($weak$)$ Hubbard tree}} $T$ of a critically finite
  polynomial map $F$ is an   embedded tree together with some additional
  structure. More precisely:

  \begin{itemize}
  \item[(1)] The underlying set $|T|$ is  the smallest subset of $\C$ which
  contains all of the critical orbits,
  and also contains the regulated path (see \cite{DH1} or \cite {Po})
  joining any
  two critical or postcritical points within the filled Julia set of $F$.

 \item[(2)]  The vertices of this
  tree are these critical or postcritical points, together with any
  branch points.

\item[(3)]  Furthermore:
\begin{itemize}
\item[(a)] the map from vertices to vertices must be specified;
\item[(b)] the critical points must be labeled; and
\item[(c)] their multiplicity must be specified if it is greater than one.
\end{itemize}
\end{itemize}
\msk

  {\sf Note:} We are using this weak definition of Hubbard tree
  in order to relate maps of two different degrees. However this 
  loses a basic property of more usual definitions (as given for example
in \cite{Po} or \cite{M4}). This tree
  does not determine the map up to conformal conjugacy. For example
  the cubic maps $z\mapsto z^3+i$ and $z\mapsto z^3-i$ have the same weak tree,
  yet do not belong to the same conformal conjugacy class. \msk

  Figures~\ref{F-J+HT} and \ref{F-HT2} illustrate two examples for the tree
  $T$ associated to a critically periodic point in a Mandelbrot copy. They
  suggest the following two statements:\msk
\begin{itemize}
\item[{\bf 1.}] The tree $T'$ for the associated critically periodic point in
  $\M$ naturally embeds as a subtree of $T$.\msk

\item[{\bf 2.}] The ``quotient'' $T/T'$ in which $T'$ is collapsed to a
  point is naturally isomorphic to the tree $T''$ for the main hyperbolic
  component of the copy $M$.\msk
\end{itemize}

However, even if both of these statements are true, we would still be
far from knowing how to construct the tree $T$ for any given component of any
Mandelbrot copy in $\cS_p$. \msk

\begin{rem}\label{R-figs} Here are further comments about Figures~\ref{F-J+HT}
  and \ref{F-HT2}.
  
 In Figure \ref{F-J+HT},  the tree $T'$ for the
associated quadratic map is colored blue, while the rest of the cubic tree
is colored red. Note that the blue part has rotation number $2/3$.
If we collapse the blue part to a point, we essentially get the tree $T''$ for
the main hyperbolic component. However there is a minor technicality: When we
collapse $T'$ to a point, both vertices of the edge joining the branch point
 $x$ to $-a$ map to the same point. This is not allowed in a Hubbard
tree, so we must also collapse this edge to this point. In this way we get
the required tree for the principle hyperbolic component, with rotation
number $1/3$.  
 \end{rem}

 \bigskip
 
 \goodbreak
 
    
\bigskip

\bigskip
\goodbreak

\listoffigures

\vspace{1.5cm}

\parbox{2.5in}{Araceli Bonifant \\
              Mathematics Department,\\
              University of Rhode Island,\\
              Kingston, R.I.,  02881.\\
              e-mail: bonifant@uri.edu\\
           } 
\parbox{3in}{John Milnor \\
               Institute for Mathematical Sciences,\\
               Stony Brook University,\\
               Stony Brook, NY. 11794-3660.\\
               e-mail: jack@math.stonybrook.edu}

  \end{document}